\newtheorem{theorem}{Theorem}
\newtheorem{corollary}{Corollary}
\newtheorem{lemma}{Lemma}
\newtheorem{proposition}{Proposition}
\newtheorem*{problem}{Problem}
\newtheorem{problem 1}{Problem 1}
\newtheorem{problem 2}{Problem 2}
\newtheorem{problem 3}{Problem 3}
\theoremstyle{definition}
\newtheorem{definition}{Definition}
\newtheorem{remark}{Remark}
\newtheorem{example}{Example}
\newcommand{\bea}{\begin{eqnarray*}}
\newcommand{\eea}{\end{eqnarray*}}
\title[LAMINATIONS WITH SINGULARITIES ]
      {RIEMANN SURFACE LAMINATIONS WITH SINGULARITIES}
\author[John Erik Forn\ae ss, Nessim Sibony]{}
\email{fornaess@umich.edu, nessim.sibony@math.u-psud.fr}
\thanks{Preliminary Version, June 2007. }
\begin{document}

\begin{abstract}
In these introductory notes we give the basics of the theory of holomorphic foliations and laminations. The emphasis is on the theory of harmonic currents and unique ergodicity for laminations transversally Lipschitz in $\mathbb P^2$ and for generic holomorphic foliations in $\mathbb P^2.$
  \end{abstract}

\maketitle



\centerline{John Erik Forn\ae ss\footnote{The first author is 
supported by an NSF grant.
Keywords: Harmonic Currents, Singular Foliations.
2000 AMS classification. Primary: 32S65;
Secondary 32U40, 30F15, 57R30} and Nessim Sibony
}

\medskip


\medskip

 \medskip

\tableofcontents

\section{Introduction}

In applications of mathematics to sciences, differential equations
play an important role. An important such equation is
\bea
x_1'(t) & = & F_1(x)\\
& \cdots &\\
x_k'(t) & = & F_k(x)\\
\eea

The solutions are integral curves of the vector field

$$
Z(x) = \sum_j F_j(x) \frac{\partial}{\partial x_j}
$$

The integral curves are tangent to the common null space of
the $1-$ forms $\lambda_{ij}=F_i(x) dx_j-F_j(x)dx_i.$
We say that the integral curves are weakly directed by the $\lambda_{i,j}.$

To understand the global nature of solutions it is useful to investigate
their behavior near singular points, which are the common zeros of the $F_i$,
i.e. the fixed points of the flow. These common zeros are frequently complex. This leads us to study laminations with singularities by Riemann surfaces in $\mathbb C^k.$ The Riemann surfaces are the integral curves in
complex time of the complexification of $Z.$ We consider the case when the 
 $F_i$ are polynomials and working in the complex we can just as well assume that the $F_i(z)$ are holomorphic polynomials. (These of course can then be considered as real vector fields on $\mathbb R^{2k}.$) Moreover it is helpful in order to understand the global behavior to consider the extension to $\mathbb P^k$. In fact then it is natural to study general holomorphic
foliations by Riemann surfaces with singularities of $\mathbb P^k$
which are weakly directed by complex one forms
$\lambda_{\ell}.$
Petrovski and Landis \cite{P}, who studied Hilbert's problem on the number of cycles of a polynomial vector field in the plane, had the idea to relate the real cycles to algebraic solutions for the extension of
 the vector field to $\mathbb C^2$. Indeed there was a gap in their proof, but their geometric method raised interesting problems for holomorphic foliations. It also raises the general question of proving
 results for real vector fields, analogous to the complex case, for example such as results described in this survey.
 Our motivation is to study global properties of holomorphic foliations with singularities.
In particular we want to introduce objects describing the asymptotic behavior of solutions
of a differential equation in $\mathbb C^2$, $\frac{dw}{dz} = \frac{Q(z,w)}{P(z,w)},$ where $P$ and
$Q$ are polynomials.

\medskip

Let us recall the more familiar case of discrete dynamical systems.
In order to study the global dynamical behavior of a continuous map $f:X \rightarrow X$ on a compact
space $X,$ it is useful to introduce invariant probability measures $\mu$, satisfying
$f_*\mu=\mu.$ They can be obtained following Krylov-Bogoloubov as limit points of
$\frac{1}{N} \sum_{i=0}^{N-1}\delta_{f^i(x)},$ where $\delta_{f^i(x)}$ denotes the Dirac mass at the
point $f^i(x).$ It is then of interest to study the ergodic properties of such invariant measures, in particular to show that the limit of averages along an orbit is independent of the orbit.
\medskip

 We want to construct an analogue for laminations with singularities by Riemann surfaces and study their ergodic
properties. If $x(t):\mathbb R \rightarrow \mathbb R^k$ parametrizes
an integral curve of $Z$, we can define a measure $\mu_x$ 
on the curve by
letting the image $x([a,b])$ have mass $b-a.$ Then limit points of the measures
$\frac{\mu_{|{x(0,N)}}}{\mu(x(0,N))}$ are invariant under the flow. In fact we will discuss in Section 5.8  a natural invariant measure which is invariant under a gradient flow along leaves. One can also introduce invariant $1-$ currents which are weak limits of $\frac{|x'(t)|^{-1}[x(0,N)]}{\mu(x(0,N))}$
where $[x(0,N)]$ denotes the current of integration. One might ask whether such limits are closed or not and whether they have a laminar structure as well as ergodic properties. A disadvantage in using time as a parameter is that the vector fields are usually not complete. We are interested in questions like density of leaves, so we might prefer other parametrizations. In the complex case we can use universal covering maps of leaves to get analogous measures and currents.
The currents we get will be closed or harmonic.

\medskip
There are several surveys on holomorphic foliations and laminations,
Brunella \cite{Br2000}, Ghys \cite{Gh1999}. Our purpose here is to give an introduction to harmonic currents on singular foliations as developped  by the authors, harmonic currents is the analogue of invariant measures in discrete dynamics.

 For foliations in $\mathbb P^2$ with only hyperbolic singularities (see section 2.2 for a precise definition) and without algebraic leaves, we will introduce an averaging process along leaves and we will show that the limit is unique, i.e. independent of the leaf, a unique ergodicity  property, see  \cite{FS2005}, \cite{FS2006}. 
We introduce the basic concepts and results on foliations and laminations with emphasis on singularities. We also give proofs of some results in the simplest interesting case. The problem of unique ergodicity on minimal sets in higher dimension, for foliations by Riemann surfaces is open.

\section{Basic results}

\subsection{Definitions and examples in dimension $2$}

\begin{definition}
Let $Y$ be a Hausdorff topological  space. Then $(Y,\mathcal L)$ is a lamination by Riemann surfaces if
${\mathcal L}$ is an atlas with charts
$$
\phi:U \rightarrow B \times T_U
$$
\noindent where $B$ is the unit disc in $\mathbb C$, $T_U$ is a topological space and $\phi$
is a homeomorphism. The change of coordinates are of the form
$$
(z,t) \rightarrow (z',t'),\; {\mbox{with}}\; z'=h(z,t),t'=t'(t)
$$
\noindent where $h$ is continuous and holomorphic with respect to $z.$
\end{definition}

Observe that this implies in particular that the functions $\frac{\partial^\alpha h}{\partial z^\alpha}$ are
continuous with respect to both variables.

\medskip

Most of the time we will consider the following situation. The set $Y$ is laminated and locally closed in a
compact complex manifold $M.$ Let $X:=\overline{Y}$ and $E:=X \setminus Y.$ Therefore $E$ is closed
and will be considered as a singularity set. Then $X\setminus E$ has the structure of a
lamination by Riemann surfaces and we will consider $(X,\mathcal L,E)$ as a lamination with
singularities.

\medskip

Let $\phi=(h,t)$ be a chart defined on $U\subset Y.$ Then $U$ is said to be a flow box. A connected component 
$(t=c)$ in $U$ is called a plaque. A leaf $L$ is a minimal connected subset of $X \setminus E$
such that if $L$
intersects a plaque then $L$ contains the plaque. A transversal in a flow box is a closed
set which intersects every plaque in one point.

\medskip

If $L$ is a leaf in $(X,\mathcal L,E)$, then $\overline{L}$ has also the structure of a lamination
with singularities. Indeed if $q\in \overline{L}\setminus E,$ the leaf through $q$ is also in $\overline{L},$ this follows from the local structure in a flow box.

\medskip

The definition can be extended to get the notion of a lamination of class ${\mathcal C}^k$. In that
case $T $ is not an arbitrary topological space but a closed set in a Euclidean space and the functions $h,t'$ are assumed to be of class ${\mathcal C}^k.$

We first give some examples and definitions in complex dimension $2$.

\begin{example}
Let $\alpha=P(z,w)dw-Q(z,w)dz$ be a holomorphic $1-$ form in $\mathbb C^2$, with
$P,Q$ relatively prime polynomials. Denote by ${\mathcal F}_\alpha$ the foliation tangent at each point to the kernel of $\alpha.$ Equivalently the foliation is tangent to  the vector field $Z_\alpha=P\frac{\partial}{\partial z}+Q\frac{\partial}{\partial w}$
in $\mathbb C^2 \setminus \{P=Q=0\}$ i.e. $\mathbb C^2$ with a finite number of points deleted.
At a point $p$ where $P$ and $Q$ do not vanish simultaneously the "plaque" through $p$
is given by $\zeta \rightarrow (z(\zeta),w(\zeta))$ with $(z,w)$ holomorphic functions defined on a disc
$D(0,r)$ and satisfying the differential equation
$$
P(z(\zeta),w(\zeta))w'(\zeta)-Q(z(\zeta),w(\zeta))z'(\zeta)=0
$$
\noindent $\zeta\in D(0,r).$ Since in any chart $\mathcal F_\alpha={\mathcal F}_{f \alpha}$ where
$f$ is a holomorphic nonvanishing function, it is possible to extend the foliation to $\mathbb P^2.$
However if we express $\alpha$ in another chart as a form with polynomial coefficients, $\max({\mbox{deg}}(P), {\mbox{deg}}(Q))$ is not invariant. 
\end{example}

The degree of $\mathcal F$ is defined to be the number of tangencies of $\mathcal F$ with a generic line in $\mathbb P^2.$ When $\mathcal F$ is of degree $d,$ then in an affine chart, the vector field $Z$ is of the form
$$
Z=[P(z,w)+zR(z,w)]\frac{\partial }{\partial z}+[Q(z,w)+wR(z,w)]\frac{\partial}{\partial w}
$$
\noindent with $P,Q$ polynomials of degree $\leq d$ and $R$ is a homogeneous polynomial of degree $d$, such that if $R$ is identically zero, then $\max($deg($P),$ deg$(Q))=d$ and $P,Q$ are not of the form $P=zH,Q=wH.$
It can be shown that a holomorphic foliation on a complex   surface, is given locally by the integral curves of holomorphic vector field, with isolated singularities. Locally, the vector field is unique up to multiplication by a non vanishing holomorphic function.

\medskip
We describe the isolated singularities in dimension $2.$ A theorem of Seidenberg \cite{S1968} states that any isolated singularity for a germ of a holomorphic foliation in dimension $2$, can be transformed after finitely many blow ups to a reduced singularity as described in the following definition. Assume the singular point is $0$, and that the spectrum of the $(2,2)$ matrix $DZ_\alpha(0)$ is given by the two complex numbers $(\lambda_1,\lambda_2).$

\begin{definition}
The germ $\alpha$ is $\underline{\mbox{reduced}}$ if one of the following holds:\\
\noindent a) $\lambda_1 \lambda_2 \neq 0$ and $\lambda:=\lambda_1/\lambda_2\notin 
{\mathbb N}\cup 1/{\mathbb N},$\\
\noindent b) $\lambda_1 \neq 0, \lambda_2=0 $, in which  case, the singularity  is called a saddle node.
\end{definition}

The germ is in the Poincar\'e domain if $\lambda \notin \mathbb R^-\cup\{0\},$ i.e.
the segment in $\mathbb C$ connecting $\lambda_1,\lambda_2$ does not pass through the origin.
When $\lambda \in \mathbb R^-\setminus \{0\}$ the singularity is in the Siegel domain.
If $\lambda \in \mathbb Q^-,$ then we have resonances, as defined in full generality in the next paragraph.

When the singularity is reduced and $\lambda \notin \mathbb R^-\cup\{0\}$, then a theorem of Poincar\'e asserts that there is a holomorphic coordinate system where $\alpha$
can be read as $\alpha=zdw-\lambda wdz.$ 
The solutions of $\alpha=0$ can be given explicitly by 
 $$z(\zeta)=e^\zeta, w(\zeta) =c e^{\lambda \zeta}, c \neq 0, (z=0), (w=0).$$
Recall that a $\underline{\mbox{separatrix}}$ at a singularity $p,$ is a leaf $L$ in a neighborhhood $V$ of $p$, such that $L\cup\{p\}$ is an irreducible 
holomorphic curve.Observe that  $L\cup\{p\}$ could be singular.

\begin{theorem} (Camacho-Sad)  \cite{CS1982} 
Every germ of holomorphic vector field in $(\mathbb C^2,0)$ with isolated singularity at $0$ admits a separatrix.
\end{theorem}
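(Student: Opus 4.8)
The plan is to follow the original argument of Camacho and Sad: desingularize the foliation, and then extract a separatrix from the Camacho--Sad index formula combined with the negative-definiteness of the exceptional divisor. If $\mathcal F$ is already reduced it has a convergent separatrix — two of them in case (a), and the strong one in the saddle-node case (b), by Briot--Bouquet — so assume it is not. By Seidenberg's theorem there is a finite composition of blow-ups $\pi\colon(\widetilde M,D)\to(\mathbb C^2,0)$ with $D=\pi^{-1}(0)=\bigcup_i D_i$ a normal crossings divisor, each $D_i\simeq\mathbb P^1$, whose dual graph is a tree and whose intersection matrix is negative definite, such that $\widetilde{\mathcal F}:=\pi^*\mathcal F$ has only reduced singularities. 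A separatrix of $\widetilde{\mathcal F}$ not contained in $D$ projects to a separatrix of $\mathcal F$ at $0$. Moreover, if some $D_i$ is \emph{dicritical}, i.e.\ not $\widetilde{\mathcal F}$-invariant, then $\widetilde{\mathcal F}$ is transverse to $D_i$ off a finite set and the leaf through a generic point of $D_i$ is a small invariant disc meeting $D$ at exactly that point, transversally; it descends to a separatrix of $\mathcal F$. So we may assume every $D_i$ is $\widetilde{\mathcal F}$-invariant, and we argue by contradiction: suppose $\mathcal F$ has no separatrix at $0$.

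At a \emph{non-corner} singular point $p$ of $D_i$ I claim $\mathrm{CS}(\widetilde{\mathcal F},D_i,p)=0$. Indeed $D_i$, being a smooth invariant curve through the reduced singularity $p$, is one of its (at most two) local separatrices; if $p$ is not a saddle-node, or is a saddle-node with $D_i$ as weak separatrix, then the companion separatrix is a convergent smooth curve distinct from $D_i$, hence — $p$ being a non-corner — not contained in $D$, so it would descend to a separatrix of $\mathcal F$, contrary to hypothesis. Thus $p$ is a saddle-node whose strong separatrix is $D_i$, and $\mathrm{CS}(\widetilde{\mathcal F},D_i,p)=0$ because the eigenvalue transverse to the strong separatrix vanishes. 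The Camacho--Sad formula for the compact smooth invariant curve $D_i$ therefore collapses to
\[
\sum_{j\ :\ D_j\cap D_i\neq\emptyset}\mathrm{CS}\bigl(\widetilde{\mathcal F},D_i,\,D_i\cap D_j\bigr)\;=\;D_i\cdot D_i\;<\;0 ,
\]
the sum now running over the corners of $D_i$ only. At a corner $p=D_i\cap D_j$ the index satisfies reciprocity: $\mathrm{CS}(\widetilde{\mathcal F},D_i,p)\cdot\mathrm{CS}(\widetilde{\mathcal F},D_j,p)=1$ if $p$ is a non-saddle-node reduced point, while one of the two indices is $0$ (the one along the strong separatrix) if $p$ is a saddle-node.

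We now have, on the dual tree of $D$: negative-definite weights $c_i=D_i\cdot D_i$, and at each edge a pair of ``indices'' obeying the reciprocity above, with $\sum_{\text{edges at }i}(\text{index at }i)=c_i$ at every vertex. I claim that no such data can exist, by pruning leaves. A nonempty tree has a leaf $D_\ell$; its unique edge is a corner $p=D_\ell\cap D_m$, so the formula forces $\mathrm{CS}(\widetilde{\mathcal F},D_\ell,p)=c_\ell<0$, which already rules out $p$ being a saddle-node with strong separatrix along $D_\ell$. Hence $\mathrm{CS}(\widetilde{\mathcal F},D_m,p)$ equals $1/c_\ell$ or $0$. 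Now delete $D_\ell$ and replace $c_m$ by $c_m-\mathrm{CS}(\widetilde{\mathcal F},D_m,p)$: in the first case this is exactly the Schur complement of the $(\ell,\ell)$-entry of the intersection matrix, in the second it lowers the $(m,m)$-entry further, so in both cases the reduced weighted tree is still negative definite, and the vertex identities persist, the corner $p$ having been absorbed into $c_m$. Iterating down to a single vertex $D_*$, negative-definiteness gives $c_*<0$ while the (empty) vertex identity gives $0=c_*$: contradiction. Hence $\mathcal F$ admits a separatrix.

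Two ingredients carry the real weight. The first is the Camacho--Sad formula itself, $\sum_p\mathrm{CS}(\mathcal F,C,p)=C\cdot C$ for a compact smooth invariant curve $C$ in a surface, together with the corner reciprocity — a residue/Chern-class computation for the normal bundle of $C$ relative to the foliation, and the conceptual heart of the proof. The second is the local theory of reduced singularities, in particular the fact (Briot--Bouquet) that a saddle-node always has a convergent strong separatrix but possibly only a formal weak one; it is precisely this asymmetry that makes the saddle-node corners the delicate case in the pruning step, where reciprocity degenerates. The remaining inputs — Seidenberg's reduction of singularities, and the negative-definiteness and tree shape of the exceptional divisor of an iterated blow-up — are classical and I would quote them.
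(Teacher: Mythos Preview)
The paper does not prove this theorem: it merely states it and cites \cite{CS1982}, then remarks that the analogue fails in $(\mathbb C^3,0)$. There is therefore no proof in the paper to compare your proposal against.

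As for your proposal itself, it is a correct outline of the classical Camacho--Sad argument in its modern streamlined form. The logical skeleton --- Seidenberg reduction, the dicritical shortcut, the observation that under the no-separatrix hypothesis every non-corner reduced singularity on the exceptional divisor must be a saddle-node with strong separatrix in $D$ (hence Camacho--Sad index $0$), the index formula $\sum_p \mathrm{CS}(\widetilde{\mathcal F},D_i,p)=D_i\cdot D_i$, corner reciprocity, and the leaf-pruning contradiction against negative-definiteness --- is sound. One small wording quibble: in the saddle-node corner case you say the new weight ``lowers the $(m,m)$-entry further''; more precisely, $c_m$ is \emph{unchanged} in that case, which is still $\le c_m-1/c_\ell$ (the Schur complement value), so negative-definiteness is preserved either because principal submatrices of negative-definite matrices are negative-definite, or by the monotonicity you invoke. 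Either justification works; just make the comparison explicit. Note also that the pruning combinatorics you use is not the original 1982 argument but a later simplification (due essentially to J.~Cano); you might want to cite that if you present this proof formally.
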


The result is false for holomorphic vector fields in $(\mathbb C^3,0)$, see \cite{GML1992}.

In the above example, the solution curves $z=0,w=0$ are the only separatrices if $\lambda \notin \mathbb Q.$ When $\lambda \notin \mathbb R,$ i.e. $\Im(\lambda) \neq 0,$ the singularity is said to be hyperbolic. It is easy to check that the solutions near the singularity cluster
along the separatrices. 
When $\lambda \in \mathbb R^-$, to have linearization it is in general necessary to assume
that $\lambda$ is poorly approximable by rationals (Siegel-Brjuno condition) 
\cite{AI1988}.  When $\lambda \in \mathbb Q^-,$ the singularity is either linearizable or is of flower type,  [9].

\begin{example}
A very special case, with a non hyperbolic singularity is given by the form $\alpha=zdw-\lambda wdz,
\lambda \in \mathbb R \setminus \mathbb Q.$ Then $X(c) =\{|w|=c|z|^\lambda\}, c>0$ is an example
of a lamination with singularities. When we take the closure of $X(c)$ in $\mathbb P^2$ it's
smooth except at two of the points $[1:0:0], [0:0:1]$ and $[0:1:0].$ Observe that since $\lambda$ is irrational
every leaf is dense in $X(c).$ An $X(c)$ does not pass through all singular points, but any two will intersect at singular points.
\end{example}


\begin{example}
If $\lambda_1\lambda_2 \neq 0$ and $\lambda_1=n\lambda_2, n \in \mathbb Z^+$, i.e. the resonant case, the foliation
is conjugate to the one given by the $1-$ form
$$
nzdw-wdz\; {\mbox{or}}\; (nz+w^n)dw-wdz.
$$
When $\lambda =0 $ (saddle node), according to Dulac, see[1], the vector field can be expressed as
$$
[z(1+\alpha w^k)+wF(z,w)]\frac{\partial}{\partial z}+w^{k+1}
\frac{\partial}{\partial w},
k \in \mathbb N^+, \alpha \in \mathbb C$$
\noindent and $F$ is a holomorphic function vanishing to order $k$.
There is only one separatrix, $(w=0).$
\end{example}
In the following example we describe the case where the foliation admits a meromorphic first integral.

\begin{example}
Let $f$ be a meromorphic function on an algebraic manifold $M$. Then
$\{f=c\}$ are the leaves of a holomorphic foliation. For example, if $M=\mathbb P^2$, then $f=P/Q$ with $P,Q$ homogeneous polynomials without common factor. The leaves $L_c=\{P=cQ\}$ are closed. The singularities contain the discrete set defined by $\{P=Q=0\}$. The form defining the foliation is $QdP-PdQ$ so there might be other singular points. The closure of each leaf intersects the singular set of $\mathcal F$ and  every leaf is a separatrix. Take any leaf $L$, i.e. an irreducible component of some $\{P=cQ\}$. Then $Q=0$  must intersect the algebraic leaf $L$. The  points of intersection must also be a zeroes for $P$ since $P=cQ.$ This gives an example of a dicritical singularity, i.e a singularity with infinitely many separatrices.
In $\mathbb C^2$, it can be shown \cite{CM1982}  that if $\alpha$  is a germ of holomorphic 1-form and f is a germ of holomorhic function near 0, such that $d(\alpha/f)=0$ then $\alpha$ has a multivalued first integral. More precisely, if $f=f_1^{n_1} \cdots f_p^{n_p}$ is the decomposition of $f$ into irreducible factors then $F=\exp \left( \frac{\beta}{f_1^{n_1-1} \cdots f_p^{n_p-1}}\right) f_1^{\lambda_1}\cdots f_p^{\lambda_p}$ is a first integral.
Here the $\lambda_i$ are complex numbers and $\beta$ is a germ of a holomorphic function near 0,

\end{example}

The notion of dicritical singularity is quite central. Given a germ of a holomorphic vector field $Z,$ with isolated singularity at $(\mathbb C^k,0)$, we say that $0$ is $\underline{\mbox{dicritical}}$ if there are infinitely many separatrices through it.
The fact that all leaves are closed in a deleted neighborhood of the origin $(\mathbb C^2,0)$ and that the origin is dicritical
does not imply the existence of a meromorphic first integral. An example of that situation was given by M.
Suzuki. Consider the level sets of the function f(z,w)=xexp(y/x). This function is a first integral, in the complement of the y-axis for the system xdy+(x-y)dx=0. It is easy to check that leaves are closed and that 
the origin is dicritical. But as shown in \cite{S1978} there is no meromorphic first integral in any neighborhood of the origin.

Another interesting observation is given by the following example.The point $0$ is dicritical for $Z=z\frac{\partial}{\partial z}+pw\frac{\partial}{\partial w}, p \in \mathbb N,$ in $\mathbb C^2.$ Indeed the separatrices are $(e^tz_0,e^{pt}w_0), t \in \mathbb C^*.$ Observe that for every $p\in \mathbb N$, $Z$ defines a $\underline{\mbox{linear}}$ foliation and the separatrices are of algebraic degree $p,$ which is arbitrarily large. When there is a meromorphic first integral as in the Example 4 all leaves are separatrices. The converse is not true \cite{CM1982}.  When the first integral is holomorphic, there are only finitely many separatrices and the other leaves are closed.

It can be shown \cite{CM1982} p.29, that if $\alpha$ is a germ of holomorphic 1-form in $(\mathbb C^2,0)$ vanishing exactly to order m at 0, then if $\alpha$ admits m+2 formal separatrices, then $\alpha$ admits uncountably many separatrices. So, in the nondicritical case the number of separatrices is bounded by the order of vanishing at the singularity.

\begin{example}
Ricatti's equation \cite{H1976}, \cite{CL-N1985}. Consider the equation
$$
w'=a(z)w^2+b(z)w+c(z)
$$
\noindent where $a,b,c$ are rational functions on $\mathbb C.$
The form
$$
\omega_0=dw-(a(z)w^2+b(z)w+c(z))dz
$$
\noindent can be extended as a meromorphic form on $\mathbb C \times \mathbb P^1.$
Let $\mathcal P$ be the finite set of poles in $\mathbb C$. It is easy to check that out of ${\mathcal P}$
the form $\omega_0$ is transverse to the fibers $\{z_0\}\times \mathbb P^1$. It follows from a result of
Ereshman that the solutions are coverings of $\mathbb C \setminus \mathcal P.$ In particular
if $\mathcal P = \emptyset$, then the leaves are graphs of meromorphic functions on $\mathbb C.$
When $\# \mathcal P\geq 2$, the leaves are uniformized by the unit disc. Following solutions around a pole of the coefficients induces a map on the space of solutions and hence on $\mathbb P^1,$ it gives clearly a fractional linear transformation:

$$
(z_0,w_0) \rightarrow \frac{w_0A(z_0)+B(z_0)}{w_0 C(z_0)+D(z_0)}.
$$

We get in this way a group of fractional linear transformations on $\mathbb P^1.$
The orbit of a point is the intersection of the leaf through that point with the vertical $\mathbb P^1,$ it is also the orbit of the point under the group of linear transformations.
Hence the dynamics of the foliation can be deduced from the dynamics of the group. 
The Ricatti equation has remarkable extremal properties, see\cite{H1976}  p136. One such propery is: if the differential equation in $\mathbb C^2$, $\frac{dw}{dz} = \frac{Q(z,w)}{P(z,w)},$  is not a Ricatti equation and has a meromorphic solution, then the solution is a rational function.
\end{example}

\begin{example}
Suspension: The simplest examples of laminations are obtained by the process of suspension, that we describe in our context \cite{CL-N1985}.
Let $S$ be a compact Riemann surface of genus $g \geq 2.$ Consider a homomorphism
$h:\Pi_1(S) \rightarrow {\mbox{Aut}}(\mathbb P^1).$ If $\tilde{S}$ denotes the universal covering of $S$ (the unit disc in our case), then $h$ induces an action $\tilde{h}$ on $\tilde{S} \times \mathbb P^1.$
More precisely

\bea
\tilde{h} :\Pi_1(S) &\rightarrow & {\mbox{Aut}}(\tilde{S} \times \mathbb P^1)\\
\tilde{h}[\alpha] (\tilde{b},z) & = & ([\alpha]\cdot \tilde{b}, h[\alpha]\cdot z)\\
\eea
The action of $\tilde{h}$  on $\tilde{S} \times \mathbb P^1$ is free and properly discontinuous. We can then consider the manifold $M=M_h=\tilde{S} \times \mathbb P^1/\tilde{h}$ which is the quotient by the above action.
There is a natural projection $\pi:M \rightarrow S$ and the trivial foliation on $\tilde{S} \times \mathbb P^1$, with leaves $\tilde{S} \times \{z\}$ induces a foliation ${\mathcal F}_h$  
on $M,$ whose leaves are coverings of $S,$ hence hyperbolic Riemann surfaces.
So to any representation $h$ of $\Pi_1(S)$ into PSL$(2,\mathbb C)$ corresponds a foliation. It was observed by E. Ghys \cite{Gh1999} that $M_h$ is a
projective surface and that if $\Lambda$ is the limit set of the group $h(\Pi_1(S))$, the corresponding invariant set in $\mathcal F_h$ can be embedded as a laminated set in $\mathbb P^3.$

This situation has been studied by C. Bonatti and X. Gomez-Mont \cite{BG-M2001}. They have shown that either the group $h(\Pi_1(S))$ has an invariant probability measure or the foliation ${\mathcal F}_h$ 
is uniquely ergodic and this is the generic case. They construct a probability 
measure on $M_h$ by an appropriate averaging process on the leaves and unique ergodicity means that the averaging process applied to an arbitrary leaf gives always the same limit. In these examples however the laminations considered are without singularities. The foliations are transverse to  a fibration with fibre $\mathbb P^1$ or $\mathbb P^2$. See also the recent work by M.Martinez  \cite{M2006}.

\end{example}

\begin{example}
We give a classical example of a Riemann surface lamination of $\mathbb P^3$ without singularities, (Atiyah \cite{A1979}). Let $H$ denote the field of quaternions. Recall that every quaternion $x$ has a unique expression $x=z_1+jz_2, z_1=x_1+ix_2, z_2= x_3+ix_4$ where the $z_j$ are complex numbers. 
Consider the map $\pi: \mathbb P^3(\mathbb C) \rightarrow \mathbb P^1(\mathbb H)$ given by
$[z_1:z_2:z_3:z_4] \rightarrow [z_1+z_2j:z_3+z_4j].$ It is a fibration and each fibre is a complex line in $\mathbb P^3(\mathbb C)$. Indeed  $\mathbb P^1(\mathbb H)$ is not a complex manifold, otherwise it would be a one point compactification of $\mathbb C^2$. This gives an example of a lamination without singularities in $\mathbb P^3$. This lamination is however not a holomorphic foliation.

Clearly one cannot have such an example in $\mathbb P^2$ because of the B\'ezout Theorem. 
It is not possible to have a lamination $(\mathbb P^2,\mathcal L)$ without singularities. Suppose $\mathcal L$ is such a laminaton, then we can write the tangent bundle $\mathbb T \mathbb P^2=T_1\oplus T_2$ where $T_1$ and $T_2$ are two line bundles, $T_1$ is  tangent along the leaves and $T_2$ is normal. On the other hand the topological classification of line bundles is the same as the classification for holomorphic ones on $\mathbb P^2.$ Hence $\mathbb T \mathbb P^2=\mathcal O(p) \oplus \mathcal O(q), p,q \in \mathbb Z.$ 
Since the quotient of a bundle is more positive than the original one we get that
$p\geq 1, q \geq 1.$ The space of sections of $T\mathbb P^2$ corresponds to holomorphic vector fields, it is of dimension $8$. The dimension of the space  of sections of ${\mathcal O}(p) \oplus \mathcal O(q)$ is  $6$, when $p=q=1,$ and larger than $8$ otherwise.

Let $X$ be a compact laminated set in $\mathbb P^k$ laminated by Riemann surfaces. We assume that $X$ contains no germs of higher dimensional complex manifolds. We say that $X$ is real analytic
if $X$ locally is the zero set of a real analytic function $P(z_1,\dots,z_k)$ Setting $w_j=\overline{z}_j$
we can complexify $P$ as a function $R(z_1,\dots, z_k, w_1,\cdots, w_k)$ where $R$ is a holomorphic function.
Assume that this can be done globally. More precisely, we assume that for each $\mathbb C^k$
coordinate chart, we can choose $P$ and $R$ as entire functions. If we fix a germ $L$ of a leaf of the lamination and choose $w^0$ in $\overline{L}$, then by a Segre argument $R(z_1,w_1^0,...,w_k^0)$ vanishes identically
on $L$. In fact in local coordinates the leaf is given by $z_2=z_3=\cdots=z_k=0$ so
$P=z_2r_2+\overline{z}_2s_2+\cdots z_k r_k+\overline{z}_k s_k$ and hence
we see that $R(z_1,\cdots 0)$ vanishes identically. Also  the zero set is a global analytic set in $\mathbb C^k.$ In fact it can be shown \cite{DF1978}
that the leaf is locally the finite intersection of such analytic sets. In particular it follows that each leaf is closed when the lamination is restricted to $\mathbb C^k$.
\end{example}

\begin{example} (Ohsawa \cite{Ohsawa}, Nemirovski \cite{N1999})
Let $s:T \rightarrow \mathbb P^1$ be a meromorphic function on a torus $T$ with simple zeros and poles,
$\{a_j,b_j\}.$ We set $\Sigma=\{(z,rs(z)),z\in T\setminus\{a_j,b_j\}, r \in \mathbb R^*\}\subset T \times \mathbb C^*.$
Then the closure ${\overline{\Sigma}}$ contains the complex curves $\{a_j\}\times \mathbb C^*, \{b_j\}\times \mathbb C^*.$ Let $\rho>1$ and let $T_2$ be the quotient obtained from $\mathbb C^*$ by identifying
$w, \rho^n w.$ The quotient $S$ of ${\overline{\Sigma}}$ in $T \times T_2$ under this identification is a Levi flat
hypersurface and the fibers over $\{a_j,b_j\}$ are copies of $T_2.$ 
It seems difficult to find similar examples in higher dimension due to the fact that meromorphic functions have indeterminacy points.

Another example of a real analytic Levi flat hypersurface in a product of two tori was given
by Grauert. Let $T_1$ be the quotient of $\mathbb C(z)$ by the Lattice $L_1$ generated by $\{1,i\}$ and let $T_2$ be the quotient of $\mathbb C(w)$ by the lattice $L_2$ generated by $\{1,\sqrt{2}i\}.$
Then the set $S=\{ Re(z-w)=0\}$ is a Levi flat hypersurface with dense leaves all biholomorphic to $\mathbb C.$
\end{example}

\begin{example}
Let  $N$ be a compact homogeneous surface. Suppose $K \subset N$ is a nowhere dense compact set, laminated by Riemann surfaces.  Let $\Phi:M \rightarrow N$ be a holomorphic map of maximal rank $2$
from a compact surface M into N. Then after a small sliding, $\Phi^{-1}(K)$ is a laminated compact set in $M.$ To see this, let $X$ be the image of the critical set of $\Phi.$ We can assume that none of the irreducible branches of $X$ is a leaf of $K.$ We can also assume that $K$ does not intersect the singular set of $X.$ Suppose that $X$ is tangent to some leaves. Let $k \geq 2$ be the maximal order of tangency. There can be at most finitely many tangencies of order $k$ of $X$ with leaves.
Next we locally extend the lamination to a lamination of an open set. Notice that small slidings of $X$ can only have one tangency of order $k$ with a leaf in these local flow boxes. Since $K$ is nowhere dense we can arrange that all tangencies are with added leaves. Then we have no tangencies between leaves of $K$ and $X$ of order $k$. Repeating the process we can finally assume that there are no tangencies between leaves of $K$ and $X.$  Then $\Phi^{-1}(K)$ is a nonsingular lamination. If $K$ is a smooth Levi flat hypersurface, then $\Phi^{-1}(K)$ is a also a smooth Leviflat hypersurface. 
\end{example}

\begin{example}
A holomorphic foliation on a complex surface $S^2$ is given in local coordinates by a form
$\alpha=Pdz+Qdw$ where $P,Q$ are holomorphic functions with isolated common zeroes.
If $\Phi:M^2 \rightarrow S^2$ is a holomorphic map of maximal rank $2$, then $\Phi^*(\alpha)=Fdu+Gdv$ defines a holomorphic foliation with isolated singularities which becomes evident after dividing
out common factors of $F,G $. This permits to construct singular foliations on some tori.
\end{example}

\subsection{Singularities for foliations of dim 1 in $\mathbb P^k$}

\begin{example}
The Lorenz vector field in $\mathbb R^3$ is an example of a quadratic polynomial vector field with a complicated
orbit structure.
$$
Z=\sigma(y-x)\frac{\partial}{\partial x}+(rx-y-xz)\frac{\partial}{\partial  y}+(xy-bz)\frac{\partial}{\partial z}.
$$
The Lorenz attractor is given by the parameter choices $\sigma=10, b=8/3, r=28.$ For a proof that this is a strange attractor as indicated by computer pictures, see Warwick \cite{W2002}.
The complexification is given on $\mathbb P^3([t:x:y:z])$ as
$$
X=\sigma t(y-x)\frac{\partial}{\partial x}+
(rxt-yt-xz)\frac{\partial}{\partial y}+(xy-bzt)\frac{\partial}{\partial z}.
$$
This has three singular points in the chart $t=1$ which are all real and equal
to $P=[1:0:0:0]$ and $C^\pm = [1:\pm \sqrt{b(r-1)}:\pm \sqrt{b(r-1)}:r-1]$. The singularity at $P$ is a hyperbolic saddle with a two dimensional stable manifold containing the $z$ axis and an unstable curve. The singularity at $C^\pm$ have an expansive pair of complex eigenvalues and a stable eigenvector. At infinity, $t=0,$ the singularity set contains two pieces, the curve
$t=x=0$ and the point $[0:1:0:0].$ The eigenvalues at the latter are $0, \pm i.$
\end{example}

\begin{example}
Let $H(z_1, \dots,z_k,w_1,\dots,w_k)$ be a holomorphic polynomial in $\mathbb C^{2k}.$
Then there is associated to $H$ a holomorphic Hamiltonian vector field:
$$
Z= -\sum_j \frac{\partial H}{\partial w_j}\frac{\partial}{\partial z_j}+\sum_j \frac{\partial H}{\partial z_j}\frac{\partial}{\partial w_j}.
$$
This gives a Riemann surface lamination of $\mathbb P^{2k}$. Each leaf is contained in a level set of $H$. More generally one can define a multivalued Hamiltonian of the form
$H=\Pi_j H_j^{\alpha_j}$ for polynomials $H_j$ and complex numbers $\alpha_j.$ In $\mathbb C^2$ this has the zero sets of the $H_j$ as algebraic integral curves. 
\end{example}

We discuss some basic properties of holomorphic foliations by Riemann surfaces in $\mathbb P^k.$ Suppose $\mathcal F$ is a holomorphic foliation of dimension one in $\mathbb P^k,$ i.e. locally the leaves of $\mathcal F$ are tangent to a holomorphic vector field. More precisely there is a covering $(U_i)$ of $\mathbb P^k$ with holomorphic vector fields $X_i$ on $U_i$ satisfying the compatibility condition $X_i=f_{ij}X_j$ on $U_i\cap U_j$ with $f_{ij}$ nonvanishing holomorphic functions. Let $\pi:\mathbb C^{k+1}\setminus \{0\} \rightarrow \mathbb P^k$ denote the canonical projection. It can be shown that $\pi^* \mathcal F$ can be defined by a holomorphic vector field on $\mathbb C^{k+1}$

$$
Z=\sum_{i=0}^k F_i(z) \frac{\partial}{\partial z_i}
$$

\noindent where the $F_i$ are homogeneous polynomials of degree $d \geq 1.$
We call $d$ the degree of the foliation. This is the analogue of Chow's theorem that any analytic set in $\mathbb P^k$ is algebraic.
In the chart $(z_0=1)$ the vector field is given by

$$
Z_0=\sum_{i=1}^k (F_i(1,z)-z_iF_0(1,z))\frac{\partial}{\partial z_i}.
$$
Conversely the projections by $\pi$ of the trajectories of such a vector field gives a holomorphic foliation of $\mathbb P^k.$  Indeed if $z(\zeta)=(z_0(\zeta), \dots , z_k(\zeta))$ satisfies $z'_i(\zeta)= F_i(z(\zeta)), 0 \leq i \leq k$, then $Z(\eta):=cz(\eta c^{d-1}), c\in \mathbb C,$ is also an integral curve which induces the same trajectory in $\mathbb P^k.$ We can always assume that the $F_j's$ have no common factor. The space of such foliations of degree $d$ is denoted by $\mathcal F_d(\mathbb P^k),$ it is clearly a Zariski open set in some projective space.
 The space of polynomials of degree d in k variables is of dimension $ \frac{(d+k)!}{k!d!}$. Using the representation of $$
Z_0=\sum_{i=1}^k (F_i(1,z)-z_iF_0(1,z))\frac{\partial}{\partial z_i},
$$
we can compute the dimension N of the space of holomorphic foliations of degree d. We have $N=(d+k+1) \frac{(d+k-1)!}{(k-1)!d!}-1$ \cite {Br2000}. It contains the subspace of foliations obtained by pull-back of a foliation of degree 1 by a holomorphic endomorphism of algebraic degree $\ell $. The pullback of a form of degree 1, by an algebraic endomorphism of degree $\ell $, is generically of degree $2\ell-1.$ The dynamics of such foliations is easy to describe..

A point $p\in \mathbb P^k$ is a singularity for $\mathcal F$ if $F(p)$ is colinear with $p$, i.e. if $p$ is a pole or a fixed point of $f=[F_0:\cdots:F_k]$ as a meromorphic map in $\mathbb P^k.$ If $f$ is non holomorphic, then the indeterminacy set is analytic of codimension $\geq 2$, it can be of positive dimension when $k \geq 3.$ Assume that $f$ is holomorphic. To count the fixed points it is enough to apply the B\'ezout theorem to the equations $F_j(z)-t^{d-1}z_j=0$ in $\mathbb P^{k+1}$ with homogeneous coordinates $[z:t]$ and observe that $[0:\cdots:0:1]$ is a solution. The number of fixed points counted with multiplicity is $\frac{d^{k+1}-1}{d-1}$ so the singularity set, Sing$(\mathcal F)$, of any holomorphic foliation is non empty. It is also clear that for an open Zariski dense set $\mathcal U$ in $\mathcal F_d(\mathbb P^k),$
Sing$(\mathcal F)$ is discrete. The eigenvalues at  a singularity are given by the eigenvalues of f-Id, at the corresponding fixed point.
We are going to recall some results on the local classification of singular points. Let $X$ be a germ of a holomorphic vector field near $0\in \mathbb C^k$, with an isolated singularity at $0.$ The matrix $DX(0)$, i.e. the linear part at $0$, has eigenvalues $\lambda_1,\dots,\lambda_k.$

\begin{definition}
The singularity is in the Poincar\'e domain if the convex hull
in $\mathbb C$ of $\{\lambda_1,\dots,\lambda_k\}$ does not contain the origin, it is in the Siegel domain otherwise.
\end{definition}

\begin{definition}
The singularity of $X$ at $0$ is hyperbolic if for all $i,j$ the $\lambda_i \neq 0$ and the
$\frac{\lambda_i}{\lambda_j}$ are non real.
\end{definition}

Clearly in dimension $2$ a hyperbolic singularity is always in the Poincar\'e domain.

The following result is due to Chaperon \cite{Ch1986}.

\begin{theorem}
(Chaperon)Let $X$ be a germ of a holomorphic vector field in $(\mathbb C^k,0)$. If $0$ is a hyperbolic singularity, then $X$ is topologically linearizable.
\end{theorem}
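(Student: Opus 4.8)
The plan is to reduce to the classical Hartman--Grobman situation on $\mathbb R^{2k}$ by using the hyperbolicity of the ratios $\lambda_i/\lambda_j$ to select a good complex time direction, and then to linearize a genuinely hyperbolic real flow.

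\emph{Step 1 (choosing a good time direction).} Write $X=\Lambda z+O(|z|^{2})$ with $\Lambda=DX(0)$ having eigenvalues $\lambda_1,\dots,\lambda_k$. Since each $\lambda_j\neq 0$ and $\lambda_i/\lambda_j\notin\mathbb R$ for $i\neq j$, the numbers $\lambda_1^{-1},\dots,\lambda_k^{-1}$ lie on $k$ pairwise distinct lines through $0$, so $\bigcup_{j} i\mathbb R\,\lambda_j^{-1}$ is a union of $k$ lines and I can pick $c\in\mathbb C^{*}$ outside it. Then $\operatorname{Re}(c\lambda_j)\neq 0$ for all $j$, so the real $2k$-dimensional linear flow of $cX$, with derivative $t\mapsto e^{tc\Lambda}$ at $0$, has no eigenvalue on the unit circle for $t\neq 0$: it is hyperbolic in the usual sense. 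Multiplying a holomorphic vector field by a nonzero constant changes neither its foliation nor — up to the linear reparametrization $\zeta\mapsto c\zeta$ of complex time — its flow, and $\mathcal F_{\Lambda}=\mathcal F_{c\Lambda}$, so it suffices to linearize $cX$; hence I may assume the real flow $\phi^{t}$ of $X$ near $0$ in $\mathbb R^{2k}$ is hyperbolic, with $\operatorname{Re}\lambda_j\neq 0$ for all $j$.

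\emph{Step 2 (linearizing the hyperbolic flow).} This is Hartman--Grobman for flows. I would first pass to the time-one map $g=\phi^{1}$, a local diffeomorphism fixing $0$ with $Dg(0)=e^{\Lambda}$ hyperbolic. After a linear change of coordinates realizing the splitting $\mathbb R^{2k}=E^{s}\oplus E^{u}$ and a cut-off making the nonlinear part globally Lipschitz-small, the conjugacy equation $h\circ g=e^{\Lambda}\circ h$ written with $h=\mathrm{id}+u$ becomes a fixed point problem $u=\mathcal T u$, where $\mathcal T$ is a contraction on the Banach space of bounded continuous maps $\mathbb R^{2k}\to\mathbb R^{2k}$ whose contraction constant is governed by the spectral gap of $e^{\Lambda}$; its unique fixed point is a homeomorphism conjugating $g$ to $e^{\Lambda}$. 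To pass from the time-one map to the whole flow I would average over time, setting
\[
H(x)=\int_{0}^{1} e^{-s\Lambda}\,h\bigl(\phi^{s}(x)\bigr)\,ds ,
\]
so that a change of variables together with $h\circ\phi^{1}=e^{\Lambda}\circ h$ gives $H\circ\phi^{t}=e^{t\Lambda}\circ H$ for all $t\in\mathbb R$; since $H$ is $C^{0}$-close to $h$ it is still a local homeomorphism near $0$, injectivity and openness following from invariance of domain together with the conjugacy relation along orbits. An alternative is to build $H$ separately on $W^{s}$ and $W^{u}$, trivializing each via the flow as $\mathbb R\times(\text{sphere})$, matching with the corresponding trivialization of the linear flow, checking continuous extension across $0$, and assembling by the local product structure $W^{s}\times W^{u}$.

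\emph{Step 3 (returning to $X$).} Undoing Step 1, $H$ conjugates the complex-time flow of $X$ to that of $\Lambda$ after reparametrization, hence carries the leaves of $\mathcal F_{X}$ near $0$ homeomorphically onto those of the linear foliation $\mathcal F_{\Lambda}$. If one wants to conjugate the full holomorphic $\mathbb C$-action rather than a single real flow, I would run Step 2 for two independent directions $c_{1},c_{2}$: uniqueness in the fixed point argument forces the two conjugacies to agree, and the common map then intertwines the flows in every complex direction.

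I expect the main obstacle to be Step 2: producing a single homeomorphism of a full neighbourhood of $0$ — not merely on $W^{s}$ and $W^{u}$ separately — that intertwines the flows, while controlling the non-uniformity produced by eigenvalues of different real parts, and then keeping the homeomorphism property when upgrading the time-one conjugacy to a flow conjugacy (the time-averaged map can lose injectivity if the estimates are done carelessly). Step 1 and the compatibility claims in Step 3 are comparatively soft, but they do use the full strength of the hypothesis $\lambda_i/\lambda_j\notin\mathbb R$.
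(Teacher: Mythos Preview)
The paper does not prove this theorem; it simply attributes it to Chaperon and explains what the conclusion means. So there is no ``paper's proof'' to compare against, and I evaluate your sketch on its own.

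Your Steps 1 and 2 are fine, but Step 3 is where the real content of Chaperon's theorem lives, and your argument there has a genuine gap. A Hartman--Grobman conjugacy $H$ for the real flow of $cX$ sends real orbits of $cX$ to real orbits of $c\Lambda$; these are one-real-dimensional curves sitting inside the two-real-dimensional leaves of $\mathcal F_X$, and there is no reason $H$ should carry whole leaves to leaves. Your proposed fix --- run the contraction for a second direction $c_2$ and invoke uniqueness --- is the right instinct, and the formal computation ``$L_2^{-1}\circ h\circ g_2$ is another bounded-perturbation conjugacy for $g_1$, hence equals $h$'' is correct \emph{provided} $g_1$ and $g_2$ commute globally. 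But the uniqueness you are using is the global one, obtained only after a cutoff that makes the nonlinear part Lipschitz-small on all of $\mathbb R^{2k}$; once you cut off, the vector field is no longer holomorphic, the two real flows $(c_1\tilde X)_{\mathbb R}$ and $(c_2\tilde X)_{\mathbb R}$ no longer commute outside the small ball, and the uniqueness argument collapses. Locally the flows do commute, but locally the Hartman--Grobman conjugacy is \emph{not} unique (already on $\mathbb R$, $x\mapsto 2x$ has a continuum of self-conjugacies), so you cannot run the argument there either. Producing a single homeomorphism that simultaneously linearizes the full $\mathbb C$-action is exactly the difficulty Chaperon has to overcome, and it is not ``comparatively soft''.

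A related red flag: your argument never genuinely uses the hypothesis $\lambda_i/\lambda_j\notin\mathbb R$. In Step 1 you only need the lines $\{\operatorname{Re}(c\lambda_j)=0\}$ to have empty interior, for which $\lambda_j\neq 0$ suffices; the distinctness you derive from hyperbolicity is irrelevant. If your scheme worked as written it would topologically linearize every singularity with nonvanishing eigenvalues, which is false. The non-reality of the ratios is what Chaperon actually exploits to control the transverse holonomy and build a conjugacy respecting the two-dimensional leaf structure; that mechanism is absent from your outline.
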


This means that there is a homeomorphism $\Phi:(\mathbb C^k,0)\rightarrow (\mathbb C^k,0)$ sending the foliation defined by $X$ to the foliation defined by the vector field $X_0=\sum_i \lambda_i z_i \frac{\partial}{\partial z_i}$ where the $\lambda_i$ are the eigenvalues of $DX(0)$. 

If we write the formal conjugation of a holomorphic vector field near a singularity, to its linear part, one has to divide by the quantities
$(\lambda,m)-\lambda_j,$ $m=(m_1,\dots,m_k)\in (\mathbb Z^+\cup \{0\})^k,
|m|=\sum m_j \geq 2.$ Here $(\lambda,m)$ denotes the inner product $\sum \lambda_j m_j.$ To prove convergence
one assumes that these quantities are non zero and not too close to zero.

We define the resonances of $\lambda\in \mathbb C^k$ as:
$$
\mathcal R=\{(m,j); m=(m_1,\dots,m_k), m_j \geq 0, |m| \geq 2,
(\lambda,m)-\lambda_j=0\}.
$$

The set $\{(\lambda,m)-\lambda_j; |m| \geq 2\}$ has zero as a limit point if and only if $\lambda$ belongs to the Siegel domain.

An example of a normal form, when $\mathcal R$ is nonempty is given above in 
Example 3.
\medskip

We recall two results from the local theory near a singular point in any dimension.\\

\begin{theorem}
(Poincar\'e) \cite {AI1988} A germ of a singular holomorphic vector field in $(\mathbb C^k,0)$ with a non resonant linear part (i.e. $\mathcal R$ is empty)
such that $\lambda$ is in the Poincar\'e domain is holomorphically equivalent to it's linear part.
\end{theorem}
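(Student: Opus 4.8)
The plan is the classical two-step scheme: first produce a \emph{formal} linearizing change of coordinates, then show it converges, the Poincar\'e domain hypothesis being exactly what guarantees convergence. Write $X = X_0 + R$, where $X_0 = \sum_{j=1}^k \lambda_j z_j \frac{\partial}{\partial z_j}$ is the linear part and $R$ vanishes to order $\geq 2$ at $0$. We look for a germ of biholomorphism $\Phi = \mathrm{Id} + P$, tangent to the identity ($P = O(|z|^2)$), with $\Phi_* X = X_0$, i.e. $D\Phi \cdot X = X_0 \circ \Phi$. Expanding in homogeneous components $P = \sum_{d\geq 2} P_d$ and collecting terms of total degree $d$, this becomes a chain of \emph{homological equations} $\mathrm{ad}_{X_0}(P_d) := [X_0, P_d] = W_d$, where each $W_d$ is a homogeneous polynomial vector field of degree $d$ depending only on $R$ and on $P_2, \dots, P_{d-1}$. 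On the basis of monomial vector fields $z^m \frac{\partial}{\partial z_j}$ the operator $\mathrm{ad}_{X_0}$ is diagonal with eigenvalue $(\lambda, m) - \lambda_j$; since $\mathcal R = \emptyset$, all these numbers are nonzero for $|m| \geq 2$, so $\mathrm{ad}_{X_0}$ is invertible on homogeneous vector fields of each degree $\geq 2$ and the $P_d$ are uniquely and recursively determined. This yields a unique formal solution $\Phi$.

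For convergence one uses that $\lambda$ is in the Poincar\'e domain. Since $0$ is not in the convex hull of $\{\lambda_1,\dots,\lambda_k\}$, there exist $\xi \in \mathbb C$ and $c>0$ with $\mathrm{Re}(\bar\xi \lambda_j) \geq c$ for all $j$; hence $\mathrm{Re}\big(\bar\xi (\lambda,m)\big) = \sum_i m_i\,\mathrm{Re}(\bar\xi\lambda_i) \geq c\,|m|$, so $|(\lambda,m)-\lambda_j| \to \infty$ as $|m|\to\infty$. Only finitely many of the denominators $(\lambda,m)-\lambda_j$ can therefore be small, and none is zero by non-resonance, so $\delta := \inf_{|m|\geq 2,\,j}|(\lambda,m)-\lambda_j| > 0$. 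Thus inverting $\mathrm{ad}_{X_0}$ costs only the fixed factor $\delta^{-1}$: there are \emph{no small divisors}. Now run Cauchy's method of majorants. Choosing a convergent power series dominating $R$ coefficientwise, one proves by induction that $\|P_d\| \leq \delta^{-1}\|W_d\|$ and that the $W_d$ obey a quadratic (convolution) recursion whose generating majorant satisfies an analytic functional equation; solving it by the analytic implicit function theorem gives bounds $\|P_d\| \leq C\rho^d$ for suitable $C,\rho > 0$. Hence $\Phi$ converges on a neighborhood of $0$ and is the desired biholomorphic conjugacy from $X$ to $X_0$.

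The formal step and the elementary convexity estimate yielding $\delta > 0$ are routine; the real work is the convergence bookkeeping in the second step — writing the recursion for $W_d$ precisely and picking a majorant for which the resulting functional equation is analytically solvable. The conceptual content is exactly that the Poincar\'e domain condition turns the a priori dangerous denominators $(\lambda,m)-\lambda_j$ into a family that is bounded uniformly away from $0$ (indeed tending to $\infty$), so the arithmetic subtleties of the Siegel situation simply do not arise and the problem reduces to an ordinary majorant argument for an analytic ODE.
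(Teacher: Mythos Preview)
Your argument is correct and is exactly the classical proof of Poincar\'e's linearization theorem: solve the homological equation degree by degree (non-resonance makes $\mathrm{ad}_{X_0}$ invertible), then use the Poincar\'e domain hypothesis to get a uniform lower bound $\delta>0$ on the divisors $|(\lambda,m)-\lambda_j|$ and conclude convergence by a majorant argument. The convexity estimate you give for $\delta>0$ is the right one.

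There is nothing to compare against, however: the paper does not prove this theorem. It is stated as one of two results ``recalled from the local theory'' with a reference to Arnold--Il'yashenko \cite{AI1988}, and no proof is given. Your write-up would serve as a self-contained proof if one were wanted; the only part that is genuinely sketched rather than written out is the majorant recursion for the $W_d$, but that is standard and your description of what needs to be done there is accurate.
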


To get linearization for $\lambda$ in the Siegel domain it suffices to assume the so called Brjuno condition: condition (B) \cite{AI1988},\cite{B1979}. Define for $n\in{\mathbb N }$
$$
\Omega(n)=\inf_{1\leq j \leq k} \{|(\lambda,m)-\lambda_j|,
m=(m_1,\dots, m_k), |m| \leq 2^{n+1}\}.
$$

Condition $(B)$ is satisfied when

$$
\sum_{n \geq 1} \frac{\log (1/\Omega(n))}{2^n} <\infty.
$$

Implicitly we are assuming that there are no resonances. In dimension $2$, if $\lambda\in \mathbb R^-$, the Brjuno condition is $\sum_{n \geq 1} \frac{\log q _{n+1}}{q_n}<\infty$ where $\{p_n/q_n\}$ is the nth
approximant of $-\lambda.$ 

\medskip

\begin{theorem} (Brjuno) \cite{AI1988}\cite{B1979}A germ of a singular holomorphic vector field in $(\mathbb C^k,0)$ with non resonant linear part and which satisfies
the Brjuno condition is holomorphically linearizable.
\end{theorem}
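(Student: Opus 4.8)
The plan is to prove this by a rapidly converging (Newton/KAM type) iteration that constructs the convergent linearization directly; the Brjuno condition will enter precisely at the point where the accumulated small divisors are summed. Write $X=X_0+R$ with $X_0=\sum_i\lambda_i z_i\frac{\partial}{\partial z_i}$ and $R$ vanishing to order $\ge 2$ at $0$. The relevant linear operator on germs of vector fields is $\mathcal{L}_{X_0}h:=Dh\cdot X_0-DX_0\cdot h$; it is diagonal in the monomial basis, with $\mathcal{L}_{X_0}\!\left(z^m\frac{\partial}{\partial z_j}\right)=\big((\lambda,m)-\lambda_j\big)z^m\frac{\partial}{\partial z_j}$, and by non resonance all its eigenvalues with $|m|\ge 2$ are nonzero. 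Hence $\mathcal{L}_{X_0}$ is invertible on the finite dimensional space of vector fields whose components are polynomials of degree $\ge 2$ and $<D$, the inverse losing only a single factor $\Omega(n)^{-1}$ once $D\le 2^{n+1}$. After the rescaling $z\mapsto tz$, which replaces $R$ by $t^{-1}R(tz)=O(t)$, we may assume $X$ holomorphic on the polydisc $\Delta_{\rho_0}=\{|z_i|<\rho_0\}$ with $\|R\|_{\rho_0}\le\ep_0$ as small as we wish (here $\|\cdot\|_\rho$ denotes the sup norm over $\Delta_\rho$).

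\emph{The iteration.} Assume inductively that $X_n=X_0+R_n$ is holomorphic on $\Delta_{\rho_n}$, that $\|R_n\|_{\rho_n}\le\ep_n$, and that $R_n$ vanishes to order $\ge N_n$ at $0$, where $N_0=2$ and $N_{n+1}:=2N_n-1$ (so $N_n=2^n+1$). Solve the homological equation $\mathcal{L}_{X_0}h_n=-(R_n)_{<2N_n}$ monomial by monomial; since every monomial involved has degree $<2N_n\le 2^{(n+1)+1}$, Cauchy estimates give $\|h_n\|_{\rho_n(1-\delta_n)}\le C\,\Omega(n+1)^{-1}\,\delta_n^{-a}\,\ep_n$, with $C,a$ depending only on $k$ and $\delta_n\in(0,1/2)$ at our disposal. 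Put $\Phi_n:=\mathrm{id}+h_n$, $\rho_{n+1}:=\rho_n(1-\delta_n)$, $X_{n+1}:=(\Phi_n)_*X_n$. For $\ep_n$ small $\Phi_n$ is a biholomorphism onto a neighborhood of $\Delta_{\rho_{n+1}}$, and using $\mathcal{L}_{X_0}h_n=-(R_n)_{<2N_n}$ one finds $R_{n+1}\circ\Phi_n=(R_n)_{\ge 2N_n}+Dh_n\cdot R_n$, which vanishes to order $\ge 2N_n-1=N_{n+1}$ and satisfies
$$
\ep_{n+1}\ \le\ C'\,\Omega(n+1)^{-1}\,\delta_n^{-a'}\,\ep_n^{2},
$$
with $C',a'$ depending only on $k$ (the untruncated tail $(R_n)_{\ge 2N_n}$ is doubly exponentially small and absorbed into the quadratic bound, the dominant term being $Dh_n\cdot R_n$).

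\emph{Closing the induction.} Take $\delta_n:=c_0/(n+1)^2$, so $\rho_\infty:=\rho_0\prod_n(1-\delta_n)>0$. Taking logarithms in the last inequality and iterating,
$$
\log\ep_n\ \le\ 2^{n}\Big(\log\ep_0+\sum_{k\ge 0}2^{-k-1}\big(\log(1/\Omega(k+1))+a'\log(1/\delta_k)+\log C'\big)\Big).
$$
Here $\sum_k 2^{-k}\log(1/\delta_k)$ and $\sum_k 2^{-k}\log C'$ converge trivially, and $\sum_{k\ge 0}2^{-k-1}\log(1/\Omega(k+1))=\sum_{n\ge 1}2^{-n}\log(1/\Omega(n))<\infty$ is exactly the Brjuno condition $(B)$; so the bracket equals $\log\ep_0+S_0$ with $S_0$ finite, and taking $\ep_0<e^{-S_0}$ forces $\ep_n\to 0$ doubly exponentially. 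Since $\log(1/\Omega(n+1))=o(2^{n})$ (a term of a convergent series), $\|h_n\|\le C\Omega(n+1)^{-1}\delta_n^{-a}\ep_n\to 0$ faster than any geometric progression; hence $\Psi_n:=\Phi_0\circ\cdots\circ\Phi_n$ converges uniformly on $\Delta_{\rho_\infty/2}$ to a holomorphic $\Psi$ with $\Psi(0)=0$ and $D\Psi(0)=\prod_n\big(I+Dh_n(0)\big)=I$ (each $h_n$ vanishes to order $\ge 2$), so $\Psi$ is a biholomorphism near $0$. Letting $n\to\infty$ in $X_{n+1}=(\Phi_n)_*X_n$ and using $\ep_n\to 0$ gives $\Psi_*X=X_0$, the desired holomorphic linearization.

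The heart of the matter is the quadratic estimate $\ep_{n+1}\le C'\Omega(n+1)^{-1}\delta_n^{-a'}\ep_n^2$ with $C',a'$ \emph{independent of} $n$: one must check that solving the homological equation at stage $n$ loses only the single small-divisor factor $\Omega(n+1)^{-1}$ (all monomials there have degree at most $2N_n$), that the Cauchy estimates for $Dh_n$ and for the substitution $z=\Phi_n^{-1}(w)$ cost only fixed powers of $\delta_n^{-1}$, that $Dh_n\cdot R_n$ and all the composition corrections are genuinely $O(\ep_n^2)$ and of order $\ge N_{n+1}$, and that the loss of domain from $\Delta_{\rho_n}$ to $\Delta_{\rho_n(1-\delta_n)}$ is enough to contain $\Phi_n^{-1}(\Delta_{\rho_{n+1}})$. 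Once this recursion is in hand the remainder is a routine geometric-series argument, and the precise role of condition $(B)$ --- summability of $2^{-n}\log(1/\Omega(n))$ --- is dictated by the doubly exponential speed of the scheme. An alternative, closer to Brjuno's original argument, is the majorant method: estimate the coefficients of the formal linearizing series $\mathrm{id}+\hat h$ by induction on the degree, packaging the small divisors that recur into the Brjuno function; this trades the KAM bookkeeping for a delicate combinatorial bound on how often a given divisor can appear.
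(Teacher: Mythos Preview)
The paper does not prove this theorem; it is quoted as a classical result with references to \cite{AI1988} and \cite{B1979}, so there is no argument in the paper to compare your proposal against.

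Your sketch is the standard KAM/Newton route and is correct in its architecture: double the order of tangency at each step, solve the truncated homological equation at the cost of a single factor $\Omega(n+1)^{-1}$, derive a quadratic recursion $\ep_{n+1}\le C'\Omega(n+1)^{-1}\delta_n^{-a'}\ep_n^2$, and observe that the accumulated small-divisor loss is precisely the Brjuno series. You also correctly note the alternative majorant approach of Brjuno.

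One point deserves more care. With $\delta_n=c_0/(n+1)^2$ and $N_n\sim 2^n$, the tail $(R_n)_{\ge 2N_n}$ on $\Delta_{\rho_{n+1}}$ is of order $\ep_n\cdot(1-\delta_n)^{2N_n}\sim \ep_n\exp(-2^{n+1}/(n+1)^2)$, whereas $\ep_n^2\sim\exp(-c\,2^{n+1})$; for large $n$ the former is \emph{larger}, so the tail is not literally ``absorbed into the quadratic bound'' with this choice of $\delta_n$. This is a known delicacy of the scheme, not a fatal flaw: one fixes it either by tying $\delta_n$ to $\ep_n$ (e.g.\ $\delta_n$ comparable to $N_n^{-1}\log(1/\ep_n)$, which is still summable since $\ep_n$ decays doubly exponentially and $N_n\sim 2^n$), or by estimating $R_{n+1}$ directly from its order of vanishing $N_{n+1}$ rather than splitting off the tail. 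Either way the closing argument goes through, but the sentence ``doubly exponentially small and absorbed into the quadratic bound'' hides exactly the place where the bookkeeping must be done honestly.
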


\subsection{The space of holomorphic foliations on $\mathbb P^k$}

Let $\mathcal F_d(\mathbb P^k)$ denote the space of holomorphic foliations of degree $d$ by Riemann surfaces in $\mathbb P^k.$ We parametrize this space by $\mathcal M(\mathbb P^k)=[F_0:\cdots :F_k]$ where the $F_j$ are homogeneous holomorphic polynomials of degree $d$, with no common factor and hence with indeterminacy set of codimension $\geq 2.$ 
The space $\mathcal F_d^0$ of foliations with discrete singularity set is an open Zariski dense set and is parametrized by meromorphic self maps with discrete indeterminacy set and with a discrete set of fixed points.

In dimension $2$, the number of tangencies of a generic line with the foliation of degree $d$  is $d.$ In higher dimension the tangencies of a foliation of degree d with a generic hyperplane is  an algebraic hypersurface  of degree d in the hyperplane.

Let $\mathcal H^y_d(\mathbb P^k)$ denote the space of foliations of degree $d$ such that all singular points are hyperbolic. 

\begin{proposition}
The set $\mathcal H^y_d(\mathbb P^k)$ contains an open real Zariski dense set in $\mathcal F_d(\mathbb P^k).$
\end{proposition}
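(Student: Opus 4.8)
The plan is to prove the slightly stronger statement that $\mathcal{H}^y_d(\mathbb P^k)$ is \emph{itself} open in $\mathcal{F}_d(\mathbb P^k)$, and then to exhibit a single foliation in it; real Zariski density will be automatic. First I would record a reduction. A hyperbolic singularity has invertible linear part, hence Milnor number $1$; conversely a singular point with Milnor number $\geq 2$ has $0$ among its eigenvalues and is not hyperbolic, and a foliation with a non-discrete singular set has no hyperbolic singularities at all. Thus $\mathcal{H}^y_d$ is contained in the Zariski open subset $\mathcal{F}_d^{\,red}\subset\mathcal{F}_d^0$ of foliations whose $1+d+\cdots+d^k$ singular points (B\'ezout) are all simple. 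On $\mathcal{F}_d^{\,red}$ the singular points, and the linear parts of the foliation at them, depend holomorphically on the foliation — apply the implicit function theorem to $F_i(z)-t^{d-1}z_i=0$ at a simple root — so since the conditions $\lambda_i\neq 0$ and $\lambda_i/\lambda_j\notin\mathbb R$ are open, $\mathcal{H}^y_d$ is open in $\mathcal{F}_d^{\,red}$, hence in $\mathcal{F}_d(\mathbb P^k)$. Next, $\mathcal{F}_d(\mathbb P^k)$ is Zariski open in a complex projective space, hence connected, and as a real algebraic variety it is irreducible: a real polynomial vanishing on a non-empty Euclidean open subset is real analytic on the connected complex manifold $\mathcal{F}_d(\mathbb P^k)$ and so vanishes identically. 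Hence a proper real algebraic subset has empty Euclidean interior, so every non-empty Euclidean open subset of $\mathcal{F}_d(\mathbb P^k)$ is real Zariski dense. It therefore suffices to produce \emph{one} $\mathcal{F}\in\mathcal{H}^y_d(\mathbb P^k)$. (If one wants a genuinely real Zariski open set one then takes the complement in $\mathcal{F}_d(\mathbb P^k)$ of the proper real algebraic set cut out by the discriminant of the singular scheme together with the resultant, in the singular points, of the real equation $\prod_{i<j}\bigl(\lambda_i\bar\lambda_j-\bar\lambda_i\lambda_j\bigr)=0$.)

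For the example itself: when $d=1$ I would take $\mathcal F=[\,a_0z_0:\cdots:a_kz_k\,]$ with $a_0,\dots,a_k\in\mathbb C$ in general position, i.e. no three of them collinear; the singular points are the $k+1$ coordinate points, and at the one attached to $a_i$ the foliation is linear with eigenvalues $a_j-a_i$ $(j\neq i)$, whose ratios $(a_j-a_i)/(a_\ell-a_i)$ are non-real precisely because $a_i,a_j,a_\ell$ are not collinear. For general $d$ in $\mathbb P^2$ I would use the generic logarithmic foliation $\omega=\bigl(\prod_{i=1}^{d+2}\ell_i\bigr)\sum_{i=1}^{d+2}\lambda_i\,d\ell_i/\ell_i$ with $\ell_1,\dots,\ell_{d+2}$ linear forms in general position and $\lambda_i\in\mathbb C$ generic subject to $\sum_i\lambda_i=0$; it has degree $d$, and at each singular point $\{\ell_i=\ell_j=0\}$ its germ is $\lambda_i\,d\ell_i/\ell_i+\lambda_j\,d\ell_j/\ell_j$ plus a holomorphic term, so the eigenvalue ratio there is $-\lambda_j/\lambda_i$, non-real for generic $\lambda$. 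For general $k$, where the logarithmic foliation has a non-discrete singular set, I would instead start from the Fermat foliation $[\,z_0^d:\cdots:z_k^d\,]$, whose $1+d+\cdots+d^k$ fixed points are all simple (though every eigenvalue vector there is proportional to $(1,\dots,1)$, so it is not hyperbolic), and perturb it by a generic degree-$d$ term, tracking how the eigenvalues spread to first order around those degenerate configurations.

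The hard part will be the finitely many ``extra'' singularities whose eigenvalues are not given by the closed formulas above: for the logarithmic foliation, the ones lying on no $\{\ell_i=0\}$ (there are $\binom d2$ of them in $\mathbb P^2$); for the Fermat example, all the perturbed singularities. I would have to show that for a generic choice of the remaining parameters these are still simple with non-real eigenvalue ratios. Since for fixed data the set of bad parameters is a real algebraic (at worst semialgebraic) subset of the parameter space, this reduces to exhibiting one good choice — equivalently, to showing that the relevant eigenvalue-ratio functions are non-constant on the connected manifold $\mathcal{F}_d^{\,red}$, or that the first-order eigenvalue variation at each singularity is surjective enough to separate the imaginary parts of the $k$ eigenvalues there while the index (Baum--Bott) relations only constrain their sums. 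Establishing this non-degeneracy — which I expect to settle by an explicit computation in low degree and then propagate — is the one genuinely non-formal point; everything else in the argument is soft.
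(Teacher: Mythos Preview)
The paper does not actually prove this proposition: the entire proof reads ``We leave the details to the reader.'' So there is no argument to compare yours against, and your write-up is already far more detailed than what the paper offers.

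Your overall architecture is sound and is exactly what one expects here: (i) $\mathcal H^y_d\subset\mathcal F_d^{\,red}$ because a hyperbolic singularity has invertible linear part; (ii) on $\mathcal F_d^{\,red}$ the singular points and their linear parts vary holomorphically, so $\mathcal H^y_d$ is Euclidean open; (iii) $\mathcal F_d(\mathbb P^k)$, being Zariski open in a projective space, is irreducible as a real variety, so any non-empty Euclidean open subset is real Zariski dense. All of that is correct.

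Where you are making life harder than necessary is in step (iv), the non-emptiness. You propose building explicit examples (linear, logarithmic, perturbed Fermat) and then worrying about the ``extra'' singularities, and you flag this as the one non-formal point. But you already state the slicker route in your own parenthetical: it suffices to show that on the connected manifold $\mathcal F_d^{\,red}$ the functions $\mathcal F\mapsto \lambda_i^{(\ell)}/\lambda_j^{(\ell)}$ are non-constant holomorphic functions; then each real condition $\mathrm{Im}\bigl(\lambda_i^{(\ell)}/\lambda_j^{(\ell)}\bigr)=0$ cuts out a proper real-analytic hypersurface, and the finite union of these (together with the complex-algebraic locus where some eigenvalue vanishes or two coincide) has empty interior. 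Non-constancy is immediate: fix $\mathcal F\in\mathcal F_d^{\,red}$ and a singular point, say $p=[1:0:\cdots:0]$; in the chart $z_0=1$ the linear part at $0$ is $\bigl(\partial_jF_i(1,0)-\delta_{ij}F_0(1,0)\bigr)_{i,j}$, and adding $\epsilon_{ij}z_0^{d-1}z_j$ to $F_i$ perturbs this matrix arbitrarily while keeping $p$ singular. So the map from $\mathcal F_d^{\,red}$ to the linear part at a chosen singularity is a submersion onto $\mathfrak{gl}_k$, and in particular the eigenvalue ratios move. This closes your gap in one line and makes the explicit examples (and the delicate bookkeeping of their extra singularities) unnecessary.
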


We leave the details to the reader.

The Poincar\'e problem is to bound the degree of invariant algebraic curves of a foliation of degree $d.$

 Carnicer \cite{C1994} has proved the following:
 
 \begin{theorem}
 Let ${\mathcal F}\in \mathcal F_d(\mathbb P^2)$ and let $A$ be an algebraic curve invariant by $\mathcal F.$ If there are no dicritical singularities on $A,$ then ${\mbox{deg}}(A) \leq d+2.$ The estimate is sharp.
 \end{theorem}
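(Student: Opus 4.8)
The plan is to reduce the statement to the case where the invariant curve is \emph{smooth}, where adjunction together with a tangency count settles it immediately, and then to transport this back down through a resolution of singularities. The non-dicritical hypothesis is used in an essential way at exactly the point where it matters: it is what forces every exceptional divisor produced in the resolution to be invariant, so that the blown-up curve stays invariant and the numerical bookkeeping closes up. (Allowing a dicritical singularity on $A$ the conclusion is false; dicritical centers produce non-invariant exceptional divisors and invariant curves of degree far exceeding $d+2$ occur.)

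For the smooth case, recall that a degree $d$ foliation $\mathcal F$ on $\mathbb P^2$ has tangent bundle $T_{\mathcal F}=\mathcal O(1-d)$ and, from the exact sequence $0\to T_{\mathcal F}\to T\mathbb P^2\to N_{\mathcal F}\otimes \mathcal I_{\mathrm{Sing}(\mathcal F)}\to 0$ together with $c_1(T\mathbb P^2)=3H$, normal bundle $N_{\mathcal F}=\mathcal O(d+2)$. If $A$ is smooth of degree $n$ and invariant, then the local $1$-forms defining $\mathcal F$, restricted to $A$, are sections of $N^*_A$ (because $A$ is tangent to the leaves), giving a morphism $N^*_{\mathcal F}|_A\to N^*_A$ which is nonzero — at a regular point of $\mathcal F$ lying on $A$ the leaf is $A$ itself, so the $1$-form is transversally nondegenerate there — and whose vanishing divisor is effective. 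Hence $N_{\mathcal F}\cdot A\ge A\cdot A$, i.e. $(d+2)n\ge n^2$, i.e. $n\le d+2$. (Dually: $T_{\mathcal F}|_A\hookrightarrow T_A$ has effective cokernel, and one combines $\deg T_A=-A\cdot(A+K_{\mathbb P^2})$ with $T_{\mathcal F}\cdot A\le\deg T_A$; this works verbatim on any surface, not just $\mathbb P^2$.)

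In general I would take $\pi\colon S\to\mathbb P^2$, a composition of point blow-ups, which resolves the singularities of $A$ so that the strict transform $\widetilde A$ is smooth and has normal crossings with the exceptional locus, and reduces à la Seidenberg the singularities of $\mathcal F$ along $A$. Every center is a singular point of the corresponding transform of $\mathcal F$, since an invariant curve germ through a regular point of a foliation is smooth; and since no singularity of $\mathcal F$ on $A$ is dicritical, the entire reduction uses only non-dicritical blow-ups, so each exceptional divisor is invariant and $\widetilde A$ is invariant for the transformed foliation $\widetilde{\mathcal F}$. The smooth-case inequality on $S$ gives $N_{\widetilde{\mathcal F}}\cdot\widetilde A\ge\widetilde A\cdot\widetilde A$. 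Now the blow-up formulas at a non-dicritical center of algebraic multiplicity $\nu$ read $N_{\widetilde{\mathcal F}}=\pi^*N_{\mathcal F}\otimes\mathcal O(-\nu E)$ and $\widetilde A=\pi^*A-mE$, where $m$ is the multiplicity of $A$ at the center — whereas at a dicritical center the coefficient of $E$ for $N_{\widetilde{\mathcal F}}$ would be $\nu+1$, which is exactly what would break the estimate. Substituting (with $E_i^*$ the total transforms of the exceptional curves, $m_i$, $\nu_i\ge 1$ the corresponding multiplicities), the inequality becomes
\[
(d+2)\deg A-(\deg A)^2\ \ge\ \sum_i m_i(\nu_i-m_i),
\]
so it suffices to bound the right-hand side from below well enough that the integer $\deg A$ cannot reach $d+3$.

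The hard part, and the step I expect to be the genuine obstacle, is precisely this local analysis at the centers: bounding the multiplicities $m_i$ of the successive strict transforms of $A$ against the algebraic multiplicities $\nu_i$ of the foliation — using that the branches of $A$ at a singular point are separatrices, whose number at a non-dicritical singularity is at most $\nu+1$ (cf. the discussion preceding Example 6), together with the combinatorics of the dual tree of the resolution — and showing the resulting contributions aggregate so as not to overwhelm the slack in $(d+2)\deg A-(\deg A)^2$ when $\deg A\ge d+3$. This is where the absence of dicritical singularities is used decisively and is the technical core of Carnicer's argument. Finally, for sharpness one exhibits, for each $d$, a degree $d$ foliation carrying a smooth invariant curve of degree $d+2$ along which it is transverse away from its forced singular locus (so that the vanishing divisor above is trivial and $N_{\mathcal F}\cdot A=A\cdot A$); such a foliation — which has no dicritical singularities on the curve — shows that the value $d+2$ cannot be lowered.
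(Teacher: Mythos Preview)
The paper does not actually prove this theorem: it is stated as Carnicer's result with a citation to \cite{C1994}, and no argument is given. What the paper does prove is the weaker Proposition~2 immediately afterwards, under the much stronger hypothesis that all singularities of $\mathcal F$ are hyperbolic, and with the weaker conclusion $\sum_i\deg(A_i)\le 2(d+2)$. That argument is completely different from yours: it chooses an affine chart and a generic projection, writes the foliation via a polynomial vector field $P\partial_z+Q\partial_w$, and counts the vertical tangencies on $A$ in two ways --- once as $\{P=h=0\}$ via B\'ezout, and once as $\{h=\partial h/\partial w=0\}$ --- exploiting that hyperbolic singularities have at most two separatrices and contribute bounded local multiplicity. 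It is an elementary double count, not a resolution-of-singularities argument.

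Your outline, by contrast, is precisely the architecture of Carnicer's original proof: pass to a Seidenberg resolution, use non-dicriticality to keep all exceptional divisors invariant, apply the smooth-case inequality $N_{\widetilde{\mathcal F}}\cdot\widetilde A\ge\widetilde A^2$ upstairs, and then push the blow-up formulas back down. Your smooth-case computation and the set-up of the inequality $(d+2)\deg A-(\deg A)^2\ge\sum_i m_i(\nu_i-m_i)$ are correct. But you are right that the proof is not finished: the entire content of Carnicer's paper is the local analysis you defer, namely showing that the aggregate of the $m_i(\nu_i-m_i)$ over the resolution tree is controlled (it is not simply that each term is nonnegative; one genuinely needs the combinatorics of how multiplicities of branches and of the foliation evolve under successive non-dicritical blow-ups). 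As written, your proposal is a correct roadmap to Carnicer's proof with the decisive step explicitly left open, rather than an alternative argument --- and it bears no resemblance to the elementary tangency count the paper actually carries out for its weaker proposition.
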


The case of invariant curves in $\mathcal F_d(\mathbb P^k)$ is studied by Lins-Neto, \cite{LN2002}.
 He has shown that even under a natural restriction, "fixed local analytic type",  it is not possible to bound the degree of algebraic solutions, see \cite{LN2002} for a precise statement.

The following theorem of E. Ghys \cite{Gh2000}, which improves a theorem by J. P. Jouanolou \cite{J1978}, gives a general result on closed leaves.

\begin{theorem}
(Ghys) Let $\mathcal F$ be a codimension $1$ holomorphic foliation (possibly singular) on a compact complex manifold. Then $\mathcal F$ has only a finite number of closed leaves except when $\mathcal F$ admits a meromorphic first integral, in which case all leaves are closed.
\end{theorem}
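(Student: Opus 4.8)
The plan is to prove the stated dichotomy by proving its contrapositive: if $\mathcal{F}$ has infinitely many closed leaves, I will produce a meromorphic first integral. The reverse implication is the easy one and was already recorded in Example 4: if $f=P/Q$ is a meromorphic first integral, every leaf is an irreducible component of some level set $\{P=cQ\}$, hence an analytic and therefore closed hypersurface. So only the forward direction requires work.

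The first step is to pass from closed leaves to invariant hypersurfaces. A leaf $L$ closed in $M\setminus\mathrm{Sing}(\mathcal{F})$ is there an analytic set of pure codimension one; since $\mathrm{Sing}(\mathcal{F})$ is analytic of codimension $\geq 2$, the Remmert--Stein theorem shows that the closure $\overline{L}$ is an irreducible analytic hypersurface $V_L\subset M$, and it is $\mathcal{F}$-invariant (being the closure of a union of plaques). Thus infinitely many closed leaves produce infinitely many pairwise distinct irreducible $\mathcal{F}$-invariant hypersurfaces $V_1,V_2,\dots$, and it now suffices to show that ``too many'' invariant hypersurfaces force a meromorphic first integral.

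The mechanism here is the Darboux--Jouanolou argument in the form due to Ghys. Represent $\mathcal{F}$ by a global twisted $1$-form $\omega\in H^0(M,\Omega^1_M\otimes N_{\mathcal{F}})$ with $\omega\wedge d\omega=0$ and zero locus of codimension $\geq 2$. To each invariant hypersurface $V_i$, with local equations $f_i$ (a section of $\mathcal{O}(V_i)$), attach the logarithmic form $\tfrac{df_i}{f_i}$ together with its ``cofactor'': the obstruction, living in a cohomology group assembled from $\Omega^1_M$, $N_{\mathcal{F}}$ and $\mathrm{Pic}(M)$, to expressing $\omega$ as a logarithmic form with pole along $V_i$. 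Compactness of $M$ forces all these cofactors into a single finite-dimensional $\mathbb{C}$-vector space, via $\dim_{\mathbb C}H^{\ast}(M,\mathbb C)<\infty$ together with finiteness of the N\'eron--Severi-type quotient of $\mathrm{Pic}(M)$ (which injects into the finitely generated group $H^2(M,\mathbb Z)$). As soon as the number of invariant hypersurfaces exceeds this dimension, a nontrivial linear relation among the cofactors forces $\mathcal{F}$ to be defined by a closed logarithmic $1$-form $\eta=\sum_i\lambda_i\tfrac{df_i}{f_i}$, equivalently by a multivalued Darboux first integral $H=\prod_i f_i^{\lambda_i}$.

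Finally, having infinitely many $V_i$ in hand I can extract further independent relations, and use them to force the residues $\lambda_i$ to be rational, equivalently to kill the periods of $\eta$; raising $H$ to a common denominator then yields a genuine single-valued meromorphic function $P/Q$ on $M$ constant along leaves, i.e. a meromorphic first integral, and the easy direction then gives that all leaves are closed, consistent with the theorem. The main obstacle is exactly this middle circle of ideas on a general compact complex manifold that need not be K\"ahler or projective: one must pin down the correct finite-dimensional receptacle for the cofactors (Jouanolou's original count was special to $\mathbb{P}^n$, and overcoming this is Ghys's contribution), and then carry out the delicate passage from a closed logarithmic form with possibly incommensurable periods to an honest meromorphic first integral --- controlling the monodromy of $\eta$ is the subtle point, and it is here that one genuinely exploits having infinitely (not merely sufficiently) many closed leaves.
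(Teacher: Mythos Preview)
The paper does not actually prove this theorem: it is stated with attribution to Ghys and a reference to \cite{Gh2000}, and then the text explicitly says ``Here we just prove a weaker elementary estimate on the total degree of invariant curves, in a generic case,'' namely Proposition~2. So there is no proof in the paper against which to compare your proposal.

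That said, your outline is a faithful sketch of the Darboux--Jouanolou--Ghys strategy and identifies the right structure: passing from closed leaves to invariant hypersurfaces via Remmert--Stein, the cofactor mechanism with a finite-dimensional receptacle forcing a closed logarithmic form once the number of invariant hypersurfaces is large enough, and then the extra step of using \emph{infinitely} many hypersurfaces to rationalize the residues and obtain a genuine meromorphic first integral. You are also honest that the two delicate points --- pinning down the correct finite-dimensional target on a general compact complex manifold, and controlling the monodromy to pass from a closed logarithmic form to a single-valued first integral --- are exactly where the real work lies. As written this is a plan rather than a proof: neither of those steps is carried out, and both require nontrivial arguments (the first is precisely Ghys's contribution over Jouanolou). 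If you intend this as an outline pointing to \cite{Gh2000} for the details, it is accurate; if you intend it as a self-contained proof, the two acknowledged gaps are genuine and would need to be filled.
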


Here we just prove a weaker elementary estimate on the total degree of invariant curves,  in a generic case. 

\begin{proposition}
Let $(A_i)$ denote the algebraic curves invariant under $\mathcal F \in \mathcal H^y_d(\mathbb P^2).$
Then $\sum_i {\mbox{deg}}(A_i) \leq 2(d+2).$ 
\end{proposition}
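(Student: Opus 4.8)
The plan is to derive the estimate from B\'ezout's theorem and Carnicer's bound (the Theorem just stated), using only the local normal form of a hyperbolic singularity recalled above — no Camacho--Sad indices are needed. Write $(A_i)_{i=1}^{n}$ for the irreducible algebraic curves invariant by $\mathcal F\in\mathcal H^y_d(\mathbb P^2)$; this family is finite, since a hyperbolic singularity is not dicritical, hence $\mathcal F$ has no meromorphic first integral and Ghys's theorem applies. Put $\delta_i=\deg A_i$ and $s=\sum_i\delta_i$, so the claim is $s\le 2(d+2)$.

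First I would nail down the local picture. If $p$ is a regular point of $\mathcal F$ on an invariant curve $A$, then each local branch of $A$ at $p$ is invariant and therefore contains, hence equals, the single plaque through $p$; so $A$ is smooth at $p$ and equals that plaque there. Two consequences: an irreducible invariant curve is smooth off $\mathrm{Sing}(\mathcal F)$; and two distinct irreducible invariant curves cannot meet at a regular point of $\mathcal F$, for they would share a plaque germ and an analytic germ lies in a unique irreducible algebraic curve. At $p\in\mathrm{Sing}(\mathcal F)$, hyperbolicity and Poincar\'e's linearization present $\mathcal F$ near $p$ as $z\,dw-\lambda w\,dz$ with $\lambda\notin\mathbb R$, so the only separatrices at $p$ are the two smooth transverse germs $\{z=0\}$ and $\{w=0\}$. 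Thus the germ at $p$ of any invariant curve is a union of branches drawn from these two; distinct irreducible invariant curves share no branch, so at most two of the $A_i$ pass through $p$, and whenever $A_i\neq A_j$ both pass through $p$ each has exactly one branch there (one of the two separatrices) and they cross transversally, i.e. $(A_i\cdot A_j)_p=1$.

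Next I would count. For $i\neq j$ the curves $A_i$ and $A_j$ meet only inside $\mathrm{Sing}(\mathcal F)$ and always transversally, so B\'ezout gives $\delta_i\delta_j=A_i\cdot A_j=\#(A_i\cap A_j)$. Since no singularity lies on three of the $A_i$, the sets $A_i\cap A_j$ are pairwise disjoint as $\{i,j\}$ varies, whence
\[
\sum_{1\le i<j\le n}\delta_i\delta_j=\#\bigl\{\,p\in\mathrm{Sing}(\mathcal F):p\ \text{lies on at least two of the }A_i\,\bigr\}\ \le\ \#\mathrm{Sing}(\mathcal F)\ \le\ d^2+d+1,
\]
the last inequality being the B\'ezout count of the fixed points of $[F_0:F_1:F_2]$ recalled earlier. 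By Carnicer's theorem each $A_i$ (carrying no dicritical singularity) has $\delta_i\le d+2$, so $\sum_i\delta_i^{2}\le(d+2)s$. Hence
\[
s^{2}=\sum_i\delta_i^{2}+2\sum_{i<j}\delta_i\delta_j\ \le\ (d+2)s+2(d^2+d+1),
\]
i.e. $s^{2}-(d+2)s-2(d^2+d+1)\le0$. As $(d+2)^2+8(d^2+d+1)=9d^2+12d+12\le(3d+3)^2$ for $d\ge1$, this forces $s\le\tfrac12\bigl((d+2)+(3d+3)\bigr)<2(d+2)$; in fact one gets $s\le 2(d+1)$.

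The only delicate part is the local analysis in the second paragraph — that an invariant curve is smooth on the regular set and touches $\mathrm{Sing}(\mathcal F)$ only along separatrices, that a hyperbolic point has exactly two separatrices and they are smooth and transverse, and hence that two distinct invariant curves cross transversally there (rather than being tangent or sharing a branch). Granting this, everything else is bookkeeping with B\'ezout. If one wishes to avoid Carnicer's theorem, note that in the generic situation where every $A_i$ is smooth one has, by the usual index count, $\#(\mathrm{Sing}(\mathcal F)\cap A_i)=\delta_i(d+2-\delta_i)\ge0$, hence $\delta_i\le d+2$ with no further input, and the same computation gives the bound; it is only in the possibly-singular nondicritical case that Carnicer's estimate is the convenient substitute.
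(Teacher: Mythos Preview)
Your argument is correct, but it is genuinely different from the paper's. The paper does not invoke Carnicer's theorem at all: it takes the full reduced union $A=\bigcup A_i$ of degree $N$, projects it to a generic line, and compares two counts of ramification (``vertical'') points. On one hand these are among the solutions of $P=h=0$ together with the singular points, giving at most $N(d+1)$ nonsingular vertical points; on the other hand they are among the $N(N-1)$ solutions of $h=h_w=0$, and each hyperbolic singularity (at most $(d+1)^2$ of them, each carrying at most two separatrices) contributes at most $2$ to this count. This yields $N(N-1)-2(d+1)^2\le N(d+1)$ and hence the bound.

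By contrast, you work with the individual $A_i$: the local picture at a hyperbolic point forces distinct invariant curves to meet only at singularities and transversally there, so $\sum_{i<j}\delta_i\delta_j\le d^2+d+1$ by B\'ezout; then Carnicer gives $\sum_i\delta_i^2\le (d+2)s$, and the quadratic inequality finishes. Your route is clean and even yields the slightly sharper $s\le 2(d+1)$, but it imports Carnicer's theorem as a black box, whereas the paper's branched-covering count is self-contained (modulo Poincar\'e linearization) and, as the Remark after the Proposition indicates, generalizes immediately once one bounds the number and local multiplicity of separatrices at each singularity. Your closing remark about bypassing Carnicer in the smooth case is in the right spirit, but note that the paper's method already handles the general (possibly singular) invariant curve without that detour.
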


\begin{proof}
Let $A=(h=0)=\cup_{i \in I} A_i$ (or any finite union if there are infinitely many). Since the singularities are hyperbolic, if $p\in {\mbox{Sing}}(\mathcal F)
\cap A,$ then $A$ is a separatrix and if we consider $A$ as a branched covering in generic coordinates, the local degree of the covering near a singularity is at most $2.$ Moreover at all non singular vertical points, $A$ is tangent to second order to its tangent line. So we can choose a $\mathbb C^2$ chart
containing all the singular points and consider that $A$ is a branched covering over the $z-$ axis. We can assume that $A$ is transverse to the line at infinity $L_\infty$ and that it omits the points $[0:1:0], [1:0:0].$ So near $L_\infty,$ $A=\cup_j\{w=\alpha_j z+ o(z)\}$ for distinct $\alpha_j$. The foliation is associated in that chart to a polynomial vector field $P\frac{\partial}{\partial z}+Q\frac{\partial}{\partial w}.$ 
with ${\mbox{deg}}(P) \leq d+1.$  and ${\mbox{deg}}(Q) \leq d+1.$
We count the number of vertical points on $A \setminus {\mbox{Sing}}(\mathcal F)$ in two ways. Let $h=0$ be an equation of $A.$ The vertical points are given by
the equations  $P=0,h=0.$ If $h$ is  of degree $N=\sum_{i\in I} {\mbox{deg}}(A_i)$, we get at most $N(d+1)$ nonsingular vertical points, besides the singular points. So finally the number of such points is bounded by $N(d+1)+2(d^2+d+1).$ The other way is to count the number of common zeroes of $h$ and $\frac{\partial h}{\partial w}$ which gives $N(N-1).$ 
By the B\'ezout Theorem there are at most $(d+1)^2$ common zeroes of $P,Q$, i.e. singular points. Each singular point can contribute at most $2$ common zeroes of $h$ and $\frac{\partial h}{\partial w}$ counted with multiplicity.Because of hyperbolicity, the multiplicity  of the zeroes  
at the singular points is bounded by $2.$ Hence the number of non singular vertical points is at least $N(N-1)-2(d+1)^2.$ Hence $N(N-1)-2(d+1)^2\leq N(d+1).$ 
 So $N\leq 2(d+2).$

\end{proof}

\begin{remark}
The argument works as long as we assume that there are only less than $n(d)$ separatrices at every singular point and that their local multiplicity as branched covers are bounded by some $m(d)$ in which case we find
$\sum {\mbox{deg}}(A_i) \leq c(d).$
\end{remark}

 Jouanolou proved the important fact that not all foliations of $\mathbb P^2$ have algebraic leaves.
\begin{theorem}(Jouanolou \cite{J1979}, Lins-Neto, Soares, \cite{LNSo1996}).
There is a real Zariski dense open set $\mathcal H(d)\subset \mathcal F_d(\mathbb P^k)$ such that for every $\mathcal F \in \mathcal H(d),$\\
i) all the singularities of $\mathcal F$ are hyperbolic,\\
ii) $\mathcal F$ has no invariant algebraic curve.
\end{theorem}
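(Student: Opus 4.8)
\medskip
\noindent\emph{Proof plan.} Statement (i) is exactly Proposition 1: it provides an open, real Zariski dense set $\mathcal U\subset\mathcal H^y_d(\mathbb P^k)\subset\mathcal F_d(\mathbb P^k)$ consisting of foliations all of whose singularities are hyperbolic. All the work is in (ii). The plan is to combine a \emph{uniform} bound on the degree of the algebraic curves invariant under a foliation of $\mathcal U$ with the existence of a \emph{single} foliation having no invariant algebraic curve, and then to argue using the Zariski topology on the irreducible variety $\mathcal F_d(\mathbb P^k)$ (recall it is a Zariski open subset of a projective space, hence irreducible).

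First I would produce $N=N(d,k)$ such that every algebraic curve invariant under $\mathcal F\in\mathcal U$ has degree at most $N$. The key point is that a hyperbolic singularity is non-dicritical: in $\mathbb P^2$ it is non-resonant and lies in the Poincar\'e domain, hence is holomorphically linearizable with only the two coordinate separatrices, so Carnicer's theorem gives $\deg(A)\le d+2$ for every invariant curve $A$ (Proposition 2 would even do); in $\mathbb P^k$ one invokes instead the Poincar\'e-type degree estimates of Soares and of Lins-Neto--Soares \cite{LNSo1996} for curves invariant by a one-dimensional foliation whose singularities lying on the curve are non-dicritical. Consequently, \emph{on $\mathcal U$}, having an invariant algebraic curve is the same as having one of degree $\le N$, a finitary condition.

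Next I would check that the set $\mathcal A_N\subset\mathcal F_d(\mathbb P^k)$ of foliations admitting an invariant algebraic curve of degree $\le N$ is Zariski closed. Let $\mathcal C$ be the (projective) Chow variety of $1$-cycles of degree $\le N$ in $\mathbb P^k$ and let $\mathcal I\subset\mathcal F_d(\mathbb P^k)\times\mathcal C$ be the set of pairs $(\mathcal F,C)$ with $C$ invariant under $\mathcal F$; invariance is the vanishing of the tangency of the defining homogeneous vector field along $C$, an algebraic condition, so $\mathcal I$ is closed, and $\mathcal A_N$ is its image under the projection to $\mathcal F_d(\mathbb P^k)$, which is proper because $\mathcal C$ is complete. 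Hence $\mathcal V:=\mathcal F_d(\mathbb P^k)\setminus\mathcal A_N$ is Zariski open. Now Jouanolou's classical example --- for each $d\ge 2$ the degree $d$ foliation of $\mathbb P^2$ defined by a cyclically symmetric homogeneous vector field (e.g.\ $z_1^{d}\partial_{z_0}+z_2^{d}\partial_{z_1}+z_0^{d}\partial_{z_2}$, suitably normalized), together with its higher-dimensional analogues of Lins-Neto--Soares \cite{LNSo1996} --- has no invariant algebraic curve, so $\mathcal V\ne\emptyset$; being a nonempty Zariski open subset of the irreducible variety $\mathcal F_d(\mathbb P^k)$, $\mathcal V$ is Zariski dense. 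Set $\mathcal H(d):=\mathcal U\cap\mathcal V$: this is the intersection of a real Zariski dense open set with a Zariski dense open set, hence real Zariski dense, open, and nonempty (a proper complex algebraic subset cannot cover the nonempty open set $\mathcal U$). Every $\mathcal F\in\mathcal H(d)$ has only hyperbolic singularities, so any invariant algebraic curve of $\mathcal F$ would have degree $\le N$, while $\mathcal F\in\mathcal V$ excludes any such curve; thus $\mathcal F$ has no invariant algebraic curve at all, and (i), (ii) hold on $\mathcal H(d)$.

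The only genuinely hard input is the verification that the Jouanolou foliation (and its higher-dimensional cousins) really has no invariant algebraic curve --- the delicate arithmetic-geometric heart of the argument --- together with pinning down the degree bound $N(d,k)$ for $k\ge 3$. The Zariski-closedness of $\mathcal A_N$ and the topological bookkeeping that glues (i) to (ii) are routine.
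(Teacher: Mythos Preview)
Your proposal is correct and follows essentially the same route as the paper's sketch: bound the degree of invariant curves on the hyperbolic locus (the paper uses its Proposition~2 in $\mathbb P^2$ rather than Carnicer), set up an incidence variety over $\mathcal F_d \times \{\text{curves of degree}\le N\}$ whose projection is analytic/Zariski closed, and invoke Jouanolou's example to see this projection is proper, so its complement meets the dense open hyperbolic locus. The paper's write-up is terser and explicitly treats only $\mathbb P^2$, but the logic --- including the observation that one need not check hyperbolicity of Jouanolou's example itself --- is identical to yours.
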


\begin{proof}
(Sketch) We consider $\mathcal H^y_d(\mathbb P^2)$, the space of foliations  
of degree $d$ with only hyperbolic singularities. According to Proposition 2 an algebraic leaf is necessarily of degree $\leq c(d).$ Let $\mathcal A_\ell$ denote the compact space of (possibly reducible) curves of degree $\ell \leq c(d).$
 We consider
 
 $\Gamma_\ell=\{(\mathcal F,L)\in \mathcal H^y_d(\mathbb P^2) \times \mathcal A_\ell,\;{\mbox{each component of}}\;
L \;{\mbox{is a leaf of }}\; \mathcal F\}.$

The set $\Gamma_\ell$ is analytic, hence the projection is analytic in $\mathcal H^y_d(\mathbb P^2).$ So it is enough to exhibit an example of a foliation in $\mathcal H^y_d(\mathbb P^2)$, without an algebraic solution. The following basic example is due to Jouanolou \cite{J1979}, it was extended to $\mathbb P^k$ by
Lins-Neto Soares \cite{LNSo1996}. It is defined as the foliation associated to the form:
$$
\omega_0=(z^{d-1}t-w^d)dz+(w^{d-1}z-t^d)dw+(t^{d-1}w-z^d)dt.
$$

We don't have to check that the above example has hyperbolic singularities since
if the projection is not surjective, the complement of the image intersects $\mathcal H^y_d(\mathbb P^2)$.
\end{proof}

\subsection{Minimal sets}

\begin{definition}
A minimal set for $(X,\mathcal L,E)$ in $M$ is a compact subset $Y \subset X$ such that $Y$ is not contained in $E,$ $Y\setminus E$ is a union of leaves and for every leaf $L \subset Y,  \overline{L}=Y.$
\end{definition}

It is easy to show that if there is a neighborhood $V$ of $E$ so that no leaf is contained in $V,$ then minimal sets exist.  Let $V$ be an open neighborhood of $E,$ the singular set, such that $V$ contains no leaf. Let $X_\alpha$
be a decreasing chain of closed invariant sets for the foliation. Then $\cap (X_\alpha \setminus V)$ is nonempty. Hence $\cap X_\alpha$ is not reduced to $E$ and Zorn's Lemma implies the existence of a minimal set. We will sometimes write minimal set with singularities to emphasize that we allow singularities in $Y.$ It is indeed not known if there is a minimal set without singularities in $\mathbb P^2.$
The problem is discussed in \cite{BLM1992}, \cite{CLS1988}, \cite{H1976}.
In particular, minimal sets always exist for holomorphic foliations by Riemann surfaces in $\mathbb P^k$ with isolated singularities.The sets X(c) in example 2 are minimal sets since every leaf  is dense in X(c).

The global properties of holomorphic foliations of degree $d$ by Riemann surfaces in $\mathbb P^2$ are still poorly understood. However the case where the foliation is tangent to the line at infinity $L_\infty$
has been analyzed by Il'yashenko \cite{Il1987}: 

\begin{theorem} Let $\mathcal F_d(\mathbb C^2)$ denote the foliations of $\mathbb P^2$
of degree $d,$ which are tangent to the line at infinity.
There is a set of full Lebesgue measure $\mathcal A_d \subset \mathcal F_d(\mathbb C^2)$ such that for any $\mathcal F \in \mathcal A_d$ every leaf in $\mathbb C^2$ is dense. Any measurable set of leaves of $\mathcal F\in \mathcal A_d$ has zero or total Lebesgue measure.
\end{theorem}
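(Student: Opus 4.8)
The plan is to translate the statement into a question about the holonomy group of the invariant line $L_\infty$, and then to establish — for almost every $\mathcal F$ — an ergodic dichotomy for that group. Since $\mathcal F$ is tangent to $L_\infty$, this line is a leaf with finitely many singularities $p_1,\dots,p_k$; fix a small holomorphic transversal $\Sigma\cong(\mathbb C,0)$ through a nonsingular point $p_0\in L_\infty$. As $\pi_1(L_\infty\setminus\{p_1,\dots,p_k\},p_0)$ is finitely generated (by loops $\gamma_j$ around the $p_j$), the holonomy representation produces a finitely generated group $G\subset\mathrm{Diff}(\mathbb C,0)$. First I would set up the geometric dictionary: for $\mathcal F$ generic every leaf $L\subset\mathbb C^2$ accumulates on a nonsingular point of $L_\infty$, because $\overline L\cap L_\infty$ is a nonempty closed subset invariant under $\mathcal F|_{L_\infty}$, and since $L_\infty\setminus\mathrm{Sing}$ is a single leaf and each $p_j$ is hyperbolic (so nearby leaves spiral onto $L_\infty$ near $p_j$), this forces $\overline L\supset L_\infty$. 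Consequently $L\cap\Sigma$ is a $G$-orbit, $L$ is dense in $\mathbb C^2$ iff that orbit is dense near $0$, and an $\mathcal F$-saturated Lebesgue-measurable set $A\subset\mathbb C^2$ is null or co-null iff $A\cap\Sigma$ is a $G$-invariant subset of $\Sigma$ that is null or co-null (Fubini in holomorphic flow boxes, where the leafwise measure class is absolutely continuous with respect to ambient Lebesgue measure).

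Next I would pin down $\mathcal A_d$ by three genericity conditions: all singularities of $\mathcal F$ are hyperbolic; $\mathcal F$ has no invariant algebraic curve other than $L_\infty$; and $G$ is non-solvable. The first condition is Zariski-open and dense (compare Proposition 1), hence of full Lebesgue measure; granting it, the degree of any further invariant algebraic curve is bounded by $2(d+2)$ by Proposition 2, so the second condition fails only on a countable union of proper algebraic subsets and thus holds almost everywhere. For the third, "$G$ solvable" is cut out by the vanishing of finitely many commutator jets — an algebraic condition on the parameters — so its locus is a proper subvariety provided it is proper, which follows once one exhibits, for each $d$, a single foliation tangent to $L_\infty$ with only hyperbolic singularities and non-solvable holonomy (adapting the Jouanolou--Lins-Neto--Soares type constructions to the class tangent to $L_\infty$). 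Finally, hyperbolicity of the $p_j$ linearizes each generator of $G$ (Poincar\'e's theorem) to $z\mapsto e^{2\pi i\lambda_j}z$ with $\lambda_j\notin\mathbb R$, so $|g_j'(0)|\neq 1$ and $G$ contains hyperbolic elements.

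The heart of the matter is then the following dichotomy, to be applied to $G$: a non-solvable finitely generated subgroup of $\mathrm{Diff}(\mathbb C,0)$ that contains a hyperbolic element has all its orbits dense in some punctured disc $\{0<|z|<\delta\}$ and is ergodic for Lebesgue measure on $\{|z|<\delta\}$. The mechanism I would use: pass to the linearizing coordinate of a hyperbolic element $g_1$ (so $g_1(z)=\mu_1 z$), then to $w=\log z$, in which $\{0<|z|<\delta\}$ becomes a left half-plane and $g_1$ becomes the translation $w\mapsto w+\log\mu_1$; a second, non-commuting generator can be used to produce, in the closure of $G$ for local uniform convergence, a nontrivial real one-parameter group of translations not contained in $\mathbb Z\log\mu_1$ (the Shcherbakov--Nakai "vector fields in the closure of the group" construction), from which density of every orbit and ergodicity follow — non-solvability being exactly what keeps this construction from collapsing into an abelian or affine subgroup. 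Il'yashenko's original route is more hands-on: choose $g_1,g_2$ with multiplicatively independent multipliers and estimate long words in $g_1,g_2$ by a normal-numbers / law-of-large-numbers argument, which is also the source of the zero--one law phrasing in the statement.

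With these in place the conclusion is immediate: for $\mathcal F\in\mathcal A_d$ every leaf $L\subset\mathbb C^2$ meets $\Sigma$ in a dense $G$-orbit, hence is dense in $\mathbb C^2$; and any $\mathcal F$-saturated measurable $A\subset\mathbb C^2$ corresponds to a $G$-invariant measurable subset of $\Sigma$, which is null or co-null by ergodicity, so $A$ has $\mathbb C^2$-Lebesgue measure $0$ or full. The main obstacle is the third step: the density/ergodicity dichotomy for non-solvable subgroups of $\mathrm{Diff}(\mathbb C,0)$ with a hyperbolic element is the genuinely difficult analytic input, and the rest of the argument is organized precisely so as to isolate it. A secondary difficulty is the genericity bookkeeping of the second step — above all producing, for every degree $d$, an explicit foliation tangent to $L_\infty$ with only hyperbolic singularities whose holonomy group is non-solvable.
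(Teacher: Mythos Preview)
The paper does not prove this theorem: it is stated as Il'yashenko's result with a citation to \cite{Il1987}, and no argument is given. So there is no ``paper's own proof'' to compare against.

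That said, your outline is broadly faithful to Il'yashenko's approach --- reduce to the holonomy group $G\subset\mathrm{Diff}(\mathbb C,0)$ of the invariant line $L_\infty$, then prove density and ergodicity of $G$-orbits for generic $\mathcal F$. Two points deserve care. First, your dictionary step assumes every leaf $L\subset\mathbb C^2$ actually meets a fixed transversal $\Sigma$ near $L_\infty$; for this you need that $\overline L\cap L_\infty\neq\emptyset$, which is not automatic (a priori a leaf could be bounded in $\mathbb C^2$), and must itself be extracted from the generic hypotheses. Second, your genericity bookkeeping mixes ``Zariski-open and dense'' with ``full Lebesgue measure''. The hyperbolicity and no-algebraic-leaf conditions are indeed complements of proper real-algebraic sets, hence of full measure; but your third condition, ``$G$ non-solvable'', is not obviously cut out by finitely many polynomial equations in the coefficients of $\mathcal F$ --- the solvability of $G$ is an infinite family of commutator-vanishing conditions, and reducing it to an algebraic locus requires an argument you have not supplied. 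Il'yashenko's original route sidesteps this by working directly with multiplicatively independent hyperbolic multipliers (a full-measure, not Zariski, condition) rather than invoking the later Shcherbakov--Nakai theory.
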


Mjuller \cite{M1975} has constructed a non empty open set $\mathcal U$ of foliations in $\mathcal F_d(\mathbb P^2),$ 
such that for $\mathcal F \in \mathcal U$ every leaf in $\mathcal F$ is dense.
Loray-Rebelo \cite{LR2003} have constructed open sets $\mathcal U_k$ in $\mathcal F_d(\mathbb P^k)$ with the same property. They also show that for $\mathcal F \in \mathcal U_k$ any measurable set of leaves is of zero or full Lebesgue measure.

Example 2 describes a class of minimal sets with singularities. For the open set of foliations constructed by Mjuller and Loray-Rebelo, the minimal set is $\mathbb P^k.$ We will see that for a dense real Zariski open set of foliations in $\mathcal F_d(\mathbb P^2)$, then the minimal set is unique and that it enjoys properties of unique ergodicity.

\section{Directed positive closed currents}

Let $(X,{\mathcal L}, E)$ be a lamination with singularities by Riemann surfaces. We assume $X$ is contained in a compact K\"{a}hler manifold $ (M,\omega)$ of dimension $k.$ The K\"{a}hler condition is most of the time unnecessary. A Hermitian metric is enough. We will assume that $\Lambda^2(E)=0$
where $\Lambda^2$ denotes the two dimensional Hausdorff measure. The lamination is defined by continuous $(1,0)$ forms $\gamma_j,j=1,\dots,\ell$ on $M,$ such that $\gamma_j \wedge [V_\alpha]=0$ for every plaque $V_\alpha;$ $[V_\alpha]$ denotes the current of integration on the plaque $V_\alpha.$ The $\gamma_j$ vanish on $E$ and  rank $(\gamma_j)
=k-1$ on $X \setminus E.$

\begin{definition}
A positive current $T$ of bidimension $(1,1)$ on $M$ is weakly directed by $(X,\mathcal L,E)$ if
$T \wedge \gamma_j=0, 1 \leq j \leq \ell$ and $T$ is supported on $X.$ 
\end{definition}

The following is a stronger condition.

\begin{definition}
A positive harmonic current $T$ of bidimension $(1,1)$ on $M$ is directed by
$(X,\mathcal L,E)$ if $T$ is supported on $X$ and if
in local flow boxes away from the singularities $T$ has the form
$$
T=\int_\Delta h_\alpha [V_\alpha]d\mu(\alpha)
$$

\end{definition}

Here $\mu$ is a positive Borel measure on a transversal $\Delta$  and $[V_\alpha]$ denotes the current of integration on the plaque $V_\alpha.$
Clearly, if $T$ is directed, then $T$ is weakly directed. We will consider the  question: under which conditions is
a weakly directed $T$ directed?

The following result is proved in \cite{S1976}  when $E$ is empty.

\begin{theorem} (Sullivan) Let $(X,\mathcal L,E)$ be as above. Either there is a non zero positive closed $(k-1,k-1)$
current weakly directed by ${\mathcal L}$ or there is a smooth exact form of degree $2$ which is strictly positive on positive weakly directed $(k-1,k-1)$ currents.
\end{theorem}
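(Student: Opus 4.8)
The plan is to exhibit this as a Hahn–Banach separation argument in a suitable topological vector space, in the spirit of Sullivan's original "foliated cycles" dichotomy. First I would fix the relevant cone: let $\mathcal{C} \subset \mathcal{D}'_{k-1,k-1}(M)$ denote the set of positive $(k-1,k-1)$-currents on $M$ that are weakly directed by $(X,\mathcal{L},E)$, i.e. supported on $X$ and annihilated by the appropriate wedge conditions coming from the defining forms $\gamma_j$ (the codimension-one analogue of the bidimension-$(1,1)$ condition in the definitions above). This $\mathcal{C}$ is a closed convex cone. The two alternatives of the theorem are: (i) $\mathcal{C} \neq \{0\}$ contains a closed element; or (ii) there is a smooth exact $2$-form $\beta = d\eta$ with $\langle T, \beta\rangle > 0$ for every nonzero $T \in \mathcal{C}$.

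The key step is to set up the right pairing. Inside $\mathcal{C}$, consider the convex subset $\mathcal{C}_1$ of currents of mass $1$ (mass measured against $\omega^{k-1}$, normalized on the compact $X$); by weak-$*$ compactness of positive currents of bounded mass, $\mathcal{C}_1$ is compact and convex in the weak topology — provided $\mathcal{C} \neq \{0\}$, which we may assume (otherwise there is nothing in alternative (i), and one checks (ii) holds vacuously or is re-derived). A current $T \in \mathcal{C}_1$ is closed iff $dT = 0$ iff $\langle T, d\eta\rangle = 0$ for every smooth $1$-form $\eta$ — equivalently $\langle T, \beta\rangle = 0$ for every smooth exact $2$-form $\beta$. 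So I want to decide: does the closed subspace $B := \{\text{smooth exact }2\text{-forms}\}$ have the property that some $T \in \mathcal{C}_1$ annihilates all of $B$? Suppose not. Then for every $T \in \mathcal{C}_1$ there is an exact $2$-form $\beta_T$ with $\langle T, \beta_T\rangle \neq 0$; replacing $\beta_T$ by $\pm\beta_T$ and using an open-condition/partition-of-unity argument over the compact set $\mathcal{C}_1$, one produces a single exact form, or a convex combination of finitely many, that is positive on all of $\mathcal{C}_1$ — hence on all of $\mathcal{C}\setminus\{0\}$ by homogeneity. This is exactly alternative (ii). Conversely if such a separating $\beta = d\eta$ existed together with a closed $T \in \mathcal{C}_1$, we'd get $0 = \langle T, d\eta\rangle > 0$, a contradiction, so the two alternatives are genuinely exclusive.

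The main obstacle is the passage from "for each $T$ there is a separating exact form" to "there is one exact form separating them all," i.e. the uniform/Hahn–Banach step, carried out in the correct locally convex topology (the currents of bounded mass with the weak-$*$ topology, versus the Fréchet space of smooth forms). The clean way is a minimax or separation argument: apply the Hahn–Banach theorem to separate the compact convex set $\mathcal{C}_1$ (or rather its closed convex hull translated away from $0$) from the closed subspace $B$ of exact forms in the dual pairing $\langle\text{currents},\text{forms}\rangle$; one must verify that $B$ is weakly closed in the space of $2$-forms in the relevant duality and that $\mathcal{C}_1$ does not meet the "annihilator of $B$" (that annihilator being precisely the closed currents) — which is our standing assumption in this branch. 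The functional analysis is standard once the spaces are pinned down; the only genuinely delicate points are that $\Lambda^2(E) = 0$ lets one ignore the singular set when testing against forms (so $X$ behaves like the closure of a genuine lamination), and that the defining forms $\gamma_j$ are merely continuous, so the cone $\mathcal{C}$ is cut out by continuous — not smooth — linear conditions; neither affects the separation argument, since the test forms we separate with are smooth and exact regardless. I would therefore reduce everything, as in Sullivan's treatment, to the abstract lemma: a closed convex cone in a locally convex space either contains a nonzero vector in a prescribed closed subspace, or is strictly separated from $0$ by a continuous functional vanishing on that subspace — and then translate "prescribed closed subspace" $=$ "closed currents" $=$ "orthocomplement of exact forms."
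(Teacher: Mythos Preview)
Your proposal is correct and follows essentially the same route as the paper: normalize to the compact convex set $\mathcal{C}_1$ of mass-one weakly directed positive currents, identify the closed ones as the annihilator of smooth exact $2$-forms, and apply Hahn--Banach to separate $\mathcal{C}_1$ from the subspace of closed currents by a functional which (via the duality between currents and forms) is represented by a smooth exact $2$-form. One small expositional wobble: midway you speak of separating $\mathcal{C}_1$ from the subspace $B$ of exact forms, but these live in different spaces --- what you actually separate, as you correctly state at the end, is $\mathcal{C}_1$ from $B^\perp=\{\text{closed currents}\}$, and the separating functional then lies in (the closure of, hence by a density argument in) $B$ itself.
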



\begin{proof}
Let $\mathcal C$ denote the convex compact set defined by
$$
\mathcal C:=\{T \; {\mbox{on}} \; X;T\geq 0,\; T\;{\mbox{has bidimension (1,1)}}, T \wedge \gamma_j=0, \int T \wedge \omega=1\}.
$$
Let $F:=\{d\phi\}^\perp $ denote the orthogonal in the space of currents of bidimension
 $(1,1)$ of the exact smooth forms of degree $2$.Indeed F is the space of closed currents. If $\mathcal C \cap F \neq \emptyset$ then there exists
 a non trivial positive closed current $T$ weakly directed by $\mathcal L.$ Necessarily $T$ has no mass on $E$ since $\Lambda^2(E)=0,$ and positive closed currents give no mass to such sets. This is a consequence of Federer's support theorem \cite{F1969}.

 \medskip
 
 If $\mathcal C \cap F=\emptyset,$ we apply the Hahn-Banach Theorem. The space of currents is reflexive, hence by compactness of $\mathcal C$, 
 there exists an element $\ell \in \overline{\{d\phi\}},$ 
 such that $\ell(T)>\delta>0, T \in \mathcal C$ and $\ell$ vanishing on $F^\perp.$ We can assume 
 $\ell=d\phi.$ 
 
 \end{proof}
 
 \begin{remark}
 In order to prove the existence of a weakly directed  closed current, the hypothesis on Hausdorff dimension can be replaced by the weaker assumption that there is no nonzero closed current supported by $E$. For example we can assume $M \setminus E$ is not $k-1$ pseudoconvex in which case $E$ cannot support a current of bidimension (1,1), see \cite{FS1995}. For $k=2$, this means that $M \setminus E$ is not pseudoconvex, and if $M=\mathbb P^2$ this is equivalent to $M \setminus E$ is not Stein.
 \end{remark}

 The following theorem,  due to Sullivan \cite {S1976},  explains why  weakly directed closed currents are useful.
 The theorem applies when there are no singularities and the lamination is smooth enough.
Also the result works in any dimension of the ambient manifold, but the leaves are Riemann surfaces.

 \begin{theorem} (Sullivan)
 Every positive closed weakly directed current $T$ on a lamination  by Riemann surfaces which is transversally ${\mathcal C}^1$
 has the following form in a flow box,
 $$
 T=\int [V_\alpha]d\mu(\alpha)
 $$
 \noindent where $\mu$ is a holonomy invariant positive measure. Any holonomy invariant 
 measure corresponds to a positive closed directed current.
 \end{theorem}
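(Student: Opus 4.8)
The plan is to prove the statement locally in a single flow box and then globalize. Fix a flow box $U\cong B\times T_U$ with chart $\phi=(h,t)$ and let $\pi\colon U\to T_U$ be the transverse projection. The first step is a disintegration of $T$ along $\pi$. Since $T$ is positive and closed it is a normal current, so by the slicing theory for normal currents — and here is exactly where the transversal $\mathcal C^1$ hypothesis enters, since it is what makes $\pi$ regular enough to slice $T$ — setting $\nu:=\pi_*(T\wedge\omega)$ one gets
\[
T=\int_{T_U}T_\alpha\,d\nu(\alpha),
\]
where for $\nu$-almost every $\alpha$ the slice $T_\alpha\ge 0$ has bidimension $(1,1)$ and is supported on the single plaque $V_\alpha=\{t=\alpha\}$; that each $T_\alpha$ lies \emph{along} $V_\alpha$ rather than spilling into transverse directions is forced by the weak directedness $T\wedge\gamma_j=0$. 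Because $V_\alpha$ is a Riemann surface, $T_\alpha$ is then nothing but a positive measure $\tau_\alpha$ on $V_\alpha$.

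The key step is to extract more from $dT=0$. Testing against a smooth $1$-form $\psi$ supported in a slightly smaller flow box, and using that plaques are smooth submanifolds so that restriction commutes with $d$, one has
\[
0=\langle T,d\psi\rangle=\int_{T_U}\langle \tau_\alpha,\,d(\psi|_{V_\alpha})\rangle\,d\nu(\alpha).
\]
Localizing $\psi$ near an arbitrary plaque yields $\langle\tau_\alpha,d(\psi|_{V_\alpha})\rangle=0$ for $\nu$-almost every $\alpha$ and every such $\psi$. In a holomorphic coordinate $z=x+iy$ on $V_\alpha$ this says $\partial_x\tau_\alpha=\partial_y\tau_\alpha=0$ in the distribution sense, hence $\tau_\alpha$ is a constant multiple of Lebesgue measure, i.e.\ $\tau_\alpha=c_\alpha[V_\alpha]$ with $c_\alpha\ge 0$. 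Setting $d\mu(\alpha):=c_\alpha\,d\nu(\alpha)$ we obtain $T=\int_{T_U}[V_\alpha]\,d\mu(\alpha)$ in the flow box. Running the same computation with $dd^cT=0$ in place of $dT=0$ would give only $\partial\bar\partial\tau_\alpha=0$, i.e.\ a positive harmonic weight $h_\alpha$; it is the full closedness that forces the weight to be constant and therefore absorbable into the transverse measure.

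It remains to check holonomy invariance and the converse. On the overlap of two flow boxes the holonomy $\gamma$ carries plaques to plaques, and since $T$ is a single global current its two local decompositions must agree; by the essential uniqueness of the disintegration this forces $\gamma_*\mu=\mu$, so $\mu$ is a holonomy invariant transverse measure. For the converse, given such a $\mu$ one defines $T$ in each flow box by $\int_{T_U}[V_\alpha]\,d\mu(\alpha)$; holonomy invariance makes these agree on overlaps, so they patch to a global positive directed current, and closedness is a direct computation: choosing a partition of unity $\{\chi_i\}$ subordinate to a cover by flow boxes, each term of $\langle T,d\phi\rangle$ produces a plaque-boundary integral, which vanishes since $\chi_i$ has compact support in its box, together with a term $\int\bigl(\int_{V_\alpha}d\chi_i\wedge\phi\bigr)\,d\mu$; regrouping these leafwise and using $\sum_i d\chi_i\equiv 0$ together with the holonomy invariance of $\mu$ makes the total vanish.

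The main obstacle is the very first step: the legitimacy of the slice decomposition $T=\int T_\alpha\,d\nu(\alpha)$ with each $T_\alpha$ carried by an individual plaque. This is precisely the point at which transversal $\mathcal C^1$ smoothness is indispensable — without it the transverse projection need not be regular enough to slice $T$, and the conclusion is genuinely sensitive to the transverse regularity. Once the decomposition is in hand, the identification of the slices via closedness and the bookkeeping for holonomy invariance are comparatively routine.
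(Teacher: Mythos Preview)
Your route is different from the paper's, and there is a real gap at the ``localizing'' step which, once repaired, turns out to need exactly the computation the paper makes central.

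The paper does not disintegrate $T$ first and then analyze the pieces. It observes directly that for any $\mathcal C^1$ function $\chi$ on the transversal, the current $(\chi\circ\pi)T$ is again closed and weakly directed: since $\pi$ is $\mathcal C^1$ one has $d\bigl((\chi\circ\pi)T\bigr)=d(\chi\circ\pi)\wedge T$, and $d(\chi\circ\pi)$ lies pointwise in the span of the $\gamma_j,\overline{\gamma}_j$, so weak directedness kills it. This single line shows that the convex cone of closed weakly directed currents in a flow box is stable under multiplication by transverse cutoffs; hence its extreme rays are supported on individual plaques (thus are multiples of $[V_\alpha]$), and Choquet's integral representation theorem delivers the decomposition at once. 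Your disintegration approach is a reasonable alternative, but the paper's is shorter and isolates the one place the hypotheses are used.

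The gap in your argument is the sentence ``localizing $\psi$ near an arbitrary plaque''. From
\[
\int_{T_U}\bigl\langle\tau_\alpha,\,d(\psi|_{V_\alpha})\bigr\rangle\,d\nu(\alpha)=0
\]
you cannot conclude the integrand vanishes $\nu$-a.e.\ unless you can insert an arbitrary transverse weight $g(\alpha)$. Doing so means testing $T$ against $d\bigl((g\circ\pi)\psi\bigr)=(g\circ\pi)\,d\psi+d(g\circ\pi)\wedge\psi$, and the second term must be shown to pair to zero with $T$. It does, precisely because $d\pi\wedge T=0$ --- i.e.\ by weak directedness together with the $\mathcal C^1$ regularity of $\pi$. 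So the missing ingredient is exactly the paper's key step, and the transverse $\mathcal C^1$ hypothesis enters here, not in any slicing. A related smaller point: ``slicing theory for normal currents'' is not the right invocation for your first step. Slicing a $2$-dimensional current by a map to a $(2k-2)$-dimensional transversal yields $0$-dimensional slices when $k=2$ and is undefined for $k\ge 3$; it never produces bidimension-$(1,1)$ pieces $T_\alpha$. What you actually want is ordinary measure disintegration, available once weak directedness has reduced $T$ to the single positive coefficient measure of $i\,dz\wedge d\bar z$ in flow-box coordinates, and that step needs no regularity of $\pi$ beyond measurability.
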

 
 \begin{proof} We show the last part of the Theorem first.
 Recall that a measure $\mu$ is holonomy invariant if $\mu(\gamma(A))=\mu(A)$ for every Borel set $A$ on a transversal and $\gamma$ is the sliding along leaves from a transversal to another one in a flow box. More formally let ${\mathcal U}=(U_i)_{i\in I}$ be an atlas. Let $\Gamma$ denote the pseudogroup of holonomy transformations acting on transversals of ${\mathcal L},$ then $\mu(\gamma(A))=\mu(A)$ when defined. Let $\mu$ be a transverse measure. For a smooth $(1,1)$ form $\phi$ we define
 $$
 <T_\mu,\phi>=\sum_{i\in I}\int_{S_i}<[V_\alpha],\lambda_i \phi>d\mu_i(\alpha).
 $$
 Here $(\lambda_i)$ is a partition of unity associated to the covering $(U_i), S_i$ is a transversal in $U_i$, and $<[V_\alpha],\lambda_i\phi>$ denotes the integration on the plaque $V_\alpha.$ It is easy to check that the definition is independent of the partition of unity. Hence $T_\mu$ is closed.
 
 \medskip
 
 We next show that a positive closed weakly directed current is defined by a holonomy invariant measure.
 It is sufficient to analyze the structure of positive closed weakly directed currents in a flow box $B$ of a ${\mathcal C}^1$ lamination.
 
 \medskip
 
 Let $S$ be a transversal and denote by $\pi$ the projection on the transversal $S$. We assert that the extreme rays of the convex cone of weakly directed positive closed currents are given by integration on plaques: Let $\chi$ be a smooth function with compact support on $S$. Since $\pi$ is ${\mathcal C}^1$ then $d((\chi \circ \pi)T)=\chi'(\pi) d\pi \wedge T.$ But $d \pi$ is pointwise in the span of $(\gamma_j,\overline{\gamma}_j)$ which defines the lamination so $d\pi \wedge T =0.$ Hence $(\chi \circ \pi )T$ is a weakly directed closed current. The assertion follows. The representation of $T$ given in the theorem is then a consequence of Choquet's integral representation theorem.

 \end{proof}

  For laminations by Riemann surfaces in higher dimension, the theorem is false
if we drop the condition of transverse smoothness. In fact the laminated set given by the union of the complex curves $\gamma_t(s)=(s,(s-t)^2,(s-t)^3)$ have the property that the current of integration of the $z$ axis is weakly directed but not directed, see \cite{FWW2007a}. Transversally, the above lamination is only H\"{o}lder continuous. Also this paper gives a proof of the theorem in the case of surfaces. In this case the lamination is given by holomorphic motion and this gives enough transverse regularity to use an interpolation argument.

 \begin{theorem} (Plante) Let $(X,\mathcal L,E)$ be a lamination by Riemann surfaces in a compact complex manifold $M.$ Suppose there is a sequence of relatively open smoothly bounded domains $D_n \subset\subset L_n$, $L_n\subset X$ Riemann surfaces weakly directed by $\mathcal L,$ such that $\frac{{\mbox{vol}}(\partial D_n)}{{\mbox{vol}}( D_n)}\rightarrow 0.$ Then there
 is a weakly directed  closed current $T$ supported on  $\overline{\cup D_n}.$  It the $L_n$ are locally contained in leaves outside of $E,$ then $T$ is directed by the lamination.
 \end{theorem}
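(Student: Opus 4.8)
The plan is to run Plante's averaging construction. Set $c_n:={\mbox{vol}}(D_n)$ and form the normalized integration currents $T_n:=c_n^{-1}[D_n]$. Since $D_n$ is an open subset of the Riemann surface $L_n$, Wirtinger's identity gives ${\mbox{vol}}(D_n)=\int_{D_n}\omega$ and identifies ${\mbox{vol}}(\partial D_n)$ with the mass of the current $[\partial D_n]$, so each $T_n$ is a positive current of bidimension $(1,1)$ with $\int_M T_n\wedge\omega=1$. By weak-$*$ compactness of mass-bounded families of currents, a subsequence of $(T_n)$ converges to a positive current $T$, and I will check that $T$ has all the required properties.

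First, $T$ is closed: as $D_n$ is smoothly bounded in $L_n$, Stokes' theorem gives $d[D_n]=[\partial D_n]$, hence $dT_n=c_n^{-1}[\partial D_n]$ has mass ${\mbox{vol}}(\partial D_n)/{\mbox{vol}}(D_n)\to 0$, and since exterior differentiation is continuous for the weak topology, $dT=\lim dT_n=0$. Next, $T\neq 0$: because $M$ is compact no mass escapes, so pairing with the fixed form $\omega$ gives $\int_M T\wedge\omega=\lim_n\int_M T_n\wedge\omega=1$. Moreover ${\mbox{supp}}\,T\subset\overline{\bigcup_n D_n}$, since any test form supported off this set pairs to zero with every $T_n$. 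Finally $T$ is weakly directed: each $L_n$ is weakly directed, so $T_n\wedge\gamma_j=0$ for all $n$ and $j$, and as the $\gamma_j$ are continuous forms this passes to the limit, $T\wedge\gamma_j=0$. This settles the first assertion.

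For the second assertion, assume that away from $E$ each $L_n$ is a union of plaques, and fix a flow box $B$ disjoint from $E$, with transversal $\Delta$ and plaques $\{V_\alpha\}_{\alpha\in\Delta}$; it suffices to describe $T$ on a slightly smaller box. Since $D_n$ is open in $L_n$ and $L_n\cap B$ is a union of plaques, $D_n\cap B$ is a disjoint union of open subsets of plaques; it has finite area, so only countably many of the pieces $D_{n,\alpha}:=D_n\cap V_\alpha$ are nonempty and $[D_n\cap B]=\sum_\alpha[D_{n,\alpha}]$. Writing $[D_{n,\alpha}]=\mathbf 1_{D_{n,\alpha}}[V_\alpha]$ and renormalizing, put $\mu_n:=c_n^{-1}\sum_\alpha{\mbox{area}}(D_{n,\alpha})\,\delta_\alpha$, a positive measure on $\Delta$ of mass $\leq 1$, and $h_{n,\alpha}:={\mbox{area}}(D_{n,\alpha})^{-1}\mathbf 1_{D_{n,\alpha}}$, so that $T_n|_B=\int_\Delta h_{n,\alpha}\,[V_\alpha]\,d\mu_n(\alpha)$. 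Passing to a subsequence with $\mu_n\to\mu$ weakly on $\Delta$ and extracting, by a diagonal argument, weak limits $h_\alpha$ of appropriate $\mu_n$-averages of the $h_{n,\alpha}$, one expects $T|_B=\int_\Delta h_\alpha\,[V_\alpha]\,d\mu(\alpha)$, which is precisely the condition for $T$ to be directed; closedness of $T$ then pins down the $h_\alpha$ further and makes $\mu$ a holonomy invariant transverse measure, recovering Plante's original conclusion.

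The construction of $T$ and the verifications that it is closed, nonzero, supported where claimed and weakly directed are routine manipulations with Stokes' theorem and weak limits. The genuine difficulty is the last step. Weak directedness together with closedness does \emph{not} by itself force directedness — the example discussed earlier (see \cite{FWW2007a}), the current of integration over the $z$-axis, is closed and weakly directed but not directed — so one must really use that the $L_n$ are locally unions of plaques and, above all, control how the transversal mass is distributed in the limit: one has to rule out that the ``partial plaque'' contributions $D_{n,\alpha}\neq V_\alpha$ and the measures $\mu_n$ conspire to spread mass transversally. That bookkeeping inside a flow box is the heart of the argument.
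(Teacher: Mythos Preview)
Your treatment of the first assertion is correct and coincides with the paper's: define $T_n=c_n^{-1}[D_n]$, use Stokes to see $dT_n\to 0$, and pass to a weak limit which is nonzero, supported where claimed, and weakly directed because the $\gamma_j$ are continuous.

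The gap is in the second assertion. You set up the decomposition $T_n|_B=\int_\Delta h_{n,\alpha}[V_\alpha]\,d\mu_n(\alpha)$ and propose to extract weak limits of $\mu_n$ and of the $h_{n,\alpha}$ by a diagonal argument, but you then concede that the ``partial plaque'' contributions are the heart of the matter and leave that bookkeeping undone. As written, nothing prevents the weak limits $h_\alpha$ from being genuine sub-plaque densities (limits of characteristic functions are only in $[0,1]$, not $\{0,1\}$), so your outline does not yet produce a directed current.

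The paper supplies exactly the missing estimate. First one fills in all ``local holes'': if a boundary component of $D_n$ lies entirely in a single plaque, add the enclosed disc to $D_n$; this increases ${\mbox{vol}}(D_n)$ and decreases ${\mbox{vol}}(\partial D_n)$, so the hypothesis is preserved. Then one shrinks the flow box. After these two moves, any plaque $V_\alpha$ of the shrunk box that is only \emph{partially} covered by $D_n$ must carry an arc of $\partial D_n$ that enters and exits the box, hence has length bounded below by a constant depending only on the box geometry, i.e.\ comparable to the area of $V_\alpha$. Summing over all partial plaques, their total area is $\leq C\,{\mbox{vol}}(\partial D_n)$, which is $o({\mbox{vol}}(D_n))$. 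Thus the partial plaques may be discarded in the limit, and what remains is a sum of full plaques --- a directed current. This length-versus-area comparison is the step your proposal is missing.
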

 
 \begin{proof}
 Define for a test form $\phi$:
 $$
 T_n(\phi):= \frac{1}{{\mbox{vol}}(D_n)}\int_{D_n}\phi.
 $$
 Then 
 \bea
 |T_n(d\eta)| & = & \left| \frac{1}{{\mbox{vol}}(D_n)}\int_{\partial D_n}\eta\right|\\
 & \leq & \|\eta\|_\infty 
\frac{{\mbox{vol}}(\partial D_n)}{{\mbox{vol}}( D_n)}\rightarrow 0\\
\eea

Clearly $(T_n)$ has bounded mass.
So any cluster point $T$ of the sequence $(T_n)$ is a closed current weakly directed by the foliation. 
Assume next that $L_n$ is locally contained in leaves outside of $E.$ Notice that if a boundary component of $D_n$ belongs to one plaque, then we can increase $D_n$ by filling in the hole. Filling in all such local holes increases $vol(D_n)$ and decreases $vol(\partial D_n)$ . We assume below, that the local holes have been  filled. We also shrink the plaques.

To show that $T$ is directed, note that in any flow box the current $T_n$ is given by a finite number of plaques. It the total mass of the plaques in the flow box goes to zero, the limit $T$ is identically zero there.
If the mass does not go to zero, the fraction of plaques that is only partially contained in $D_n$ goes to zero. This is because if a plaque is only partially contained in the shrunk $D_n$, there is a boundary piece with length at least comparable to the area of the plaque. Hence when we pass to limits we can suppress the plaques which are not
completely contained in $D_n$. Hence the limit is directed.

\end{proof}

 \begin{remark}
 The previous result implies that when a lamination has no directed positive closed current,
 then leaves satisfy the following growth condition. Given a leaf $L,$ fix $p\in L$ and consider
 $D(r)$, the disc of center $p$ and radius $r,$ on the leaf, with respect to a fixed Hermitian
 metric on the manifold. Then Plante's theorem implies that
 $$
 \underline{\lim}_{r \rightarrow \infty} \frac{{\mbox{vol}}(\partial D(r))}{{\mbox{vol}}( D(r))}
 =\alpha_L>0.
 $$
 In particular there is a constant $C=C_L$ such that
vol$(D(r))\leq C$ vol$(\partial D(r)).$
 It follows that the areas of the leaves grow at least exponentially
\cite{MP1974}.
 \end{remark}

 \begin{corollary}
 Let $(X,\mathcal L,E)$ be a laminated compact in $M.$ Let $\phi:\mathbb C \rightarrow X $ be a non constant holomorphic map. Assume $$<\gamma_i(\phi(\zeta)),\phi'(\zeta)>=0,\; \zeta \in \mathbb C.$$ Then there is a nontrivial weakly directed positive closed current $T,$ for the lamination $(X,{\mathcal L},E)$ defined by the $(\gamma_i).$ If in addition we make the hypothesis that $\phi(\mathbb C)$ is locally contained in a leaf outside of $E$, then there is a nontrivial positive closed directed current.
 \end{corollary}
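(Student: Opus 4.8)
The plan is to produce $T$ as an Ahlfors--Nevanlinna limit of normalized currents of integration over the discs $\phi(\Delta_r)$, and then to appeal to the theorem of Plante above for directedness. Fix the Hermitian form $\omega$ on $M$, and for $r>0$ set $A(r)=\int_{\Delta_r}\phi^*\omega$ (the area of $\phi(\Delta_r)$ counted with multiplicity) and let $L(r)$ be the Riemannian length of $\phi(\partial\Delta_r)$; since $\phi$ is non-constant, $A$ is increasing and $A(r_0)>0$ for every $r_0>0$. First I would prove the length--area inequality $L(r)^2\le 2\pi r\,A'(r)$: in polar coordinates $\phi^*\omega=|\phi'|_\omega^2\,\tfrac{i}{2}\,d\zeta\wedge d\bar\zeta$ gives $A'(r)=\int_0^{2\pi}|\phi'(re^{i\theta})|_\omega^2\,r\,d\theta$ while $L(r)=\int_0^{2\pi}|\phi'(re^{i\theta})|_\omega\,r\,d\theta$, and Cauchy--Schwarz finishes it. Dividing by $rA(r)^2$ and integrating yields
\[
\int_{r_0}^{\infty}\frac{L(r)^2}{r\,A(r)^2}\,dr\ \le\ 2\pi\int_{r_0}^{\infty}\frac{A'(r)}{A(r)^2}\,dr\ \le\ \frac{2\pi}{A(r_0)}\ <\ \infty,
\]
so $\liminf_{r\to\infty}L(r)/A(r)=0$ (otherwise the integrand would bound a positive multiple of $1/r$ near infinity). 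I would then fix radii $r_n\to\infty$ with $L(r_n)/A(r_n)\to 0$.

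Next I would set $\langle T_n,\psi\rangle:=\frac{1}{A(r_n)}\int_{\Delta_{r_n}}\phi^*\psi$; these are positive currents of bidimension $(1,1)$ with mass $\langle T_n,\omega\rangle=1$, so after passing to a subsequence $T_n\to T$ weakly, with $T\ge 0$ of bidimension $(1,1)$, supported on the closed set $X\supseteq\phi(\mathbb C)$, and $\langle T,\omega\rangle=1$, hence $T\neq 0$. Closedness is Stokes: $\langle T_n,d\eta\rangle=\frac{1}{A(r_n)}\int_{\partial\Delta_{r_n}}\phi^*\eta$ has modulus at most $\|\eta\|_\infty\,L(r_n)/A(r_n)\to 0$, so $\langle T,d\eta\rangle=0$. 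For weak directedness, the tangency hypothesis is precisely $\phi^*\gamma_i=\langle\gamma_i(\phi(\zeta)),\phi'(\zeta)\rangle\,d\zeta\equiv 0$, so $\langle T_n\wedge\gamma_i,\psi\rangle=\frac{1}{A(r_n)}\int_{\Delta_{r_n}}\phi^*\gamma_i\wedge\phi^*\psi=0$ for every $n$; since the $\gamma_i$ are continuous and the $T_n$ are uniformly bounded positive currents, the products converge and $T\wedge\gamma_i=0$. This settles the first assertion.

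Finally, under the extra hypothesis that $\phi(\mathbb C)$ is locally contained in leaves outside $E$, I would invoke the directedness half of Plante's theorem, viewing $(\Delta_{r_n},\phi)$ as a relatively compact weakly directed immersed disc $D_n$ in $X$ that is locally contained in leaves, with $\mathrm{vol}(\partial D_n)/\mathrm{vol}(D_n)\le L(r_n)/A(r_n)\to 0$, and whose generated limit current is exactly the $T$ above; Plante's theorem then gives that $T$ is directed. Alternatively, one may rerun the flow-box argument from that proof verbatim: in a flow box $T_n$ is a finite sum of pieces of plaques, those plaques not entirely contained in $D_n$ each carry a boundary arc of length comparable to their area and so contribute vanishing mass in the limit, leaving $T$ with a plaque decomposition. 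The step I expect to cost the most is the radius selection---the Ahlfors-type calculus lemma $\liminf L(r)/A(r)=0$---together with the mild bookkeeping needed to treat a possibly non-injective $\phi$ inside Plante's embedded-domain framework; given the earlier results, the verifications that $T$ is closed, nonzero and weakly directed are essentially formal.
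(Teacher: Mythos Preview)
Your proof is correct and follows essentially the same route as the paper: the Ahlfors length--area inequality $L(r)^2\le 2\pi r\,A'(r)$ combined with the finiteness of $\int A'/A^2$ to extract radii $r_n$ with $L(r_n)/A(r_n)\to 0$, followed by Plante's theorem. The only cosmetic difference is that the paper phrases the radius selection as showing the set $E_c=\{r:L(r)/A(r)\ge c\}$ has $\int_{E_c}dr/r<\infty$, while you integrate $L^2/(rA^2)$ directly; your version is slightly cleaner.
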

 
\begin{proof}
We recall that the $\gamma_j$ vanish on $E.$
Let $D_r=\phi(\Delta_r)$ where $\Delta_r$ denotes the disk in $\mathbb C$ centered at zero with radius $r.$ Let $A(r):= {\mbox{Area}}(D_r), L(r):= {\mbox{Length}}(\partial D_r).$ It is enough, by the weakly directed case of Plante's theorem, to show the existence of a sequence $r_i \rightarrow \infty$ such that $\lim_{i} \frac{L(r_i)}{A(r_i)}\rightarrow 0.$ Suppose on the contrary that $\underline{\lim}_{r \rightarrow \infty} \frac{L(r)}{A(r)}=C>0.$ We will get a contradiction using an inequality due to Ahlfors. We have by Schwarz's inequality

$$
L(r)=r\int_0^{2\pi} |\phi'(re^{i\theta})|d\theta \leq r\left(\int_0^{2\pi} |\phi'(re^{i \theta}|^2d\theta\right)^{\frac{1}{2}}\sqrt{2\pi}.
$$

On the other hand, 

$$
\frac{dA(r)}{dr}=r\int_0^{2\pi}|\phi'(re^{i \theta})|^2d\theta.
$$

So $L^2(r) \leq 2\pi r \frac{dA(r)}{dr}.$ For each $c>0,$ define $E_c:=\{r; \frac{L(r)}{A(r)}\geq c\}.$ 
We get on $E_c$

\bea
\frac{dA}{dr} & \geq & \frac{L^2(r)}{2\pi r}\\
& \geq & \frac{c^2}{2\pi r}{A^2(r)}\\ 
& \Rightarrow &\\
\frac{A'}{A^2} & \geq & \frac{c^2}{2\pi r}, r \in E_c\\
& \Rightarrow &\\
\int_{E_c} \frac{dr}{r} & \leq & \frac{2\pi}{c^2}\int_{E_c}\frac{A'}{A^2}\\
&  \leq & \frac{2\pi}{c^2}\int_{0}^\infty \frac{A'}{A^2}\\
& \leq & \frac{2\pi}{c^2 A(0)}.\\
\eea

So $E_c$ cannot contain a neighborhood of infinity. The other assertions follow from the directed case of Plante's theorem.

\end{proof}

\begin{example}
Let $X_c=\overline{\{(z,w); |z|=c|w|^\lambda\}}$ with $\lambda \in \mathbb R,$ where the closure
is in $\mathbb P^2$. Then $X_c$ has many images of $\mathbb C$, $\phi(\zeta)=
(ce^{\lambda \zeta}, e^\zeta e^{i\theta}).$ Let $\pi$ denote the projection from $\mathbb C^3\setminus 0
\rightarrow \mathbb P^2.$ The current $T$ defined by $\pi^*T=i\partial \overline{\partial} u,$
with $u(z,w,t)= \log (\max\{|z||t|^{\lambda-1},c|w|^\lambda\}),$ (assuming $ \lambda>1$) is directed by the lamination.
If $\lambda$ is irrational then it can be shown that it is up to multiplication by a constant the unique positive closed current supported by $X_c.$ The holomorphic foliation of $\mathbb P^2$ associated to $\alpha=\lambda zdw- wdz, \lambda \in \mathbb R$ admits a continuous family of directed positive closed currents. In the presence of a transverse measure, i.e. of a positive closed directed current, the dynamics is quite clear. But most foliations do not admit a directed positive closed current.

We can construct similar examples for foliations of higher degree. Let $F$ be a holomorphic map of algebraic degree $d$. Then for generic F, $F ^*\alpha$ defines a holomorphic foliation of degree $2d-1$, which has also infinitely many positive closed directed currents.
\end{example}

\begin{proposition}
Let $(X,\mathcal L,E)$ be a lamination with singularities in a compact complex manifold $\mathcal M$.
Assume $\Lambda^1(E)=0$ or $\Lambda^2(E)=0$ and $E$ is locally complete pluripolar.
Suppose $L$ is a leaf which is closed in $X \setminus E,$ then $\overline{L}$ is an analytic set.
\end{proposition}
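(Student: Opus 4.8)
\emph{Proof idea.} The plan is to work locally on $M$ and turn the statement into a removable singularities problem for the analytic curve underlying $L$. I would first record two elementary facts. Since $X$ is compact, $\overline L\subseteq X$; and since $L$ is closed in $X\setminus E$, any point of $\overline L$ lying outside $E$ already belongs to $L$, so $\overline L\setminus L\subseteq E$. Secondly, $L$ is a pure $1$-dimensional complex analytic subset of $M\setminus E$: around any point of $X\setminus E$ a flow box exhibits $L$ as a union of plaques that is locally finite (finiteness because $L$ is closed there), and each plaque is a holomorphic curve in $M$. Hence the task reduces to showing that, in each coordinate ball $\Omega\subset M$, the closure of the analytic curve $L\cap(\Omega\setminus E)$ is analytic in $\Omega$, after which a patching argument over a finite cover of $M$ finishes the proof. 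When $\Lambda^1(E)=0$ this last step is immediate: with $p=1$ one has $\Lambda^{2p-1}(E)=0$, so Shiffman's removable singularity theorem for pure-dimensional analytic sets applies verbatim. So I concentrate on the other case.

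Assume now $\Lambda^2(E)=0$ and $E$ locally complete pluripolar, so that $\Lambda^1(E)$ may well be positive and Shiffman is unavailable. The plan is to pass to the positive closed current of bidimension $(1,1)$ given by integration over $L$ on $\Omega\setminus E$, call it $[L]$, and to extend it across $E$. The crucial point, and the one I expect to be the main obstacle, is to prove that $[L]$ has locally finite mass near $E$, equivalently that $L$ has locally finite area in $\Omega$: a priori an analytic curve that is closed only in $\Omega\setminus E$ could wind around $E$ with infinite area, and nothing in the lamination structure by itself prevents this near $E$.

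To control the area I would use exactly the hypothesis that $E\cap\Omega$ is complete pluripolar. Choose a plurisubharmonic function $u\le 0$ on $\Omega$ with $E\cap\Omega\subseteq\{u=-\infty\}$, cut off $[L]$ by functions of the form $\chi_\varepsilon\circ u$, where the $\chi_\varepsilon$ are increasing and concave near $-\infty$, chosen so that $\chi_\varepsilon\circ u$ vanishes near $E$ and increases to $1$ on $\Omega\setminus E$, and integrate by parts against $dd^c$ of a local K\"ahler potential. Plurisubharmonicity of $u$ together with the concavity of $\chi_\varepsilon$ is precisely what keeps the negative part of $dd^c(\chi_\varepsilon\circ u)$ under control, and from this one extracts a bound for the mass of $[L]$ on relatively compact subsets of $\Omega$ that is uniform in $\varepsilon$; letting $\varepsilon\to0$ gives local finiteness of the mass. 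This El Mir type cut-off estimate is where essentially all the work sits.

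Granting local finiteness of mass, the Skoda--El Mir theorem on extension of positive closed currents across complete pluripolar sets shows that the trivial extension $\widetilde{[L]}$ is a positive closed current $T$ of bidimension $(1,1)$ on $\Omega$, and patching (together with the fact, recalled in the text, that positive closed currents charge no set of vanishing $\Lambda^2$) produces such a $T$ on all of $M$ with $T=[L]$ on $M\setminus E$. To conclude I would invoke Siu's theorem on analyticity of Lelong number upper level sets: $E_1(T)=\{x:\nu(T,x)\ge 1\}$ is an analytic subset of $M$ of dimension $\le 1$. Since $T=[L]$ on $M\setminus E$ and $L$ is smooth of pure dimension one, $E_1(T)\cap(M\setminus E)=L$, hence $\overline L\subseteq E_1(T)$, and in fact $L$ is contained in the union of the $1$-dimensional irreducible components of $E_1(T)$. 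Conversely, because $\Lambda^2(E)=0$, no $1$-dimensional component $C$ of $E_1(T)$ can lie inside $E$, so $C\setminus E$ is dense in $C$ and $C=\overline{C\setminus E}\subseteq\overline L$. Therefore $\overline L$ equals the union of the $1$-dimensional irreducible components of $E_1(T)$, which is an analytic set, completing the argument.
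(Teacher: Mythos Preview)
Your proposal is correct and follows essentially the same route as the paper: pass to the integration current $[L]$ on $M\setminus E$, extend it across $E$ as a positive closed current (Harvey/Shiffman when $\Lambda^1(E)=0$; a mass bound from the complete pluripolar hypothesis plus Skoda--El~Mir/Bishop otherwise), and then invoke Siu's analyticity of Lelong number superlevel sets. The paper cites \cite{FS1995} for the locally-finite-mass step rather than spelling out the cut-off argument you sketch, and it appeals to Harvey rather than Shiffman in the first case, but the architecture is the same; your final paragraph handling the possible $0$-dimensional components of $E_1(T)$ is in fact more careful than the paper's one-line conclusion $\overline L=L'$.
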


\begin{proof}
Consider $[L]$, the current of integration on $L.$ It is a positive closed current of bidimension $(1,1)$ in $\mathcal M\setminus E.$ Under the hypothesis it follows that the trivial extension $\tilde{[L]}$ of $[L]$ is still positive and closed. If $\Lambda^1(E)=0$ this is a consequence of Harvey's theorem \cite{H1974}, otherwise we use \cite{FS1995} to show that the mass of $L$ is bounded near E, a version of Bishop's Theorem \cite{De 1997}  gives the result.

Let $\nu(\tilde{[L]},a)$ denote the Lelong number at $a.$ According to Siu's theorem \cite{S1974}

$$
L' =\{a; \nu(\tilde{[L]},a) \geq 1\}
$$

\noindent is an analytic set in $\mathcal M$, which contains $L$. Hence
$\overline{L}=L'$ and $\overline{L}$ is algebraic.

\end{proof}
\medskip

\begin{theorem}
Let $(X,\mathcal L,E)$ be a $\mathcal C^2$ lamination with singularities in $\mathbb P^2.$ If a leaf is contained in an algebraic curve $A$ and if  X contains a neighborhood of A, then $A \cap E \neq \emptyset.$
\end{theorem}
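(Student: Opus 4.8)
The plan is to argue by contradiction: assume $A \cap E = \emptyset$. Then $A$ is a compact algebraic curve sitting inside the open set $X \setminus E$, and since $X$ contains a neighborhood of $A$, the lamination $\mathcal L$ extends smoothly (class $\mathcal C^2$) across a full neighborhood of $A$ in $\mathbb P^2$. In particular every point of $A$ is a regular (non-singular) point of the lamination, and near each point of $A$ the leaf structure is a genuine $\mathcal C^2$ Riemann-surface lamination with $A$ itself as one of its leaves (or a union of plaques forming a leaf). The idea is that this forces the existence of a non-trivial positive closed directed current supported on (a neighborhood of) $A$, and then to derive a contradiction from the normal-bundle geometry of $A$ in $\mathbb P^2$, where every algebraic curve has strictly positive self-intersection.

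First I would produce the directed current. Since $A$ is a compact leaf (an algebraic curve), the current of integration $[A]$ is a positive closed current of bidimension $(1,1)$, it is supported in $X \setminus E$, and by construction it is directed by the lamination in every flow box near $A$ — it is exactly one plaque-measure $\delta_{\alpha_0}$ in Definition 4's normal form. So $T := [A]$ is a non-trivial positive closed directed current for $(X,\mathcal L,E)$ whose support misses $E$. (Alternatively one can invoke Plante's theorem, Theorem 5, applied to the compact leaf $A$ with $D_n = A$, $\partial D_n = \emptyset$, which gives the same $T$ directly in directed form.) The point of phrasing it this way is that $T$ carries the holonomy-invariant transverse structure of the lamination near $A$.

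The contradiction comes from the transverse measure / self-intersection. Because the lamination is transversally $\mathcal C^1$ (indeed $\mathcal C^2$) in a neighborhood of $A$ and $A$ is a compact leaf, Sullivan's structure theorem (Theorem 3, the second one in the excerpt) says the directed current near $A$ is governed by a holonomy-invariant transverse measure $\mu$. Near the compact leaf $A$ the holonomy pseudogroup is generated by the global holonomy of $A$, which acts on a transverse disc fixing the point corresponding to $A$; a finite holonomy-invariant measure supported near that fixed point must then be supported at the fixed point itself unless the holonomy is trivial. In the latter case the lamination would be, near $A$, a product $A \times (\text{transverse disc})$, i.e. $A$ would move in a holomorphic family of compact curves, forcing the normal bundle $N_A$ to be trivial (or at least to have a non-trivial holomorphic section), contradicting $A \cdot A > 0$ for any algebraic curve in $\mathbb P^2$. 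In the former case, $T = c[A]$ is the whole directed current near $A$, but then $[A]$ being closed with $A \subset \mathbb P^2$ algebraic gives $\{[A]\} \cdot \{[A]\} = \deg(A)^2 > 0$, while the laminar/holonomy structure (the fact that nearby leaves, if any genuinely occur in $X$, wrap around $A$) is incompatible with a positive self-intersecting curve being a limit of its own nearby leaves — concretely, a compact leaf of a $\mathcal C^2$ Riemann surface lamination in a surface, with nearby leaves in the lamination, must have a normal bundle admitting a flat (unitary) structure, hence degree zero, again contradicting $\deg(A)^2 > 0$.

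The main obstacle is the last step: cleanly ruling out the case where $A$ is an isolated compact leaf, i.e. where the lamination near $A$ consists of $A$ together with leaves that do not accumulate on $A$ in a laminated way — here "$X$ contains a neighborhood of $A$" is essential, since it forces the transverse model near $A$ to be a genuine lamination of a full 2-disc worth of transversal, and then the holonomy of $A$ acts on that transversal with $A$ as a fixed point while preserving a lamination by curves; the Camacho-Sad index formula (or the classical fact that the normal bundle of a compact leaf of a transversely holomorphic codimension-one foliation near it must be flat when the leaf is not "attracting/repelling") then gives $\deg(N_A) = 0$, i.e. $A \cdot A = 0$, contradicting B\'ezout. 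I would expect the write-up to route this through either the Camacho-Sad separatrix index or through the observation that $[A]$ directed and closed plus $A$ algebraic forces a flat normal bundle, and to use $\mathbb P^2$ precisely at the point "no algebraic curve in $\mathbb P^2$ has zero self-intersection."
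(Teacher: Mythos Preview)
Your intuition is right: the contradiction must come from the degree of the normal bundle of $A$, which is $\deg(A)^2>0$ in $\mathbb P^2$, being incompatible with $A$ sitting as a compact leaf inside a $\mathcal C^2$ lamination of a full neighborhood. But your route to that incompatibility does not close. The holonomy dichotomy is not a proof: the holonomy of $A$ is only a $\mathcal C^2$ germ of a self-map of a transversal (the lamination is not assumed transversally holomorphic), so Camacho--Sad does not apply; and ``trivial holonomy $\Rightarrow$ product $\Rightarrow$ trivial normal bundle'' breaks because the product is only $\mathcal C^2$ in the transverse parameter, so the nearby holomorphic leaves are not a holomorphic family and do not hand you a holomorphic section of $N_A$. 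In the nontrivial-holonomy branch you simply assert that the normal bundle is flat; that is exactly the statement to be proved. Producing the directed closed current $[A]$ is fine but does not by itself give the contradiction, and the later results in the paper about directed closed currents (Theorems 17 and 18) do not help here, since $[A]$ is precisely the allowed exception.

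The paper obtains the ``flat normal bundle'' conclusion directly by a Bott-type vanishing argument, and this is where the $\mathcal C^2$ hypothesis is actually used. From the $\mathcal C^2$ transverse transition functions $t_j=t_j(t_i)$ one has a cocycle $t_{ij}=dt_j/dt_i$ constant on plaques; with a partition of unity one builds a $\mathcal C^1$ connection $1$-form $\gamma_i=\sum_j\chi_j\,dt_{ij}/t_{ij}$ on the conormal bundle of the lamination. Its curvature $\Omega=\frac{1}{2\pi i}\,d\gamma_i$ is a globally defined closed continuous $2$-form on a neighborhood of $A$ representing the Chern class of the conormal bundle of $A$, hence cohomologous to $c\,\omega$ with $c\neq 0$. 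The key point is that $\gamma_i$ is a multiple of $dt_i$, so it pulls back to zero on every plaque, and therefore $\Omega$ pulls back to zero on $A$. Pairing with $[A]$ gives
\[
c\int_A\omega \;=\; \langle [A],\,c\omega\rangle \;=\; \langle [A],\,c\omega-\Omega\rangle \;=\; -\langle [A],\,d\gamma\rangle \;=\; 0,
\]
a contradiction. This is the clean way to turn ``compact leaf of a $\mathcal C^2$ lamination'' into ``Chern number zero'', and it bypasses holonomy entirely.
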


\begin{proof}
 The proof is by contradiction, so assume that $A \cap E=\emptyset.$
We introduce first the conormal bundle associated to a lamination $(X,\mathcal L,E),$ transversally $\mathcal C^2$ on a surface $M.$ The construction is classical.

We can cover $X \setminus E$ by a locally finite collection of charts $(U_i)$ with $\phi_i:U_i \rightarrow \Delta \times T_i,
{\mathcal C}^2$ homeomorphisms. The change of coordinates is of the form
$z_j=h(z_i,t_i), t_j=t_{j}(t_i).$ The plaques are given by $t_i= c^{\mbox{te}}.$ In particular $dt_j= t_{ij}dt_i$ with $t_{ij}$ constant on plaques
in $U_i \cap U_j.$ Clearly $t_{ij}t_{jk}t_{ki}=1$ on $U_i \cap U_j \cap U_k$. So the constants $c_{ijk}=\frac{1}{2\pi i}
(\log t_{ij}+\log t_{jk}+ \log t_{ki})$ define an element of $H^2(X\setminus E, \mathbb Z).$ Let $(\chi_i)$ be a partition of unity associated to the cover $(U_i).$ Define on $U_i$ the $\mathcal C^1,$ $1-$ form
$$
\gamma_i= \sum_j \chi_j \frac{dt_{ij}}{t_{ij}}.
$$
  Since $(t_{ij})$ is a multiplicative co-cycle, we have that
$\gamma_i-\gamma_j=\frac{dt_{ij}}{t_{ij}}$ and $(\gamma_i)$ define a continuous $1-$ form on $U_i$, which satisfies $\gamma_i \wedge dt_i=0.$ The 2- form $\Omega= \frac{1}{2\pi i}(d\gamma_j)$ is well defined, closed and continuous in $X \setminus E$ on $U_i$ we have $\Omega \wedge dt_i=0.$

Fix a neighborhood $U$ of $A$ disjoint from $E$, where $\Omega$ is well defined. The cocycle $\{c_{ijk}\}$ restricted to $A$ represents the Chern class of the conormal bundle to $A$ and is hence non trivial. It is also represented by the form $\Omega$ which is hence non exact. So there is a constant $c\neq 0$ such that $\Omega-c\omega=d \gamma$ where $\gamma$ is a $1-$ form. 

Let $[A]$ denote the current of integration on $A.$ Since $\Omega \wedge [A]=0,$ 
we get
  $$<[A],c\omega>= <[A],c\omega-\Omega>=<d[A],\gamma>=0.$$ 
  This is a contradiction.
\end{proof}

\begin{remark}
Let $(X,\mathcal L, E)$ in $\mathbb P^2$ be a lamination which is transversally $\mathcal C^1$ out of E. 
We will see in Section 4, Theorem 15, that if $T$ is a directed closed current for $(X,\mathcal L, E)$, then supp$(T)$ has to intersect $E.$
\end{remark}

\subsection{Baum-Bott Formula}

Suppose $\mathcal F$ is a holomorphic foliation on a compact surface $M.$
If locally $\mathcal F$ is defined on $U_i$, by a holomorphic form $\alpha_i$, then on the intersection $U_i \cap U_j$ we have $\alpha_i=g_{ij}\alpha_j.$ The functions $(g_{ij})$ are the transition functions of the so called normal bundle $\mathcal N_\mathcal F$ of $\mathcal F.$ 

Using the exact sequence

$$
0\rightarrow \mathbb Z \rightarrow^{2i\pi} \mathcal O \rightarrow^{\exp}  \mathcal O^*
\rightarrow 0,
$$
\noindent we get  the long exact sequence of  cohomology groups 

$$
H^1(M,\mathcal O)\rightarrow H^1(M,\mathcal O^*)\rightarrow ^{c_1} H^2(M,\mathbb Z) \rightarrow.
$$

The image of the map $c_1$  of a line bundle $B$ is called the Chern class of $B.$ In the case of $\mathbb P^2$, $H^2(\mathbb P^2,\mathbb Z)=\mathbb Z.$
Hence the Chern class of a bundle is just an integer. For the normal bundle to a foliation of degree $d$ in $\mathbb P^2$ it can be shown, see \cite{Br2000}  or \cite{Z2001} that $N_{\mathcal F}=\mathcal O(d+2).$ So the Chern class is just $(d+2).$ 

We now define the Baum-Bott index at a singular point $p.$ Assume the foliation $\mathcal F$ is given near the singular point $p$ by the form $\alpha=Pdw-Qdz.$ Then $d\alpha=\beta\wedge \alpha,$ with
$\beta=\frac{P_z+Q_w}{|P|^2+|Q|^2} (\overline{P} dz+\overline{Q}dw).$ The Baum-Bott index at $p$ is

$$
BB(\mathcal F,p)=\frac{1}{(2i\pi)^2}\int_{\partial B_1} \beta\wedge d\beta
$$
\noindent where $B_1$ is a small ball around $p.$

So the Baum-Bott index is a residue and it can be shown that it is independent of the form $\alpha$ defining $\mathcal F$ and of the choice of $\beta$ such that $d\alpha=\beta\wedge \alpha.$

When the eigenvalues  $\lambda_1,\lambda_2$ of the vector field $\mathcal F$ at $p$ are both nonzero then

$$
BB(\mathcal F,p)=\frac{\lambda_1}{\lambda_2}+\frac{\lambda_2}{\lambda_1}+2.
$$

\begin{theorem} (Baum-Bott) \cite{BB}
Let $\mathcal F$ be a holomorphic foliation of a compact surface $M.$ Then
$$
N_\mathcal F \cdot N_\mathcal F=\sum_{p \in\; {\mbox{Sing}}(\mathcal F)} BB(\mathcal F,p)
$$ So if $M=\mathbb P^2$, we get 
$$
(d+2)^2= \sum_{p\in \; {\mbox{Sing}}(\mathcal F)}BB(\mathcal F,p).
$$
\end{theorem}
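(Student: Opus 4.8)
The plan is to prove the Baum–Bott formula by interpreting both sides cohomologically and localizing the characteristic class of the normal bundle at the singularities via a Chern–Weil / residue argument. First I would recall that for a holomorphic foliation $\mathcal F$ on a compact complex surface $M$, given locally by holomorphic $1$-forms $\alpha_i$ with $\alpha_i = g_{ij}\alpha_j$, the cocycle $(g_{ij})$ defines the normal bundle $N_{\mathcal F}$, so that $N_{\mathcal F}\cdot N_{\mathcal F} = \int_M c_1(N_{\mathcal F})^2$. The strategy is then to represent $c_1(N_{\mathcal F})$ by a closed $(1,1)$-form that is supported (up to exact terms) near $\mathrm{Sing}(\mathcal F)$, and to show that the resulting integral of its square localizes to a sum of residues, each of which is exactly $BB(\mathcal F, p)$ as defined by the integral $\frac{1}{(2i\pi)^2}\int_{\partial B_1}\beta\wedge d\beta$.

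The key steps, in order, would be: (1) Choose a Hermitian metric on $N_{\mathcal F}$; away from the singular set, the local data $\alpha = P\,dw - Q\,dz$ lets us write a connection $1$-form, and one computes that $d\alpha = \beta\wedge\alpha$ with $\beta = \frac{P_z + Q_w}{|P|^2+|Q|^2}(\overline P\,dz + \overline Q\,dw)$, so that $\frac{1}{2i\pi}d\beta$ is a global closed $(1,1)$-form on $M\setminus\mathrm{Sing}(\mathcal F)$ representing $c_1(N_{\mathcal F})$ there. (2) Verify that $\beta$ transforms as a connection form for $N_{\mathcal F}$ under the cocycle $(g_{ij})$, i.e. $\beta_i - \beta_j = d\log g_{ij}$ up to the right normalization, so that $\Theta := \frac{1}{2i\pi}d\beta$ is indeed a global curvature representative. (3) Excise small balls $B_\varepsilon(p)$ around each singularity $p$ and apply Stokes: $\int_M \Theta\wedge\Theta = \int_{M\setminus\bigcup B_\varepsilon}\Theta\wedge\Theta + \sum_p\int_{B_\varepsilon}\Theta\wedge\Theta$; writing $\Theta = \frac{1}{2i\pi}d\beta$ on the punctured neighborhood and integrating by parts, the boundary contribution from each $\partial B_\varepsilon(p)$ is $\frac{1}{(2i\pi)^2}\int_{\partial B_\varepsilon}\beta\wedge d\beta = BB(\mathcal F,p)$, while the interior piece tends to $\int_M c_1(N_{\mathcal F})^2 = N_{\mathcal F}\cdot N_{\mathcal F}$ once one checks the self-intersection is computed by this representative. (4) Finally, specialize to $M = \mathbb P^2$, where it was recalled that $N_{\mathcal F} = \mathcal O(d+2)$, hence $N_{\mathcal F}\cdot N_{\mathcal F} = (d+2)^2$, giving the stated corollary. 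One should also note that the eigenvalue formula $BB(\mathcal F,p) = \lambda_1/\lambda_2 + \lambda_2/\lambda_1 + 2$ at a singularity with nonzero eigenvalues follows by a direct residue computation in linearizing (or formal) coordinates, but this is not needed for the theorem itself.

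The main obstacle I expect is making the localization rigorous at the singular points: one must show that the form $\beta$, which blows up like $1/|z|$ near $p$, nevertheless gives a finite residue and that the limit $\varepsilon\to 0$ of both the interior integral and the boundary integral behaves as claimed — in particular that $\int_{\partial B_\varepsilon}\beta\wedge d\beta$ converges to a well-defined number independent of the metric, the defining form $\alpha$ (only well-defined up to a nonvanishing holomorphic multiple), and the choice of $\beta$ with $d\alpha = \beta\wedge\alpha$. This independence is precisely the statement that $BB(\mathcal F,p)$ is a genuine residue, and the cleanest route is to observe that changing $\alpha\mapsto f\alpha$ changes $\beta\mapsto\beta + d\log f$, and that $\int_{\partial B_\varepsilon}(d\log f)\wedge d\beta$ and $\int_{\partial B_\varepsilon}\beta\wedge d(d\log f) = 0$ vanish in the limit by Stokes on the ball minus the origin together with the fact that $d\log f$ is closed. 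A secondary technical point is justifying Stokes' theorem with the mildly singular form $\beta\wedge d\beta$ on $M\setminus\bigcup B_\varepsilon$, which is fine since that form is smooth there. Given these, assembling the identity is routine.
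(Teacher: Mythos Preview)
The paper does not prove this theorem: it states the Baum--Bott formula with a citation to \cite{BB} and remarks that a proof can also be found in Brunella \cite{Br2000}, then moves on. So there is no ``paper's own proof'' to compare your proposal against.

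That said, your outline is essentially the standard Chern--Weil localization argument one finds in \cite{Br2000}: represent $c_1(N_{\mathcal F})$ by the curvature of a connection built from the local data $d\alpha=\beta\wedge\alpha$, excise balls around the singularities, and apply Stokes so that $\int_M c_1(N_{\mathcal F})^2$ reduces to boundary integrals $\frac{1}{(2i\pi)^2}\int_{\partial B_\varepsilon}\beta\wedge d\beta$. One point to tighten in your step (2)--(3): the form $\beta$ is \emph{not} globally defined on $M\setminus\mathrm{Sing}(\mathcal F)$ (it depends on the local choice of $\alpha_i$), only its curvature $d\beta$ glues to a global form. The usual fix is to first choose a global smooth connection on $N_{\mathcal F}$, compare it on each $U_i\setminus\{p_i\}$ to the local ``Bott connection'' $\beta_i$, and use the resulting transgression to push the integral of $\Theta\wedge\Theta$ to the boundaries $\partial B_\varepsilon(p_i)$. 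Your independence-of-choices argument via $\beta\mapsto\beta+d\log f$ is correct. With that adjustment your plan is sound and matches the treatment the paper defers to.
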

A proof of the previous theorem can also be found in \cite {Br2000} .

Recently, Lins-Neto and Pereira have shown that this is the only relation between the BB indices.
They  consider the space  of holomorphic foliations  of degree $d$ in $\mathbb P^2.$ For each foliation $\mathcal F$ there are $N=d^2+d+1$ singular points $p_1,\dots,p_N$. Define the map
$B(\mathcal F)=[BB(\mathcal F,p_1),
\cdots BB(\mathcal F,p_N)]$ with values in $\frac{(\mathbb P^1)^N}{S_N}$ where $S_N$ is the premutation group of the $N$ coordinates.

\begin{theorem} (Lins-Neto, Pereira)\cite {LNP2006} The Baum-Bott map $B$ has maximal rank equal to $N-1=d^2+d.$
\end{theorem}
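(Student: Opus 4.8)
The plan is to compute the rank of the Baum-Bott map $B:\mathcal F_d(\mathbb P^2)\dashrightarrow (\mathbb P^1)^N/S_N$ by exhibiting a single foliation at which the differential $dB$ has rank $N-1$; since rank is lower semicontinuous, this is enough, and the Baum-Bott relation $\sum_i BB(\mathcal F,p_i)=(d+2)^2$ (Theorem 9) shows the rank can never exceed $N-1$, so equality will follow. The natural test foliations are those admitting a simple normal form whose singularities and eigenvalue ratios can be written down explicitly: for instance a generic pencil-type or "logarithmic" foliation $\omega=\sum_k \lambda_k \,d\log\ell_k$ given by $d+2$ lines $\ell_k$ in general position, whose singularities are the pairwise intersections $\ell_j\cap\ell_k$. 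At such a node the two eigenvalues are (up to scale) $\lambda_j$ and $\lambda_k$, so $BB(\mathcal F,p_{jk})=\tfrac{\lambda_j}{\lambda_k}+\tfrac{\lambda_k}{\lambda_j}+2$, a fully explicit function of the parameters.

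The key steps, in order, would be: (i) fix such a logarithmic foliation $\mathcal F_0$ with generic coefficients $(\lambda_1,\dots,\lambda_{d+2})$ and generic lines, and check that all $N=\binom{d+2}{2}=d^2+d+1$ singularities are nondegenerate nodes (this is the combinatorial count that makes logarithmic foliations the right model); (ii) move inside $\mathcal F_d(\mathbb P^2)$ along the $(d+1)$-dimensional family obtained by varying the $\lambda_k$ (one parameter is lost to overall scaling), and compute the $N\times(d+1)$ Jacobian of the map $(\lambda_k)\mapsto\big(\tfrac{\lambda_j}{\lambda_k}+\tfrac{\lambda_k}{\lambda_j}+2\big)_{j<k}$; (iii) show this Jacobian has rank $d+1$ at a generic point — this is a linear-algebra/Vandermonde-type computation, and then observe that the one-dimensional kernel visible on the target side (forced by $\sum BB=(d+2)^2$) is exactly matched, so the restriction of $B$ to the logarithmic locus already contributes a subspace of dimension $d+1$ to the image; (iv) to reach the full rank $N-1$, perturb $\mathcal F_0$ transversally to the logarithmic locus inside $\mathcal F_d(\mathbb P^2)$ — which has dimension $N=(d+k+1)\frac{(d+k-1)!}{(k-1)!d!}-1$ with $k=2$, much larger than $d+1$ — and use a deformation/transversality argument (varying the configuration of lines and then deforming off the logarithmic type) to show that the remaining eigenvalue ratios can be moved independently; (v) conclude by semicontinuity of rank together with the upper bound from Theorem 9.

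The main obstacle is step (iv): producing enough independent deformations of the individual Baum-Bott indices beyond those coming from the cheap logarithmic family. The eigenvalue ratios at distinct singular points are not a priori independent functions on $\mathcal F_d(\mathbb P^2)$ — a priori there could be hidden relations besides the Baum-Bott sum — so one must genuinely exhibit a foliation at which $dB$ is surjective onto the tangent space of the $(N-1)$-dimensional subvariety $\{\sum BB=(d+2)^2\}$ of $(\mathbb P^1)^N/S_N$. I would attack this by combining local deformations near one singularity at a time (using Seidenberg/normal-form freedom to wiggle a single eigenvalue ratio while controlling its effect on the others via the residue formula for $BB$), together with a dimension count showing $\mathcal F_d(\mathbb P^2)$ is large enough to afford $N-1$ such independent moves; the Baum-Bott relation is then the unique obstruction, which is precisely the content of "maximal rank $=N-1$."
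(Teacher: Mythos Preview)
The paper does not prove this theorem: it is stated with attribution and the citation \cite{LNP2006}, and the text immediately moves on to the next section. There is therefore no proof in the paper against which to compare your proposal.

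Independently of that, your plan contains a concrete error in step~(i). You assert that the logarithmic foliation $\omega=\sum_{k=1}^{d+2}\lambda_k\,d\log\ell_k$ has exactly $N=\binom{d+2}{2}=d^2+d+1$ singularities, all sitting at the pairwise intersections $\ell_j\cap\ell_k$. But $\binom{d+2}{2}=\tfrac{(d+1)(d+2)}{2}$, and this equals $d^2+d+1$ only for $d=1$. For $d\ge 2$ the arrangement intersections account for only $\tfrac{(d+1)(d+2)}{2}$ of the singular points; the remaining $d^2+d+1-\tfrac{(d+1)(d+2)}{2}=\binom{d}{2}$ singularities lie \emph{off} the line arrangement (for $d=2$ one checks directly that there is exactly one such extra point). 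At those extra singularities the eigenvalue ratio is no longer the simple quotient $\lambda_j/\lambda_k$, so the explicit Jacobian computation you propose in step~(iii) does not apply to them, and the ``Vandermonde-type'' argument collapses. Your upper bound (rank $\le N-1$ from the Baum-Bott relation) is of course correct, and the overall strategy of exhibiting a single foliation where $dB$ has rank $N-1$ is sound; but the logarithmic family with $d+2$ lines is not the clean test case you claim, and step~(iv), which you yourself flag as the main obstacle, remains the real content of the theorem.
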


\section{Laminations without positive closed currents}

\begin{theorem}
Let $(X,{\mathcal L},E)$ be a lamination with singularities in a compact complex manifold $M$. Either there is a non trivial image
of ${\mathbb C}$ weakly directed by ${\mathcal L}$ or there is a constant $c>0$ such that 
$$
|\phi'(\zeta)|\leq \frac{c}{1-|\zeta|^2}$$
\noindent for all holomorphic maps from the unit disc $\Delta$ into $X,$ weakly directed by ${\mathcal L}.$
The same result holds for maps with images locally contained in leaves outside $E.$
\end{theorem}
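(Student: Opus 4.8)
The plan is to argue by contradiction and dichotomy, exactly as in the classical Brody reparametrization argument, but keeping track of the directedness condition. Suppose no uniform bound $|\phi'(\zeta)|\le c/(1-|\zeta|^2)$ holds for directed holomorphic maps $\phi:\Delta\to X$. Then there is a sequence of directed maps $\phi_n:\Delta\to X$ and points $\zeta_n\in\Delta$ with $(1-|\zeta_n|^2)|\phi_n'(\zeta_n)|\to\infty$. After composing with automorphisms of $\Delta$ we may assume $\zeta_n=0$, so $|\phi_n'(0)|\to\infty$; note that precomposing a directed map with an automorphism of $\Delta$ preserves the defining relations $\langle\gamma_i(\phi(\zeta)),\phi'(\zeta)\rangle=0$, and (in the second version of the statement) also preserves the property of being locally contained in a leaf outside $E$. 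The first step is therefore to set up this normalization carefully and record that directedness is automorphism-invariant.

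Next I would apply the Brody reparametrization lemma: given $\phi_n:\Delta\to X$ with $|\phi_n'(0)|\to\infty$ and $X$ compact (sitting in the compact manifold $M$ with its Hermitian metric $\omega$), one produces radii $R_n\to\infty$, points $a_n\in\Delta$, and radii $r_n$ with $r_n(1-|a_n|^2)|\phi_n'(a_n)|\to\infty$ such that the reparametrized maps $\psi_n:\Delta_{R_n}\to X$, $\psi_n(\zeta)=\phi_n\!\left(a_n+\tfrac{r_n\zeta}{R_n}(\cdots)\right)$ — concretely, affine reparametrizations $\psi_n(\zeta)=\phi_n(a_n+c_n\zeta)$ chosen so that $|\psi_n'(0)|=1$ and $|\psi_n'(\zeta)|\le R_n^2/(R_n^2-|\zeta|^2)$ on $\Delta_{R_n}$ — have uniformly bounded derivatives on compact sets. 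Since each $\psi_n$ is again an affine reparametrization of a directed map, it is directed (its image lies in $X$ and satisfies the same $\gamma_i$-relations, and lies locally in a leaf off $E$ in the second version). By Montel's theorem $\psi_n$ converges locally uniformly, together with derivatives, to an entire map $\psi:\mathbb C\to M$ with $|\psi'(0)|=1$, hence nonconstant, and with image in the compact set $X$. The limiting relations $\langle\gamma_i(\psi(\zeta)),\psi'(\zeta)\rangle=0$ pass to the limit because the $\gamma_i$ are continuous; thus $\psi:\mathbb C\to X$ is a nonconstant holomorphic map weakly directed by $\mathcal L$, which is the first alternative. In the second version, one must check the limit map is still locally contained in a leaf outside $E$: this follows because convergence is in $C^1_{loc}$, the plaque structure off $E$ is closed under such limits in a flow box, and the $\psi_n$ eventually map small discs into single flow boxes; care is needed near $E$, but the defining relations force $\psi(\mathbb C)\subset X$ and the leaf-containment is inherited on the open dense part.

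The main obstacle is the usual subtlety in the Brody/Zalcman reparametrization when the target is only a compact metric space (here $X$, not a manifold) containing the singular set $E$: one must ensure the limit map is nonconstant (handled by the normalization $|\psi'(0)|=1$ and the derivative bound preventing the image from collapsing) and that it genuinely lands in $X$ and respects the directedness — both of which follow from uniform convergence plus continuity of $\gamma_i$ and closedness of $X$. A secondary point is that the second assertion (maps locally contained in leaves off $E$) requires the reparametrization to interact well with the lamination charts; this is where I expect to spend the most effort, using that away from $E$ the condition ``locally contained in a leaf'' is a closed condition under $C^1_{loc}$ limits, together with the fact that $\Lambda^2(E)=0$ (or the relevant Hausdorff-dimension hypothesis) to rule out pathological accumulation of the limit image on $E$.
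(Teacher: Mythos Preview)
Your proposal is correct and follows essentially the same route as the paper: a Brody reparametrization argument. The paper first reduces to bounding $|\psi'(0)|$ uniformly (via the M\"obius trick you also use), then assumes $|\psi_n'(0)|\to\infty$, recenters at the maximum of $(1-|z|^2)|\psi_n'(z)|$ by a disc automorphism, rescales affinely to normalize the derivative at $0$, and extracts a nonconstant entire limit directed by $\mathcal L$; the second assertion is dismissed with ``the last part is proved in the same way,'' so your extra discussion of leaf-containment under $C^1_{\rm loc}$ limits is if anything more careful than the original.
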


\begin{proof}
Assume there is no nontrivial image of $\mathbb C$ weakly directed by $\mathcal L.$ We show that there exists $c>0$ such that $|\psi'(0)|\leq c$ for every holomorphic map $\psi:\Delta \rightarrow X,$ weakly directed by $\mathcal L.$ This implies the result: Let $\psi(z)=\phi(\frac{z+\zeta}{1+\overline{\zeta}z}).$ Then $|\psi'(0)|=|\phi'(\zeta)|(1-|\zeta|^2)\leq c.$
So $|\phi'(\zeta)|\leq \frac{c}{1-|\zeta|^2}.$

\medskip

The proof is by contradiction using Brody's technique. Assume there are $\psi_n$, holomorphic in a neighborhood of $\overline{\Delta}$, such that $|\psi'_n(0)| \rightarrow \infty.$ Let $H_n(z):=(1-|z|^2)|\psi'_n(z)|.$ For each $n,$ there is a $z_n\in \Delta$ so that $H_n(z_n)=\max_\Delta H_n.$ Define
$\phi_n(z):=\psi_n(\frac{z+z_n}{1+\overline{z}_nz})=:\psi_n(E_n(z)).$
Then

\bea
|\phi_n'(z)|(1-|z|^2) & = & |\psi'_n(E_n(z))||E'_n(z)| (1-|z|^2)\\
& = & |\psi'_n(E_n(z))|(1-|E_n(z)|^2)\frac{|E'_n(z)|}{1-|E_n(z)|^2} (1-|z|^2)\\
& = & |\psi'_n(E_n(z))|(1-|E_n(z)|^2)\\
& \leq & H_n(z_n)\\
& = & |\phi_n'(0)|.\\
\eea

Since $|\phi_n'(0)|\geq |\psi_n'(0)|,$ we then have $R_n:=|\phi'_n(0)| \nearrow \infty.$
Define $h_n(z):=\phi_n(\frac{z}{R_n}).$ We have using the above estimate:

$$
|h'_n(z)| = \frac{1}{R_n}|\phi'_n(\frac{z}{R_n})|\leq \frac{1}{1-|z/R_n|^2}.$$
So $\{h_n\}$ is equicontinuous. We can assume that $(h_n)$ converges uniformly on compact sets of ${\mathbb C}$ to a nonconstant holomorphic map $h:\mathbb C \rightarrow X.$ Clearly $h$ is weakly directed by ${\mathcal 
L}$ and the image cannot be contained in $E$, since $\Lambda^2(E)=0.$ The last part is proved in the same way.

\end{proof}

\begin{remark}
It is enough to assume that $E$ is Brody hyperbolic in the sense that there is no holomorphic non constant map $\mathbb C \rightarrow E.$ If $X= E,$ we find the usual Brody Theorem.
\end{remark}

\begin{corollary}
Assume there is a holomorphic map $\phi:\Delta \rightarrow (X,\mathcal L,E)$ such that
$\lim_{r\rightarrow 1} (1-r){\mbox{Area}}(\phi(\Delta_r))=\infty$ and with $\phi(\Delta)$ locally contained in leaves outside of $E.$ Then there is a nonconstant  image of $\mathbb C$ in $(X,\mathcal L,E)$ locally contained in leaves outside $E$ and consequently a positive closed current directed by $(X,\mathcal L, E),$ supported in $\overline{\phi(\Delta)}.$
\end{corollary}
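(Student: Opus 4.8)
The plan is to deduce this corollary from Corollary 3 (the $\mathbb C$-version of Plante's theorem for directed currents) together with Theorem 13, the Brody-type estimate just proved. The strategy is the standard reparametrization/normal families argument: if no nonconstant image of $\mathbb C$ locally contained in leaves outside $E$ exists, then Theorem 13 forces a uniform derivative bound $|\phi'(\zeta)|\le c/(1-|\zeta|^2)$ on all such disc maps, and I will show this bound is incompatible with the hypothesis $\lim_{r\to 1}(1-r)\,\mathrm{Area}(\phi(\Delta_r))=\infty$.

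First I would argue by contradiction: suppose there is no nonconstant holomorphic $h:\mathbb C\to X$ weakly directed by $\mathcal L$ with image locally in leaves outside $E$. By the last sentence of Theorem 13 (``the same result holds for maps with images locally contained in leaves outside $E$''), there is then a constant $c>0$ with $|\psi'(\zeta)|(1-|\zeta|^2)\le c$ for every such disc map $\psi:\Delta\to X$. In particular this applies to our given $\phi$. Now estimate the area: using the expression $\mathrm{Area}(\phi(\Delta_r))=\int_{\Delta_r}|\phi'|^2\,\tfrac{i}{2}dz\wedge d\bar z$ (area counted with multiplicity, which only helps), the pointwise bound gives
$$
\mathrm{Area}(\phi(\Delta_r))\;\le\; \int_{\Delta_r} \frac{c^2}{(1-|z|^2)^2}\,dA(z)\;=\;2\pi c^2\int_0^r \frac{\rho\,d\rho}{(1-\rho^2)^2}\;=\;\pi c^2\left(\frac{1}{1-r^2}-1\right).
$$
Hence $(1-r)\,\mathrm{Area}(\phi(\Delta_r))\le \pi c^2\,(1-r)\left(\tfrac{1}{1-r^2}-1\right)$, and as $r\to 1$ the factor $(1-r)\cdot\tfrac{1}{1-r^2}=\tfrac{1}{1+r}\to \tfrac12$ stays bounded, so the whole expression stays bounded. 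This contradicts the hypothesis that $(1-r)\,\mathrm{Area}(\phi(\Delta_r))\to\infty$.

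Therefore a nonconstant holomorphic map $h:\mathbb C\to X$ weakly directed by $\mathcal L$ with image locally contained in leaves outside $E$ must exist. Moreover, chasing through the Brody reparametrization in the proof of Theorem 13, the image of $h$ is a Hausdorff limit of reparametrized pieces of $\phi(\Delta)$, hence $h(\mathbb C)\subset\overline{\phi(\Delta)}$. Then I apply Corollary 3: since $h$ satisfies $\langle\gamma_i(h(\zeta)),h'(\zeta)\rangle=0$ for all $\zeta\in\mathbb C$ (being weakly directed) and $h(\mathbb C)$ is locally contained in a leaf outside $E$, Corollary 3 yields a nontrivial positive closed current directed by $(X,\mathcal L,E)$; by the directed case of Plante's theorem (Theorem 12) it is supported in $\overline{h(\mathbb C)}\subset\overline{\phi(\Delta)}$, which is the claim.

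The main obstacle is the bookkeeping needed to guarantee the containment $\overline{h(\mathbb C)}\subset\overline{\phi(\Delta)}$ and that $h$ is genuinely locally-in-leaves outside $E$: one must track that each Brody-rescaled map $h_n$ has image inside $\phi(\Delta)$ (the rescalings are automorphisms of $\Delta$ composed with $\phi$, so this is immediate), and that the limit map inherits the ``locally contained in leaves outside $E$'' property — which follows because $E$ is closed, the $\gamma_i$ vanish on $E$ and have rank $k-1$ off $E$, and a nonconstant limit cannot fall into $\Lambda^2$-null $E$. The area estimate itself is the routine computation above and presents no difficulty.
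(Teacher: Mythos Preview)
Your argument is correct and follows essentially the same route as the paper's proof: assume no nonconstant $\mathbb C$-image, invoke the Brody-type bound $|\phi'(\zeta)|\le c/(1-|\zeta|^2)$, integrate to get $\int_{\Delta_r}|\phi'|^2\le c'/(1-r)$, and contradict the area hypothesis; then appeal to the $\mathbb C$-version of Plante's result for the directed closed current. You supply more detail than the paper does---the explicit evaluation of the area integral, and the tracking through the Brody reparametrization to secure $h(\mathbb C)\subset\overline{\phi(\Delta)}$ for the support claim---but the skeleton is identical.
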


\begin{proof}
If not then $|\phi'(\zeta)|\leq \frac{c}{1-|\zeta|^2}$ and
$\int_{\Delta_r}|\phi'|^2\leq \frac{c'}{1-r},$ a contradiction. The last statement follows from Plante's theorem and the remark after. 

\end{proof}

Brunella \cite{Br2005} has shown that generically for holomorphic foliations by Riemann surfaces in $\mathbb P^k$ there are no directed positive closed currents.

\begin{theorem} (Brunella)
Given $k \geq 2$ and $d \geq 2$, there exists an open dense set $\mathcal{U} \subset \mathcal{F}_d(\mathbb P^k)$ such that any $\mathcal F \in   \mathcal U$ has no non trivial directed positive closed bidimension $(1,1)$ current. A foliation $\mathcal F$ belongs to $\mathcal U$ if all the singularities are hyperbolic and no algebraic curve is invariant by $\mathcal F$.
\end{theorem}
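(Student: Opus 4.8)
The plan is to deduce non-existence of a directed positive closed current from the interplay between the hyperbolicity of the singularities and the absence of invariant algebraic curves, using the fact (Theorem~\ref{theorem:no closed currents}-type statements, i.e. the Plante/Ahlfors and Brody machinery developed above) that the failure of the uniform derivative estimate already forces images of $\mathbb{C}$, and more precisely exploiting Theorem~14 above (the conormal-bundle obstruction): a directed closed current whose support avoids $\mathrm{Sing}(\mathcal F)$ cannot exist on $\mathbb P^2$, and its $\mathbb P^k$ analogue. First I would fix the open set $\mathcal U$ to be the foliations all of whose singularities are hyperbolic (a real Zariski dense open set by Proposition~1, hence open dense) and which carry no invariant algebraic curve (open dense inside that, by the argument sketched in the proof of Theorem~8 together with Brunella's refinement). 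For such $\mathcal F$ I would argue by contradiction, assuming a non-trivial directed positive closed current $T$ of bidimension $(1,1)$ exists.

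The key step is a local analysis of $T$ near each hyperbolic singular point $p$. After Chaperon's theorem the foliation is topologically linearizable at $p$, but more is true for the current: one shows, using the explicit linear (or Poincaré-normal-form) model $\sum \lambda_i z_i \partial/\partial z_i$ with all $\lambda_i/\lambda_j$ non-real, that a positive closed directed current has locally bounded mass near $p$ and that its trivial extension across $p$ remains positive and closed — this is where the hyperbolicity is essential, since the leaves spiral and do not accumulate as a pluripolar set carrying mass. (This mass-boundedness is the analogue of the estimates invoked in the proof of Proposition~4, via \cite{FS1995} and Bishop-type extension \cite{De 1997}.) Having extended $T$ across all of $\mathrm{Sing}(\mathcal F)$, I obtain a genuine positive closed current $\widetilde T$ on all of $\mathbb P^k$ directed by the (now globally defined) foliation. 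One then computes its cohomology class: since $\widetilde T$ is closed and supported on $\mathbb P^k$, its class is $c\,\omega^{k-1}$ for some $c>0$ (positivity forces $c>0$ by pairing with $\omega$). Pairing against the normal bundle of $\mathcal F$ in the spirit of Theorem~14 — i.e. using $\gamma_j \wedge \widetilde T = 0$ together with the fact that the curvature form $\Omega$ of the (co)normal bundle $N_{\mathcal F}=\mathcal O(d+2)$ (Baum–Bott section) represents a non-trivial class — yields that $\widetilde T$ pairs to zero with a positive class, the desired contradiction; alternatively, one invokes Siu's theorem to conclude that the Lelong super-level sets of $\widetilde T$ are analytic, and a positive-mass analytic component would be an invariant algebraic curve, contradicting the defining property of $\mathcal U$.

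I would then separately rule out the possibility that $T$ has no mass near the singularities but nevertheless exists on $X\setminus\mathrm{Sing}(\mathcal F)$: here the point is that a non-trivial directed closed current on $\mathbb P^k$ cannot have support disjoint from $\mathrm{Sing}(\mathcal F)$ at all, which is exactly Theorem~14 (for $k=2$) and its straightforward extension using the conormal-bundle obstruction in higher dimension — the leaves of a degree $d\geq 2$ foliation are non-algebraic, recurrent, and their closures must meet the singular set (a closed leaf off $E$ would be algebraic by Proposition~4). Finally, openness of $\mathcal U$ follows because the conditions "all singularities hyperbolic" and "no invariant algebraic curve of degree $\leq c(d)$" (the bound being Proposition~2, using that hyperbolic singularities bound separatrix multiplicities) are each open, and by Theorem~8 the intersection is non-empty; density is inherited from density of $\mathcal H^y_d(\mathbb P^k)$ together with the Jouanolou–Lins-Neto–Soares example.

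The main obstacle I expect is the local mass-boundedness and trivial-extension step at the hyperbolic singularities: one must show that a directed positive closed current cannot concentrate mass on the leaves spiralling into $p$, and that its extension stays closed. For $k=2$ hyperbolicity makes the two separatrices the only candidate supports near $p$ and the spiralling leaves have controlled area growth, so the estimate from \cite{FS1995} applies; in higher dimension one has several eigenvalues and must check that no resonance-type degeneration (excluded precisely by the non-real ratio hypothesis) allows a pluripolar accumulation set of positive mass. Everything else — the cohomological contradiction, the reduction via Theorem~14, and the topological description of $\mathcal U$ — is comparatively routine given the tools assembled earlier in the paper.
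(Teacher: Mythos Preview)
Your overall architecture is right --- analyze $T$ locally near the hyperbolic singularities, reduce to a current supported away from $\mathrm{Sing}(\mathcal F)$, then derive a global contradiction --- but both the local and global steps miss the actual mechanisms the paper uses, and your ``extension'' framing is off.

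\medskip

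\textbf{Local step.} The current $T$ is already a global current on $\mathbb P^k$; there is nothing to extend. The issue is not boundedness of mass or area growth of spiralling leaves, but rather the structure of the \emph{transverse measure}. Since $T$ is closed and directed, by Sullivan's theorem (Theorem~10) it is given in flow boxes by a holonomy-invariant transverse measure $\mu$. Near a hyperbolic singularity with model $zdw-\lambda wdz$, $\lambda=a+ib$, $b\neq 0$, the return map on a transversal $\{z=z_0\}$ sends $w\mapsto e^{-2\pi b}\cdot w$ (up to the obvious identification), i.e.\ is a geometric contraction. A finite measure invariant under a strict contraction must be a Dirac mass at the fixed point. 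Hence $\mu$ is concentrated on the separatrices $z=0$, $w=0$. This is the crux, and your ``controlled area growth'' plus the \cite{FS1995}-type estimate does not produce it.

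\medskip

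\textbf{From separatrices to algebraic leaves.} Here you do have the right idea in your ``alternatively'': if $T$ has mass on a separatrix, Siu's theorem on Lelong-number upper-level sets gives an invariant algebraic curve, contradicting the hypothesis on $\mathcal U$. After subtracting these contributions, $T$ is supported on a closed invariant set disjoint from $\mathrm{Sing}(\mathcal F)$.

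\medskip

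\textbf{Global step.} The paper does \emph{not} use the conormal-bundle pairing of Theorem~14 (that theorem concerns a single algebraic leaf with a neighbourhood in $X$, not a general directed current). Instead it invokes the Hurder--Mitsumatsu result (equivalently the argument of Theorem~17): for a directed positive closed current supported on a nonsingular lamination one has $\int T\wedge T=0$, which is impossible in $\mathbb P^2$ since $T$ is cohomologous to a positive multiple of $\omega$. Your conormal-bundle idea --- pairing $T$ against a curvature form $\Omega$ vanishing along leaves and cohomologous to $c\,\omega$ --- is a plausible alternative route, but you would need $\Omega$ and the primitive $\gamma$ to be defined on all of $\mathrm{supp}(T)$ with enough regularity to justify $\int T\wedge d\gamma=0$, and you would need the cohomological identification on $\mathbb P^k\setminus\mathrm{Sing}(\mathcal F)$ rather than on a tubular neighbourhood of a single curve. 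That is not what Theorem~14 provides.
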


We show that for  $\mathbb P^2,$ if the singularities of $\mathcal F$ are hyperbolic, then the only positive closed currents directed by $\mathcal F$ correspond to algebraic leaves.

\begin{proof}
We show first that there is no mass of a directed positive closed current $T,$ near a singularity. We study the local model near $0,$ given by $\alpha=zdw-\lambda wdz,\lambda=a+ib, b\neq 0.$ Take a transversal $D_{z_0}=\{|w|<1,z=z_0\}.$ The leaves are given by $z=e^\zeta,w=ce^{\lambda \zeta}.$ A leaf hits the transversal at $(z_0,ce^{\lambda \zeta_n})$ with $z_0=e^{\zeta_0}$ and
$\zeta_n=\zeta_0+2i\pi n.$ So $|ce^{\lambda \zeta_n}|=de^{-2\pi nb}$ decays geometrically. The mass of $T$ is finite and is preserved by the holonomy map. This is impossible unless $T$ is concentrated on the separatrices $z=0,w=0.$ Let $\nu(T,p)$ denote the Lelong number of $T$ at $p.$ According to a theorem of Siu \cite{S1974} the sublevel sets $\{p;\nu(T,p)\geq c>0\}$ are analytic. If $T$ has mass on $z=0$ for example, then for some $c>0$, $\{\nu(T,p)\geq c\}$ will define an analytic set of dimension one, this will give an algebraic leaf. We subtract from $T$ these contributions. Then $T$ has to be supported on a closed invariant set $Y,$ without singularities. It can then be shown, Hurder-Mitsumatsu \cite{HM1991}, that
$\int T \wedge T=0.$ This is impossible in the projective space, see also Theorem 17 below.

\end{proof}

We give here the argument that we will use to study the self intersection of a positive directed closed current which gives also a proof of the Hurder-Mitsumatsu result. The result was first proved by Camacho, Lins-Neto, Sad \cite{CLS1988} for holomorphic foliations in $\mathbb P^2$ and by Hurder-Mitsumatsu in the ${\mathcal C}^1$ case.

\begin{theorem}
Let $(X,\mathcal L)$ be a transversally Lipschitz lamination in $\mathbb P^2$ without singularities.Then there is no  positive directed closed current on $X$ except for the current of integration on algebraic leaves.
\end{theorem}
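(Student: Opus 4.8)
The plan is to argue by contradiction through the self-intersection of the current: on $\mathbb P^2$ a nonzero positive closed current has self-intersection a positive multiple of $\{\omega_{FS}\}\cdot\{\omega_{FS}\}=1$, whereas the laminar structure will force it to vanish in the absence of singularities. This is, in essence, the Camacho--Lins-Neto--Sad / Hurder--Mitsumatsu computation, and I would organize it in three steps.

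First I would strip off the algebraic leaves by Siu's decomposition theorem. Given a positive closed current $T$ directed by $(X,\mathcal L)$, write $T=R+\sum_j\lambda_j[A_j]$ with $\lambda_j>0$ and $\sum_j\lambda_j\deg A_j\le\langle T,\omega_{FS}\rangle<\infty$, where the $A_j$ are the irreducible curves along which $T$ has positive generic Lelong number and $R$ carries no mass on any analytic curve. Siu's theorem makes the $A_j$ analytic, hence algebraic by Chow; since $T\ge\lambda_j[A_j]$ forces $A_j$ to coincide with a union of plaques near its smooth points, each $A_j$ is an algebraic leaf and $[A_j]$ is itself closed and directed. Hence $R$ is again a positive closed directed current, and its transverse measures are now non-atomic. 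It suffices to prove $R=0$.

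So suppose $R\ne0$. Since $H^{1,1}(\mathbb P^2,\mathbb R)=\mathbb R\{\omega_{FS}\}$ and $R$ is positive, $\{R\}=c\{\omega_{FS}\}$ with $c=\langle R,\omega_{FS}\rangle>0$. Writing $R=c\,\omega_{FS}+dd^cu$ with $u$ quasi-psh, and using that for a transversally Lipschitz lamination the local potentials of $R$ are tame enough that $R\wedge R$ is a well-defined positive measure with no loss of mass, one gets $\int_{\mathbb P^2}R\wedge R=c\int_{\mathbb P^2}\omega_{FS}\wedge R+\int_{\mathbb P^2}dd^c(uR)=c^2$ by Stokes on the closed manifold $\mathbb P^2$. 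On the other hand I would compute $R\wedge R$ in a flow box from the directed representation $R=\int h_\alpha[V_\alpha]\,d\mu(\alpha)$: for each $\alpha$ the slice $R\wedge[V_\alpha]=\int h_\beta([V_\beta]\wedge[V_\alpha])\,d\mu(\beta)$ vanishes, because distinct plaques of a flow box are disjoint and the transverse mass $h_\alpha\mu$ has no atom at $\alpha$; hence $R\wedge R=\int h_\alpha(R\wedge[V_\alpha])\,d\mu(\alpha)=0$ on every flow box. As there are no singularities, no mass of $R\wedge R$ can be concentrated on a singular set, so $R\wedge R\equiv0$ on $\mathbb P^2$. Comparing the two computations gives $c^2=0$, a contradiction; therefore $R=0$ and $T=\sum_j\lambda_j[A_j]$ is a combination of currents of integration on algebraic leaves.

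The hard part is the identity $\int_{\mathbb P^2}R\wedge R=c^2$, i.e.\ the reconciliation of the cohomological self-intersection with the ``geometric'' one computed leaf-by-leaf. Regularizing $R$ transversally only produces directed currents that are merely approximately closed, and a careless passage to the limit could lose self-intersection mass; one must show this does not happen. This is exactly where transverse Lipschitz regularity enters: it makes the holonomy cocycle differentiable almost everywhere with bounded derivatives, which is what one needs to estimate the defect-from-closedness terms and rule out the loss of mass (for only transversally H\"older laminations the statement may fail). The absence of singularities is equally essential: if $\operatorname{supp}(R)$ met a singular set, the self-intersection would pick up Camacho--Sad type residues there, as already happens for the directed currents on the minimal sets $X(c)$ of Example~2, and the conclusion would break down.
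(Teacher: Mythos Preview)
Your overall architecture is the paper's: strip off algebraic leaves via Siu/Chow to get a diffuse directed closed current $R$, then derive a contradiction from $\{R\}\cdot\{R\}=c^2>0$ versus a ``geometric'' computation giving $R\wedge R=0$. That is indeed the Hurder--Mitsumatsu/Camacho--Lins-Neto--Sad mechanism the paper invokes.

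Where you diverge from the paper, and where there is a genuine gap, is in the step ``$R\wedge R=0$ in every flow box''. Your computation $R\wedge[V_\alpha]=\int h_\beta([V_\beta]\wedge[V_\alpha])\,d\mu(\beta)=0$ presupposes that $R\wedge R$ is already a well-defined measure admitting this Fubini decomposition, and that it coincides with the cohomological product; you flag this as ``the hard part'' but do not supply an argument beyond the assertion that the potentials are ``tame enough''. For a merely Lipschitz lamination it is not clear that the local potential of $R$ is bounded or even continuous, so Bedford--Taylor theory does not apply directly, and the naive leafwise computation is formal. The paper does \emph{not} attempt to define $R\wedge R$ directly. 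Instead it perturbs $R$ by a generic automorphism $\Phi_\epsilon$ of $\mathbb P^2$ (so $R_\epsilon=(\Phi_\epsilon)_*R$ is again closed, with the same cohomology class), regularizes both sides by averaging over $U(3)$, and compares the cohomological intersection $\int R^\delta\wedge R_\epsilon^{\delta'}$ with a \emph{geometric} intersection $R\wedge_g R_\epsilon$ that genuinely counts intersection points of plaques of $\mathcal L$ with plaques of $\Phi_\epsilon(\mathcal L)$. The transverse Lipschitz hypothesis enters precisely here: it gives a uniform bound (independent of $\epsilon$) on the number of points in $J^\epsilon_{\alpha,\beta}=\Delta_\alpha\cap\Delta^\epsilon_\beta$, by propagating nearby intersections through flow boxes. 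Combined with diffuseness of $\mu$, this forces the geometric intersection mass to $0$ as $\epsilon\to0$; the regularization then transfers this to the cohomological product. Your appeal to ``holonomy differentiable a.e.'' and ``defect-from-closedness terms'' from transverse regularization is a different (and here unjustified) mechanism; the paper's perturbation-by-automorphism keeps everything closed and isolates the Lipschitz input as a finiteness statement about plaque intersections.
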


\begin{proof}
Let $T$ be a  positive directed closed current of mass $1$. Let $\Phi_\epsilon$ be an automorphism
of $\mathbb P^2$ close to the identity. Define $T_\epsilon=(\Phi_\epsilon)_*T.$ If in a flow box $B, T=\int [\Delta_\alpha]d\mu(\alpha)$, then $T_\epsilon=\int [\Delta^\epsilon_\beta]d\mu(\beta).$ It is possible to define the geometric intersection $T\wedge_g T_\epsilon.$ If $\phi$ is a test function in $B,$
$$
<T\wedge_gT_\epsilon,\phi>=\int \left(\sum_{p\in J_{\alpha,\beta}^\epsilon}\phi(p)\right)d\mu(\alpha)d\mu(\beta).$$

\noindent Here $J^\epsilon_{\alpha,\beta}$ consists of the intersection points  of $\Delta_\alpha$ and
$\Delta^\epsilon_\beta$. We assume that $\Phi_\epsilon$ is chosen so that there is no common leaf in $\mathcal L$ and $(\Phi_\epsilon)_* \mathcal L.$ We can use a partition of unity to define $T\wedge_g T_\epsilon$ globally. It is shown in \cite{FS2005} that if the lamination is Lipschitz then the number of points in $J^\epsilon_{\alpha,\beta}$ is uniformly bounded.

If $T$ is not the current of integration on an algebraic subvariety the problem can be reduced to the case where the transverse measure is diffuse, i.e. has no mass on points.
We can consider smoothings of $T$ and $T_\epsilon$, $T^\delta$ and $T^{\delta'}_\epsilon.$ Here $T^\delta=\int \phi_*(T) d\rho_\delta(\phi)$ where $\rho_\delta$ is an approximation of identity on $U(3).$ The uniform estimate on the number of intersections between plaques
permits to show that 
$$
<T^{\delta'}_\epsilon, \phi T^\delta> \rightarrow 0
$$
\noindent when $\delta,\delta'<<\epsilon, \epsilon \rightarrow 0.$ 
On the other hand we can show that

$$
\int T \wedge T =\int T^{\delta'}_\epsilon \wedge T^\delta =\int T^{\delta'}_\epsilon \wedge_g T^\delta \rightarrow 0.
$$

This is impossible for positive closed currents in $\mathbb P^2$ unless $T=0.$Indeed the integral can be computed cohomologically and T is cohomologous to the Fubini Study form.

\end{proof}

Let $K$ be a laminated set. We define a holonomy map. Fix a point $p\in K$. Suppose there is a closed curve $\gamma$ in the leaf through $p$ and which starts and ends at $p.$ Let $A$ denote a transverse slice to $K$ at $p.$ Then for every $q\in A\cap K$ close enough to $p$ we can follow the leaf through
$q$ along $\gamma$ until we return to a point $s(q)\in A \cap K.$ The map $s$ is said to be a holonomy map. If the lamination is a holomorphic foliation, and $A$ is a complex manifold, then $s$ is a holomorphic map. We note that if  there is a closed laminated current on $K$, then the holonomy map preserves the transverse measure. If $K$ is in a complex surface and $s$ has $p$ as an attracting fixed point, then
there can be no mass of a transverse measure on small punctured discs in $A.$ Bonatti, Langevin and Moussu \cite{BLM1992} have proved the following result:

\begin{theorem}(Bonatti-Langevin-Moussu)
Let $\mathcal F$ be a holomorphic foliation of $\mathbb P^2.$ Assume $M$ is an exceptional minimal set for $\mathcal F$. Then there is a leaf $L$ in $M$ and a closed real curve in $L$ with hyperbolic holonomy.
\end{theorem}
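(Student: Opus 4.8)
The plan is to transport Sacksteder's theorem to the holomorphic category and combine it with the fact that on $\mathbb P^2$ an exceptional minimal set cannot carry a holonomy invariant transverse measure. Here an exceptional minimal set means a minimal set $M$ that is disjoint from $\mathrm{Sing}(\mathcal F)$ and is not contained in an algebraic curve; in particular, on a neighbourhood of $M$ the foliation $\mathcal F$ is a nonsingular holomorphic foliation, so $(M,\mathcal L)$ is a lamination by Riemann surfaces with empty singular set, transversally holomorphic (hence in particular transversally $\mathcal C^1$ and Lipschitz). Cover $M$ by finitely many flow boxes $U_1,\dots,U_r$, fix holomorphic transversals $\Sigma_i\subset U_i$ (discs in $\mathbb C$), set $K_i=\Sigma_i\cap M$ and $K=\bigsqcup_i K_i$, and let $\Gamma$ be the associated holonomy pseudogroup acting on $\bigsqcup_i\Sigma_i$; it is finitely generated, by the holonomies $\gamma_{ij}$ between overlapping flow boxes, each a holomorphic injection between open subsets of $\mathbb C$. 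Minimality of $M$ says that $K$ is a $\Gamma$-minimal compact invariant set. The proof then has two parts: first, $\Gamma$ admits no invariant probability measure on $K$; second, a $\Gamma$-minimal compact set carrying no invariant probability measure must contain a hyperbolic fixed point of $\Gamma$, that is, a point $p\in K$ fixed by some $g\in\Gamma$ with $|g'(p)|\ne 1$.

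For the first part, suppose $\mu$ were a $\Gamma$-invariant probability measure on $K$. As $\mathcal F$ is transversally $\mathcal C^1$ near $M$, $\mu$ is a holonomy invariant transverse measure for $(M,\mathcal L)$, so by Sullivan's correspondence (recalled above) it determines a non zero positive closed current $T$ of bidimension $(1,1)$ directed by $(M,\mathcal L)$ and supported on $M$. Since $(M,\mathcal L)$ is transversally Lipschitz and has empty singular set, the theorem of Camacho, Lins-Neto and Sad recalled above forces $T$ to be the current of integration on an algebraic leaf contained in $M$; by minimality the closure of that leaf is $M$, so $M$ is contained in an algebraic curve --- contradicting the choice of $M$. (One can also argue directly, without that theorem: a non zero positive closed current on $\mathbb P^2$ is cohomologous to a positive multiple of the Fubini--Study form, so $\int T\wedge T>0$, while the geometric self-intersection argument used in its proof --- available here because the lamination is Lipschitz and, $M$ not being contained in an algebraic curve, the transverse measure is diffuse --- gives $\int T\wedge T=0$.) Hence no such $\mu$ exists.

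The second part is the heart of the matter. Consider the additive cocycle $c(g,x)=\log|g'(x)|$ over the action of $\Gamma$ on $K$. \emph{Suppose first that $c$ is bounded along every $\Gamma$-orbit in $K$.} Then, by the Koebe distortion theorem, every composition in $\Gamma$ --- no matter how long --- has uniformly bounded distortion on a definite subtransversal around each point of $K$; consequently $\Gamma$ is equicontinuous on $K$, and an averaging over this equicontinuous pseudogroup (a Markov--Kakutani type construction, in the manner of Plante) produces a $\Gamma$-invariant probability measure on $K$, contradicting the first part. \emph{Hence $c$ is unbounded along some orbit.} By the chain rule and compactness this yields a composition $h=\gamma_{i_m i_{m-1}}\circ\cdots\circ\gamma_{i_1 i_0}$ with $i_0=i_m$ and $|h'|<1$ on a subtransversal $V\subset\Sigma_{i_0}$ meeting $K$ (pass to the inverse element if the partial products expand rather than contract). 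Using minimality to bring the $h$-orbit of a point $q\in V\cap K$ back close to $q$, one arranges $\overline{h(D)}\subset D$ for a suitable small disc $D\subset V$; then $h$ strictly contracts $D$ in its Poincar\'e metric (Schwarz lemma), so it has an attracting fixed point $p=\lim_n h^n(q)$, which lies in $K$ because $K$ is closed and $h^n(q)\in K$. Thus there are $g\in\Gamma$ and $p\in K$ with $|g'(p)|<1$, in particular $|g'(p)|\ne 1$.

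Finally, the element $g$ is by construction the holonomy of a loop: concatenating the plaque paths in $U_{i_0},U_{i_1},\dots,U_{i_m}=U_{i_0}$ that realise the successive transitions $\gamma_{i_1 i_0},\dots,\gamma_{i_m i_{m-1}}$ through the fixed point $p\in K\subset M$ gives a closed real curve $\gamma$ contained in the leaf $L$ through $p$, whose holonomy germ on $\Sigma_{i_0}$ is exactly $g$. Since $|g'(p)|\ne 1$, this holonomy is hyperbolic, and $L\subset M$ because $M$ is invariant and $p\in M$; this is the assertion of the theorem. The hard part is the second step: extracting a genuine attracting fixed point \emph{inside} $K$ from mere unboundedness of the derivative cocycle, and, on the other branch, producing the invariant measure from equicontinuity. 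Both halves rely on one-variable holomorphic distortion control --- the Schwarz lemma and Koebe's theorem applied to the holonomy maps --- which is exactly where holomorphy of $\mathcal F$ takes over the role played by $\mathcal C^{1+\mathrm{bv}}$ regularity in the classical Sacksteder theorem.
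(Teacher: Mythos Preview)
The paper does not give its own proof of this theorem: it is stated, attributed to Bonatti--Langevin--Moussu, and cited to \cite{BLM1992} without argument. So there is nothing in the paper to compare your proposal against, and I can only comment on the proposal itself.

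Your first part is fine and uses exactly the tools the paper has made available: Sullivan's correspondence (Theorem~10) together with Theorem~17 (or the Camacho--Lins-Neto--Sad version of it) rules out a holonomy-invariant transverse measure on an exceptional minimal set in $\mathbb P^2$.

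The second part is the right strategy --- it is a holomorphic Sacksteder argument, which is indeed what Bonatti--Langevin--Moussu carry out --- but as written it has two soft spots that are precisely where the real work lies. First, the dichotomy ``$c$ bounded along every orbit $\Rightarrow$ equicontinuity via Koebe $\Rightarrow$ invariant measure'' is not automatic: you need uniform control of $|g'|$ on a definite neighbourhood of $K$, not just orbit-by-orbit boundedness, before Koebe gives equicontinuity, and the averaging construction for a pseudogroup (as opposed to a group action) must be set up with some care. Second, in the unbounded branch, finding a single composition $h$ with $|h'|<1$ on some transversal patch is easy; the hard step is upgrading this to $\overline{h(D)}\subset D$ for a disc $D$ that actually meets $K$. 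Your sentence ``using minimality to bring the $h$-orbit of $q$ back close to $q$'' hides the issue: the $h$-orbit of $q$ need not be defined for many iterates (the domain of $h$ may be small), and bringing the $\Gamma$-orbit back requires composing with further generators, which can undo the contraction you have gained. The construction in \cite{BLM1992} handles exactly this point, and it is not a one-line consequence of minimality plus the Schwarz lemma. So your outline is correct in architecture, but the second step needs the substantive argument from the cited paper to become a proof.
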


\subsection{Poincar\'e Metric}

Let $(X,\mathcal L,E)$ be a laminated set with singularities in a compact Hermitian manifold $(M,g).$

\medskip

We define two functions for $p\in X$ with values in $[0,\infty].$  
$$
\eta(p)=\max \{|\phi'(0)|,\phi:\Delta \rightarrow X,\phi(\Delta)\;
{\mbox{ locally in leaves outside}}\; E,\phi(0)=p\}.
$$
$$
\tilde{\eta}(p)=\max\{|\phi'(0)|, \phi:\Delta \rightarrow X \setminus E
, \phi(\Delta)  \; {\mbox{contained in a leaf and}} \; \phi(0)=p,$$
$${} if  \; p \in E,\;
{\tilde{\eta}(p)=0}\}.
$$

The following is a variation on results by Ghys \cite{Gh1999}, Candel-Gomez Mont \cite{CG-M1995}, Verjovsky \cite{V1987} and Candel \cite{C1993}. We use a result by Royden \cite{R1974} in order to study the lower
semi continuity of the above functions.

\begin{theorem}
 If there is no image of $\mathbb C$ locally contained in leaves outside $E$, then $\eta$ is  upper semicontinuous on $X\setminus E$ with values in $(0,\infty)$.
\end{theorem}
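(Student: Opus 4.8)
The plan is to establish upper semicontinuity of $\eta$ at an arbitrary point $p_0 \in X \setminus E$ by the standard normal families argument, with the key technical input being that a limit of a sequence of directed holomorphic discs, along which the derivatives at the origin are bounded below, is again a nonconstant directed disc (whose image is locally contained in leaves outside $E$). First I would note that Theorem 14 already guarantees that $\eta$ is bounded, say $\eta \le c$ on $X$, since the absence of a directed image of $\mathbb C$ gives the estimate $|\phi'(\zeta)| \le c/(1-|\zeta|^2)$, hence $|\phi'(0)| \le c$ for every admissible disc. In particular $\eta$ takes values in $[0,\infty)$; that $\eta > 0$ on $X \setminus E$ follows because through a point of $X\setminus E$ there is a plaque, which provides a nonconstant holomorphic disc locally contained in a leaf, so some rescaling gives $|\phi'(0)| > 0$. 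So the content is the upper semicontinuity.

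Suppose $\eta$ is not upper semicontinuous at $p_0 \in X\setminus E$. Then there is a sequence $p_n \to p_0$ in $X\setminus E$ and admissible discs $\phi_n : \Delta \to X$ with $\phi_n(0) = p_n$, $\phi_n(\Delta)$ locally contained in leaves outside $E$, such that $|\phi_n'(0)| \to \ell > \eta(p_0)$ (passing to a subsequence, using the uniform bound $|\phi_n'(0)| \le c$). The family $\{\phi_n\}$ satisfies the uniform estimate $|\phi_n'(\zeta)| \le c/(1-|\zeta|^2)$ from Theorem 14, so it is locally equicontinuous on $\Delta$ and we may extract a subsequence converging uniformly on compact subsets of $\Delta$ to a holomorphic map $\phi : \Delta \to X$. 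Then $\phi(0) = p_0$ and $|\phi'(0)| = \ell > \eta(p_0) \ge 0$, so $\phi$ is nonconstant. Moreover $\phi$ is weakly directed by $\mathcal L$: the conditions $\langle \gamma_i(\phi_n(\zeta)), \phi_n'(\zeta)\rangle = 0$ pass to the limit by uniform convergence of $\phi_n$ and $\phi_n'$ together with continuity of the $\gamma_i$. The main obstacle is to upgrade "weakly directed" to "$\phi(\Delta)$ locally contained in leaves outside $E$", because a priori $\phi(\Delta)$ could run into $E$ or fail to be locally plaque-contained even though each $\phi_n(\Delta)$ was; here is where Royden's result on the Poincaré/Kobayashi metric (as cited) is used.

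To handle this I would argue locally near a point $\phi(\zeta_0) = q \in X$. If $q \notin E$, choose a flow box $U$ around $q$; for large $n$, $\phi_n$ maps a fixed small disc around $\zeta_0$ into $U$, and since $\phi_n$ restricted there lies in a single plaque (the image being locally leaf-contained), the limit $\phi$ also maps that disc into the closure of a union of plaques, and by the holomorphic dependence in the flow box coordinates $(z,t)$ — the $t$-coordinate of $\phi_n$ is locally constant, hence so is that of $\phi$ — $\phi$ maps the small disc into a single plaque. The remaining point is to rule out that $\phi$ meets $E$: since $\Lambda^2(E) = 0$, the image of a nonconstant holomorphic disc cannot be contained in $E$, and using Royden's semicontinuity of the infinitesimal Kobayashi metric on the leafwise-hyperbolic lamination (no image of $\mathbb C$ in leaves outside $E$ forces every leaf through $X\setminus E$ to be hyperbolic), together with the completeness/boundary behavior of the leafwise Poincaré metric near $E$, one shows $\phi(\Delta) \cap E = \emptyset$: a disc limiting onto $E$ would force the leafwise Poincaré metrics of the $\phi_n$ to blow up, contradicting the uniform bound $|\phi_n'(0)| \le c$. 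Granting this, $\phi$ is an admissible disc at $p_0$ with $|\phi'(0)| = \ell > \eta(p_0)$, contradicting the definition of $\eta(p_0)$ as the maximum. This contradiction proves upper semicontinuity, and the value bound $(0,\infty)$ was noted at the start.
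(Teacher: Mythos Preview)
Your normal-families argument is the right one and matches the paper's proof: use Theorem 14 to bound $|\phi_n'(\zeta)|\le c/(1-|\zeta|)$, extract a limit $\phi$, and check that $\phi$ is admissible at $p_0$. Your flow-box argument in part (a) --- that the transverse coordinate of $\phi_n$ is locally constant, hence so is that of $\phi$ --- is exactly the content of the paper's one-line assertion that ``$\phi$ is an element in the family on which the max is taken.''

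The problem is part (b). You have misread the definition of $\eta$: the admissible maps are $\phi:\Delta\to X$ whose image is \emph{locally contained in leaves outside $E$}, meaning that wherever $\phi(\Delta)$ lies in $X\setminus E$ it sits in a plaque. The maps are \emph{allowed to pass through $E$}; the paper says this explicitly at the very start of the proof. So there is nothing to rule out, and your attempt to show $\phi(\Delta)\cap E=\emptyset$ is unnecessary. This also explains your confusion about Royden: the paper cites Royden's Stein-neighborhood result for the \emph{lower} semicontinuity of $\tilde\eta$ in the next theorem, not here. Upper semicontinuity of $\eta$ needs only Theorem 14 and the flow-box limit argument you already gave.

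Incidentally, the argument you sketch in (b) is not sound as written: the blow-up of the leafwise Poincar\'e metric near $E$ says nothing about the ambient derivative bound $|\phi_n'(\zeta)|\le c/(1-|\zeta|)$, since the latter is measured in the Hermitian metric of $M$, not the leafwise one. Fortunately you do not need it.
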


\begin{proof}

The maps under consideration are allowed to pass through $E.$ We show 
that $\eta$ is upper semicontinuous. Suppose that $p_n \rightarrow p$ and $|\phi'_n(0)|>\eta(p_n)-1/n, \phi_n(0)=p_n.$
By Theorem 3 there is a constant $c$ such that $|\phi'_n(\zeta)|\leq \frac{c}{1-|\zeta|}.$ We can then assume that $\phi_n \rightarrow \phi$ uniformly on compact sets of $\Delta.$ Then $\phi(0)=p,
|\phi'(0)|\geq\overline{\lim}\;\eta(p_n).$ Since $\phi$ is an element in the family on which the max is taken, it follows that $\eta(p)\geq \overline{\lim}_{p'\rightarrow p} \eta(p')$. So $\eta$ is upper semi continuous..

\end{proof}

\begin{theorem}
The function $\tilde{\eta}$ is lower semicontinuous. Assume that $E$ is finite and that for $p\in E$ there are exactly $k$ local invariant hypersurfaces $\Sigma_1,\dots,\Sigma_k$ such that $\cap \Sigma_i=p.$ 
The function $\tilde{\eta}$  is continuous if there is no image of $\mathbb C$ locally contained in leaves out of $E.$
\end{theorem}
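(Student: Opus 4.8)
\emph{Proof proposal.} The lower semicontinuity of $\tilde\eta$ requires no hypothesis on $E$, and I would prove it by a holonomy (sliding) argument. It is trivial at points of $E$, where $\tilde\eta=0$, so fix $p\in X\setminus E$ and a disc $\phi\colon\Delta\to X\setminus E$ contained in the leaf through $p$, with $\phi(0)=p$ and $|\phi'(0)|>\tilde\eta(p)-\varepsilon$. For $r<1$ set $\psi(\zeta)=\phi(r\zeta)$, so $\psi(\overline\Delta)=\phi(\overline{\Delta_r})$ is a compact subset of $X\setminus E$. Covering $\psi(\overline\Delta)$ by finitely many flow boxes and sliding $\psi$ along the leaves (following the holonomy of the path $\psi$), one obtains, for every $q\in X$ close to $p$, a holomorphic disc $\psi_q\colon\Delta\to X\setminus E$ contained in the leaf through $q$ with $\psi_q(0)=q$; moreover $\psi_q\to\psi$ in $C^1$ as $q\to p$, because the chart changes and their $z$-derivatives are continuous in both variables and $\psi_q(\Delta)$ stays in a shrinking neighbourhood of the compact set $\psi(\overline\Delta)\subset X\setminus E$. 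Hence $\tilde\eta(q)\ge|\psi_q'(0)|$, and letting $q\to p$, then $r\to1$ and $\varepsilon\to0$, we get $\liminf_{q\to p}\tilde\eta(q)\ge\tilde\eta(p)$.

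For the continuity statement it remains to prove upper semicontinuity under the stated hypotheses, and the heart of the matter is the following Key Lemma: \emph{if there is no image of $\mathbb C$ locally contained in leaves outside $E$, then $\tilde\eta(x)\to0$ as $x\to E$.} This is precisely upper semicontinuity at points of $E$ (where $\tilde\eta$ vanishes), and it is the only place the hypotheses on $E$ are used — it fails at dicritical singularities, where infinitely many leaves pass arbitrarily close to the singular point without being pinched there. To prove it I would pass to local coordinates near a given $p\in E$ (isolated in $E$, since $E$ is finite) in which the $k$ invariant hypersurfaces $\Sigma_1,\dots,\Sigma_k$ are the coordinate hyperplanes, and use the local normal form of the singularity afforded by the hypothesis (Poincar\'e, Brjuno, or Dulac); the leaves through points $x$ near $p$ are then ``wedge-shaped'' domains squeezed between the $\Sigma_i$, and along the separatrix leaves $\Sigma_i\setminus\{p\}$, which have a puncture at $p$, the leafwise Poincar\'e density blows up against the ambient metric. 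An explicit comparison in the model, together with the non-existence of $\mathbb C$-leaves — which keeps the leaves from being so large near $p$ as to ``fill in'' the puncture — should give $\tilde\eta(x)\le\varepsilon$ uniformly for $x$ near $E$. I expect this Key Lemma to be the main obstacle; everything else is soft.

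Granting the Key Lemma, I would prove upper semicontinuity at $p\in X\setminus E$ by a Brody normal-families argument. First, under the hypothesis every leaf is hyperbolic (a non-hyperbolic leaf would carry a nonconstant map from $\mathbb C$). Take $q_n\to p$ with $\tilde\eta(q_n)\to\ell$; we may assume $\ell>0$. Let $\psi_n\colon\Delta\to L_{q_n}$ be the universal covering of the leaf $L_{q_n}$ with $\psi_n(0)=q_n$; by the Schwarz lemma, $|\psi_n'(\zeta)|(1-|\zeta|^2)=\tilde\eta(\psi_n(\zeta))$ for all $\zeta$, and by the Brody-type estimate (Theorem 3), $|\psi_n'(\zeta)|\le c/(1-|\zeta|^2)$. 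Passing to a subsequence, $\psi_n\to\phi$ uniformly on compacta, with $\phi\colon\Delta\to X$ holomorphic, $\phi(0)=p$ and $|\phi'(0)|=\ell>0$. Since each $\psi_n$ is unramified, Hurwitz's theorem forces $\phi'$ to vanish nowhere; in particular $\phi$ is nonconstant and (as $E$ is finite) $\phi^{-1}(E)$ is discrete. If $\phi(\zeta_0)\in E$ for some $\zeta_0$, then $\psi_n(\zeta)\to E$ for $\zeta$ near $\zeta_0$ and $n$ large, so by the Key Lemma $|\psi_n'(\zeta)|(1-|\zeta|^2)=\tilde\eta(\psi_n(\zeta))\to0$ there, whence $\phi'(\zeta_0)=0$ — a contradiction. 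Therefore $\phi(\Delta)\subset X\setminus E$; as a locally uniform limit of maps into plaques is again a map into a plaque, $\phi$ maps $\Delta$ into a single leaf, necessarily the leaf through $p$, so $\tilde\eta(p)\ge|\phi'(0)|=\ell$. Together with the lower semicontinuity and the Key Lemma (which covers the points of $E$), this yields the continuity of $\tilde\eta$.
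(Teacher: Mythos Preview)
Your lower semicontinuity argument is essentially the paper's: both lift a compact sub-disc of the uniformizing map to nearby leaves. The paper does this by invoking Royden's result that $\phi(\overline{\Delta}_r)$ has a Stein tubular neighbourhood (biholomorphic to a polydisc), in which nearby leaves are graphs and the lift is immediate; your direct holonomy-sliding through finitely many flow boxes is the same idea carried out by hand. Either way this part is fine.

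The gap is in your Key Lemma. You propose to prove $\tilde\eta\to 0$ near $E$ by ``passing to local coordinates in which the $\Sigma_i$ are coordinate hyperplanes and using the local normal form (Poincar\'e, Brjuno, or Dulac)''. But the hypothesis of the theorem is \emph{only} that there exist $k$ invariant hypersurfaces $\Sigma_1,\dots,\Sigma_k$ with $\bigcap_i\Sigma_i=\{p\}$; it does \emph{not} imply any holomorphic normal form. Already in dimension $2$, a Siegel-domain singularity with $\lambda\in\mathbb R^-$ violating the Brjuno condition has two convergent separatrices (so satisfies the hypothesis) yet is not holomorphically linearizable; in higher dimension the situation is worse. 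So you cannot reduce to a model, and the explicit leafwise-metric comparison you sketch has no starting point. You correctly flag this step as ``the main obstacle'', but the proposed attack does not work under the stated assumptions.

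The paper bypasses the Key Lemma altogether with a much softer argument that uses exactly the hypothesis as given. Take $p_n\to p$, choose near-extremal $\phi_n:\Delta\to L_{p_n}$ with $\phi_n(0)=p_n$, and (using the Brody bound, as you do) pass to a limit $\phi$. If $\phi(\Delta)\cap E=\emptyset$ you are done. If $\phi(q)=p_0\in E$ for some $q$, pick a small disc $D\ni q$ and let $h_i$ be a local defining function of $\Sigma_i$ near $p_0$. For large $n$, $\phi_n(D)$ lies in a leaf near $p_0$; a leaf near $p_0$ that is not one of the separatrices meets no $\Sigma_i$, so $h_i\circ\phi_n$ is nowhere zero on $D$ (and if the leaf \emph{is} a separatrix $\Sigma_j$, the same holds for every $i\neq j$). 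But $h_i\circ\phi_n(q)\to h_i(p_0)=0$, so by Hurwitz $h_i\circ\phi\equiv 0$ on $D$. Running this over all relevant $i$ forces $\phi(D)\subset\bigcap_i\Sigma_i=\{p_0\}$, hence $\phi$ is constant; thus $p=p_0\in E$ and $|\phi_n'(0)|\to 0$. This single Hurwitz step simultaneously gives upper semicontinuity at points of $X\setminus E$ and your Key Lemma at points of $E$, with no normal forms needed. Once you see this, your subsequent machinery (universal covers, the identity $|\psi_n'(\zeta)|(1-|\zeta|^2)=\tilde\eta(\psi_n(\zeta))$, Hurwitz on $\psi_n'$) is correct but unnecessary.
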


\begin{proof}
We first prove lower semicontinuity. Pick a point $p\in X.$ If $p \in E,$ $\tilde{\eta}(p)=0$ so lower semicontinuity is obvious. We assume that $p\in X \setminus E$ and $\tilde{\eta}(p)<\infty.$ Let $L\subset X \setminus E$ be the leaf through $p$ and let $\phi:\Delta \rightarrow L$ be the uniformizing map, $\phi(0)=p.$ Fix $r<1.$ Then $\phi(\overline{\Delta}_r)$ admits a Stein neighborhood $U$, (Royden \cite{R1974}), biholomorphic to the bidisc. If $\phi$ is not $1-1$ we lift to a Riemann domain. Steiness implies the existence of holomorphic retractions on submanifolds. So the leaves are holomorphic graphs on $\phi(\overline{\Delta}_r)$ and we can lift $\phi$ to the graph. So at $p'$ close to $p$,
$\tilde{\eta}(p') \geq \tilde{\eta}(p)-\epsilon.$ This gives lower semicontinuity. 
\medskip

 We prove next upper  semicontinuity under the hypothesis that there is no image of $\mathbb C$ locally contained in leaves outside of $E.$ Let $p\in X, p_n \rightarrow p,
|\phi_n'(0)|\geq \tilde{\eta}(p_n)-1/n, \phi_n(0)=p_n.$
We can assume $\phi_n\rightarrow \phi$. If $\phi(\Delta) \cap E =\emptyset,$ then $\tilde{\eta}(p) \geq \limsup \tilde{\eta}(p_n)$.

Assume next that $\phi(\Delta) \cap E$ contains some $p_0.$ 
We want to show that $\phi(\Delta)=\{p_0\}.$ Let $q\in D \subset \Delta$ be a small subdisc with $\phi(q)=p_0.$
If $\phi(\Delta) \neq \{p_0\}$, for some $i,$ $\phi(D)$ is not locally contained in $ \Sigma_i.$ Assume
$\Sigma_i=\{h_i=0\}$ in a neighborhood of $p_0.$ For large $n,$ since $\phi_n$ are contained in leaves, $h_i\neq 0$ on $\phi_n(D)$. But $h_i\circ \phi_n(q) \rightarrow 0.$ By Hurwitz,
$\phi(D) \subset \{h_i=0\},$ a contradiction.
Hence $\phi$ is constant so $p_0=p$ and $p_n \rightarrow p_0,$ 
$|\phi'_n(p)| \rightarrow 0$ and
$\tilde{\eta}(p_n) \rightarrow 0.$

\end{proof}

\begin{example}
\cite{CG-M1995} Suppose that $\mathcal L$ is given locally by holomorphic vector fields with linearizable hyperbolic singularities i.e. equivalent to
$Z=\sum_{j=1}^k \lambda_j z_j \frac{\partial}{\partial z_j}, \lambda_j \neq 0.$ Then, the first hypothesis of Theorem 20 is satisfied.  If we assume that there are no algebraic leaves, then Brunella's result, Theorem 16, implies that there are not directed closed currents and hence no directed image of $\mathbb C$ .
Then  Theorem 20 applies.
\end{example}

\begin{remark}
Suppose that we are in the situation of the second part of Theorem 20.
Since the leaves in $X \setminus E$ are uniformized by the unit disc we have a Poincar\'e metric $g_P$ on each leaf. There is also a metric $g_L$ induced by the Hermitian metric $g$ on $M.$   Then $g_L=\hat{\eta} g_P,$ and $\hat{\eta}=\tilde{\eta}.$ So the previous result gives the continuity of $\hat{\eta}$ in $X \setminus E.$
\end{remark}

Let $\mathcal C_d$ denote the set of foliations $\mathcal F$ of degree $d$ in $\mathbb P^k,$ which admit a non constant directed holomorphic map $f:\mathbb C \rightarrow \mathbb P^k$. Using Brody's technique it is easy to check  that $\mathcal C_d$ is closed in $\mathcal F_d(\mathbb P^k).$
Observe also that Corollary 1 implies that $\mathcal C_d$ is contained in the set $C_d$ of foliations admitting a positive closed directed current. Theorem 7 and Theorem 16 imply that $\mathcal U_d:=\mathcal F_d(\mathbb P^k)\setminus \mathcal C_d$ contains a dense real Zariski open set of $\mathcal F_d(\mathbb P^k)$. We can define on $\mathcal F_d \times \mathbb P^k,$ the functions $\eta(\mathcal F,p)$,  $\tilde{\eta}(\mathcal F,p)$ as the functions $\eta$, $\tilde\eta$ associated to $\mathcal F$, we just emphasize the dependence on $\mathcal F.$ Theorem 20 extends easily to this context and the function $(\mathcal F,p) \rightarrow \tilde{\eta}(\mathcal F,p)$ is lower semi continuous on $\mathcal F_d \times \mathbb P^k$ and continuous on $\mathcal U_d.$ The function

$$
c(\mathcal F):=\sup\{|\phi'(0)|; \phi:\Delta \rightarrow \mathbb P^k,
\; {\mbox{holomorphic and }}\; \mathcal F\; {\mbox{directed}}\}=\sup_p \tilde{\eta}(\mathcal F,p),
$$
\noindent is also continuous on $\mathcal U_d$ and is an exhaustion in $\mathcal U_d,$ i.e. it goes to infinity when $\mathcal F$ approches the boundary of  $\mathcal U_d$ which is $\mathcal C_d,$ since this set has empty interior.

\subsection{Universal covering maps of leaves.}

The following theorem is proved for laminations in a compact complex manifold $M$ of dimension $k$.
 
\begin{theorem}
Let $(X,\mathcal L, E)$ be a laminated set with a finite set $E$ of singular points. Assume there is no directed positive closed current on $(X,\mathcal L,E).$ Let $\phi:\Delta \rightarrow X \setminus E$ be the universal covering map of a leaf $L.$ Then 
$$
\int_\Delta (1-|\zeta|)|\phi'(\zeta)|^2d\lambda(\zeta)=\infty.
$$
\end{theorem}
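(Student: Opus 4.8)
The plan is to argue by contradiction: assume $\int_\Delta (1-|\zeta|)|\phi'(\zeta)|^2\,d\lambda(\zeta) < \infty$ and produce a directed positive closed current, contradicting the hypothesis. The natural candidates are the Nevanlinna-type averaging currents built from $\phi$ on the disc. For $0<r<1$, consider the currents
\[
T_r(\psi) := \frac{1}{A(r)}\int_{\Delta_r}\phi^*(\psi),
\]
where $A(r) = \mathrm{Area}(\phi(\Delta_r)) = \int_{\Delta_r}|\phi'|^2 d\lambda$ (with multiplicity). These have mass bounded by a fixed constant (using the Fubini--Study form on $M$), so some subsequence converges weakly to a positive current $T$ of bidimension $(1,1)$ supported on $\overline{\phi(\Delta)}\subset X$, weakly directed by $\mathcal L$ since $\phi$ is directed and the $\gamma_j$ annihilate the leaf directions; because $E$ is finite and hence $\Lambda^2(E)=0$, $T$ gives no mass to $E$ and (using that $\phi(\Delta)$ is locally contained in leaves) a Plante-type argument as in Theorem 7 shows $T$ is actually directed. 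The crux is to choose the radii so that $T$ is also \emph{closed}.

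For closedness, the key is the coarea/Stokes estimate: for a $1$-form $\eta$,
\[
T_r(d\eta) = \frac{1}{A(r)}\int_{\partial\Delta_r}\phi^*\eta,
\]
so $|T_r(d\eta)| \leq \|\eta\|_\infty \frac{L(r)}{A(r)}$ where $L(r)=\mathrm{Length}(\phi(\partial\Delta_r)) = r\int_0^{2\pi}|\phi'(re^{i\theta})|\,d\theta$. Thus it suffices to find $r_n\to 1$ with $L(r_n)/A(r_n)\to 0$. By Cauchy--Schwarz, $L(r)^2 \leq 2\pi r\, \frac{dA}{dr}$, exactly as in the proof of Corollary 1. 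The hypothesis $\int_\Delta(1-|\zeta|)|\phi'|^2 d\lambda < \infty$ translates, after writing the integral in polar coordinates, into $\int_0^1 (1-r)\,A'(r)\,dr < \infty$, and integrating by parts this gives $\int_0^1 A(r)\,dr < \infty$, i.e. $A(r)$ is integrable near $1$. Now I would run the standard Ahlfors-type lemma: if $L(r)/A(r) \geq c > 0$ on a set $E_c$ of radii near $1$, then on $E_c$ one has $A'/A^2 \geq c^2/(2\pi r)$, and since $A$ is integrable while $A'/A^2$ integrated over a neighborhood of $1$ is controlled by $1/A(r_0)$ — wait, here instead one uses that $\int_0^1 \frac{L(r)}{A(r)}\,\frac{dr}{r}$ must be controlled: combining $L^2 \le 2\pi r A'$ with the bound $L/A \ge c$ gives $\frac{L}{A}\cdot\frac{1}{r} \le \frac{1}{2\pi}\frac{A'}{A}\cdot\frac{L}{A^{?}}$... so the clean route is: $\frac{L(r)}{A(r)} = \frac{L(r)}{A(r)}$, and $\int \left(\frac{L(r)}{A(r)}\right)^2 r\,dr \le \frac{1}{2\pi}\int \frac{A'(r)}{A(r)^2}\,dr \le \frac{1}{2\pi A(0)}<\infty$, so $\liminf_{r\to1}\frac{L(r)}{A(r)}=0$ and a sequence $r_n\to1$ with $L(r_n)/A(r_n)\to0$ exists, hence $T$ is closed.

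The main obstacle I anticipate is not any single estimate but rather making sure the limit current is genuinely \emph{directed} (not merely weakly directed) and nontrivial. Nontriviality: since $\int_\Delta(1-|\zeta|)|\phi'|^2 < \infty$ forces $A(r)$ integrable near $1$ whereas $\phi$ is a nonconstant (in fact universal covering) map, one must check $A(r)\to\infty$ — otherwise $\phi$ would extend holomorphically across part of $\partial\Delta$, contradicting that $\phi$ is the universal covering of a leaf in $X\setminus E$ which, by the no-closed-current hypothesis, grows (Remark after Theorem 7: leaves have at least exponential area growth, so certainly $A(r)\to\infty$); this makes the mass-$1$ normalization legitimate and a soft argument shows the weak limit is nonzero. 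Directedness then follows exactly as in the proof of Plante's theorem (Theorem 7): filling in local holes of $\Delta_{r_n}$ in flow boxes and discarding plaques only partially contained, the limit decomposes as $\int [V_\alpha]\,d\mu(\alpha)$. The resulting $T$ is a nontrivial directed positive closed current on $(X,\mathcal L,E)$, contradicting the hypothesis, which completes the proof.
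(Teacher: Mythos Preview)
Your contradiction strategy via Plante--Ahlfors does not go through, and the gap is in the ``clean route'' at the end. The inequality chain
\[
\int_{r_0}^{1}\left(\frac{L(r)}{A(r)}\right)^2 r\,dr \;\le\; \frac{1}{2\pi}\int_{r_0}^{1}\frac{A'(r)}{A(r)^2}\,dr \;\le\; \frac{1}{2\pi A(r_0)}
\]
is correct once you replace the meaningless $A(0)=0$ by some $A(r_0)>0$, but it is \emph{always} true for any holomorphic $\phi:\Delta\to M$: it uses neither the finiteness hypothesis you derived ($\int_0^1 A(r)\,dr<\infty$) nor anything about leaves. And on the bounded interval $(r_0,1)$ the finiteness of $\int (L/A)^2\,dr$ says nothing whatsoever about $\liminf L/A$; a constant positive ratio is perfectly compatible with it. The Ahlfors argument you are imitating (Corollary~1) works for maps from $\mathbb C$ precisely because $\int_1^\infty dr/r=\infty$, and that is exactly what fails here. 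If you try to squeeze a contradiction out of $L/A\ge c$ together with $\int A<\infty$, you only get $A'\ge (c^2/2\pi)A^2$, hence $A(r)\le \tfrac{2\pi}{c^2(1-r)}$, which is an \emph{upper} bound and does not conflict with $\int A<\infty$.

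The paper's proof is of a different nature and does not go through closed currents at all. It is direct: first a lemma shows that $|\phi'(\zeta)|\ge c_\epsilon/(1-|\zeta|)$ whenever $\mathrm{dist}(\phi(\zeta),E)\ge\epsilon$ (this is where the fact that $\phi$ is a \emph{universal covering} of a leaf enters, via Schwarz on local plaques). Then a Privalov--Lindel\"of argument, using that $E$ is locally a zero set of holomorphic functions and the upper bound $|\phi'|\le c/(1-|\zeta|)$, shows that the set of angles $e^{it}$ for which $\phi(re^{it})\to E$ has Lebesgue measure zero. Consequently on a set of $e^{it}$ of measure $\ge 1/2$ there are radii $r_j\nearrow 1$ with $\phi(r_je^{it})$ uniformly away from $E$; the lower bound on $|\phi'|$ then forces $\int_0^1(1-r)|\phi'(re^{it})|^2\,dr=\infty$ along each such ray, and Fubini gives the theorem. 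The structural input you are missing is precisely this two-sided control of $|\phi'|$, which is invisible to a pure length--area argument.
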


\begin{proof}
Recall that $|\phi'(\zeta)|\leq \frac{c}{1-|\zeta|},$ with $c$ independent of $\phi.$ We need an estimate from below.

\begin{lemma}
For every $\epsilon>0$ there is a constant $c_\epsilon>0$ such that 
$$
|\phi'(\zeta)|\geq \frac{c_\epsilon}{1-|\zeta|}
$$
\noindent if dist$(\phi(\zeta),E)\geq \epsilon.$
\end{lemma}

\begin{proof}
We only have to show that $|\phi'(0)|\geq c_\epsilon$ if dist$(\phi(0),E) \geq \epsilon.$
We then consider $\psi(\zeta)=\phi(\frac{\zeta+\zeta_0}{1+\overline{\zeta}_0\zeta}).$ Then
$\psi'(0)=\phi'(\zeta_0)(1-|\zeta_0|^2)$ and we get the estimate on $\phi'(\zeta_0).$

\medskip

Let $(B_i)$ be a locally finite covering of $X \setminus E$, by flow boxes. We can assume that the plaques in $B_i$ are graphs $w_i=f_\alpha(z_i)$, $|z_i|<1,$ and that the graphs extend uniformly to $|z_i|<2.$ Let $\phi:\Delta \rightarrow L \subset X \setminus E$ be the universal covering of a leaf $L.$ 
We can assume that $\phi(0)=(a,b)$ is on the graph $w_i=f_\alpha(z_i)$. Then $\phi^{-1}$ has a well defined branch, $\phi^{-1}(\phi(0))=0,$ on $w_i=f_\alpha (z_i), |z_i-a|<1.$ By the Schwarz's Lemma, $|(\phi^{-1})'|\leq 1$. Hence $|\phi'(0)| \geq 1$.

The constant $c_\epsilon$ appears when we compare the metric in normalized coordinates with the Hermitian metric on $M.$

\end{proof}

We continue with the proof of the Theorem.
Consider the measurable subset $A$ of the unit circle defined as 
$$
A:=\{e^{it}; \lim_{r \rightarrow 1}\phi(re^{it})\in E\}.$$ Let $\lambda_1$ denote the Lebesgue measure on the unit circle. We want to show that $\lambda_1(A)=0.$ Assume that $\lambda_1(A)>0.$     We cover $E$ by finitely many small domains $U_i$ in which $E$ is defined by holomorphic functions $h_{i,j}.$ We can assume that no $h_{i,j}$ vanishes identically on any open piece of $\phi(\Delta).$  Let $A_i=\{e^{i\theta}; \lim \phi(re^{i\theta})\in E \cap U_i\}.$ Suppose that $\lambda_1(A_1)>0.$
We have that $\lim_{r \rightarrow 1}h_{1,j}(\phi(re^{i\theta}))=0$ for all $\theta\in A_1$. The argument in Privalov \cite{CL1996} shows that we can construct a domain with rectifiable boundary such that $h_{1,j}(\phi)$ is well defined there, with radial limits $0$ on a set of positive measure. The estimate $|\phi'(\zeta)|\leq \frac{c}{1-|\zeta|}$  implies that the function  $h_{1,j}(\phi)$ is bounded in an angle.  By Lindel\"{o}f's Theorem, $\phi$ has non tangential limit in  a possibly smaller angle. Privalov's theorem implies that $h_{1,j}\circ \phi \equiv 0,$ a contradiction.
 Consequently there is a $\delta>0$ and a closed set $F \subset \partial \Delta$ with $\lambda_1(F)\geq 1/2,$ such that for every $e^{it}\in F$ there is a sequence $r_j \rightarrow 1$ with dist$(\phi(r_je^{it}),E))\geq 2\delta.$ Define $E_\delta:=\{q;{\mbox{dist}}(q,E)\leq \delta\}.$ Since $|\phi'(\zeta)|\leq \frac{c}{1-|\zeta|},$ then $\phi(\Delta(r_je^{it},a(1-r_j))$ does not intersect $E_\delta$ for some $a>
 0$ small enough. By Lemma 1, on the disc $\Delta(r_je^{it},a(1-r_j))$ we have $|\phi'(\zeta)| \geq \frac{\delta}{1-|\zeta|}.$ Consequently

$$
\int_0^1|\phi'(re^{it})|^2(1-r)dr \geq \sum_j \int_{r_j-a(1-r_j)}^{r_j+a(1-r_j)}\frac{c_\delta dr}{1-r}
=c_\delta \sum_j \log \frac{1+a}{1-a}.$$

Hence $\int_Fdt \int_0^1 |\phi'(re^{it})|^2(1-r)dr=\infty.$

\end{proof}

\section{Harmonic Currents}

As we have seen in the previous chapter, generic holomorphic foliations of $\mathbb P^k$ with hyperbolic singularities have no algebraic leaves and hence don't admit a directed $\underline{\mbox{closed}}$ current. It is of interest to find another global object describing how leaves are distributed. Harmonic currents provide such a description.

\medskip

The notion of harmonic measure was introduced by L. Garnett \cite{G1983} for foliations without singularities in Riemannian manifolds. It was extended to laminations with singularities by Berndtsson and the second author \cite{BS2002} and developed in \cite{FS2005}. 

\medskip

Let $(M,\omega)$ be a compact K\"{a}hler manifold. Recall that a $(p,q)$ current $T$ is harmonic if and only if $i\partial \overline{\partial} T=0.$ We will be mostly interested in positive harmonic currents of bidimension $(1,1)$ or of bidegree $(1,1)$. For laminations they will be the analogue of positive invariant measures for dynamical systems.

\medskip

Let $(X,\mathcal L,E)$ be a lamination by Riemann surfaces in $M$ with $\Lambda^2(E)=0.$  Assume ${\mathcal L}$ is given by continuous $(1,0)$ forms $(\gamma_j)$ vanishing on $E.$ We will also assume that $E$ is locally complete pluripolar. This means that for $p\in E$, there is a plurisubharmonic function $u$ in some neighborhood $B(p,r)$ such that $(u=-\infty)=E \cap B(p,r).$

\subsection{Construction of harmonic currents by exhaustion}

\begin{theorem} 
Let M be a compact Hermitian manifold.
Let $\phi:\Delta\rightarrow M$ be a holomorphic map. Assume 
$$
\int (1-|\zeta|)|\phi'(\zeta)|^2d\lambda(\zeta) = \infty.
$$
Define $T_r:=\phi_*(\log^+ \frac{r}{|\zeta|} [\Delta])$ and $\tau_r:=\frac{T_r}{\|T_r\|}.$ Then all cluster points of $\{\tau_r\}$ are positive harmonic currents supported on $\overline{\phi(\Delta)}.$ If $\phi$ is weakly $\mathcal L-$directed for $(X,\mathcal L,E)$\\ (resp. $\mathcal L-$directed ) then all the cluster points of $\tau_r:$ are weakly $\mathcal L-$directed (resp. $\mathcal L-$directed ). 
\end{theorem}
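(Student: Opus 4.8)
The plan is to exploit the classical identity of Nevanlinna–Ahlfors theory that expresses $\log^+\frac{r}{|\zeta|}$ as the Green function of $\Delta_r$ with pole at the origin, so that $i\partial\overline\partial$ of the weight is essentially the point mass at $0$ together with a negative term on $\partial\Delta_r$. More precisely, I would first record that $T_r:=\phi_*(\log^+\frac{r}{|\zeta|}[\Delta])$ is a positive current of bidimension $(1,1)$ with finite mass $\|T_r\|=\int_{\Delta_r}\log\frac{r}{|\zeta|}\,|\phi'(\zeta)|^2\,\omega_{\mathrm{eucl}}$ up to constants, and that the hypothesis $\int(1-|\zeta|)|\phi'|^2=\infty$ forces $\|T_r\|\to\infty$ as $r\to1$ (since $\log\frac{r}{|\zeta|}$ is comparable to $1-|\zeta|$ near $|\zeta|=r$, after the usual rescaling argument). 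This divergence is what will kill the boundary/point-mass contributions after normalization.

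Next I would compute $i\partial\overline\partial T_r$. Writing $u_r(\zeta)=\log^+\frac{r}{|\zeta|}$ on $\Delta$, one has $i\partial\overline\partial u_r = 2\pi\,\delta_0 - \mu_r$ where $\mu_r$ is the (positive) harmonic measure of $\partial\Delta_r$, i.e. a measure of bounded total mass (in fact mass $2\pi$) carried on the circle $|\zeta|=r$. Since $\phi_*$ commutes with $\partial$ and $\overline\partial$, we get $i\partial\overline\partial T_r = \phi_*(i\partial\overline\partial u_r \wedge[\Delta]) = 2\pi\,\phi_*(\delta_0) - \phi_*(\mu_r) = 2\pi\,\delta_{\phi(0)} - \phi_*\mu_r$, which is a current of total mass bounded independently of $r$ (bounded by $4\pi$ times a constant depending only on the Hermitian metric, because $\mu_r$ has bounded mass and $\phi$ is an isometry from the hyperbolic metric... more simply, $\|\phi_*\mu_r\|$ is just the total variation, which is finite and uniformly bounded since the mass of $\mu_r$ is $2\pi$). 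Therefore $i\partial\overline\partial \tau_r = \frac{1}{\|T_r\|}\,i\partial\overline\partial T_r$ has mass $\le C/\|T_r\|\to0$. Passing to a convergent subsequence $\tau_{r_j}\to T$, the currents $\tau_{r_j}$ have bounded mass (they are normalized), $T$ is positive of bidimension $(1,1)$, and $i\partial\overline\partial T=\lim i\partial\overline\partial\tau_{r_j}=0$ since the masses go to zero; hence $T$ is harmonic. That $T$ is supported on $\overline{\phi(\Delta)}$ is immediate because each $\tau_r$ is.

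For the directedness statements, I would argue as follows. If $\phi$ is weakly $\mathcal L$-directed, meaning $\langle\gamma_j(\phi(\zeta)),\phi'(\zeta)\rangle=0$ for all $j$ and all $\zeta$, then $T_r\wedge\gamma_j=\phi_*(\log^+\frac{r}{|\zeta|}[\Delta])\wedge\gamma_j=\phi_*\big(\log^+\tfrac{r}{|\zeta|}\,[\Delta]\wedge\phi^*\gamma_j\big)$ vanishes because $\phi^*\gamma_j$ restricted to the leaf $[\Delta]$ is zero by hypothesis; hence $\tau_r\wedge\gamma_j=0$, and this is a closed condition preserved under weak limits, so $T\wedge\gamma_j=0$. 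For the stronger $\mathcal L$-directed conclusion, assuming $\phi(\Delta)$ is locally contained in leaves outside $E$: in a flow box away from $E$, $\phi$ maps pieces of $\Delta$ into plaques, so $T_r$ restricted there is a countable sum $\sum_k c_k[V_{\alpha_k}]$ of (weighted) plaque currents — more precisely a pushforward of a measure on the transversal by the plaque structure — because a holomorphic map from a disc into a flow box, tangent to the leaves, has image a union of plaques by the local structure theorem, and $\log^+\frac{r}{|\zeta|}[\Delta]$ pushes to an integral over the transversal of plaque currents with continuous densities. This local laminar form is preserved under normalization and — here one uses that $\Lambda^2(E)=0$ and $E$ is pluripolar to control what happens near $E$, exactly as in the closed case — passes to the weak limit, giving $T=\int h_\alpha[V_\alpha]\,d\mu(\alpha)$ in each flow box.

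The main obstacle I anticipate is the last point: verifying that the laminar (directed) structure genuinely survives passage to the limit in a flow box, i.e. that no mass escapes into a non-laminar part. The delicate issue is whether $T$ could pick up mass concentrated near $E$ or have a transversally singular component; one controls this using $\Lambda^2(E)=0$ together with the fact that harmonic currents, like closed currents, give no mass to sets of vanishing Hausdorff $2$-measure (a Federer-type support argument, or alternatively using the local complete pluripolarity of $E$ and a Skoda–El Mir type extension), and using the uniform estimate $|\phi'(\zeta)|\le c/(1-|\zeta|)$ from Theorem~3 to ensure the densities $h_\alpha$ stay locally integrable in the limit. The rest — the Green-function computation, the mass divergence of $\|T_r\|$, and the closedness of the conditions $T\wedge\gamma_j=0$ under weak convergence — is routine.
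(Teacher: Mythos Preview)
Your argument for the harmonic part is essentially the paper's: the Green-function identity $i\partial\overline\partial\,\log^+\tfrac{r}{|\zeta|}=\nu_r-\delta_0$ (up to $2\pi$), the comparison $\log^+\tfrac{r}{|\zeta|}\sim (r-|\zeta|)^+$ giving $\|T_r\|\to\infty$, and the conclusion $i\partial\overline\partial T=0$ are exactly what the paper does. The weakly directed part is likewise identical.

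For the $\mathcal L$-directed conclusion you have misidentified the obstacle. The issue is not mass near $E$; it arises already in every flow box away from $E$. The current $T_r$ is \emph{not} itself directed there: the image $\phi(\Delta_r)$ meets a flow box in finitely many plaques, but some of them are only partially covered (wherever $\partial\Delta_r$ crosses the box), and on those the pushed-forward weight $\log^+\tfrac{r}{|\zeta|}\circ\phi^{-1}$ is not a harmonic function on the full plaque. So your description of $T_r$ restricted to a flow box as ``an integral over the transversal of plaque currents with continuous densities'' is inaccurate for these boundary plaques. The paper's remedy is simple and is the same mechanism as in Plante's theorem: after shrinking the plaques, the total mass carried by the partially covered ones is bounded uniformly in $r$, hence after dividing by $A(r)\to\infty$ their contribution vanishes; what remains is a weak limit of genuinely directed harmonic pieces, and such a limit stays directed because it lies in the closed convex hull of the extremal currents $h_\alpha[V_\alpha]$.

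A second point: you invoke the Brody-type estimate $|\phi'(\zeta)|\le c/(1-|\zeta|)$ to control the densities $h_\alpha$. That estimate is not available under the hypotheses of this theorem---it requires the absence of a nonconstant directed image of $\mathbb C$, which is not assumed here. The paper does not use it in this proof, and you do not need it either once the partial-plaque argument above is in place.
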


\begin{proof}

The situation is different from the construction of positive closed currents starting from an image of $\mathbb C.$ Then we have to choose a cluster point, not all of them are a priori closed. Here all cluster points are harmonic.

Define in $\mathbb C$, the function $G_r (\zeta)=1/(2\pi)
\log^+\frac{r}{|\zeta|},r>0.$  Observe that $G_r$ vanishes on $\partial \Delta_r$
and that near $0$ the Laplacian of $G_r$ is the Dirac mass at $0.$ Let $\phi(0)=p.$ Define $T_r=\phi_*(G_r[\Delta]),r<1.$ If $\theta$ is a test form, then
$$
<T_r,\theta>=1/(2\pi)\int \log^+(r/|\zeta|)\phi^*(\theta).
$$
We can estimate the mass of $T_r.$ We have

\bea
A(r):=\|T_r\| & = & \int \log^+(r/|\zeta|) |\phi'(\zeta)|^2 d\lambda(\zeta)\\
& \sim & \int (r-|\zeta|)^+ |\phi'(\zeta)|^2 d\lambda(\zeta)\\
&  & \\
\eea

The hypothesis implies that $A(r) \nearrow \infty$. We also have from Jensen's formula 
$$
i\partial \overline{\partial} T_r =\phi_*(\nu_r)-\delta_p$$
\noindent where $\nu_r$ denotes the Lebesgue measure on the circle of radius $r$. Hence if $T$ is a cluster point of $(T_r/A(r))$, $T$ is non zero, and

\bea
i\partial \overline{\partial} T & = & \lim_{r\rightarrow 1}
\frac{i\partial \overline{\partial}T_r}{A(r)}\\
& = & \lim_{r\rightarrow 1} \frac{\phi_*(\nu_r)-\delta_p}{A(r)}\\
& = & 0\\
\eea

Clearly if $\phi(\Delta)$ is weakly directed by continuous forms $\gamma_j,$ then
$T \wedge \gamma_j=0.$

We show that if $\phi$ is $\mathcal L-$ directed for $(X,\mathcal L,E)$ i.e. if  $\phi(\Delta)$  are locally contained in leaves then all the cluster points are harmonic  $\mathcal L-$directed. In a flow box the current $T_r$ is supported on a finite number of plaques. The fraction of plaques that is only partially covered after shrinking the plaques is of bounded mass, since $A(r)$ is unbounded we can neglect that contribution in the limit. We then observe that the limit, in a flow box of directed positive harmonic currents, is directed. Indeed the limit is in the closed convex hull of the extreme points, which gives  the decomposition in plaques.

\end{proof}

\begin{corollary}
Let $(X,\mathcal L,E)$ be a lamination with $E$ analytic of dimension $0$ and without directed positive  closed currents. Then the leaves are uniformized by discs $\phi:\Delta \rightarrow (X,\mathcal L)$ and the cluster points of $T_r = \frac{1}{A(r)}\phi_*(G_r[\Delta])$ are harmonic currents directed by $\mathcal L.$ 
\end{corollary}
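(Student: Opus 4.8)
The plan is to combine three facts already in place: Corollary 1 (a non-constant holomorphic image of $\mathbb C$ locally contained in leaves produces a non-trivial directed positive closed current), Theorem 21 (for the universal covering map $\phi$ of a leaf one has $\int_\Delta(1-|\zeta|)|\phi'(\zeta)|^2\,d\lambda(\zeta)=\infty$ when $E$ is finite and no directed closed current exists), and Theorem 22 (which turns that divergence into positive harmonic currents directed by $\mathcal L$). The corollary is essentially their concatenation, applied to uniformizing maps of leaves.

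First I would settle the uniformization claim. Since $E$ is analytic of dimension $0$ in the compact manifold $M$, it is a finite set; in particular $\Lambda^2(E)=0$ and $E$ is locally complete pluripolar, so all the standing hypotheses of this section hold. Let $L\subset X\setminus E$ be any leaf. If its universal cover were not the unit disc, it would be $\mathbb C$ or $\mathbb P^1$, and in either case, restricting to $\mathbb C\subset\mathbb P^1$ in the second, we obtain a non-constant holomorphic map $g:\mathbb C\to L\subset X\setminus E$. Its image lies in the single leaf $L$, so $g$ is locally contained in leaves outside $E$, and since $g'(\zeta)$ is tangent to a plaque at every point while the defining forms $\gamma_i$ annihilate plaque tangents, $\langle\gamma_i(g(\zeta)),g'(\zeta)\rangle=0$. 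Corollary 1 would then yield a non-trivial positive closed current directed by $(X,\mathcal L,E)$, contradicting the hypothesis. Hence every leaf is uniformized by $\Delta$; fix a leaf $L$ and let $\phi:\Delta\to L\subset X\setminus E$ be its universal covering map.

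Next, because $E$ is finite and $(X,\mathcal L,E)$ admits no directed positive closed current, Theorem 21 applies to $\phi$ and gives $\int_\Delta(1-|\zeta|)|\phi'(\zeta)|^2\,d\lambda(\zeta)=\infty$. Thus $\phi$ satisfies the hypothesis of Theorem 22, and it is $\mathcal L$-directed because its image is locally contained in leaves outside $E$. With $G_r=\frac{1}{2\pi}\log^+\frac{r}{|\zeta|}$ and $A(r)=\|\phi_*(G_r[\Delta])\|$, the current $T_r=\frac{1}{A(r)}\phi_*(G_r[\Delta])$ is, up to the harmless factor $2\pi$, exactly the normalized current $\tau_r$ of Theorem 22; it has mass $1$, so by weak-$*$ compactness of bounded families of currents the family $\{T_r\}_{r<1}$ has cluster points as $r\to 1$, and Theorem 22 identifies each such cluster point as a positive harmonic current, supported on $\overline{\phi(\Delta)}=\overline L\subset X$, and directed by $\mathcal L$. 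This is precisely the assertion.

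I do not expect a genuine obstacle here: once Theorems 21 and 22 are available, the corollary is a bookkeeping exercise. The two points deserving a line of care are (i) observing that ``analytic of dimension $0$'' forces $E$ to be a finite set, which is the hypothesis Theorem 21 requires, and (ii) matching the normalization $T_r=\frac{1}{A(r)}\phi_*(G_r[\Delta])$ used in the statement with the normalized currents $\tau_r$ of Theorem 22; both are routine.
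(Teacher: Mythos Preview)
Your proof is correct and follows the same route as the paper, which simply writes ``This is a consequence of Theorem 22 and of the estimate in Theorem 21.'' You have spelled out explicitly the uniformization step via Corollary 1 (no image of $\mathbb C$, hence every leaf is hyperbolic), which the paper leaves implicit in the hypothesis of Theorem 21; this is a helpful clarification rather than a different argument.
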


\begin{proof}
This is a consequence of Theorem 22 and of the estimate in Theorem 21.

\end{proof}
\medskip
The currents $T_r$ give an average along leaves and our goal is to prove an ergodic theorem: the limit  of the averages, is independent of the leaf.

We now give another construction of harmonic currents for general singular sets $E$ as above, when the hypothesis in Theorem 22 is not necessarily satisfied.

\begin{theorem}
 Assume the lamination $(X,\mathcal L,E)$ is transversally ${\mathcal C}^2.$ There exists a positive current $T$ of bidimension $(1,1)$ supported on $X$ such that\\
\noindent i) $\int T \wedge \omega=1.$\\
\noindent ii) $T \wedge \gamma_j=0\; \forall \; j.$\\
\noindent iii) $i \partial \overline{\partial} T=0.$\\
\noindent iv) In a flow box $B,$ disjoint from the singularities
$$
T=\int_\Delta h_\alpha [V_\alpha]d\mu(\alpha)
$$
\noindent where $[V_\alpha]$ denotes the current of integration on the plaque $V_\alpha$, $h_\alpha$
is a positive harmonic function depending measurably on $\alpha$, when $\alpha$  varies on a transversal $\Delta.$
\end{theorem}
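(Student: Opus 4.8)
The plan is to imitate the proof of Theorem 8, but with the space of $\partial\bar\partial$-closed (harmonic) currents in place of the space of closed currents, and then to extract the local form (iv) from the transverse $\mathcal{C}^2$ regularity in the same way Sullivan's Theorem 9 extracts the plaque decomposition for closed currents. Concretely, I would reuse the convex compact set $\mathcal{C}=\{T\geq 0 \text{ of bidimension }(1,1): T\wedge\gamma_j=0\ \forall j,\ \int T\wedge\omega=1\}$ from the proof of Theorem 8, and let $G$ be the annihilator, in the space of currents of bidimension $(1,1)$, of the forms $\{i\partial\bar\partial\phi:\phi\in C^\infty(M)\}$; thus $G$ is precisely the space of positive harmonic currents. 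If $\mathcal{C}\cap G\neq\emptyset$ we already have a nonzero current satisfying (i), (ii), (iii). If $\mathcal{C}\cap G=\emptyset$, then since $\mathcal{C}$ is compact and the space of currents is reflexive, Hahn--Banach yields a real function $\phi$ (a priori a limit of smooth functions, but a harmless regularization makes it smooth) and a $\delta>0$ with $\langle T,i\partial\bar\partial\phi\rangle\geq\delta$ for all $T\in\mathcal{C}$. Testing against the normalized currents of integration $[V_\alpha]$ on plaques through every point of $X\setminus E$ forces the restriction of $i\partial\bar\partial\phi$ to the tangent directions of $\mathcal{L}$ to be bounded below by a positive multiple of $\omega$ there; in other words $\phi$ is strictly subharmonic along every leaf.

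Next I would derive a contradiction. Because $X$ is compact, $\phi$ attains its maximum over $X$ at a point $p$. If $p\in X\setminus E$, the maximum principle on the Riemann surface leaf through $p$ is incompatible with strict leafwise subharmonicity. The genuinely delicate case is $p\in E$: here I would use that $E$ is locally complete pluripolar, choosing near $E$ plurisubharmonic defining functions $u_i\leq 0$ with $\{u_i=-\infty\}=E$, patching them by a partition of unity into a function $v$ that is leafwise subharmonic and tends to $-\infty$ along every leaf approaching $E$, and replacing $\phi$ by $\phi+\epsilon v$ (regularized leafwise so as to remain admissible, at the cost of an arbitrarily small loss of leafwise strict subharmonicity). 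Since $\phi+\epsilon v$ now attains its maximum in $X\setminus E$, the previous contradiction applies, so in fact $\mathcal{C}\cap G\neq\emptyset$ and a harmonic directed current $T$ exists. The hypotheses $\Lambda^2(E)=0$ and $E$ complete pluripolar are consumed precisely here, and (via Federer's support theorem, as in Theorem 8) to guarantee that $T$ carries no spurious mass on $E$.

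To obtain (iv), I would fix a flow box $B$ disjoint from $E$, with plaques $V_\alpha$, transversal $\Delta$, and holonomy projection $\pi:B\to\Delta$. Transverse $\mathcal{C}^2$ regularity makes $\pi$ of class $\mathcal{C}^2$, so $\partial(\chi\circ\pi)$ and $\bar\partial(\chi\circ\pi)$ are continuous and pointwise in the spans of $(\gamma_j)$ and $(\bar\gamma_j)$. A short computation, using $T\wedge\gamma_j=T\wedge\bar\gamma_j=0$ to kill all the extra terms in $i\partial\bar\partial((\chi\circ\pi)T)$ (including the cross terms $\partial(\chi\circ\pi)\wedge\bar\partial T$, after an integration by parts that is legitimate thanks to the $\mathcal{C}^2$ regularity), shows that $(\chi\circ\pi)T$ is again a positive directed harmonic current for every smooth $\chi$ on $\Delta$. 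As in Theorem 9 this forces the extreme rays of the cone of directed harmonic currents in $B$ to be concentrated on single plaques, on each of which a positive harmonic current is $h[V_\alpha]$ with $h\geq 0$ harmonic on the plaque (whence $i\partial\bar\partial(h[V_\alpha])=0$); Choquet's integral representation then gives $T|_B=\int_\Delta h_\alpha[V_\alpha]\,d\mu(\alpha)$ with $h_\alpha$ harmonic and measurable in $\alpha$. Gluing over a locally finite covering and normalizing the total mass to $1$ finishes the argument.

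An alternative to this whole scheme, closer to Garnett, is to build $T$ from a harmonic measure: the leafwise heat diffusion defines a Feller semigroup $D_t$ on $C(X)$ --- transverse $\mathcal{C}^2$ regularity giving continuity of the leafwise heat solutions in the transverse variable, and polarity of the pluripolar set $E$ for leafwise Brownian motion letting $D_t$ ignore $E$ --- and a Markov--Kakutani fixed point produces a probability measure $m$ with $D_t^*m=m$, hence $\int\Delta_{\mathcal{L}}f\,dm=0$ for leafwise $\mathcal{C}^2$ functions $f$; disintegrating $m$ over transversals then yields the densities $h_\alpha$ directly, and the associated current is the desired $T$. In either route the step I expect to be the main obstacle is the same: controlling the admissible test objects (the subharmonic competitor $\phi+\epsilon v$, respectively the heat flow) near the singular set $E$ so that the maximum principle, respectively the Feller property, is not destroyed there --- this is exactly where $\Lambda^2(E)=0$ together with complete pluripolarity of $E$ is needed --- while the role of the transverse $\mathcal{C}^2$ hypothesis is, as in Theorem 9, to make the holonomy projection and the transverse disintegration regular enough to carry harmonic densities.
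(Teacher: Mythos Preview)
Your approach is essentially the paper's: the same Hahn--Banach argument on the same convex set $\mathcal C$, the same contradiction via a leafwise strictly subharmonic function whose maximum must be pushed off $E$ using complete pluripolarity, and the same Sullivan-style extremal decomposition for (iv) in the transversally $\mathcal C^2$ case.

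One technical step deserves correction. Your plan to patch the local plurisubharmonic defining functions $u_i$ of $E$ by a partition of unity into a global leafwise-subharmonic $v$ is not safe: a combination $\sum\rho_i u_i$ need not be plurisubharmonic (nor leafwise subharmonic), because the terms $u_i\,i\partial\bar\partial\rho_i$ are unbounded below where $u_i\to-\infty$, and no ``leafwise regularization'' repairs this. The paper avoids the issue by working \emph{locally} at the maximum point $p\in E$: in a ball $B(p,r)$ where $E=\{u=-\infty\}$ with $u$ psh, it first replaces $\psi$ by $\psi_1=\psi-\tfrac{\delta}{2}\|z\|^2$, which is still leafwise subharmonic and now satisfies $\psi_1\le\psi_1(0)-\tfrac{\delta r^2}{2}$ on $\partial B(p,r)\cap X$; then $\psi_2=\psi_1+\epsilon u$ for small $\epsilon$ is leafwise subharmonic on $B(p,r)$, equals $-\infty$ on $E$, yet takes a value above $\psi_1(0)-\tfrac{\delta r^2}{2}$ at some nearby regular point, so its maximum on $X\cap\overline{B(p,r)}$ lies in the interior and off $E$ --- a contradiction. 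This local trick is both simpler and correct; you should replace your global patching by it.

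Two minor remarks. For the ``no mass on $E$'' statement the paper invokes the existence of Lelong numbers for positive harmonic currents rather than Federer's theorem. For (iv), besides the Choquet argument you outline, the paper also records a second route: disintegrate the trace measure $\|T\|$ along the plaques and use that $T$ extends to test forms which are merely $\mathcal C^2$ along leaves (and continuous transversally), the extension remaining $i\partial\bar\partial$-closed along leaves; this forces $\nu_\alpha=h_\alpha[V_\alpha]$ for $\mu$-a.e.\ $\alpha$ without appealing to extremal structure.
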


We show how to find a current satisfying i)-iii). For iv) we refer the reader to \cite{FS2005}. The equivalence of weakly directed and directed
harmonic currents is proved in \cite{FWW2007b} for surfaces without the assumption of transverse smoothness.


\begin{proof}
Let $$C:=\{T \geq 0, \int T\wedge \omega=1, T \wedge \gamma_j=0,
T\; {\mbox{of bidimension}}\; (1,1),\;{\mbox{supported on}}\; X\}.$$
Define $F=\{i\partial \overline{\partial} \psi\}^\perp,$ the orthogonal complement in the space of currents of bidimension $(1,1)$ of $\partial \overline{\partial}-$ exact $(1,1)$ forms,The space F is the space of $\i\partial \overline{\partial} $ closed currents. We have to show that $C \cap F \neq \emptyset.$ Assume not, then by the Hahn-Banach theorem and the reflexivity of the space of currents, there is a $\delta>0$ and a smooth function $\psi$ such that for every $T$ in $C$,
$$
<T,i\partial \overline{\partial} \psi>\geq \delta.\;\;\;\;\;\;\;\;(1)
$$

Since $\psi$ is smooth, by $(1)$ it is strictly subharmonic along the leaves. The space $X$ being compact, the maximum of $\psi$ can be reached only at a point $p\in E.$ Choose local coordinates $z$ in $M$ such that $p=0$ and $E \cap B(0,r)=\{u=-\infty\},$ with $u$ plurisubharmonic in $B(0,2r)$ and negative. The function $\psi_1:=\psi-\delta\|z\|^2/2$ is still plurisubharmonic along leaves and
$\psi_1(0)=\max_{X\cap \overline{B(0,r)}}\psi_1.$ We have
$\psi_1(z) \leq \psi_1(0)-\delta r^2/2$ on $\partial B(0,r) \cap X.$ Define $\psi_2=\psi_1+\epsilon u.$
Then ${\psi_2}_{| \partial B_r\cap X}\leq \psi_1(0)-\delta r^2/2.$ If $z_1\in X \setminus E, u(z_1)\neq -\infty$ is close enough to zero and $\epsilon$ is small enough we get $\psi_2(z_1)>\psi_1(0)-\delta r^2/2$.
Hence the max of $\psi_2$ on $X\cap \overline{B(0,r)}$ is reached away from $\partial B(0,r)$ and away from $E$. This contradicts that $\psi_2$ is strictly subharmonic on leaves. Since $\Lambda^2(E)=0,$ $T$ has no mass on $E$ as follows easily from the existence of Lelong numbers for harmonic currents
\cite{BS2002}.

In a flow box $B$, we can  assume that the lamination is given by the kernel of continuous $(1,0)$ forms $(\gamma_j).$ We consider the desintegration of $\|T\|$ along leaves.

$$
<T,\phi>=\int <\nu_\alpha,\phi>d\mu(\alpha).
$$
Here $
 \nu_\alpha 
$is a measure on the plaque $V_\alpha.$.
We extend $T$ to forms which are ${\mathcal C}^2$ along leaves and continuous and  then the extension is still $\partial \overline{\partial}$ closed along leaves, \cite{FS2005}.  For $\mu$ almost every $\alpha, \nu_\alpha$ is $i\partial\overline{\partial}$ closed along leaves and hence $ 
 \nu_\alpha =
h_\alpha[V_\alpha] $ is a positive harmonic function on the corresponding plaque.

When the lamination is transversally $\mathcal C^2$ it is possible to show, as in Theorem 10, that such currents can be decomposed as claimed, by considering the decomposition in extremal elements. For general $\mathcal L$ one has to choose an appropriate $T$ \cite{FS2005}.

\end{proof}

We give a result on decomposition of directed harmonic currents.

\begin{proposition}
Let $\mathcal F$ be a holomorphic foliation with singularities on a compact surface $M$ without meromorphic first integral. Let $T$ be a positive harmonic current directed by $\mathcal F.$ Then
$$
T=\sum_{j=1}^N \lambda_j[V_j]+T_0
$$
\noindent with $\lambda_j>0$, $[V_j]$ are closed analytic subvarieties directed by $\mathcal F$ and
$T_0$ is diffuse in the following sense. In a flow box $B,$ $T_0=\int h_\alpha [V_\alpha]d\mu(\alpha),$
with the measure $\mu$ diffuse.
\end{proposition}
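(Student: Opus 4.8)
The plan is to extract the closed analytic pieces of $T$ one at a time via Lelong numbers and Siu's theorem, and to check that what remains is diffuse. First I would recall that, since $T$ is a positive harmonic current of bidimension $(1,1)$ on a compact complex surface $M$, it carries well-defined Lelong numbers at every point (this was used already, following \cite{BS2002}), so one can form the upper-level sets $E_c := \{p \in M : \nu(T,p) \geq c\}$ for $c > 0$. By Siu's semicontinuity theorem these are analytic subsets of $M$; since $\dim M = 2$, each $E_c$ is either discrete or a (locally finite) union of irreducible curves together with finitely many points. The curves appearing this way are directed by $\mathcal{F}$: at a point of such a curve which is not a singularity of $\mathcal{F}$ and not a singular point of the curve, a positive harmonic current with a positive Lelong number along the curve must contain a multiple of the current of integration on that local branch (by the Siu-type decomposition of $T$ into its part carried by the curve plus a residual positive harmonic current), and $T$ being directed forces this branch to be a plaque, i.e. the curve is $\mathcal{F}$-invariant.

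Next I would set up the actual extraction. For the largest value $c$ for which $E_c$ contains a curve, write $E_c \supset V_1 \cup \cdots$ as a union of irreducible $\mathcal{F}$-invariant curves; each is contained in some algebraic or at least closed analytic leaf-closure. By Siu's structure theorem one can write $T = \sum \lambda_j [V_j] + T'$ with $\lambda_j = $ the generic Lelong number of $T$ along $V_j$, $\lambda_j > 0$, and $T'$ a positive current with strictly smaller Lelong numbers along each $V_j$. Since $[V_j]$ is closed (hence harmonic) and $T$ is harmonic, $T'$ is again a positive harmonic current, and it is still directed by $\mathcal{F}$ off the $V_j$ (the $V_j$ being $\mathcal{F}$-invariant, removing them does not destroy the laminar structure in flow boxes away from them). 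Iterating lowers the Lelong-number "spectrum" along curves; the key finiteness input is that $\mathcal{F}$ has no meromorphic first integral, so by the Ghys--Jouanolou theorem quoted above (Theorem on closed leaves) $\mathcal{F}$ has only finitely many closed leaves, hence only finitely many invariant analytic curves can ever arise, and the process terminates after $N$ steps, giving $T = \sum_{j=1}^N \lambda_j [V_j] + T_0$ with $T_0$ positive, harmonic, directed.

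It then remains to show $T_0$ is diffuse, i.e. that in a flow box $B$ away from $E$ the transverse measure $\mu$ in the representation $T_0 = \int_\Delta h_\alpha [V_\alpha]\, d\mu(\alpha)$ has no atoms. Suppose $\mu(\{\alpha_0\}) > 0$ for some plaque $V_{\alpha_0}$. Then $T_0 \geq c [V_{\alpha_0}]$ locally for some $c > 0$, so $T_0$ has a positive Lelong number along $V_{\alpha_0}$ at each of its points, and the Siu level set $\{\nu(T_0, \cdot) \geq c'\}$ for suitable small $c' > 0$ is an analytic curve $W$ containing $V_{\alpha_0}$; thus the leaf through $V_{\alpha_0}$ is contained in a closed analytic curve, necessarily $\mathcal{F}$-invariant. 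But then $W$ is one of the finitely many invariant curves, hence was already subtracted off in forming $T_0$ — contradiction with the fact that $T_0$'s Lelong numbers along every $V_j$ were arranged to vanish. (One must note here that an atom in $\mu$ in one flow box propagates along the whole leaf by holonomy and gives a globally defined positive Lelong number, so "closed in a flow box" upgrades to "contained in a global analytic curve" exactly as in the earlier Proposition on closed leaves.) Hence $\mu$ is diffuse and $T_0$ is diffuse.

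**The main obstacle** I anticipate is the bookkeeping in the iteration: one must verify that after subtracting $\sum \lambda_j [V_j]$ the remainder $T_0$ is still \emph{directed} (not merely weakly directed) in flow boxes disjoint from $E$ and from the $V_j$, and that the "diffuse" conclusion is genuinely global rather than flow-box-dependent. Both rest on propagating a local Lelong-number / atom phenomenon along an entire leaf via holonomy, together with the finiteness of closed leaves supplied by the no-first-integral hypothesis; the positivity and harmonicity of each successive remainder are automatic, but one should be slightly careful that the Siu decomposition of a positive \emph{harmonic} (not closed) current along a curve still produces a harmonic remainder — this holds because $[V_j]$ is closed, hence $i\partial\bar\partial$-closed, so harmonicity is preserved under the subtraction.
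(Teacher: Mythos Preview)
Your proposal is correct and follows essentially the same route as the paper: the paper's proof simply cites \cite{FS2005}, Proposition~5.1 for the statement that a positive directed harmonic current giving mass to a leaf forces that leaf to lie in a compact Riemann surface, then invokes the Ghys--Jouanolou finiteness of closed leaves (no meromorphic first integral) to conclude. Your Lelong-number/Siu argument is exactly how one proves the cited lemma, and your diffuseness step (atom $\Rightarrow$ positive Lelong number along the leaf $\Rightarrow$ analytic curve $\Rightarrow$ already subtracted) is the same mechanism; the iteration you set up is harmless but unnecessary, since Ghys gives the full finite list $V_1,\dots,V_N$ of invariant curves at once and you can subtract $\sum_j \lambda_j[V_j]$ in a single step with $\lambda_j$ the generic Lelong number of $T$ along $V_j$.
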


\begin{proof}
It is shown in \cite{FS2005}, Proposition 5.1 that if a positive directed harmonic current gives mass to a leaf then the leaf is contained in a compact Riemann surface. Theorem 6.1 implies that the number of such leaves is finite. The proposition follows.
\end{proof}

\begin{remark}
\noindent i) If $\Lambda^2(E)=0$ or if $E$ does not contain a closed set $E'$ such that $\mathbb P^2\setminus E'$ is $k-1$ pseudoconvex, then $E$ does not support a nonzero positive harmonic $(1,1)$ current \cite{FS1995}. 

\noindent ii) The measure $\mu$ is not necessarily invariant by holonomy, but if $A$ is a Borel set on a transversal, such that $\mu(A)=0$, then $\mu(\gamma(A))=0$ if $\gamma$ is in the pseudogroup of holonomy transformations. Indeed Harnack's inequalities show that if $T_a$ denotes the slice of the current $T,$ if $\phi (a,a')$ denotes the holonomy map from the a slice to the a' slice  then the direct image  $(\phi(a,a'))_* T_a \leq cT_{a'}$ with $c$ the constant given by Harnack's inequalities.
\end{remark}

\begin{corollary}
Let $(X,\mathcal L,E)$ be a lamination with singularities in $\mathbb P^2$. Assume that $\Lambda^2(E)=0.$ Let $L_1,L_2$ be two leaves of $\mathcal L.$
Then $\overline{L}_1 \cap \overline{L}_2\neq \emptyset.$
\end{corollary}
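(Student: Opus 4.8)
The plan is to use the harmonic currents constructed in the preceding results together with the fact that a nonzero positive harmonic (hence $i\partial\bar\partial$-closed) current on $\mathbb P^2$ has positive cohomological self-intersection, while two currents supported on disjoint compact sets can be made to intersect in cohomology with zero geometric contribution. More precisely, suppose for contradiction that $\overline{L}_1\cap\overline{L}_2=\emptyset$. Both $\overline{L}_1$ and $\overline{L}_2$ carry the structure of laminations with singularities (as noted in Section 2, $\overline{L}$ is a lamination with singularities whenever $L$ is a leaf), with singularity sets contained in $E$, so $\Lambda^2$ of these singularity sets is still zero. First I would apply Theorem 23 (or Theorem 22 together with the growth estimate of Theorem 21, depending on whether a directed positive closed current exists) to produce nonzero positive harmonic currents $T_1,T_2$ of bidimension $(1,1)$, directed by the laminations $\overline{L}_1$ and $\overline{L}_2$ respectively, with $\mathrm{supp}(T_i)\subset\overline{L}_i$ and $\int T_i\wedge\omega=1$. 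Each $T_i$ gives no mass to $E$ since $\Lambda^2(E)=0$ (using the existence of Lelong numbers for harmonic currents, as in the proof of Theorem 23).

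Next I would compute the cohomological intersection. Since $H^2(\mathbb P^2,\mathbb R)=\mathbb R$ is generated by the class of the Fubini–Study form $\omega$, and since a positive $i\partial\bar\partial$-closed current $T$ of bidimension $(1,1)$ on a compact Kähler manifold has a well-defined cohomology class with $\{T\}=(\int T\wedge\omega)\{\omega\}$ in $H^{2}(\mathbb P^2)$ (harmonic currents are $dd^c$-closed, hence define a class; this is the same mechanism used at the end of the proof of Theorem 17 — "the integral can be computed cohomologically and $T$ is cohomologous to the Fubini–Study form"), we get $\{T_1\}\cdot\{T_2\}=\{\omega\}\cdot\{\omega\}=1>0$. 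On the other hand, because $\overline{L}_1$ and $\overline{L}_2$ are disjoint compact sets, we can choose disjoint open neighborhoods $U_1\supset\overline{L}_1$, $U_2\supset\overline{L}_2$, and the wedge product $T_1\wedge T_2$ — defined, say, via a regularization of one of the factors — is represented by a current supported in $U_1\cap U_2=\emptyset$, hence is zero. Since the pairing $\{T_1\}\cdot\{T_2\}$ is computed by this wedge product, we obtain $1=\{T_1\}\cdot\{T_2\}=0$, a contradiction. This establishes $\overline{L}_1\cap\overline{L}_2\neq\emptyset$.

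The main obstacle I expect is making the intersection $T_1\wedge T_2$ rigorous: positive $i\partial\bar\partial$-closed currents are not closed, so one cannot blindly invoke the closed-current intersection theory, and one must verify that the cohomological pairing is still well-defined and computed by a local wedge product. The clean way around this is to smooth one current: write $T_1^\delta$ for a smoothing of $T_1$ (as in the proof of Theorem 17, using $T_1^\delta=\int\Phi_*(T_1)\,d\rho_\delta(\Phi)$ over automorphisms near the identity), note that $T_1^\delta$ is a smooth form cohomologous to $\omega$, so $\int T_1^\delta\wedge T_2=\{\omega\}^2=1$ for every $\delta$; but for $\delta$ small enough $\mathrm{supp}(T_1^\delta)$ stays inside a small neighborhood of $\overline{L}_1$ disjoint from $\overline{L}_2\supset\mathrm{supp}(T_2)$, so $T_1^\delta\wedge T_2=0$ identically, giving $1=0$. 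This smoothing argument sidesteps any delicate definition of the geometric intersection and only uses that $\mathbb P^2$ is homogeneous (so the averaging smoothing is available) and that harmonic positive $(1,1)$-currents on $\mathbb P^2$ have a cohomology class proportional to $\{\omega\}$ with positive proportionality constant. One should also remark that the case where $L_1=L_2$ or $\overline{L}_1=\overline{L}_2$ is trivial, and that if either $\overline{L}_i$ happens to be an algebraic curve the conclusion follows even more directly from Bézout's theorem in $\mathbb P^2$.
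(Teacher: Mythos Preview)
There is a genuine gap in the cohomological step. You assert that $\int T_1^\delta\wedge T_2=\{\omega\}^2=1$ because ``$T_1^\delta$ is cohomologous to $\omega$'', but this is not true for harmonic currents. A positive $i\partial\bar\partial$-closed current $T_1$ is in general \emph{not} $d$-closed, so $T_1^\delta$ has no de Rham class; by Theorem~24 one only has $T_1^\delta=\omega+\partial S_1^\delta+\overline{\partial S_1^\delta}+i\partial\bar\partial u_1^\delta$. Pairing with $T_2$ and integrating by parts (this is exactly the computation leading to formula~(1) in Section~5.3) gives
\[
\int T_1^\delta\wedge T_2 \;=\; 1 \;-\; 2\,\mathrm{Re}\!\int \bar\partial S_1^\delta\wedge \partial\overline{S_2},
\]
and the second term does not vanish, because $T_2$ is not closed. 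Your appeal to Theorem~17 is misplaced: there $T$ \emph{is} closed, which is precisely why the cohomological computation of $\int T\wedge T$ goes through. So from the disjointness of supports you only obtain $Q(T_1,T_2)=0$, not the contradiction $1=0$.

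One could try to salvage the argument via Theorem~25: $Q(T_1,T_2)=0$ forces $[T_1]=[T_2]$ in $H_e$, hence $T_1-T_2=i\partial\bar\partial u$; but turning this into a contradiction for currents with disjoint supports still requires a maximum-principle or pseudoconvexity argument. The paper's own proof bypasses intersection theory entirely: having produced positive harmonic currents $T_1,T_2$ supported on $\overline{L}_1,\overline{L}_2$ (via the maximum principle, cf.\ \cite{BS2002}), it uses that $\mathbb P^2\setminus\mathrm{supp}(T_1)$ is pseudoconvex, hence Stein, and therefore carries a strictly plurisubharmonic exhaustion $\rho$. Pairing $T_2$ against $i\partial\bar\partial(\chi\rho)$ for a cutoff $\chi\equiv 1$ near $\mathrm{supp}(T_2)$ gives $0=\langle T_2,i\partial\bar\partial\rho\rangle>0$, a contradiction. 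This is both shorter and avoids the delicate issue you flagged yourself.
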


\begin{proof}
If not there are two positive closed or harmonic currents $T_1,T_2$ supported respectively on $\overline{L}_1, \overline{L}_2$, this is basically a consequence of the maximum principle, see\cite{BS2002}. It is shown in \cite{FS1995}
that the complement of the support of such currents is pseudoconvex hence, it is Stein. However, $\mathbb P^2\setminus {\mbox{supp}}(T_1)$ which is Stein, cannot contain a compact (Supp($T_2$)) with Stein complement. This follows from the existence of a strictly p.s.h exhaustion function in $\mathbb P^2\setminus {\mbox{supp}}(T_1)$ .

\end{proof}

For each $d \geq 2,$ Loray-Rebelo \cite{LR2003} constructed a non empty open set ${\mathcal U}$ of holomorphic foliations of degree $d$ in $\mathbb P^k$ by Riemann surfaces such that every leaf of ${\mathcal F}\in {\mathcal U}$ is dense. By Theorems 7, 10 and 16 we can assume that $\mathcal F \in \mathcal U$ has isolated hyperbolic singularities and that $\mathcal F$ supports no weakly directed or directed positive closed current. The (weakly) directed harmonic currents $T_{\mathcal F}$ associated to such foliations have support equal to $\mathbb P^k.$ 

\subsection{Potential Theory of positive harmonic currents and energy.}

The theory can be developed in compact K\"{a}hler manifolds \cite{FS2005}..
For simplicity we restrict to $(\mathbb P^k,\omega)$ where $\omega$ is the Fubini Study form normalized by $\int \omega^k=1.$

\begin{theorem}
Let $T$ be a positive, $\partial \overline{\partial}$ closed current of bidegree $(1,1)$ on $\mathbb P^k.$ Then the current can be decomposed as
$$
T=c\omega+\partial S+\overline{\partial S}+i\partial \overline{\partial} u.
$$
\noindent with $c\geq 0$, $S$ a $(0,1)$ form in $L^2$, with $\partial S$ and
$\overline{\partial} S$ in $L^2$ and $u$ is a function in $L^1.$ The current $\overline{\partial} S$ is uniquely determined by $T,$ moreover $T$ is closed if and only if $\overline{\partial}S=0.$ The map which sends T to $\overline{\partial} S$ is well defined.

\end{theorem}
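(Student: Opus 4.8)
The plan is to obtain the decomposition from the $\partial\overline{\partial}$-lemma on the compact K\"ahler manifold $\mathbb P^k$ and then to recover the $L^2$-regularity of $S$ from an energy estimate in which the positivity of $T$ enters essentially. First, ignoring regularity: since $i\partial\overline{\partial}T=0$ and $\overline{\partial}\,\overline{\partial}=0$, the $(1,2)$-current $\overline{\partial}T$ is $d$-closed, and as $H^3(\mathbb P^k,\mathbb C)=0$ it is $d$-exact, so the $\partial\overline{\partial}$-lemma for currents furnishes a $(0,1)$-current $S$ with $\partial\overline{\partial}S=-\overline{\partial}T$; normalizing $S$ by $\overline{\partial}^{*}S=0$ (possible since $H^{0,1}(\mathbb P^k)=0$) makes it canonical. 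Then $T':=T-\partial S-\overline{\partial S}$ is real and
$$
\overline{\partial}T'=\overline{\partial}T-\overline{\partial}\partial S-\overline{\partial}\,\overline{\partial S}=\overline{\partial}T+\partial\overline{\partial}S=\overline{\partial}T-\overline{\partial}T=0,
$$
so $dT'=0$ by reality. Since $H^2(\mathbb P^k,\mathbb R)=\mathbb R[\omega]$, the current $T'-c\omega$ is $d$-exact for a unique real $c$, and a second use of the $\partial\overline{\partial}$-lemma gives a real distribution $u$ with $T'-c\omega=i\partial\overline{\partial}u$, i.e. $T=c\omega+\partial S+\overline{\partial S}+i\partial\overline{\partial}u$. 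Pairing with $\omega^{k-1}$ kills the last three terms (they are exact top-degree currents), so $c=\int_{\mathbb P^k}T\wedge\omega^{k-1}\ge 0$ by positivity.

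For uniqueness and the ``closed'' statement: the normalization picks out the unique solution $S$ of $\partial\overline{\partial}S=-\overline{\partial}T$ orthogonal to $\ker\overline{\partial}$, and one checks that $\partial S$, $\overline{\partial}S$, $c$, and $u$ (mod constants) then depend linearly and continuously on $T$; in particular the map $T\mapsto\overline{\partial}S$ is well defined. Now $T$ is closed $\iff\overline{\partial}T=0\iff\partial\overline{\partial}S=0$; for the normalized $S$ a short computation (using $\overline{\partial}^{*}S=0$ and, for bidegree reasons, $\partial^{*}\overline{\partial}S=0$) gives $\|\partial\overline{\partial}S\|_{L^2}=\|\Delta S\|_{L^2}$ for the complex Laplacian $\Delta$, so $\partial\overline{\partial}S=0$ forces $S$ harmonic and hence $S=0$, as there are no harmonic $(0,1)$-forms on $\mathbb P^k$; thus $T$ is closed iff $\overline{\partial}S=0$.

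The remaining and, I expect, hardest point is the regularity $S,\partial S,\overline{\partial S}\in L^2$ and $u\in L^1$. I would argue by regularization: averaging $(\Phi_{\varepsilon})_{*}T$ over automorphisms $\Phi_{\varepsilon}$ of $\mathbb P^k$ close to the identity (as in the proof of Theorem 17) produces smooth positive $\partial\overline{\partial}$-closed $(1,1)$-forms $T_{\varepsilon}$ with $\|T_{\varepsilon}\|$ uniformly bounded, for which the construction above is classical and gives smooth data $c_{\varepsilon},S_{\varepsilon},u_{\varepsilon}$ with $\overline{\partial}^{*}S_{\varepsilon}=0$. The crux is the estimate
$$
\|\partial S_{\varepsilon}\|_{L^2}^{2}+\|\overline{\partial}S_{\varepsilon}\|_{L^2}^{2}\le C\|T_{\varepsilon}\|
$$
with $C$ independent of $\varepsilon$; I would prove it by an integration by parts that writes the left side in terms of $\int T_{\varepsilon}\wedge(\partial S_{\varepsilon}+\overline{\partial S_{\varepsilon}})$, uses $\partial\overline{\partial}T_{\varepsilon}=0$ to discard the extraneous terms, and uses the positivity of $T_{\varepsilon}$ to bound what is left by the mass --- positivity cannot be omitted here. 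Granting this, $\{\partial S_{\varepsilon}\}$ and $\{\overline{\partial}S_{\varepsilon}\}$ are $L^2$-bounded, the normalization together with $H^{0,1}(\mathbb P^k)=0$ yields a Poincar\'e inequality $\|S_{\varepsilon}\|_{L^2}\le C'\|dS_{\varepsilon}\|_{L^2}$, so $\{S_{\varepsilon}\}$ is $W^{1,2}$-bounded; a weak limit, identified with the canonical potential of $T$, gives $S,\partial S,\overline{\partial S}\in L^2$. Finally $i\partial\overline{\partial}u=T-c\omega-\partial S-\overline{\partial S}$ is a current of order $0$ (a measure plus an $L^2$ form), so the classical potential estimates for the complex Laplacian give $u\in L^1$. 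The only genuinely delicate step is the uniform energy estimate displayed above.
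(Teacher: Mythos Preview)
Your outline matches the paper's: obtain a raw decomposition from the $\partial\overline\partial$-lemma, then upgrade the regularity by regularizing $T$, proving an a~priori energy estimate in the smooth case, and passing to the limit. The uniqueness of $\overline\partial S$ and the equivalence ``$T$ closed $\Leftrightarrow \overline\partial S=0$'' are also handled the same way (the paper simply observes that $S$ is determined up to $\overline\partial v$, so $\overline\partial S$ is unique; if $T$ is closed one may take $S=0$, and the converse is immediate from $\overline\partial T=-\partial\overline\partial S$). Your detour through $\|\partial\overline\partial S\|=\|\Delta S\|$ is unnecessary and not obviously correct as stated.

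The real gap is exactly where you flagged it. Your proposed estimate $\|\partial S_\varepsilon\|^2+\|\overline\partial S_\varepsilon\|^2\le C\|T_\varepsilon\|$ cannot be obtained by ``writing the left side as $\int T_\varepsilon\wedge(\partial S_\varepsilon+\overline{\partial S_\varepsilon})$ and bounding by the mass'': that integral involves the unknown $\partial S_\varepsilon$, and positivity of $T_\varepsilon$ gives you nothing unless you already control $\partial S_\varepsilon$ in $L^\infty$, which is circular. What works---and what the paper does---is to expand the \emph{self-intersection}
\[
0\;\le\;\int T_\varepsilon\wedge T_\varepsilon\wedge\omega^{k-2}
\;=\;c_\varepsilon^{\,2}+2\int \partial S_\varepsilon\wedge\overline{\partial S_\varepsilon}\wedge\omega^{k-2}
\;=\;c_\varepsilon^{\,2}-2\int \overline\partial S_\varepsilon\wedge\partial\overline{S}_\varepsilon\wedge\omega^{k-2},
\]
using only Stokes and $d\omega=0$; positivity of $T_\varepsilon$ gives the left inequality, and one reads off
\[
2\int \overline\partial S_\varepsilon\wedge\partial\overline{S}_\varepsilon\wedge\omega^{k-2}\ \le\ \Bigl(\int T_\varepsilon\wedge\omega^{k-1}\Bigr)^{2}.
\]
So the correct bound is \emph{quadratic} in the mass, not linear, and it controls $\overline\partial S$ (a $(0,2)$-form) directly. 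With $\overline\partial S\in L^2$ in hand, the paper then invokes $L^2$ Hodge theory on $\mathbb P^k$ to solve $\overline\partial S=\overline\partial S_0$ with $S,\partial S\in L^2$, and the remainder $u=(v-\bar v)/i$ lands in $L^p$ for every $p<k/(k-1)$ (hence in $L^1$). Once you replace your energy argument by this self-intersection computation, the rest of your proof goes through.
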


\begin{proof}
The current $\overline{\partial}T$ is closed and $\overline{\partial} $ exact,
hence the $\partial \overline{\partial}$ lemma \cite{De 1996}, implies the existence of a $(0,1)$ current $R$, such that 
$$
\overline{\partial}T=\overline{\partial}\partial R.
$$
Hence $\overline{\partial}[T-\partial R]=0.$ Therefore there is a constant
$c \geq 0$ and a current $R'$ such that

$$
T-\partial R = c\omega+\overline{\partial} R'.
$$

Using that $T = \overline{T}$ we get the decomposition
$$
T=c\omega+\partial S_0+\overline{\partial S_0}\;\;\;\;\;\;\;(1)
$$
\noindent where $S_0$ is a $(0,1)$ current.

\medskip

Observe that $S_0$ is not unique, it is defined up to addition of $\overline{\partial}v.$ Hence $\overline{\partial}S_0$ is unique.

We show next that
$$
E(T) := \int \overline{\partial} S_0 \wedge \partial {\overline{S}}_0\wedge \omega^{k-2}<\infty.
$$

Assume at first that $T$ is smooth. Then $S_0$ can be chosen smooth and
\bea
0 & \leq & \int T \wedge T \wedge \omega^{k-2}\\
& = & c^2 \int \omega^k+2\int \partial S_0\wedge \overline{\partial S_0}\wedge\omega^{k-2}\\
& = & \left| \int \omega^{k-1}\wedge T\right|^2-2\int \overline{\partial}S_0\wedge \partial \overline{S}_0\wedge \omega^{k-2}\\
\eea

So we get the a priori estimate 
$$
2\int \overline{\partial} S_0\wedge \partial \overline{S}_0 \wedge \omega^{k-2}
\leq \left| \int T \wedge \omega^{k-1}\right|^2. \;\;\;\;\;(2)
$$

For an arbitrary $T,$ we can approximate $T$ by smooth $(T_\epsilon)$. By (2)
we get a uniform bound for the norm $\overline{\partial}S_0^\epsilon$
in $L^2.$ So letting $\epsilon \rightarrow 0,$ we get that $\overline{\partial}S_0
\in L^2.$ By Hodge $L^2$ theory on $\mathbb P^k,$ \cite{De 1996} we can solve 
$$
\overline{\partial}S=\overline{\partial}S_0
$$
\noindent with $S,\partial S$ in $L^2$ and $S_0=S+\overline{\partial} v.$
Replacing in (1) we obtain that
$$
T=c\omega+\partial S+\overline{\partial S}+i\partial \overline{\partial}\left(
\frac{v-\overline{v}}{i}\right)$$
\noindent and the function $u=(v-\overline{v})/i$ is in $L^p,$ for any $p<\frac{k}{k-1}.$

\end{proof}

\begin{corollary}
Let $T$ be a positive harmonic current of bidegree $(1,1)$ in $\mathbb P^k$.
Let $X=$Supp$(dT)$. Then $\mathbb P^k \setminus X$ is pseudoconvex.
If $T$ is not closed, then $X$ is non polar for the Sobolev space $W^{1,2}$.
In particular $X$ is locally non pluripolar and $X$ is not of finite $\Lambda^{2k-2}$ Hausdorff measure.
\end{corollary}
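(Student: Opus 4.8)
The plan is to read off all three assertions from the decomposition $T=c\omega+\partial S+\overline{\partial S}+i\partial\overline{\partial} u$ of Theorem 17. Set $\beta:=\partial S+\overline{\partial S}$; since $S$, $\partial S$ and $\overline{\partial}S$ all lie in $L^2$, $\beta$ is a real $(1,1)$--current with coefficients in $L^2$, and because $\omega$ and $i\partial\overline{\partial} u$ are $d$--closed we have $dT=d\beta$ and $X=\mathrm{Supp}(d\beta)$. For the pseudoconvexity: on $\Omega:=\mathbb P^k\setminus X$ the current $T$ is positive and closed, and the fact that $\Omega$ is pseudoconvex is the content of \cite{FS1995} (this is the statement invoked in Remark 8 and in the proof of Corollary 5, here applied to $\mathrm{Supp}(dT)$); I would simply quote it, or rerun the Oka--inequality argument of \cite{FS1995} on $\Omega$.

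The heart of the statement is the following removability lemma, which yields the $W^{1,2}$ assertion: \emph{if $\beta$ is a $(1,1)$--current with $L^2$ coefficients on $\mathbb P^k$ and $d\beta$ is supported on a set of zero $W^{1,2}$--capacity, then $d\beta=0$}. Granting it, if $T$ is not closed then $d\beta=dT\ne 0$, so $X\ne\emptyset$ and $X$ cannot be polar for $W^{1,2}$ (which means precisely $\mathrm{cap}_{1,2}(X)=0$); for $k=1$ the current $T$ is automatically closed and there is nothing to prove, so assume $k\ge 2$. To prove the lemma, choose cut--off functions $\chi_\epsilon\in W^{1,2}$ with $0\le\chi_\epsilon\le 1$, $\chi_\epsilon\equiv 1$ on a neighbourhood of $X$, and $\|\chi_\epsilon\|_{W^{1,2}}\to 0$ (these exist exactly because $\mathrm{cap}_{1,2}(X)=0$). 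For a smooth test form $\psi$, the current $d\beta$ being carried by $X$ gives $\langle d\beta,\psi\rangle=\langle d\beta,\chi_\epsilon\psi\rangle$, and Stokes on the compact manifold $\mathbb P^k$ gives $\langle d\beta,\psi\rangle=\pm\bigl(\langle\beta,d\chi_\epsilon\wedge\psi\rangle+\langle\beta,\chi_\epsilon\,d\psi\rangle\bigr)$; by Cauchy--Schwarz each term is $\le C\,\|\psi\|_{C^1}\,\|\beta\|_{L^2}\,\|\chi_\epsilon\|_{W^{1,2}}\to 0$, so $d\beta=0$. The integrations by parts are legitimate because $\beta$ and $d(\chi_\epsilon\psi)$ both lie in $L^2$; this $L^2$ control of the non--closed part of $T$ (equivalently the finite--energy bound $E(T)<\infty$ established in Theorem 17) is precisely what makes $W^{1,2}$--capacity the right notion here, since with only $L^1$ control the first term need not go to $0$.

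Finally, the two ``in particular'' statements follow by contraposition from the standard facts that a set of finite $\Lambda^{2k-2}$ Hausdorff measure in $\mathbb R^{2k}$ has zero $W^{1,2}$--capacity (the classical estimate $\mathrm{cap}_{1,2}(E)=0$ when $\Lambda^{d-2}(E)<\infty$ in $\mathbb R^d$) and that a locally complete pluripolar set has zero $W^{1,2}$--capacity (a standard comparison of the pluripolar and Sobolev capacities; when $X$ is an analytic set, the typical case, this already reduces to the previous estimate since a pure $(k-1)$--dimensional analytic set has locally finite $\Lambda^{2k-2}$ measure). Thus $X$ is not locally pluripolar and $\Lambda^{2k-2}(X)=\infty$. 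The only genuinely nonelementary input is the pseudoconvexity of $\mathbb P^k\setminus X$, which relies on the Oka--inequality machinery of \cite{FS1995}; everything else is routine once Theorem 17 is in hand, the only care needed being the integration--by--parts argument in the removability lemma --- where using $\beta\in L^2$ rather than merely $\beta\in L^1$ is essential --- and the capacitary comparisons of the last step.
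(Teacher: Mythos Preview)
Your removability lemma for the $W^{1,2}$ claim is correct and essentially parallel to the paper's argument: the paper also chooses cutoffs and applies Cauchy--Schwarz, though it works with the $(2,0)$--form $\partial\overline{S}$ rather than with $\beta$, showing that the extension of $\partial\overline{S}$ by zero across $X$ is holomorphic on all of $\mathbb P^k$ (hence vanishes, hence $T$ is closed). The ``in particular'' clauses are handled the same way in both.

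The gap is in your pseudoconvexity argument. The result you quote from \cite{FS1995} (and which is used in the paper elsewhere) says that the complement of $\mathrm{Supp}(T)$ is pseudoconvex when $T$ is a positive $\partial\overline\partial$--closed current; it says nothing about the complement of $\mathrm{Supp}(dT)$. Your sentence ``on $\Omega$ the current $T$ is positive and closed, hence $\Omega$ is pseudoconvex'' cannot be right as stated: every open subset of $\mathbb P^k$ carries positive closed $(1,1)$--currents (for instance $\omega$ itself), so the mere existence of one on $\Omega$ implies nothing. Nor does the Oka inequality help directly: it controls the \emph{mass} of a positive $\partial\overline\partial$--closed current on a filled Hartogs figure in terms of its mass on the figure, but here $T$ has mass everywhere and what you would need is that $dT=0$ on the Hartogs figure forces $dT=0$ on the filling, which is a different statement.

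The paper's route is short and does not use \cite{FS1995} at all for this step. From the decomposition, $-\partial T=\overline\partial(\partial\overline S)$ with $\partial\overline S$ a $(2,0)$--form in $L^2$. Since $\partial T$ is supported on $X$, the form $\partial\overline S$ is $\overline\partial$--closed, hence holomorphic, on $\Omega=\mathbb P^k\setminus X$. It cannot extend holomorphically across any boundary point of $X$: if it did near $p\in X$, then $\overline\partial(\partial\overline S)=0$ there, so $\partial T=0$ and (since $T$ is real) $dT=0$ near $p$, contradicting $p\in\mathrm{Supp}(dT)$. A domain admitting a holomorphic form that extends across no boundary point is pseudoconvex by the Hartogs phenomenon. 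So the ``nonelementary input'' you flag is in fact a two--line consequence of the same $L^2$ decomposition you already use; you should replace the appeal to \cite{FS1995} by this argument.
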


\begin{proof}
If  $T$ is closed, then X is empty and $\mathbb P^k $ is pseudoconvex.Hence we can assume that $T$ is not closed. From Theorem 24, $-\partial T= \overline{\partial} (\partial \overline{S}),$ and
$\partial \overline{S}$ is a $(2,0)$ form in $L^2.$ Since $dT$ is supported on $X$, $\partial T$ is also supported on $X$, and hence $\overline{\partial}(\partial \overline{S})$ as well. 
It follows that $\partial \overline{S}$ is holomorphic 
in the complement of $X$ and does not extend  holomorphically across any point in the boundary of X. Hence, $\mathbb P^2\setminus X$ is pseudoconvex.

If $X$ is polar in $W^{1,2}$, in particular if it is pluripolar,  there is a sequence
$(\chi_n)$ of smooth functions $0 \leq \chi_n\leq 1$
vanishing near $X$, $\chi_n \rightarrow 1_{\mathbb P^k \setminus X}$ pointwise
and such that $\int \partial \chi_n \wedge \overline{\partial} \chi_n \wedge \omega^{k-1}
\leq C<\infty.$ 
We want to show that $f=\widetilde{\partial \overline{S}}$, the extension of $\partial \overline{S}$
by zero through $X$ is holomorphic, hence $T$ is closed.
We have
$\chi_n \partial \overline{S} \rightarrow f$ in $L^2$ and
$\overline{\partial}(\chi_n \partial \overline{S})=\overline{\partial}\chi_n \wedge \partial \overline{S}.$

Fix $\epsilon>0$, choose $U_\epsilon$ a neighborhood of $X$ such that
$\int_{U_\epsilon} \partial \overline{S}\wedge \overline{\partial}S \wedge \omega^{k-2}\leq \epsilon,$ this is possible since $X$ is of measure zero.
Clearly $\overline{\partial} \chi_n \wedge \partial\overline{S} \rightarrow 0$ on compact subsets of $\mathbb P^k \setminus X.$ Let $\psi$ be a smooth $(1,0)$ form. We have by Schwarz's inequality

\bea
\left|\int_{U_\epsilon}\overline{\partial} \chi_n\wedge  \partial \overline{S} \wedge \psi \wedge \omega^{k-2}
\right| & \leq & 
\left|\int_{U_\epsilon}\partial{\chi_n} \wedge \overline{\partial} \chi_n  \wedge \psi 
\wedge{\overline{\psi}}\wedge \omega^{k-2}\right|^{1/2}\\
& \cdot & 
\left|\int_{U_\epsilon} \partial \overline{S}\wedge \overline{\partial}S \wedge \omega^{k-2}
\right|^{1/2}\\
& \leq & C(\psi) \epsilon^{1/2}.
\eea

Since $\psi$ and $\epsilon$ are arbitrary, $f$ is $\overline{\partial}$ closed. Hence $f=0$ and $T$ is closed.
When  $X$ has finite $\Lambda^{2k-2}$ Hausdorff measure, we can cover X by cubes and use standard bump functions to show that $\int i\partial \chi_n \wedge \overline{\partial} \chi_n \wedge \omega^{k-1}
\leq C<\infty.$

\end{proof}

\begin{remark}

\noindent i) If $Y$ is an invariant set (possibly with singularities) in $\mathbb P^2$ not supporting a positive closed 
directed current then $\Lambda^2(Y)=\infty$ and $Y$ is not polar. It is observed in \cite{Br2005} that   
 Thm.24  implies that a minimal exceptional se in $ {\mathbb P^2}$ is not pluripolar.
\end{remark}

\subsection{Harmonic Currents of finite energy}
We are going to introduce a real Hilbert space,of infinite dimension, of classes of harmonic currents,the elements will be a space of $\partial \overline{\partial}$ cohomology classes. It is the analogue of the Hodge cohomology space for closed currents, a finite dimensional space.
Let $T$ be a real current of order $0$ and bidegree $(1,1)$ in $\mathbb P^k.$
If $T$ is $\partial \overline{\partial}$ closed it can be decomposed as in the previous paragraph as
$$
T=c\omega+\partial S_0+\overline{\partial S}_0,
$$

\noindent where $c\in \mathbb R$ and $S_0$ is a $(0,1)$ current. The current $\overline{\partial}S_0$ is also uniquely determined by $T.$ But a priori $\overline{\partial}S_0$ is not necessarily in $L^2.$ We say that $T$ is of finite energy $E(T)$
if and only if 
$$
E(T):=<\overline{\partial}S_0 \wedge \partial \overline{S}_0 \wedge \omega^{k-2}><\infty.$$
We define the Hilbert space $H_e$ of classes of real currents $T$ as the space of classes $[T]$ of bidegree $(1,1)$ of order $0$ with the norm
$$
\|T\|^2_e:=|<T,\omega^{k-1}>|^2+E(T)\;\;\;\;\;(3).
$$
Observe that if $\|T\|_e=0$ then $c=0$ and $\overline{\partial}S_0=0$, hence $\overline{\partial}T=-\partial \overline{\partial}S_0=0$ therefore $T$ is closed and orthogonal to $\omega$, so $T=i\partial \overline{\partial}u$ for some $u$ in $L^1.$ The converse is clear: If $T=i\partial \overline{\partial}u$ then $\|T\|_e=0.$ So $[T_1]=[T_2]$ if and only if $T_1-T_2=i\partial \overline{\partial}u.$ This is why we say that $H_e$ is a space of classes of currents. It is easy  to show that $H_e$ is a Hilbert space. The important property is that it is possible to define on $H_e$ an intersection form.

\medskip

If $T_1,T_2$ are $\partial \overline{\partial}$ closed and smooth we can define the quadratic form
$$
Q(T_1,T_2)=\int T_1 \wedge T_2 \wedge \omega^{k-2}.$$

Using the decomposition for $T_j$, 

$$
T_j=c_j\omega+\partial S_j+\overline{\partial S_j}+i\partial \overline{\partial}u_j$$

\noindent we obtain by integration by parts that

$$Q(T_1,T_2)=\int T_1 \wedge \omega^{k-1}\int T_2\wedge \omega^{k-1}-2{\mbox{Re}}\int \overline{\partial}S_1
\wedge \partial \overline{S}_2 \wedge \omega^{k-2}.\;\;(1)
$$

The above expression makes sense for $T\in H_e$ and hence defines a quadratic form $Q(T_1,T_2)=\int T_1 \wedge T_2 \wedge \omega^{k-2}$ on $H_e,$ which is continuous for the $H_e$ topology. Indeed

$$
|Q(T_1,T_2)|\leq C\|T_1\|_e\|T'_2\|_e.
$$ 
The form $Q(T,T)$ is also clearly upper semi continuous for the weak topology of $H_e.$

\begin{proposition}
If a sequence $(T_n)$ of positive $\partial \overline{\partial}$ closed currents converge to $T,$ then $[T_n]$ converge to $[T]$ weakly in $H_e.$ Hence $T \rightarrow Q(T,T)$ is u.s.c. for the weak topology of currents.
\end{proposition}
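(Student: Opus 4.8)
\emph{Overall strategy.} The plan is to show that $([T_n])$ is a bounded sequence in the Hilbert space $H_e$ and that every weakly convergent subsequence has limit $[T]$; the upper semicontinuity of $Q(T,T)$ is then immediate from the upper semicontinuity of $Q$ on $H_e$ noted just above.

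\emph{Boundedness in $H_e$.} Since $\omega^{k-1}$ is a fixed smooth form, weak convergence of currents gives $m_n:=\langle T_n,\omega^{k-1}\rangle\to\langle T,\omega^{k-1}\rangle$, so the $m_n$ are uniformly bounded. Write the decomposition of Theorem 24, $T_n=c_n\omega+\partial S_0^n+\overline{\partial S_0^n}$, modulo a term $i\partial\overline{\partial}(\cdot)$ which does not affect the class. Because the $T_n$ are positive, the a priori estimate $(2)$ in the proof of Theorem 24, namely $2\int\overline{\partial} S_0^n\wedge\partial\overline{S}_0^n\wedge\omega^{k-2}\le |m_n|^2$, applies and gives $E(T_n)\le C$; together with $|m_n|\le C$ this yields $\|T_n\|_e^2=|m_n|^2+E(T_n)\le C'$. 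In particular each $[T_n]$ lies in $H_e$ (so the statement makes sense) and the sequence is bounded there, hence admits weakly convergent subsequences.

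\emph{Identification of the limit.} The assignment $[T]\mapsto(\langle T,\omega^{k-1}\rangle,\overline{\partial} S_0)$ embeds $H_e$ isometrically onto a closed subspace of $\mathbb{R}\times L^2_{(0,2)}(\mathbb{P}^k)$ (using that for a $(0,2)$-form $\alpha$ one has $\langle\alpha\wedge\overline{\alpha}\wedge\omega^{k-2}\rangle\approx\|\alpha\|_{L^2}^2$, and that $\overline{\partial} S_0$ is uniquely determined by $T$); so a subsequence converges weakly in $H_e$ precisely when the scalar parts converge and the forms $\overline{\partial} S_0^n$ converge weakly in $L^2$. Along such a subsequence, write $\overline{\partial} S_0^n\rightharpoonup\beta$ in $L^2$; since already $\langle T_n,\omega^{k-1}\rangle\to\langle T,\omega^{k-1}\rangle$, it suffices to prove $\beta=\overline{\partial} S_0$, where $S_0$ is attached to $T$. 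From $T_n=c_n\omega+\partial S_0^n+\overline{\partial S_0^n}$ one gets $\overline{\partial} T_n=-\partial\overline{\partial} S_0^n$; passing to the limit (weak $L^2$-convergence forces distributional convergence, and $\partial,\overline{\partial}$ are weakly continuous on currents) yields $\partial\beta=-\overline{\partial} T=\partial\overline{\partial} S_0$ and $\overline{\partial}\beta=0$. Hence $\beta-\overline{\partial} S_0$ is a $(0,2)$-current that is both $\partial$- and $\overline{\partial}$-closed. Since $H^{0,2}_{\overline{\partial}}(\mathbb{P}^k)=0$ it is $\overline{\partial}$-exact, so being also $d$-closed the $\partial\overline{\partial}$-lemma on the compact K\"{a}hler manifold $\mathbb{P}^k$ writes it as $\partial\overline{\partial} v$; comparing bidegrees (the latter is of bidegree $(1,1)$) forces $\beta-\overline{\partial} S_0=0$. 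Every subsequential weak limit is therefore $[T]$, and since a bounded sequence in a Hilbert space whose subsequential weak limits all coincide converges weakly, $[T_n]\rightharpoonup[T]$ in $H_e$.

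\emph{Conclusion and the main obstacle.} Formula $(1)$ above expresses $Q$ on pairs of $\partial\overline{\partial}$-closed currents through $\langle T_j,\omega^{k-1}\rangle$ and $\overline{\partial} S_j$ only, so $Q$ descends to $H_e$, and $Q(T,T)=|\langle T,\omega^{k-1}\rangle|^2-2E(T)$ is upper semicontinuous for the weak $H_e$-topology (the first term is weakly continuous, $E$ weakly lower semicontinuous). Combining the two parts: if positive $\partial\overline{\partial}$-closed currents $T_n\to T$, then $[T_n]\rightharpoonup[T]$ in $H_e$, whence $\limsup_n Q(T_n,T_n)\le Q(T,T)$, which is the assertion. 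The one genuinely non-formal point is the identification $\beta=\overline{\partial} S_0$: weak $L^2$-convergence does not control the gauge $S_0^n$, and one must use the structural relation $\partial\overline{\partial} S_0^n=-\overline{\partial} T_n$ together with the vanishing of $H^{0,2}(\mathbb{P}^k)$ (equivalently the uniqueness clause of Theorem 24) to rigidify the limit; everything else is routine Hilbert-space and Hodge-theoretic bookkeeping.
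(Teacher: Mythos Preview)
Your proof is correct and follows the same overall strategy as the paper: bound $\|T_n\|_e$ via the a~priori estimate $(2)$ from Theorem~24, then identify the weak limit in $H_e$ with $[T]$, and finally invoke the weak upper semicontinuity of $Q$ on $H_e$.

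The only noteworthy difference is in the identification step. The paper simply remarks that the $S_n$ are obtained as \emph{canonical} solutions of $\overline{\partial}T_n=\partial\overline{\partial}S_n$, so that weak convergence $T_n\to T$ forces the weak limits of $\overline{\partial}S_n$ to coincide with $\overline{\partial}S$; this implicitly uses continuity of the canonical solution operator. You instead argue more intrinsically: extract a weak $L^2$ limit $\beta$ of $\overline{\partial}S_0^n$, pass $\partial(\overline{\partial}S_0^n)=-\overline{\partial}T_n$ to the limit, and then use $H^{0,2}(\mathbb{P}^k)=0$ together with the $\partial\overline{\partial}$-lemma (plus the bidegree obstruction) to conclude $\beta=\overline{\partial}S_0$. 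Your route is a bit longer but more self-contained, since it does not appeal to properties of a specific solution operator and makes explicit that the conclusion rests exactly on the uniqueness clause of Theorem~24.
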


\begin{proof}
Consider the decomposition of $T_n=c_n \omega+\partial S_n+
\overline{\partial S}_n+i\partial \overline{\partial}u_n.$ Since
$(T_n)$ converge weakly to $T,$ we can assume that all the $c_n=1$
and we have
$$
2\int \overline{\partial}S_n \wedge \partial \overline{S}_n \wedge \omega^{k-2}\leq 1.
$$

The $(S_n)$ are constructed using canonical solutions of the equation
$\overline{\partial}T_n=\partial \overline{\partial}S_n.$ So the weak limits of
$
\overline{\partial}S_n$ coincides with $\overline{\partial}S.$ Hence
$[T_n]\rightarrow [T]$ weakly in $H_e.$ Since $T \rightarrow Q(T,T)$ is u.s.c. with respect to the weak topology in $H_e$ it is also u.s.c. with respect to the weak topology on positive $\partial \overline{\partial}$ closed currents.

\end{proof}

The intersection form $Q$ is useful because of the following Hodge-Riemann property.

\begin{theorem}
The quadratic form $Q$ is negative definite on the hyperplane defined by
$$
{\mathcal H}=\{T;[T]\in H_e,\int T \wedge \omega^{k-1}=0\}.
$$
In particular if $[T]$, $[T']$ are two non proportional classes of positive $\partial \overline{\partial}$ closed currents, then $Q(T,T')>0.$ The function $Q$ is strictly concave on the cone of classes of positive $\partial \overline{\partial}$ closed currents.
\end{theorem}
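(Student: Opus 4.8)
The plan is to deduce all three assertions from the polarized identity
$$Q(T_1,T_2)=\int T_1\wedge\omega^{k-1}\int T_2\wedge\omega^{k-1}-2\,\mathrm{Re}\int \overline{\partial}S_1\wedge\partial\overline{S}_2\wedge\omega^{k-2}$$
just derived, together with the a priori bound $2\int\overline{\partial}S_0\wedge\partial\overline{S}_0\wedge\omega^{k-2}\le|\int T\wedge\omega^{k-1}|^2$ from the proof of Theorem 24. Throughout I write $m(T)=\int T\wedge\omega^{k-1}$ and $E(T)=\int\overline{\partial}S_0\wedge\partial\overline{S}_0\wedge\omega^{k-2}$, and I use that $E(T)\ge 0$: indeed $\overline{\partial}S_0\wedge\partial\overline{S}_0$ is a positive $(2,2)$-current and $\omega^{k-2}$ is strongly positive, so $E(T)$ is (up to a positive constant) the squared $L^2$-norm of the form $\overline{\partial}S_0$, which is also why it enters the Hilbert norm $\|T\|_e^2=m(T)^2+E(T)$.

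First I would treat the hyperplane $\mathcal H$. If $[T]\in H_e$ and $m(T)=0$, the identity gives $Q(T,T)=-2E(T)\le 0$, and for such $T$ one has $\|T\|_e^2=E(T)$. Since $\|T\|_e=0$ holds precisely when $[T]=0$ in $H_e$ — the observation recorded just before the statement — the vanishing $Q(T,T)=0$ forces $[T]=0$. Hence $Q$ is negative definite on $\mathcal H$. I would also record here, for later use, that the a priori bound rephrases as $Q(T,T)=m(T)^2-2E(T)\ge 0$ for \emph{every} positive $\partial\overline{\partial}$-closed current $T$.

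Next, for the cross term: given two non-proportional classes $[T],[T']$ of positive $\partial\overline{\partial}$-closed currents, a nonzero positive current has positive mass, so $a:=m(T)>0$ and $b:=m(T')>0$. Setting $R:=bT-aT'$ one gets $m(R)=0$, so $[R]\in\mathcal H$, and $[R]\ne 0$ precisely because $[T]$ and $[T']$ are not proportional in $H_e$. Negative definiteness on $\mathcal H$ then gives $Q(R,R)<0$, i.e. $b^2Q(T,T)-2ab\,Q(T,T')+a^2Q(T',T')<0$; since $Q(T,T)\ge 0$ and $Q(T',T')\ge 0$ by the remark above, this forces $Q(T,T')>0$. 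For strict concavity it suffices to look along a segment $T_t=(1-t)T_0+tT_1$ joining two non-proportional positive $\partial\overline{\partial}$-closed classes normalized to a common mass (say mass $1$). Then $\frac{d^2}{dt^2}Q(T_t,T_t)=2Q(T_0-T_1,T_0-T_1)$, while $T_0-T_1$ has mass $0$ and represents a nonzero class in $H_e$, so $Q(T_0-T_1,T_0-T_1)<0$ by the first step; this is exactly strict concavity of $t\mapsto Q(T_t,T_t)$, and in particular $Q\!\left(\tfrac{T_0+T_1}{2},\tfrac{T_0+T_1}{2}\right)>\tfrac12\bigl(Q(T_0,T_0)+Q(T_1,T_1)\bigr)$.

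The only step that needs genuine care, rather than formal manipulation of the identity for $Q$, is the inequality $Q(T,T)\ge 0$ for an arbitrary, possibly non-smooth, positive $\partial\overline{\partial}$-closed current: this rests on the a priori estimate of Theorem 24, proved there for smooth $T$ and extended by regularization using the uniform $L^2$-bound on the $\overline{\partial}S_0^{\varepsilon}$ together with lower semicontinuity of the energy. One must also keep "non-proportional" meaning non-proportional in $H_e$, i.e. modulo $\partial\overline{\partial}$-exact currents, so that $R=bT-aT'$ represents a nonzero class in $\mathcal H$. Everything else follows directly from the displayed formula for $Q$ and the negative definiteness on $\mathcal H$.
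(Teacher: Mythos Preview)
Your proof is correct and follows essentially the same route as the paper: both use the identity $Q(T,T)=m(T)^2-2E(T)$ to get negative definiteness on $\mathcal H$, and both push a suitable combination $bT-aT'$ into $\mathcal H$ and expand $Q(R,R)<0$ to force $Q(T,T')>0$ using $Q(T,T)\ge 0$, $Q(T',T')\ge 0$ from Theorem 24. The paper explicitly proves only the first two assertions and leaves strict concavity unargued; your second-derivative argument along a segment fills that gap and is a welcome addition.
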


\begin{proof}
We prove the first two claims. Since
$$
Q(T,T)=\left|\int T \wedge \omega^{k-1}\right|^2-2\int \overline{\partial}S
\wedge \partial \overline{S}\wedge \omega^{k-2},
$$
\noindent it is clear that for $T\in \mathcal H$, $Q(T,T)\leq 0.$ If
$Q(T,T)=0$ then $\overline{\partial}S=0$ hence $T$ is closed. Since $<T,\omega^{k-1}>=0$ then $T=i\partial \overline{\partial}u$ so $[T]=0.$

\medskip

Suppose that the space generated by $[T],[T']$ is of dimension
$2.$ Then there is an $a>0$ such that $T-aT'\in {\mathcal H}.$ Hence
$Q(T'-aT,T'-aT)<0.$ Expanding we get
$$
a^2Q(T,T)-2aQ(T,T')+Q(T',T')<0.$$ Since $Q(T,T)\geq 0$
and $Q(T',T')\geq 0$ and $a>0$, we get that $Q(T',T)>0.$

\end{proof}

We consider the following problem. Assume $(X,\mathcal L,E)$ is a lamination with singularities in $\mathbb P^2$, without positive closed directed currents. We want to find conditions on the singularity set which imply that there is a $\underline{\mbox{unique}}$ positive current of mass one, harmonic and directed by $\mathcal L.$ 
A strategy is to show that if $T_1,T_2$ are two such currents then $Q(T_1,T_2)=0.$  However, this implies only that the classes $[T_1],[T_2]$ of the currents in $H_e$ are equal. We then need the following result.

\begin{proposition}
Let $T_1,T_2$ be positive harmonic currents directed by $(X,\mathcal L,E)$
on a lamination in $\mathbb P^2$ without positive directed closed currents.
Assume $E$ is complete pluripolar and that $[T_1]=[T_2].$ Then $T_1=T_2.$
\end{proposition}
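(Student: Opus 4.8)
The plan is to use the hypothesis $[T_1]=[T_2]$ in the most concrete possible form: by the discussion preceding the statement it means exactly that $T_1-T_2=i\partial\overline{\partial}u$ for some $u\in L^1(\mathbb P^2)$ (indeed $u\in L^p$ for every $p<2$, by Theorem 24), so everything reduces to proving $i\partial\overline{\partial}u=0$. The first thing I would record is that $u$ is harmonic along the leaves. In a flow box $U$ disjoint from $E$, choose a common desintegration $T_j\big|_U=\int_\Delta h_{j,\alpha}[V_\alpha]\,d\mu(\alpha)$ with $h_{j,\alpha}\ge 0$ harmonic along the plaques and $\mu\ge 0$ on a transversal $\Delta$; then $(T_1-T_2)\big|_U=\int_\Delta(h_{1,\alpha}-h_{2,\alpha})[V_\alpha]\,d\mu(\alpha)$. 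Since $[V_\alpha]$ annihilates $(1,1)$-forms pulled back from the transversal — equivalently $(T_1-T_2)\wedge\gamma_j=0$ for the forms $\gamma_j$ defining $\mathcal L$ — testing $i\partial\overline{\partial}u=T_1-T_2$ against transverse $(1,1)$-forms shows that the leafwise Laplacian of $u$ vanishes, i.e. $u$ is leafwise harmonic on $X\setminus E$.

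The core is then an energy identity for $T:=T_1+T_2$, which is positive, harmonic and directed. Because $i\partial\overline{\partial}T=0$, integration against $T$ annihilates leafwise Laplacians of functions that are $\mathcal C^2$ along leaves; for $f=g(u)$ with $g$ smooth and convex one gets $\int_X g''(u)\,|\nabla_{\mathcal L}u|^2\,dT=0$ (using that $\Delta_{\mathcal L}u=0$). Taking $g$ equal to a smooth truncation of $s\mapsto s^2$ and letting the truncation level tend to infinity yields $\int_X|\nabla_{\mathcal L}u|^2\,dT=0$, hence $\nabla_{\mathcal L}u=0$ $T$-a.e., i.e. $u$ is constant on $\mu$-a.e. plaque carrying positive weight. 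By the slicing theory of \cite{FS2005} the slice of $i\partial\overline{\partial}u=T_1-T_2$ on $V_\alpha$ is simultaneously $i\partial\overline{\partial}(u|_{V_\alpha})$ and $(h_{1,\alpha}-h_{2,\alpha})[V_\alpha]$ for $\mu$-a.e. $\alpha$, so constancy of $u|_{V_\alpha}$ forces $h_{1,\alpha}=h_{2,\alpha}$ there; thus $(T_1-T_2)\big|_U=0$ in every flow box disjoint from $E$. Since $E$ is complete pluripolar, $T_1$ and $T_2$ put no mass on $E$ (\cite{FS1995}, and the Remark above), so $i\partial\overline{\partial}u=T_1-T_2=0$, that is $T_1=T_2$. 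One may also finish differently: once $u$ is leafwise constant, $h_{1,\alpha}-h_{2,\alpha}$ is constant on each connected leaf, so the positive and negative parts of the closed directed current $T_1-T_2$ are themselves positive closed directed currents, and the lamination admits none by hypothesis.

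The hard part will be making the energy identity of the second paragraph rigorous, because of the two features special to the singular setting. First, $u$ is known only to lie in $L^p$ with $p<2$, so $u^2$ need not be $T$-integrable; this is precisely why one uses the convex truncations $g$ above (a Caccioppoli-type device) rather than $u^2$ directly. Second, one must integrate by parts along leaves near the singular set $E$: here one inserts cut-off functions $\eta_\epsilon$ vanishing near $E$ whose leafwise Dirichlet energy $\langle T, i\partial\eta_\epsilon\wedge\overline{\partial}\eta_\epsilon\rangle$ tends to $0$, bounds the resulting error terms by Cauchy–Schwarz, and lets $\epsilon\to 0$; such cut-offs exist exactly because $E$ is complete pluripolar (and $T$ gives no mass to $E$), by the same mechanism as in the proof of the corollary to Theorem 24. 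The absence of a directed positive closed current enters to guarantee — via Theorem 21 and the decomposition proposition for directed harmonic currents — that $T$ is diffuse and its leaves are uniformized by the disc, so that the desintegration and the local estimates behave as needed, and, in the alternative ending, to close the argument.
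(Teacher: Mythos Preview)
Your approach is workable in outline but takes a substantial detour around the paper's much shorter argument, and the detour carries real technical cost that you yourself flag.

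The key observation you are missing is this: the current $T_1-T_2=i\partial\overline{\partial}u$ is automatically $d$-closed (since $\partial^2=\overline{\partial}^2=0$), and it is directed. That alone already forces the leafwise densities to be constant. Concretely, writing in a flow box $T_j=\int h_j^\alpha r_j(\alpha)[V_\alpha]\,d\nu(\alpha)$ against a common transverse measure $\nu=\mu_1+\mu_2$, closedness of $T_1-T_2$ gives $d\bigl((h_1^\alpha r_1-h_2^\alpha r_2)[V_\alpha]\bigr)=0$ for $\nu$-a.e.\ $\alpha$, hence $h_1^\alpha r_1(\alpha)-h_2^\alpha r_2(\alpha)=c(\alpha)$ is constant on each plaque. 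One then takes the Hahn decomposition $c(\alpha)\,\nu=\lambda_1-\lambda_2$ into mutually singular positive measures, so that $T_1-T_2=T^+-T^-$ with $T^\pm=\int[V_\alpha]\,d\lambda_i$ positive closed and directed on $X\setminus E$; these have locally bounded mass, extend as closed currents across the complete pluripolar set $E$, and must vanish by hypothesis. This is the paper's entire proof.

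Your energy identity is thus reproving, by an analytic route through the potential $u$, exactly the statement ``closed directed $\Rightarrow$ leafwise constant density'' that follows in one line from the flow-box structure. The difficulties you list in your third paragraph (only $L^p$ regularity of $u$, restriction of $u$ to leaves of measure zero, construction of cut-offs near $E$ with small $T$-Dirichlet energy, justification of $\langle T,i\partial\overline{\partial}g(u)\rangle=0$ when $g(u)$ is not transversally smooth) are genuine and not all of them are obviously resolved by the mechanisms you cite; for instance, the cut-off argument from the corollary to Theorem~24 uses $W^{1,2}$-polarity with respect to Lebesgue measure, whereas here you need smallness of $\int i\partial\eta_\epsilon\wedge\overline{\partial}\eta_\epsilon\wedge T$, which is a different condition. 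None of this is needed once you notice that closedness of $T_1-T_2$ does the work directly. Your ``alternative ending'' is in fact the paper's main line; the energy argument that precedes it can simply be removed.
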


\begin{proof}
Let $B$ be a flow box away from $E.$ Then
$$
T_j=\int h^\alpha_j[V_\alpha]d\mu_j(\alpha), j=1,2.
$$
Let $\nu(\alpha)=\mu_1(\alpha)+\mu_2(\alpha),$ so $\mu_j=r_j(\alpha)\nu.$
Then
$$
T_1-T_2= \int (h_1^\alpha r_1(\alpha)-h_2^\alpha r_2(\alpha))[V_\alpha] d\nu(\alpha).$$
Since $[T_1]=[T_2]$, then $T_1-T_2$ is closed and hence
$h_1^\alpha r_1(\alpha)-h_2^\alpha r_2(\alpha)=c(\alpha)$ is constant.
We decompose $c(\alpha)\nu(\alpha)$ on the space of plaques, $c(\alpha)\nu(\alpha)=\lambda_1-\lambda_2$ for positive mutually singular positive measures. Then
$$
T_1-T_2=\int [V_\alpha]\lambda_1(\alpha)-\int [V_\alpha]\lambda_2(\alpha)
=T^+-T^-$$
\noindent for positive closed currents $T^\pm$. These currents fit together to  global positive closed currents on $X \setminus E.$ The mass of $T^\pm$ is bounded near $E.$ So since $E$ is complete pluripolar (or if $\Lambda^1(E)=0$)
the currents $T^\pm$ extend as closed currents through $E$, see for example \cite{De 1997} . Consequently
since $(X,\mathcal L)$ does not admit directed closed currents,
then $T^\pm=0$ and $T_1=T_2$. Indeed both currents have no mass on $E$, which satisfies $\Lambda^2(E)=0.$

\end{proof}

Once uniqueness is proved we get the following result, which can be considered as an ergodic theorem 
for appropriate averages on leaves.
\begin{proposition}
Assume that there is only one positive $( X,\mathcal L,E)$  directed harmonic current $T$ of mass $1$ on $X\subset \mathbb P^2.$ Then for every leaf $L$ for which a covering map $\phi:\Delta \rightarrow L$ satisfies the estimate $\int |\phi'(\zeta)|^2 (1-|\zeta|)=\infty,$
$$
\tau_r=\frac{\phi_*(G_r[\Delta_r])}{\|\phi_*(G_r[\Delta_r])\|}
\rightarrow T$$
\noindent when $r \rightarrow 1$ where $G_r=\log^+\frac{r}{|\zeta|}.$
\end{proposition}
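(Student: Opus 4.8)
The plan is to deduce the convergence from the uniqueness hypothesis together with Theorem 22 and a routine compactness argument; no new analytic input is needed. First I would observe that since $L$ is a leaf, the universal covering map $\phi:\Delta\to L$ has image contained in $L$, hence locally contained in leaves outside $E$, so $\phi$ is $\mathcal L$-directed. Moreover the assumed estimate $\int_\Delta|\phi'(\zeta)|^2(1-|\zeta|)\,d\lambda(\zeta)=\infty$ is precisely the hypothesis of Theorem 22. Applying Theorem 22 to $T_r=\phi_*(G_r[\Delta])$ with $G_r=\log^+(r/|\zeta|)$, and writing $A(r)=\|T_r\|\to\infty$, we get that every cluster point of $\{\tau_r\}$ as $r\to1$ is a positive harmonic current directed by $(X,\mathcal L,E)$ and supported on $\overline{\phi(\Delta)}\subset\overline L\subset X$.

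Next I would check that such cluster points have mass $1$, so that they are legitimate competitors for the unique current $T$. Since $\int\tau_r\wedge\omega=1$ by the normalization and $T\mapsto\int T\wedge\omega$ is continuous for the weak topology (pairing against a fixed smooth form), any weak limit $T'$ of a sequence $\tau_{r_n}$ with $r_n\to1$ satisfies $\int T'\wedge\omega=1$; in particular $T'\neq0$. Hence each such $T'$ is a positive current of mass $1$ on $X$, harmonic and directed by $\mathcal L$, and the uniqueness hypothesis forces $T'=T$.

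Finally I would invoke compactness: the family $\{\tau_r\}_{r<1}$ has uniformly bounded mass (the mass of a positive bidimension $(1,1)$ current is comparable to $\int\,\cdot\,\wedge\omega$, which is $1$ here), hence is relatively compact in the weak topology of currents, which is metrizable on bounded sets. Given any sequence $r_n\to1$ one extracts a weakly convergent subsequence, whose limit equals $T$ by the previous step. Since every subsequential limit of $\tau_r$ as $r\to1$ is $T$, the whole family converges: $\tau_r\to T$.

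The proposition is thus a soft consequence of Theorem 22 (which carries the analytic load, producing harmonic $\mathcal L$-directed cluster points) combined with the assumed uniqueness, so there is no genuinely hard step. The only points that deserve a word of care — and which I would state explicitly — are that the normalization is preserved under weak limits, so cluster points are nonzero and of mass $1$, and that $\phi$ being a leaf uniformization guarantees the directedness needed to use Theorem 22 in its "directed" (rather than merely "weakly directed") form.
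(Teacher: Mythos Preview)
Your proof is correct and follows essentially the same approach as the paper, which simply invokes Theorem 22 to identify all cluster points of $\tau_r$ as harmonic $\mathcal L$-directed currents and then appeals to uniqueness. You have spelled out more carefully the routine points the paper leaves implicit (mass preservation under weak limits, the compactness/subsequence argument, and why $\phi$ is $\mathcal L$-directed), but the underlying strategy is identical.
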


\begin{proof}
We have seen in Theorem 22 that cluster points for $\tau_r$ are harmonic and  directed by $\mathcal L$, hence they coincide with $T.$
\end{proof}

We will next show uniqueness results for harmonic currents. With these results, discussed below we obtain the following corollary,
\cite{FS2005}, \cite{FS2006}.

\begin{corollary}
If $X$ is laminated without singularities and transversally Lipschitz or if
$(\mathbb P^2,\mathcal L,E)$ is a holomorphic foliation of $\mathbb P^2$ with only hyperbolic singularities, then $\tau_r$ converges to $T.$
\end{corollary}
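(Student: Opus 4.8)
The plan is to deduce the Corollary from the previous machinery together with the as-yet-undiscussed uniqueness theorems. The statement concerns two cases — a transversally Lipschitz lamination in $\mathbb{P}^2$ without singularities, and a holomorphic foliation of $\mathbb{P}^2$ with only hyperbolic singularities — and in both cases the point is to verify the hypotheses of Proposition 8, namely that there is a unique positive directed harmonic current of mass $1$, after which $\tau_r \to T$ follows immediately. So the proof naturally splits into: (i) establishing, in each case, that directed positive \emph{closed} currents are excluded (or reduced to a trivial situation), so that Theorem 21 gives the energy estimate $\int (1-|\zeta|)|\phi'(\zeta)|^2\,d\lambda(\zeta) = \infty$ along leaves; (ii) invoking the uniqueness theorems ``discussed below'' to conclude that the directed harmonic current of mass one is unique; (iii) combining uniqueness with Proposition 8 (equivalently Corollary 6 together with Theorem 22) to get convergence of $\tau_r$.

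\medskip

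\textbf{Step (i): no directed closed currents.} In the hyperbolic-singularities case, Theorem 16 (Brunella, together with the surface version proved just after it) tells us that the only positive closed currents directed by $\mathcal{F}$ come from algebraic leaves; so after possibly subtracting the integration currents over the finitely many invariant algebraic curves we are in the situation with no directed closed current, and Theorem 21 applies to the covering maps of the remaining leaves. In the Lipschitz non-singular case, Theorem 17 gives exactly the same dichotomy — no directed closed current except integration on algebraic leaves — and the same reduction works. (If one is in a genuinely ``generic'' subcase with no algebraic leaves at all, as in Theorem 7, there is nothing to subtract.) In both cases, once there are no directed closed currents and $E$ is finite (or empty), Corollary 6 says the leaves are uniformized by discs $\phi:\Delta\to X$ with the required divergence of the energy integral, so the averaged currents $\tau_r$ are well-defined and, by Theorem 22, all their cluster points are positive harmonic currents directed by $\mathcal{L}$.

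\medskip

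\textbf{Step (ii): uniqueness.} This is the substantive input and the main obstacle: one must cite the uniqueness theorems for directed harmonic currents ``discussed below'' (in the later sections, references \cite{FS2005}, \cite{FS2006}), which assert that a transversally Lipschitz lamination in $\mathbb{P}^2$ without singularities, and a holomorphic foliation of $\mathbb{P}^2$ with only hyperbolic singularities, each support a \emph{unique} positive directed harmonic current of mass one. The strategy underlying those theorems — sketched in the excerpt — is to show that for any two such currents $T_1,T_2$ one has $Q(T_1,T_2)=0$, whence by the Hodge--Riemann inequality of Theorem 25 the classes $[T_1]=[T_2]$ coincide in $H_e$, and then Proposition 7 upgrades equality of classes to $T_1=T_2$ (using that $E$ is complete pluripolar, which holds for a finite set of points, and that $\Lambda^2(E)=0$). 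The delicate part hidden here is the geometric-intersection estimate — that the number of intersection points of a plaque of $T$ with a slightly moved plaque of $T_\epsilon$ is uniformly bounded — which is precisely where transverse Lipschitz regularity (resp. the structure near hyperbolic singularities) enters; this is the heart of the argument and the one place where the two hypotheses of the Corollary are genuinely used.

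\medskip

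\textbf{Step (iii): conclusion.} With uniqueness of the directed harmonic current $T$ of mass one in hand, apply Proposition 8: for every leaf $L$ whose covering map satisfies $\int |\phi'(\zeta)|^2(1-|\zeta|)\,d\lambda(\zeta)=\infty$ — which by Theorem 21 is every leaf once there are no directed closed currents — the normalized averages $\tau_r = \phi_*(G_r[\Delta_r])/\|\phi_*(G_r[\Delta_r])\|$ have all cluster points harmonic and directed by $\mathcal{L}$, hence equal to $T$; since the space of currents is compact this forces $\tau_r \to T$ as $r\to 1$. This is exactly the assertion of the Corollary. I would expect the write-up to be short modulo the citations, since essentially everything is assembled from Theorems 16, 17, 21, 22, 25 and Propositions 7, 8; the real work is entirely in the uniqueness statement imported from the later sections.
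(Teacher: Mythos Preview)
Your proposal is correct and follows essentially the same route as the paper: invoke the uniqueness theorems (Theorems 26 and 27, the ones ``discussed below'') to guarantee a unique directed harmonic current of mass one, then apply Proposition 8, with Theorem 21 supplying the energy estimate $\int(1-|\zeta|)|\phi'|^2\,d\lambda=\infty$ once closed directed currents are excluded via Theorems 16 and 17. The paper's own argument is in fact much terser than yours---it just says the corollary follows from the uniqueness results together with Proposition 8, and adds the one-line observation that limits are directed because the parametrized discs are locally contained in leaves---so you have filled in the scaffolding that the paper leaves implicit.

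One small point: your handling of possible algebraic leaves (``after possibly subtracting the integration currents\ldots'') is a bit loose. The uniqueness statement in Theorem 26 explicitly assumes no algebraic leaves, and the corollary as stated in the paper really lives under that standing hypothesis (the generic situation of Theorem 7); otherwise ``$T$'' is not well-defined and Proposition 8 does not apply as written. It would be cleaner simply to note that the hyperbolic-singularities case is meant under the assumption of no invariant algebraic curve, rather than to gesture at a subtraction that does not quite make sense at the level of leaves and covering maps.
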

Observe that in the cases described in the corollary, the currents $$
\tau_r=\frac{\phi_*(G_r[\Delta_r])}{\|\phi_*(G_r[\Delta_r])\|}$$ have directed limits. Indeed  the parametrized leaves are locally contained in leaves.

\subsection{Uniqueness of harmonic currents $T$ for $\mathbb P^2$}

\begin{theorem}
Let $\mathcal F$ be a holomorphic foliation in $\mathbb P^2$. Assume that all singular points are hyperbolic and there are no algebraic leaves. Then there is a unique positive harmonic $(1,1)$ current $T$ of mass one directed by ${\mathcal F}.$
\end{theorem}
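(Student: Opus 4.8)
The plan is to combine the potential-theoretic machinery of Section 4.2--4.3 with the control of harmonic currents near hyperbolic singularities established in the proof of Theorem~16. The strategy has two halves: first show that any two directed positive harmonic currents $T_1,T_2$ of mass one define the same class in the energy space $H_e$ (i.e. $[T_1]=[T_2]$), and then invoke Proposition~6 to upgrade the equality of classes to equality of currents. For the second half we need the singular set $E=\operatorname{Sing}(\mathcal F)$ to be complete pluripolar, which it is since it is a finite set of points; Proposition~6 then applies verbatim once we know $\mathcal F$ admits no directed positive closed current, which follows from Theorem~16 (hyperbolic singularities, no algebraic leaves $\Rightarrow$ no directed closed current).

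For the first half, the key tool is the Hodge--Riemann form $Q$ from Theorem~25. Normalize $\langle T_j,\omega\rangle=1$. By Proposition~5, $Q$ is upper semicontinuous and, crucially, $Q(T_1,T_2)\ge 0$ with equality forcing $[T_1]=[T_2]$ (strict concavity of $Q$ on the cone of classes of positive $\partial\bar\partial$-closed currents). So it suffices to prove $Q(T_1,T_2)=\int T_1\wedge T_2=0$. Here one uses the geometric-intersection argument from the proof of Theorem~17: away from $E$ the lamination is (locally) given by holomorphic plaques, so in a flow box $B$ disjoint from $E$ one writes $T_j=\int h^\alpha_j[V_\alpha]\,d\mu_j(\alpha)$, and after perturbing one current by an automorphism $\Phi_\ep$ of $\mathbb P^2$ close to the identity one can form the geometric intersection $T_1\wedge_g (\Phi_\ep)_*T_2$. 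The content of \cite{FS2005} is that the number of intersection points between a plaque of $\mathcal L$ and a plaque of $(\Phi_\ep)_*\mathcal L$ is uniformly bounded (this is where transverse regularity is used; for a holomorphic foliation with only hyperbolic singularities the transverse structure near the singularities is linearizable up to homeomorphism by Chaperon's theorem, Theorem~4, and one has the explicit model $\alpha=zdw-\lambda wdz$ with $\Im\lambda\neq 0$). Smoothing $T_1$ and $(\Phi_\ep)_*T_2$ to $T_1^\delta$, $T_2^{\delta'}$ and letting $\delta,\delta'\ll\ep$ then $\ep\to0$, the uniform bound on plaque intersections yields $\int T_1^{\delta'}\wedge_g T_2^{\delta}\to 0$, hence $\int T_1\wedge T_2=0$ and so $Q(T_1,T_2)=0$.

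The delicate point — and what I expect to be the main obstacle — is controlling the currents near the hyperbolic singular points, where the flow-box description breaks down. One has to show that neither $T_1$ nor $T_2$ can put mass near a singularity (so the flow-box estimates genuinely cover all of $\mathbb P^2$ up to a negligible set), exactly as in the first paragraph of the proof of Theorem~16: in the local model $\alpha=zdw-\lambda wdz$ with $\lambda=a+ib$, $b\neq 0$, the holonomy of a loop around the singularity contracts a transversal geometrically ($|w|\mapsto |w|e^{-2\pi b}$), and since the finite total mass of a harmonic current is controlled under holonomy by Harnack (Remark~8~ii), no mass can accumulate on small punctured transversal discs; thus any mass must sit on the separatrices $\{z=0\},\{w=0\}$. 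But a directed harmonic current giving mass to a leaf forces that leaf to lie in a compact curve (\cite{FS2005}, Prop.~5.1), which here would be an algebraic leaf — excluded by hypothesis. Hence $T_1,T_2$ have no mass near $E$, the $T\wedge T$ computation is legitimate, and combining $[T_1]=[T_2]$ with Proposition~6 gives $T_1=T_2$. Existence of such a $T$ is already furnished by Theorem~23 (or Corollary~2 together with Theorem~21), completing the proof.
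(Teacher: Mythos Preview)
Your overall architecture is right and matches the paper: show $Q(T,T)=0$ (equivalently $Q(T_1,T_2)=0$) for all directed positive harmonic $T$ via a geometric-intersection argument, deduce $[T_1]=[T_2]$ from the Hodge--Riemann property (Theorem~25), and then conclude $T_1=T_2$ from Proposition~6. The treatment away from the singularities is also in line with the paper.

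The genuine gap is your handling of the singularities. You claim that ``neither $T_1$ nor $T_2$ can put mass near a singularity'' and try to recycle the argument from the proof of Theorem~16. That argument, however, is specific to \emph{closed} directed currents: it uses that the transverse measure is \emph{holonomy invariant}, so the geometric contraction $|w|\mapsto e^{-2\pi b}|w|$ forces the measure on a punctured transversal to vanish. For harmonic currents the transverse measure is only quasi-invariant under holonomy (Remark~8(ii) gives $(\phi)_*T_a\le c\,T_{a'}$ via Harnack, with $c$ possibly $>1$), and this does \emph{not} force the mass near a singularity to vanish. Indeed it cannot: for foliations in $\mathcal H(d)$ the support of $T$ may be all of $\mathbb P^2$ (Proposition~11), so $T$ has mass in every neighborhood of every singular point. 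The real difficulty is not the mass of $T$ near $E$ but the mass of the geometric intersection $T\wedge_g T_\epsilon$ there: near a hyperbolic singularity the plaques of $\mathcal L$ and of $\Phi^\epsilon_*\mathcal L$ spiral and intersect in an \emph{unbounded} number of points as $\epsilon\to 0$, so the ``uniformly bounded number of intersections'' used away from $E$ simply fails.

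What the paper actually does (Section~5.5, with the hard estimates deferred to \cite{FS2006}) is a genuine local analysis in the Poincar\'e linearized model $zdw-\lambda wdz=0$: the local leaves are parametrized by a fixed sector $S_\lambda$, the harmonic densities $h_\alpha$ become positive harmonic functions $H_\alpha$ on $S_\lambda$, and one proves a Poisson-integral representation for $H_\alpha$ with quantitative decay toward the separatrices (this is where the finiteness of the total mass along the chain of flow boxes around the singularity is used). Combined with precise control on the location of the intersection points $J^\epsilon_{\alpha,\beta}$ inside the bidisc, this yields
\[
\int \sum_{p\in J^\epsilon_{\alpha,\beta}} H_\alpha(p)\,H^\epsilon_\beta(p)\,d\mu(\alpha)\,d\mu(\beta)\longrightarrow 0,
\]
which is the missing estimate. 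Your proposal skips exactly this step; without it the passage from the flow-box computation to $\int T\wedge T=0$ is not justified.
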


According to Jouanolou's theorem (Theorem 7 in $\mathbb P^2$), for every $d \geq 2$, the hypothesis is satisfied in an open real Zariski dense set in $\mathcal F_d.$

\begin{theorem} Let $(X,\mathcal L)$ be a laminated set in $\mathbb P^2$, transversally Lipschitz without singularities. Then $X$ admits a unique positive $(1,1)$ harmonic current of mass one, directed by  
${\mathcal L}.$
\end{theorem}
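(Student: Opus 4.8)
The plan is to prove uniqueness by showing that any two positive harmonic currents $T_1, T_2$ of mass one directed by $(X,\mathcal L)$ must satisfy $Q(T_1,T_2)=0$, and then invoke the machinery already assembled. By Theorem 16 (Brunella) applied with $E=\emptyset$, a transversally Lipschitz lamination in $\mathbb P^2$ without singularities that admitted a directed positive closed current would have to be supported on an algebraic leaf; but an algebraic leaf in $\mathbb P^2$ always meets the singular set of the induced foliation (it carries a nontrivial normal bundle, cf.\ the argument in Theorem 14), contradicting smoothness. Hence $(X,\mathcal L)$ supports no directed positive closed current, so $T_1,T_2$ are genuinely non-closed, and by Corollary 3 (energy estimate) their classes $[T_1],[T_2]$ lie in the Hilbert space $H_e$. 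If I can show $Q(T_1,T_2)=0$, then by the Hodge--Riemann property (Theorem 26) the classes $[T_1]$ and $[T_2]$ must be proportional, hence equal after normalizing the mass; then Proposition 6 (here $E=\emptyset$, so the complete-pluripolar hypothesis is vacuous) upgrades $[T_1]=[T_2]$ to $T_1=T_2$.

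So everything reduces to the vanishing $Q(T_1,T_2)=0$, equivalently $\int T_1\wedge T_2=0$ (for bidimension-$(1,1)$ currents in $\mathbb P^2$ the weight $\omega^{k-2}=\omega^0$ is trivial). The idea, following the geometric-intersection argument of Theorem 17, is to define a geometric wedge product $T_1\wedge_g (\Phi_\epsilon)_*T_2$ using an automorphism $\Phi_\epsilon$ of $\mathbb P^2$ close to the identity, chosen so that $\mathcal L$ and $(\Phi_\epsilon)_*\mathcal L$ share no leaf. In a flow box, writing $T_1=\int[\Delta_\alpha]\,h_\alpha\,d\mu_1(\alpha)$ and $(\Phi_\epsilon)_*T_2=\int[\Delta^\epsilon_\beta]\,k_\beta\,d\mu_2(\beta)$, the geometric intersection is $\int\big(\sum_{p\in J^\epsilon_{\alpha,\beta}}h_\alpha(p)k_\beta(p)\big)\,d\mu_1(\alpha)\,d\mu_2(\beta)$, and the key quantitative input — already cited from \cite{FS2005} in the proof of Theorem 17 — is that transverse Lipschitz regularity forces the number of intersection points $\#J^\epsilon_{\alpha,\beta}$ in a plaque to be uniformly bounded. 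One then smooths: $T_i^\delta=\int\varphi_*(T_i)\,d\rho_\delta(\varphi)$ averaging over automorphisms near the identity. The uniform bound on plaque intersections gives $\langle (T_2^{\delta'})_\epsilon,\,\psi\,T_1^\delta\rangle\to 0$ as $\delta,\delta'\ll\epsilon$ and then $\epsilon\to0$, while cohomologically $\int T_1\wedge T_2=\int T_1^\delta\wedge (T_2^{\delta'})_\epsilon=\int T_1^\delta\wedge_g (T_2^{\delta'})_\epsilon$ because the classes in $H_e$ are unchanged by smoothing and by $(\Phi_\epsilon)_*$ (which acts trivially on $H^{1,1}$ cohomology of $\mathbb P^2$, and the extra $\overline\partial S$ part contributes nothing to $Q$ here since $dT_i$ is small in the relevant pairing — this is exactly where one uses that $Q$ descends to $H_e$ and is continuous). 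Passing to the limit yields $\int T_1\wedge T_2=0$.

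The main obstacle is making the smoothing/limit step rigorous: one must control the harmonic densities $h_\alpha$ uniformly under the convolution $T_i^\delta$ and under the holonomy slides, which is where Harnack's inequality enters (as in Remark 5 and the proof of Theorem 17), and one must verify that the cohomological identity $\int T_1\wedge T_2 = \int T_1^\delta\wedge_g(T_2^{\delta'})_\epsilon$ genuinely holds in $H_e$ — i.e.\ that the non-closed parts $\overline\partial S_i$ do not obstruct the computation. The transverse Lipschitz hypothesis is used precisely and only to get the uniform bound on $\#J^\epsilon_{\alpha,\beta}$; without it the geometric intersection could blow up and the argument collapses (indeed the result is known to fail for merely Hölder transverse regularity, cf.\ the discussion after Theorem 10). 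Once $Q(T_1,T_2)=0$ is in hand, the rest is the formal chain Theorem 26 $\Rightarrow$ $[T_1]=[T_2]$ $\Rightarrow$ Proposition 6 $\Rightarrow$ $T_1=T_2$, and existence of at least one such $T$ is Theorem 23 (with $E=\emptyset$, $\Lambda^2(E)=0$ trivially), so uniqueness is the whole content.
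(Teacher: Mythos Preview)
Your approach is essentially the paper's own: show the geometric intersection $T\wedge_g(\Phi_\epsilon)_*T$ goes to zero using the uniform bound on $\#J^\epsilon_{\alpha,\beta}$ from the Lipschitz hypothesis together with diffuseness of the transverse measure, identify this limit with $Q(T,T)$ via the smoothing argument (Proposition 8), and then conclude via the Hodge--Riemann property (Theorem 25) and Proposition 6. The only cosmetic difference is that the paper computes $Q(T,T)=0$ for each $T$ rather than $Q(T_1,T_2)=0$ directly, which is equivalent by polarization; one genuine slip is that you invoke Brunella (Theorem 16, about holomorphic foliations) for the absence of closed directed currents, whereas the relevant statement for transversally Lipschitz laminations is Theorem 17.
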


\begin{remark}
Recently Deroin-Klepsyn \cite{DK2006} have obtained a different proof of this result, assuming that $X$ is minimal for a transversally conformal foliation. Their  approach uses diffusion for the heat equation on leaves as developed by Garnett \cite{G1983} and Candel \cite{C2003}.
Deroin-Klepsyn obtain also results on uniqueness of harmonic measure on minimal sets for general transversally conformal foliations, without singularities.
\end{remark}
To get uniqueness of directed harmonic currents we have to prove that for any laminated positive harmonic $(1,1)$ current $T, Q(T,T)=0.$ For that purpose we introduce the notion of geometric intersection and we compare it with the cohomological intersection given by the form Q.

\subsection{Geometric intersection of directed harmonic currents}

We will denote by $\Phi^\epsilon$ an automorphism of $\mathbb P^2$ close to the identity and set $T_\epsilon:=\Phi^\epsilon_*(T),\mathcal F_\epsilon=\Phi^\epsilon_*\mathcal F.$ The geometric intersection
of $T\wedge_g T_\epsilon$ is a measure which has the following expression in a flow box $B.$
Suppose $T=\int h_\alpha [\Delta_\alpha]d\mu(\alpha)$ and
$T_\epsilon=\int h^\epsilon_\beta[\Delta^\epsilon_\beta]d\mu(\beta).$ For a continuous compactly supported function $\phi$ in $B$ we define

$$
<T\wedge_gT_\epsilon,\phi>=
\int \left(\sum_{p\in J^\epsilon_{\alpha,\beta}}h_\alpha(p)h^\epsilon_\beta(p)\phi(p)\right) d\mu(\alpha)d\mu(\beta),$$

\noindent where $J^\epsilon_{\alpha,\beta}$ consists of the intersection points of $\Delta_\alpha$ and $\Delta^\epsilon_\beta.$ 

The automorphism is generic in order that the foliation $\mathcal F$ and
$\mathcal F_\epsilon$ have no common leaf. It also satisfies other generic conditions like moving all singular points away from the separatrices.

The delicate point is to show that $T\wedge_gT_\epsilon$ converges to zero and hence the limit has total mass zero. It is also necessary to show that the total mass is $\int T \wedge T$ which implies the uniqueness of $T$. We give a few details.

Define $S^\delta=\int \Phi_*(S) d \rho_\delta(\Phi)$ where $\rho_\delta$ is a smooth approximation to the identity in $U(3).$ The important point is that the estimate of the mass of $T \wedge_g T_\epsilon$ are independent of small perturbations of $\Phi^\epsilon.$

\begin{proposition}
With the above notations there are $\delta,\delta'$ small enough with respect to $\epsilon$ so that
$$
\int T^{\delta} \wedge_g T^{\delta'}_\epsilon \rightarrow \int T \wedge T..
$$
\end{proposition}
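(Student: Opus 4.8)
The plan is to show that the geometric intersection $\int T^\delta \wedge_g T^{\delta'}_\epsilon$ computes, up to small errors, both the cohomological self-intersection $\int T\wedge T$ and the wedge-product of the smoothings, and that these agree in the limit. First I would record the cohomological identity: since $T$ is positive $\partial\overline\partial$-closed of bidegree $(1,1)$ on $\mathbb P^2$, its class $[T]\in H_e$ is well defined, $\int T\wedge\omega = \int T^\delta\wedge\omega = \int T^{\delta'}_\epsilon\wedge\omega$ (the smoothing by averaging over $U(3)$ and the automorphism $\Phi^\epsilon$ both preserve cohomology and mass), so by the decomposition $T = c\omega + \partial S + \overline{\partial S} + i\partial\overline\partial u$ of Theorem~24 and the formula for $Q$ in \S4.6 one gets
$$
\int T^\delta \wedge T^{\delta'}_\epsilon \wedge \omega^{k-2} = Q(T^\delta, T^{\delta'}_\epsilon),
$$
and since smoothing is continuous on $H_e$ (Proposition~4, via convergence of the canonical solutions $\overline\partial S_\delta \to \overline\partial S$), $Q(T^\delta, T^{\delta'}_\epsilon) \to Q(T,T) = \int T\wedge T$ as $\delta,\delta',\epsilon \to 0$. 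In $\mathbb P^2$ the integrand $\omega^{k-2}$ is just a constant, so this is literally $\int T\wedge T$.

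Next I would compare the smooth wedge product with the geometric intersection. The point is that $T^\delta$ and $T^{\delta'}_\epsilon$ are smooth forms, so $\int T^\delta \wedge T^{\delta'}_\epsilon$ is an honest integral; on the other hand, writing $T^\delta$ and $T^{\delta'}_\epsilon$ locally as superpositions of (smoothed) plaques via the disintegration in a flow box, the wedge product of two families of nearly-transverse graphs is, by the coarea/Fubini argument, exactly the measure carried by their pairwise intersection points weighted by the harmonic densities — i.e. $\int T^\delta \wedge T^{\delta'}_\epsilon = \int T^\delta \wedge_g T^{\delta'}_\epsilon$ up to boundary terms that vanish as the flow-box partition of unity is refined. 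Here one uses that $\mathcal F$ and $\mathcal F_\epsilon = \Phi^\epsilon_*\mathcal F$ share no common leaf (genericity of $\Phi^\epsilon$), so the intersections $J^\epsilon_{\alpha,\beta}$ are genuinely transverse and finite; and one uses the uniform bound on $\#J^\epsilon_{\alpha,\beta}$ (established in \cite{FS2005} in the Lipschitz case, and for hyperbolic singularities after moving the singular points off the separatrices) to control everything uniformly. Combining the two comparisons gives the claimed convergence.

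The main obstacle — and the step I expect to require the real work — is the uniformity in the comparison $\int T^\delta \wedge_g T^{\delta'}_\epsilon$ versus $\int T^\delta \wedge T^{\delta'}_\epsilon$ as $\delta,\delta'\to 0$ relative to $\epsilon$: one must choose $\delta,\delta'$ small enough that the smoothing does not smear plaques across more than a bounded number of transverse sheets, yet the error terms coming from plaques only partially contained in a flow box, and from the mass of the harmonic densities near $E$ and near the separatrices, must be shown to be negligible. This is exactly where the hyperbolicity of the singularities (no mass near $\mathrm{Sing}(\mathcal F)$, by the holonomy-contraction argument in the proof of Theorem~16) and the Harnack control of the harmonic densities $h_\alpha$ (Remark~11(ii)) enter, together with the uniform intersection bound, to make the error estimates independent of the perturbation; the delicate bookkeeping of these errors is the heart of the argument and is carried out in \cite{FS2005}, \cite{FS2006}.
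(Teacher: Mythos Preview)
The paper does not prove this proposition: it is stated without proof, with the sentence preceding it (``the estimate of the mass of $T\wedge_g T_\epsilon$ are independent of small perturbations of $\Phi^\epsilon$'') serving as the only hint, and the details deferred to \cite{FS2005}, \cite{FS2006}. So there is no in-paper argument to compare your plan against; what one can do is check your outline against the strategy the paper sketches in the proof of Theorem~17, namely the chain $\int T\wedge T = \int T^{\delta'}_\epsilon \wedge T^\delta = \int T^{\delta'}_\epsilon \wedge_g T^\delta$.

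Your plan matches that strategy and is essentially correct. Two points deserve sharpening. First, in your Step~1 you phrase the cohomological identity as a limit via Proposition~4; in fact for smooth representatives $Q(T^\delta,T^{\delta'}_\epsilon)=\int T^\delta\wedge T^{\delta'}_\epsilon$ exactly, and since automorphisms homotopic to the identity and $U(3)$-averaging move the $\overline\partial S$-part continuously in $L^2$, one gets convergence to $Q(T,T)$ as $\delta,\delta',\epsilon\to 0$ --- this is what you wrote, and it suffices. Second, your Step~2 is phrased as if $T^\delta\wedge_g T^{\delta'}_\epsilon$ were a geometric intersection of smooth forms; it is not. The object $T^\delta\wedge_g T^{\delta'}_\epsilon$ should be read as the double average
\[
\int\!\!\int \bigl(\Phi_*T\bigr)\wedge_g\bigl((\Phi'\Phi^\epsilon)_*T\bigr)\, d\rho_\delta(\Phi)\, d\rho_{\delta'}(\Phi'),
\]
and the identification with $\int T^\delta\wedge T^{\delta'}_\epsilon$ proceeds by Fubini together with the fact that for two laminated positive currents with no common leaf the current-theoretic wedge (defined via one smooth factor) coincides with the geometric intersection measure. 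Your ``coarea/Fubini'' phrase points in the right direction but conflates the smooth averages with the plaque disintegration; separating these two layers makes the argument clean. The uniformity you flag in your last paragraph --- mass estimates independent of small perturbations of $\Phi^\epsilon$ --- is precisely the point the paper singles out, and is indeed where the real work (done in \cite{FS2005}, \cite{FS2006}) lies.
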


So the main problem is to estimate $T \wedge_g T_\epsilon.$

We first explain the idea in the nonsingular case. In that case it can be shown that the number of points of the intersection of the plaques $\Delta_\alpha$ and $\Delta^\epsilon_\beta$ is  bounded above independently of $\epsilon.$  The measure $\mu$ has no Dirac mass (otherwise there would be a closed current). Hence in any flow box the mass of $T^{\delta} \wedge_g T^{\delta'}_\epsilon$ converges to zero. To prove the boundedness of the number of intersections one has to control the intersections in far away flow boxes. The idea is that if this number is too large then using that the lamination is Lipschitz and following the flow boxes, the leaves in a fixed flow box would be closer than permitted from the explicit form of the automorphisms.

In the presence of singularities the above analysis holds away from the singularities. Using the Poincar\'e linearization theorem it is then enough to understand the repartition of mass of a harmonic current near a singular point, say $(0,0).$

Suppose $\mathcal F$ is given by $zdw-\lambda wdz=0, \lambda=a+ib, b>0.$
The leaves $L_\alpha$ in the unit polydisc are parametrized by $(z,w)=\psi_\alpha(\zeta)$ with

\bea
z & = & e^{i(\zeta+(\log |\alpha|)/b)}, \zeta=u+iv\\
w & = & \alpha e^{i \lambda(\zeta+(\log|\alpha|)/b)}\\
|z| & = & e^{-v}\\                   
|w| & = & e^{-bu-av}\\
\eea

The parametrization is chosen so that the plaques in the unit polydisc are parametrized by a fixed sector $S_\lambda$ defined by $ v>0, u>\frac{-av}{b}.$

Let $h_\alpha$ denote the harmonic function associated to the current $T$ on the leaf $L_\alpha.$ The local leaf clusters on both separatrices. To investigate the clustering in the $z-$ axis, we use a transversal $D_{z_0}:=\{(z_0,w);|w|<1$ for some $|z_0|=1.$ We can normalize so that $h_\alpha(z_0,w)=1$ where $(z_0,w)$ is the point on the local leaf with $e^{-2\pi b}\leq |w|<1.$ So $(z_0,w)=\psi_\alpha(\zeta_0)=\psi_\alpha(u_0+iv_0)$ with $v_0=0$ and $0<u_0 \leq 2\pi$ determined by the equations $|z_0|=e^{v_0}=1$ and $e^{-2\pi b}\leq |w|=e^{-bu_0-av_0}<1.$ Let $\tilde{h}_\alpha$ denote the harmonic continuation along $L_\alpha.$ Define $H_\alpha(\zeta):=\tilde{h}_\alpha(e^{i(\zeta+(\log |\alpha|)/b},\alpha e^{i\lambda(\zeta+(\log |\alpha|)/b)})$ on $S_\lambda.$

The nature of the holonomy map associated to the singularity permits to show that $H_\alpha$ is given by a Poisson integral with precise estimates on the behaviour at infinity, i.e. when the leaves get close to the separatrices. The main ingredient is the finiteness of the total mass of the harmonic currents on the disjoint flow boxes crossed when we follow a path around a hyperbolic singularity.We need also precise estimates on the location of the intersection points of the plaques in $\Delta^2.$ This gives that

$$
\int\sum_{p\in J^\epsilon_{\alpha,\beta}} H_\alpha(p)H^\epsilon_\beta(p)
d\mu(\alpha) d\mu(\beta) \rightarrow 0.
$$

 This shows that $\int T \wedge T=0$ for every $T$, positive harmonic, directed by $\mathcal F$. Hence using Theorem 25 and Proposition 6,$T$ is unique.The estimates are very technical. 

\begin{corollary}
Let $(X,\mathcal L,E)$ be a laminated set, with a unique directed harmonic current of mass one.Then any Borel set of leaves of $T$ of positive mass is of full $T$- measure.\end{corollary}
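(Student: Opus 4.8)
The plan is to adapt the classical ergodic-theory dichotomy---a \emph{unique} invariant probability measure is automatically ergodic---to the setting of directed harmonic currents. By a \emph{Borel set of leaves of positive mass} I mean a $T$-measurable saturated set $Y\subset X$, i.e.\ a union of leaves of $\mathcal L$, with $\|\mathbf 1_Y T\|>0$; here $\|\cdot\|$ denotes mass and $\mathbf 1_Y T$ is the restriction of the order-zero current $T$ to $Y$. First I would establish the following claim: $\mathbf 1_Y T$ is again a positive current directed by $(X,\mathcal L,E)$ and harmonic, i.e.\ $i\partial\overline\partial(\mathbf 1_Y T)=0$. Granting this, normalize to $T_1:=\mathbf 1_Y T/\|\mathbf 1_Y T\|$; then $T_1$ is a positive directed harmonic current of mass one, so the uniqueness hypothesis forces $T_1=T$, that is $\mathbf 1_Y T=\|\mathbf 1_Y T\|\,T$. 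Multiplying both sides of this identity by $\mathbf 1_{X\setminus Y}$ gives $0=\|\mathbf 1_Y T\|\,\mathbf 1_{X\setminus Y}T$, and since $\|\mathbf 1_Y T\|>0$ we conclude $\mathbf 1_{X\setminus Y}T=0$, i.e.\ $Y$ is of full $T$-measure.

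The substance is the claim. Since $T$ has measure coefficients, multiplication by the bounded Borel function $\mathbf 1_Y$ produces a well-defined positive current supported on $X$ with $\mathbf 1_Y T\wedge\gamma_j=0$, so it is weakly directed. To see it is directed and harmonic I would argue locally: in a flow box $B$ disjoint from $E$ one has the disintegration $T=\int_\Delta h_\alpha[V_\alpha]\,d\mu(\alpha)$ of Theorem~23, and because $Y$ is saturated, $Y\cap B$ is a union of plaques, say $\bigcup_{\alpha\in A}V_\alpha$ for a $\mu$-measurable set $A\subset\Delta$ (measurability of the saturation on the transversal is the routine point, using that the plaque projection is continuous and $Y$ is $T$-measurable). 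Hence $\mathbf 1_Y T|_B=\int_A h_\alpha[V_\alpha]\,d\mu(\alpha)$, which is of the directed form, and since each $h_\alpha$ is harmonic on its plaque, each summand $h_\alpha[V_\alpha]$ --- hence $\mathbf 1_Y T|_B$ --- is $i\partial\overline\partial$-closed in $B$, just as for $T$; thus $i\partial\overline\partial(\mathbf 1_Y T)=0$ on $\mathbb P^2\setminus E$. Finally, $\mathbf 1_Y T\le T$ carries no mass on $E$ (because $\Lambda^2(E)=0$, or since $E$ is complete pluripolar and the mass stays bounded near $E$), so by the removable-singularity theorem for positive $\partial\overline\partial$-closed currents across $E$ --- the same one used for $T$ itself in the proof of Theorem~23, cf.\ \cite{BS2002}, \cite{FS1995} --- the identity $i\partial\overline\partial(\mathbf 1_Y T)=0$ propagates across $E$, proving the claim.

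The main obstacle, and the only place where care is genuinely needed, is this last step: one must check that restricting $T$ to a saturated Borel set preserves not merely positivity but the full structure of a directed harmonic current --- that the saturation meets each transversal measurably, that the disintegration restricts compatibly, and that harmonicity survives the passage across the singular set $E$. None of these requires new ideas beyond what already appears in the construction of $T$ (Theorem~23) and the ``no mass on $E$'' arguments, but they must be assembled. Once the claim is in place the conclusion is the soft two-line argument above, whose sole analytic input is the uniqueness hypothesis (which holds, by Theorems~25 and 26, for holomorphic foliations of $\mathbb P^2$ with only hyperbolic singularities and for transversally Lipschitz nonsingular laminations in $\mathbb P^2$).
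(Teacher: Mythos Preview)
Your approach is correct and is exactly the paper's own: show that $\mathbf 1_Y T$ is again a positive directed harmonic current, then invoke uniqueness to conclude it is proportional to $T$. The paper's proof is a terse two lines asserting ``$1_A T$ is positive harmonic'' without justification; you have simply (and correctly) fleshed out that claim via the flow-box decomposition and the extension across $E$.
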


\begin{proof}
This follows from the uniqueness of $T.$ If $A$ is a Borel invariant set then the current $1_AT$ is positive harmonic. If it is not zero, then it should be proportional to $T.$.\end{proof}

\subsection{Minimal exceptional sets.}

Let $\mathcal F\in \mathcal F_d(\mathbb P^k).$ Recall that a closed minimal invariant set $M$ is a minimal exceptional set if $M \cap$Sing$(\mathcal F)=\emptyset.$ Whether such sets exist is a central open question for holomorphic foliations in $\mathbb P^2 $.

\begin{proposition}
Let $\mathcal F$ be a holomorphic foliation in $\mathbb P^2 $. The following are equivalent:

\noindent 1) $\mathcal F$ admits no minimal exceptional set.\\
2) The support of every positive harmonic or closed current directed by $\mathcal F$ intersects  Sing$(\mathcal F).$\\
3) There exists a positive harmonic or closed current directed by $\mathcal F,$ whose support intersects Sing$(\mathcal F).$
\end{proposition}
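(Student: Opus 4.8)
The plan is to prove $(1)\Leftrightarrow(2)$ outright, to obtain $(2)\Rightarrow(3)$ for free, and to reserve the bulk of the argument for $(3)\Rightarrow(1)$. For $(1)\Leftrightarrow(2)$ both directions are contrapositive and use the Zorn-type construction of minimal sets. For $\neg(1)\Rightarrow\neg(2)$: if $M$ is a minimal exceptional set then $M$ is a compact set laminated by Riemann surfaces with \emph{no} singular points, so Theorem 23 (applied with $E=\emptyset$) provides a directed positive harmonic current on $M$; by minimality its support is all of $M$, which misses $\mbox{Sing}(\mathcal F)$, so $(2)$ fails. For $\neg(2)\Rightarrow\neg(1)$: if $T$ is a directed positive harmonic or closed current with $\mbox{supp}(T)\cap\mbox{Sing}(\mathcal F)=\emptyset$, then $\mbox{supp}(T)$ is a nonempty compact $\mathcal F$-invariant subset of $\mathbb P^2\setminus\mbox{Sing}(\mathcal F)$ (it is a union of leaf closures), and Zorn's lemma applied to its nonempty closed invariant subsets ordered by reverse inclusion yields a minimal one $M\subset\mbox{supp}(T)$; every leaf in $M$ is then dense in $M$, so $M$ is a minimal set disjoint from $\mbox{Sing}(\mathcal F)$, i.e. a minimal exceptional set, and $(1)$ fails. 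Finally $(2)\Rightarrow(3)$ is immediate, because $\mathcal F$ always carries at least one directed positive harmonic or closed current — the current of integration on an invariant algebraic curve if one exists, and otherwise the harmonic current produced by Theorem 23 — whose support meets $\mbox{Sing}(\mathcal F)$ by $(2)$.

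For $(3)\Rightarrow(1)$ I would argue by contraposition: assuming $\mathcal F$ admits a minimal exceptional set $M$, I must show that the support of \emph{every} directed positive harmonic or closed current $T$ avoids $\mbox{Sing}(\mathcal F)$. The first ingredient is the fact, underlying Corollary 5 and resting on the result of Fornaess--Sibony that the complement of the support of a positive harmonic or closed directed current in $\mathbb P^2$ is pseudoconvex (hence Stein, the relevant set not being pluripolar), that two such currents cannot have disjoint supports. Applying this with the current $T_M$ whose support is exactly $M$, we get $\mbox{supp}(T)\cap M\neq\emptyset$; since this intersection is a nonempty closed invariant subset of the minimal set $M$, it equals $M$, so $M\subset\mbox{supp}(T)$. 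In particular $M$ is the \emph{unique} minimal set of $\mathcal F$ and $\overline L\supset M$ for every leaf $L$. The same Corollary~5 mechanism rules out invariant algebraic curves: the closure of a component of such a curve is algebraic, meets a leaf of $M$ by Corollary~5, hence contains that leaf and so lies in $M$, yet an invariant algebraic curve in $\mathbb P^2$ must carry a singular point of $\mathcal F$ — contradicting $M\cap\mbox{Sing}(\mathcal F)=\emptyset$. Thus $\mathcal F$ has no invariant algebraic curve, a fortiori no meromorphic first integral, and Proposition~4 shows that every directed harmonic current of $\mathcal F$ is diffuse.

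To conclude, I would invoke Theorem 18 (Bonatti--Langevin--Moussu): the minimal exceptional set $M$ contains a leaf carrying a closed loop with hyperbolic holonomy. On the contracting side of that loop, the transverse measure of $T$ is preserved by a strict contraction of a transversal, so it must be supported at the single point of $M$; hence $T$ has no laminar mass transverse to $M$ near the loop. Propagating this along the holonomy pseudogroup — using that $M$, being a minimal set of a \emph{holomorphic} foliation, is transversally holomorphic, in particular transversally Lipschitz, and that the directed harmonic current on such a lamination is unique — one forces $\mbox{supp}(T)=M$. Since $M\cap\mbox{Sing}(\mathcal F)=\emptyset$, this contradicts $(3)$, completing $(3)\Rightarrow(1)$.

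The step I expect to be the main obstacle is precisely this last propagation: upgrading ``$M\subset\mbox{supp}(T)$ with contracting holonomy on $M$'' to ``$\mbox{supp}(T)=M$'', i.e. excluding the possibility that extra laminar mass of $T$ spirals out of $M$ and accumulates at a singular point. Everything upstream — the equivalence $(1)\Leftrightarrow(2)$, the implication $(2)\Rightarrow(3)$, and the reduction of $(3)\Rightarrow(1)$ to the statement $\mbox{supp}(T)=M$ — is soft; the genuine analytic content is combining the contracting-holonomy information on $M$ with the rigidity coming from the absence of invariant algebraic curves and from the Stein/pseudoconvexity properties of complements of supports of directed currents in $\mathbb P^2$.
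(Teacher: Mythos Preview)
Your treatment of $(1)\Leftrightarrow(2)$ and $(2)\Rightarrow(3)$ is correct and coincides with the paper's argument. The divergence is entirely in $(3)\Rightarrow(1)$, where you take a much harder route and end up with precisely the gap you flag.

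The paper bypasses the propagation problem completely by using the intersection form $Q$ of Section~5.3 and the Hodge--Riemann property (Theorem~25). Assume a minimal exceptional set exists; let $S$ be a directed harmonic current supported on it (your $T_M$), and let $T$ be the current supplied by hypothesis~$(3)$, so that $\mathrm{supp}(T)\cap\mathrm{Sing}(\mathcal F)\neq\emptyset$. Then $S\neq T$ since their supports differ. Both are directed by the \emph{same} foliation $\mathcal F$, and $S$ is supported in the nonsingular locus; the geometric-intersection computation of Section~5.5 (the nonsingular case underlying Theorem~27) therefore gives $T\wedge_g S=0$, hence $Q(T,S)=\int T\wedge S=0$. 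But Theorem~25 asserts that $Q(T,S)>0$ whenever the classes $[T],[S]$ are non-proportional positive harmonic classes. Thus $T$ is proportional to $S$, contradicting the discrepancy of supports. The closed case is handled the same way.

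So the missing idea in your attempt is the quadratic form $Q$ and its sign property. Once that is in hand, there is no need for Bonatti--Langevin--Moussu, no need to exclude algebraic leaves, and no propagation step at all. Note also that your BLM step is already problematic for \emph{harmonic} currents: the transverse measure of a harmonic current is only quasi-invariant under holonomy (Harnack), not invariant, so a hyperbolic contraction on a transversal does not by itself force concentration on the fixed point the way it does for closed currents. This makes the ``contracting side'' argument weaker than you suggest, and the subsequent propagation to $\mathrm{supp}(T)=M$ is, as you say, not justified.
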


\begin{proof}
$1) \Rightarrow 2):$ Indeed the support of any directed current (closed or harmonic) is invariant under $\mathcal F$, so it has to intersect Sing$(\mathcal F).$ 

$2)\Rightarrow 3)$ is clear. We show that $3) \Rightarrow 1).$ Suppose that $\overline{L}\cap $Sing$(\mathcal F)=\emptyset.$ Let $T$ be a harmonic current directed by $\mathcal F$, with Supp$(T) \cap$Sing$(\mathcal F)\neq \emptyset.$ Let $S$ be a harmonic current directed by $\mathcal F$, 
supported by $\overline{L}.$ Then $S \neq T.$ On the other hand the geometric intersection $T\wedge_gS=0.$ So $\int T \wedge S=0$ and hence $T$ is proportional to $S,$ a contradiction. If $T$ or $S$ or both are closed the argument is similar.
\end{proof}

\begin{corollary}
If $\mathcal F$ admits a positive closed directed
 current, then there is no minimal exceptional set for $\mathcal F.$
\end{corollary}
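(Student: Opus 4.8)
The plan is to reduce the statement to the three equivalent conditions of the preceding Proposition. By that Proposition, to prove that $\mathcal F$ has no minimal exceptional set it is enough to produce a positive harmonic or closed current directed by $\mathcal F$ whose support meets $\mathrm{Sing}(\mathcal F)$. The hypothesis hands us a (nonzero) positive closed directed current $T$, so the whole content of the corollary reduces to one claim: $\mathrm{supp}(T)\cap\mathrm{Sing}(\mathcal F)\neq\emptyset$.

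To establish this claim I would argue by contradiction. Suppose $K:=\mathrm{supp}(T)$ is disjoint from $\mathrm{Sing}(\mathcal F)$. Then $K$ is a compact laminated subset of $\mathbb P^2$ carrying no singular points; and since a holomorphic foliation is transversally holomorphic --- in particular transversally Lipschitz --- in a flow box around any regular point, $(K,\mathcal L)$ is a transversally Lipschitz Riemann surface lamination in $\mathbb P^2$ without singularities, on which $T$ is a nonzero positive directed closed current. The theorem above on positive directed closed currents for such laminations (they are exactly the currents of integration on algebraic leaves) then forces $T=\sum_i c_i[A_i]$ with $c_i>0$ and each $A_i$ an algebraic leaf. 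Taking closures in $\mathbb P^2$, each $A_i$ lies in an $\mathcal F$-invariant algebraic curve $\widehat A_i$, and since $K$ is closed, $\widehat A_i\subseteq K$.

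The final step is the classical obstruction that an algebraic curve of $\mathbb P^2$ invariant under a holomorphic foliation necessarily carries a singularity of that foliation: by the Camacho--Sad index theorem \cite{CS1982} the sum of the Camacho--Sad indices of $\mathcal F$ along $\widehat A_i$ equals the self-intersection $\widehat A_i\cdot\widehat A_i=(\deg\widehat A_i)^2>0$, so the sum runs over a non-empty set of points of $\mathrm{Sing}(\mathcal F)$ lying on $\widehat A_i$. This contradicts $\widehat A_i\subseteq K$ together with $K\cap\mathrm{Sing}(\mathcal F)=\emptyset$. Hence $\mathrm{supp}(T)$ does meet $\mathrm{Sing}(\mathcal F)$, condition 3) of the preceding Proposition holds, and therefore $\mathcal F$ has no minimal exceptional set. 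I do not anticipate a genuine obstacle here: the argument merely assembles the structure theorem for directed closed currents in $\mathbb P^2$ with the index obstruction, and the only point that needs a moment's care is that the Lipschitz hypothesis of the structure theorem is automatic, since off $\mathrm{Sing}(\mathcal F)$ the foliation is transversally holomorphic.
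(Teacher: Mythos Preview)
Your proof is correct and follows essentially the same route as the paper: reduce via the preceding Proposition to the claim $\mathrm{supp}(T)\cap\mathrm{Sing}(\mathcal F)\neq\emptyset$, then invoke the structure theorem (Theorem~17) for directed closed currents on a singularity-free transversally Lipschitz lamination in $\mathbb P^2$. The paper's one-line proof simply cites Theorem~17 as already containing this conclusion, leaving implicit the step you make explicit --- that the exceptional case $T=\sum c_i[A_i]$ cannot occur because every $\mathcal F$-invariant algebraic curve in $\mathbb P^2$ must carry a singularity.

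The only minor difference is in how that last fact is justified. You appeal to the Camacho--Sad index formula ($\sum \mathrm{CS}(\mathcal F,\widehat A_i,p)=\widehat A_i\cdot\widehat A_i=(\deg\widehat A_i)^2>0$), which is perfectly valid. The paper has the same conclusion available internally via Theorem~12 (the conormal-bundle/Chern-class argument that an algebraic leaf $A$ of a $\mathcal C^2$ lamination in $\mathbb P^2$ must meet $E$), which applies since a holomorphic foliation on $\mathbb P^2$ is transversally holomorphic, hence $\mathcal C^2$, and $X=\mathbb P^2$ certainly contains a neighborhood of $A$. Both arguments are standard and amount to the same index-theoretic obstruction; your version has the advantage of being self-contained for readers who know Camacho--Sad but have not absorbed the paper's Theorem~12.
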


\begin{proof}
We have seen in Theorem 17, that if $T$ is closed directed by $\mathcal F$, then \\
Supp$(T)\cap$Sing$(\mathcal F)\neq \emptyset.$ So we can apply the previous proposition.
\end{proof}
Let $\mathcal P_d$ denote the space of foliations in $\mathcal F_d(\mathbb P^2)$ with all singularities
in the Poincar\'e domain. We denote by $\mathcal P_d^0$ the subset of $\mathcal P_d$ such that $\mathcal F \in \mathcal P_d^0$ if and only if $\mathcal F$ does not admit a minimal exceptional set, i.e. 
every leaf $L$ for $\mathcal F$ clusters at a singular point.

\begin{proposition}
The set $\mathcal P_d^0$ is open in $\mathcal P_d.$ Moreover $\mathcal P_d^0$ contains the foliations in $\mathcal P_d$ admitting a positive closed directed current.
\end{proposition}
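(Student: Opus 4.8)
The second assertion is immediate: if $\mathcal F\in\mathcal P_d$ carries a positive closed current directed by $\mathcal F$, then by the Corollary preceding this statement $\mathcal F$ has no minimal exceptional set, i.e.\ $\mathcal F\in\mathcal P_d^0$. So the content is the openness of $\mathcal P_d^0$ in $\mathcal P_d$, and the plan is to prove it by a compactness argument for directed harmonic currents, the r\^ole of the Poincar\'e hypothesis being to control the behaviour of leaves near the singular points \emph{uniformly} in $\mathcal F$.

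Fix $\mathcal F_0\in\mathcal P_d^0$ and suppose, for contradiction, that there is a sequence $\mathcal F_n\to\mathcal F_0$ in $\mathcal P_d$ with $\mathcal F_n\notin\mathcal P_d^0$. Since a singularity in the Poincar\'e domain is isolated, $\mathrm{Sing}(\mathcal F_0)=\{p_1,\dots,p_N\}$ with $N=d^2+d+1$, and for $n$ large $\mathcal F_n$ has singular points $p_j^{(n)}\to p_j$ which again lie in the Poincar\'e domain, this being an open condition on the eigenvalues. By the existence theorem for directed positive harmonic currents (applicable since a holomorphic foliation is transversally holomorphic, hence $\mathcal C^2$, off its singular set, which is finite and complete pluripolar), each $\mathcal F_n$ carries a positive harmonic current $T_n$ of mass one directed by $\mathcal F_n$; and because $\mathcal F_n$ has a minimal exceptional set, the Proposition above characterizing foliations without a minimal exceptional set (failure of its third condition) forces $\mathrm{Supp}(T_n)\cap\mathrm{Sing}(\mathcal F_n)=\emptyset$. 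Passing to a subsequence, $T_n\to T$ weakly. Since the continuous $(1,0)$ forms defining $\mathcal F_n$ vary continuously with $\mathcal F_n$ and converge to those defining $\mathcal F_0$, and since $i\partial\overline{\partial}$ is continuous for the weak topology, $T$ is a positive harmonic current of mass one (the mass being tested against the smooth form $\omega$) directed by $\mathcal F_0$.

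The heart of the matter is to show that no mass of $T$ escapes into $\mathrm{Sing}(\mathcal F_0)$, i.e.\ $\mathrm{Supp}(T)\cap\mathrm{Sing}(\mathcal F_0)=\emptyset$; this is where the Poincar\'e hypothesis is essential. I would use the following uniform local fact: there exist $\rho>0$ and a neighborhood $W$ of $\mathcal F_0$ in $\mathcal P_d$ such that for every $\mathcal F\in W$ and every $j$, each leaf of $\mathcal F$ meeting the ball $B(p_j(\mathcal F),\rho)$ has $p_j(\mathcal F)$ in its closure. Indeed, at a singularity in the Poincar\'e domain one may rotate the two eigenvalues so that both have positive real part; the real flow of the local vector field then has the singular point as a source, a quadratic Lyapunov function adapted to the linear part confines a ball of fixed radius to its basin, and $\rho$ can be chosen uniformly over $W$ because on $\mathcal P_d$ the eigenvalue data remain in a compact subset of the Poincar\'e domain. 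Granting this, fix $j$: the set $\mathrm{Supp}(T_n)$ is closed, invariant, and disjoint from $\mathrm{Sing}(\mathcal F_n)$, so if it met $B(p_j^{(n)},\rho)$ at a point other than $p_j^{(n)}$, the leaf through that point would put $p_j^{(n)}\in\mathrm{Supp}(T_n)$, a contradiction; hence $\mathrm{Supp}(T_n)\cap B(p_j^{(n)},\rho)=\emptyset$, and since $p_j^{(n)}\to p_j$, for $n$ large $T_n$ has no mass on $B(p_j,\rho/2)$. By lower semicontinuity of mass on open sets, $T$ has no mass on $B(p_j,\rho/2)$, so $p_j\notin\mathrm{Supp}(T)$. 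As $j$ was arbitrary, $\mathrm{Supp}(T)\cap\mathrm{Sing}(\mathcal F_0)=\emptyset$. But then $T$ witnesses the failure of the second condition of the characterizing Proposition for $\mathcal F_0$, so $\mathcal F_0$ admits a minimal exceptional set, contradicting $\mathcal F_0\in\mathcal P_d^0$. This proves that $\mathcal P_d^0$ is open in $\mathcal P_d$.

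The main obstacle is exactly the uniform local statement above: one needs not merely that a Poincar\'e-domain singularity behaves like a source (or sink) for the real flow, so that every germ of a leaf clusters at it, but that the relevant basin radius does not shrink to zero as the foliation moves near $\mathcal F_0$. This follows from normal-form and ODE theory together with continuity of the eigenvalue data on $\mathcal P_d$ and the fact that they stay in a compact part of the Poincar\'e domain; it is precisely the source/sink behaviour that is lost in the Siegel domain, where a singularity is generically a saddle for the real flow and nearby leaves need not cluster at it.
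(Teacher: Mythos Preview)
Your argument is correct and follows essentially the same route as the paper: both proofs hinge on (i) a uniform local statement that, for $\mathcal F$ near $\mathcal F_0$ in $\mathcal P_d$, every leaf entering a fixed ball around a singular point must cluster at that point, and (ii) a compactness argument for directed harmonic currents combined with the characterizing Proposition on minimal exceptional sets. The only cosmetic difference is in how (i) is justified: the paper invokes continuous dependence of the Poincar\'e linearization on parameters to get a uniform linearization domain $B(p_i,\delta)$, whereas you obtain the same uniform basin via a real-flow Lyapunov argument after rotating the eigenvalues into the right half-plane; both are standard and yield the same conclusion.
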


\begin{proof}
The second assertion is a consequence of Proposition 9. Observe that it is easy to construct such foliations for some degrees: Take a linear foliation $\mathcal L$ with all singularities in the Poincar\'e domain and consider $f^*(\mathcal L)$ with $f$ a generic holomorphic endomorphism such that the singularities are still in the Poincar\'e domain. The foliation $f^*(\mathcal L)$ is defined by the pull-back of the forms defining 
$\mathcal L$.

Fix $\mathcal F_0\in \mathcal P_d^0$ with singular points $(p_i)_{i\leq N}.$ There is a neighborhood $\mathcal U(\mathcal F_0)$ of $\mathcal F_0$ in $\mathcal F_d(\mathbb P^2)$ and a $\delta>0$ such that for every $\mathcal F \in \mathcal U(\mathcal F_0)$ the domain of linearization of $\mathcal F$ at $p_i(\mathcal F)$ contains $B(p_i,\delta).$ This is clear since linearization depends continuously on parameters in the Poincar\'e domain, as it is obtained using a fixed point Theorem.

We show that for $\mathcal F \in \mathcal U(\mathcal F_0)$ there is a harmonic current $T_\mathcal F$, directed by $\mathcal F,$ with mass on $\cup_{i=1}^N B(p_i,\delta)$ and hence containing in its support singular points of $\mathcal F.$ If not, let $\mathcal F_n \rightarrow \mathcal F_0$ and $T_{\mathcal F_n}$ with no mass on $\cup_{i=1}^N B(p_i,\delta).$ Let $T_0$ be a cluster point. Then $T_0$ has no mass on $\cup_{i=1}^N B(p_i,\delta),$ contradicting that  $\mathcal F_0\in \mathcal P^0,$ we use Proposition 9.

\end{proof}

\begin{problem}
Show $\mathcal P^0_d$ is closed in $\mathcal P_d.$
\end{problem}
\begin{proposition}
Let $\mathcal F$ be a holomorphic foliation in $\mathbb P^2$. Assume that all singular points are hyperbolic and there are no algebraic leaves. Every leave is dense in $\mathbb P^2$ iff the support of the harmonic current directed by F is equal to $\mathbb P^2$ 
\end{proposition}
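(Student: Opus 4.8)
The plan is to derive the equivalence from two facts already established: the uniqueness of the directed harmonic current (Theorem 26), which is what makes the phrase ``the harmonic current directed by $\mathcal F$'' legitimate, and the exhaustion construction of Theorem 22 and Corollary 3, which assigns to every leaf a directed harmonic current concentrated on the closure of that leaf. As preliminaries I would note that, since every singularity of $\mathcal F$ is hyperbolic and no algebraic curve is invariant, Brunella's Theorem 16 shows that $\mathcal F$ admits no non-trivial directed positive closed $(1,1)$ current; hence Corollary 3 applies with $X=\mathbb P^2$ and $E=\mathrm{Sing}(\mathcal F)$ (a finite set, so analytic of dimension $0$). Thus every leaf $L$ of $\mathcal F$ is uniformized by a holomorphic map $\phi_L\colon\Delta\to L$, and by Theorem 22 every cluster point, as $r\to1$, of the currents $\tau_r=\phi_{L*}(G_r[\Delta])/\|\phi_{L*}(G_r[\Delta])\|$ is a positive harmonic current of mass one directed by $\mathcal F$ and supported in $\overline{\phi_L(\Delta)}=\overline L$. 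Write $T$ for the unique such current given by Theorem 26.

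For the direction ``every leaf dense $\Rightarrow\mathrm{supp}\,T=\mathbb P^2$'' I would argue as follows. In a flow box disjoint from $E$ one has $T=\int h_\alpha[V_\alpha]\,d\mu(\alpha)$ with $h_\alpha$ a positive harmonic function, so $\mathrm{supp}\,T$ meets the box in a union of plaques (those with parameter in the closed set $\mathrm{supp}\,\mu$); propagating along leaves, $\mathrm{supp}\,T\setminus E$ is a union of leaves of $\mathcal F$. It is nonempty, and it is not entirely contained in $E$ because $T$ has no mass on $E$ (as $\Lambda^2(E)=0$, via the existence of Lelong numbers for harmonic currents \cite{BS2002}). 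So $\mathrm{supp}\,T$ contains some leaf $L_0$; by hypothesis $\overline{L_0}=\mathbb P^2$, and since $\mathrm{supp}\,T$ is closed we conclude $\mathrm{supp}\,T=\mathbb P^2$.

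For the converse I would argue by contraposition. Suppose some leaf $L$ of $\mathcal F$ is not dense, so $\overline L$ is a proper closed subset of $\mathbb P^2$. Applying the construction recalled in the first paragraph to this particular $L$, we obtain a positive harmonic current $S$ of mass one directed by $\mathcal F$ with $\mathrm{supp}\,S\subseteq\overline L$ and $\overline L\neq\mathbb P^2$. By uniqueness (Theorem 26), $S=T$, so $\mathrm{supp}\,T=\mathrm{supp}\,S\neq\mathbb P^2$; contrapositively, $\mathrm{supp}\,T=\mathbb P^2$ forces every leaf to be dense. I expect the only point needing care to be the one invoked in the second paragraph — that $\mathrm{supp}\,T$ is genuinely saturated by leaves away from $E$ and is not swallowed by the singular set — which rests on the flow-box disintegration of a directed harmonic current together with the absence of mass of $T$ on $E$; granting this, the statement follows at once from Theorems 16, 22, 26 and Corollary 3.
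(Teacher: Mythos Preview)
Your proof is correct and follows the same approach as the paper: for one direction you use that $\mathrm{supp}\,T$ is leaf-saturated and hence contains a (dense) leaf, and for the other you construct a directed harmonic current on $\overline{L}$ via the exhaustion process and invoke uniqueness to force $\mathrm{supp}\,T\subseteq\overline{L}$. The paper's own proof is a two-line sketch (``the closure of any leaf contains the support of a directed harmonic current, since the harmonic current is unique it should contain $\mathbb P^2$; the converse is clear''), and your argument is simply a careful unpacking of that sketch, including the justification of the saturation of $\mathrm{supp}\,T$ that the paper leaves implicit.
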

\begin{proof}
Indeed the closure of any leaf contains the support of a directed harmonic current, since the harmonic current is unique it should contain $\mathbb P^2$ .The converse is clear.

\end{proof}

\subsection{Limit set}

Let $L$ be a leaf in a lamination with singularity $(X,\mathcal L, E)$. We define the limit set of $L$ as
$$
L_\infty=\cap_{n \geq 1} \overline{L \setminus K_n}
$$
\noindent where $(K_n)$ is an increasing sequence of compact sets in $L$ with $K_n \subset K_{n+1}^0$ and $\cup K_n=L.$ Clearly 
$L_\infty$ is independent of $(K_n)$.
The set $L_\infty\setminus E$ is invariant under $\mathcal L.$ We introduce also the set

$$
\mathcal L_\infty=\overline{\cup_{L \in \mathcal L}L_\infty}.
$$
So ${\mathcal L}_\infty$ is closed and invariant under $\mathcal L.$ It is of interest to describe $\mathcal L_\infty$ the limit set for ${\mathcal F}_d(\mathbb P^2).$ When $\mathcal L_\infty$  is small, then the situation is easily described.
\begin{theorem}
Let $(X,\mathcal L, E)$ be a lamination with singularities in $\mathbb P^2.$ Assume $\mathcal L_\infty$ has vanishing one dimensional Hausdorff measure. Then all leaves are closed. If $\mathcal L\in \mathcal F_d(\mathbb P^2)$ then $\mathcal L$ admits a meromorphic first integral.
\end{theorem}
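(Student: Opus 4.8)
The plan is to use the hypothesis $\Lambda^1(\mathcal{L}_\infty)=0$ to show first that every leaf $L$ has relatively compact closure in $X\setminus E$, hence is closed in $X\setminus E$, and then to apply the structure results already in the excerpt. The first step is the key one: I claim that if $\Lambda^1(\mathcal{L}_\infty)=0$, then in fact $L_\infty\subset E$ for every leaf $L$. Indeed, $L_\infty\setminus E$ is invariant under $\mathcal{L}$, so if it is nonempty it contains a whole leaf $L'$, and then it contains $\overline{L'}\setminus E$. Now $\overline{L'}\setminus E$ cannot be a single closed leaf of finite length only: a nonconstant holomorphic curve that is closed in $X\setminus E$ and has finite area would, by the extension theorems quoted around Proposition 4 (Harvey's theorem when $\Lambda^1(E)=0$, or the Bishop-type argument via \cite{FS1995}), extend to an analytic set in $\mathbb{P}^2$, which is then algebraic of positive dimension and hence has infinite one-dimensional Hausdorff measure — contradicting $\Lambda^1(\mathcal{L}_\infty)=0$, since $\overline{L'}\subset \mathcal{L}_\infty$. (More carefully: one must rule out that $\overline{L'}\cap E\neq\emptyset$; but $\overline{L'}$ still has a nontrivial laminar part in $X\setminus E$ whose one-dimensional measure is positive unless it is empty, and the argument applies to that part.) Thus $L_\infty\subset E$ for all $L$.

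Once $L_\infty\subset E$, each leaf $L$ has $\overline{L}\setminus E$ compact in $X\setminus E$ — more precisely, $\overline{L}\subset L\cup E$, since any point of $\overline{L}\setminus(L\cup E)$ would lie in $L_\infty$. Hence $L$ is closed in $X\setminus E$. At this point I would invoke Proposition 4 of the excerpt: under the standing hypotheses on $E$ (here $E$ is the singular set of a foliation in $\mathbb{P}^2$, so $E$ is finite, in particular $\Lambda^1(E)=0$), a leaf $L$ closed in $X\setminus E$ has $\overline{L}$ an analytic, hence algebraic, subset of $\mathbb{P}^2$. So every leaf is algebraic.

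For the final assertion, assume $\mathcal{L}=\mathcal{F}\in\mathcal{F}_d(\mathbb{P}^2)$. Every leaf being algebraic means $\mathcal{F}$ has infinitely many algebraic leaves (the leaves through a continuum of points give infinitely many distinct invariant algebraic curves, since a leaf is contained in its algebraic closure and two distinct leaves have disjoint interiors). Now I would apply Ghys's theorem (Theorem 6 in the excerpt): a codimension-one holomorphic foliation on a compact complex manifold has only finitely many closed leaves unless it admits a meromorphic first integral, in which case all leaves are closed. Since $\mathcal{F}$ has infinitely many closed (indeed algebraic) leaves, Ghys's dichotomy forces the existence of a meromorphic first integral. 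That completes the proof.

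The main obstacle I anticipate is the first step — making rigorous the claim that $\Lambda^1(\mathcal{L}_\infty)=0$ forces $L_\infty\subset E$, and in particular handling leaves $L$ whose closure meets $E$. The clean case ($\overline{L}\cap E=\emptyset$) is immediate from the extension/Bishop machinery, but when $\overline{L}$ clusters at a singular point one needs to argue that the non-exceptional part of $\overline{L}$ in $X\setminus E$ still carries positive $\Lambda^1$-measure unless $\overline{L}\subset L\cup E$, and that this part is a relatively closed analytic curve there; this is where one leans on the finiteness of $E$ and the local structure near singularities. Everything after that is a direct citation of Proposition 4 and Ghys's theorem, both already available in the excerpt.
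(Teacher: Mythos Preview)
Your proof is correct in outline and uses the same two key ingredients as the paper --- Harvey's removable singularity theorem for the first assertion and Ghys's theorem (Theorem 6) for the second --- so the overall strategy matches.

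However, you significantly overcomplicate the first step. The observation you need is completely elementary: a plaque is a $2$-real-dimensional disc, and any such set has \emph{infinite} one-dimensional Hausdorff measure. Hence if $L_\infty\setminus E$ were nonempty it would contain a plaque (by invariance) and immediately $\Lambda^1(\mathcal L_\infty)=\infty$, a contradiction. There is no need to discuss ``finite length'', ``finite area'', or to invoke extension theorems and algebraicity to see this; your detour through Bishop-type arguments is unnecessary, and the parenthetical worries about leaves clustering at $E$ evaporate once you notice this.

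The paper's proof is in fact even more streamlined than yours: it does not bother to establish $L_\infty\subset E$ first and then go through Proposition~4. Instead it applies Harvey's theorem directly, using only that $\overline{L}\setminus L\subset L_\infty\subset\mathcal L_\infty$ has vanishing $\Lambda^1$: the current $[L]$ is positive and closed on $\mathbb P^2\setminus(\overline{L}\setminus L)$, and Harvey's theorem removes a closed set of vanishing $\Lambda^1$, so $\overline{L}$ is analytic. This avoids any separate hypothesis on $E$ (such as $\Lambda^1(E)=0$), which your route via Proposition~4 requires you to assume. For the second assertion both proofs cite Theorem~6.
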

\begin{proof}
Indeed by Harvey's Theorem \cite{H1974}  the closure of each leaf is analytic, so the leaves are closed. When $\mathcal L\in \mathcal F_d(\mathbb P^2)$ one can  apply Theorem 6.
\end{proof}
The case where $\mathcal L_\infty$ is of dimension one has been desribed by Camacho-Lins-Neto-Sad, \cite{CLS1988}.

\begin{theorem}
(Camacho-Lins-Neto-Sad) Let $\mathcal L\in \mathcal F_d(\mathbb P^2)$. Assume $\mathcal L_\infty$
is analytic of dimension $1,$ satisfying the following assumptions.\\
(i) The holonomy of each irreducible component of $\mathcal L_\infty$ is hyperbolic.\\
(ii) The number of separatrices at each singularity is finite.\\
Then there is a rational map $F$ of $\mathbb P^2$ and a linear flow $\mathcal L_0$ such that $\mathcal L=F^* \mathcal L_0.$
\end{theorem}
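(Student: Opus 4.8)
The plan is to recognize $\mathcal L_\infty$ as an invariant algebraic curve, to use hypotheses (i)--(ii) to show that $\mathcal L$ is a logarithmic (Darboux) foliation whose polar divisor is that curve, and then to realize any such foliation as the pull-back of a linear one by a rational map. Since $\mathcal L_\infty$ is closed and analytic of dimension one in $\mathbb P^2$, it is an algebraic curve $A=\bigcup_{i\in I}A_i$ (Remmert--Stein and Chow), and being a union of leaf closures it is invariant by $\mathcal L$. By the very definition of the limit set, every leaf $L\not\subset A$ satisfies $\overline{L}=L\cup L_\infty\subset L\cup A$, so each such leaf is closed in $\mathbb P^2\setminus A$ and clusters only on $A$. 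Hypothesis (ii) says that no singularity of $\mathcal L$ lying on $A$ is dicritical; together with the bound $\sum_i\deg A_i\le c(d)$ (Theorem~5 and the remark following Proposition~2) this keeps $A$, and the local separatrix data along it, finite and controlled.

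First I would work semi-locally near $A$. After Seidenberg desingularization and a few further blow-ups one passes to $\pi\colon\tilde M\to\mathbb P^2$ for which $\tilde{\mathcal L}:=\pi^*\mathcal L$ has only reduced singularities and $\tilde A:=\pi^{-1}(A)$ is a normal-crossings divisor \emph{entirely invariant} by $\tilde{\mathcal L}$; here (ii) is essential, since a dicritical point on $A$ would create a non-invariant exceptional component. Hypothesis (i) passes to the components of $\tilde A$, and a holonomy multiplier of modulus $\neq 1$ forces, at every singularity of $\tilde{\mathcal L}$ lying on $\tilde A$, the ratio of the two eigenvalues to be non-real, hence in the Poincar\'e domain and non-resonant; by Poincar\'e's theorem (Theorem~3) such a singularity is holomorphically linearizable, while the holonomy group along each component $\tilde A_j$, being hyperbolic, linearizes to a subgroup of $\mathbb C^*$. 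Simultaneous linearization at the corners exhibits $\tilde{\mathcal L}$ near each corner $\tilde A_j\cap\tilde A_k$ as $\alpha\,\frac{dg_j}{g_j}+\beta\,\frac{dg_k}{g_k}$; a gluing argument of Mattei--Moussu and Cerveau--Mattei type then shows that on a neighborhood $V$ of $\tilde A$ the foliation $\tilde{\mathcal L}$ is defined by a closed logarithmic $1$-form $\eta=\sum_j\lambda_j\,\frac{dg_j}{g_j}$, the $\lambda_j$ being logarithms of the holonomy multipliers; equivalently $\tilde{\mathcal L}$ has there a multivalued first integral $\prod_j g_j^{\lambda_j}$ with abelian monodromy.

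Next I would globalize. Because every leaf of $\mathcal L$ clusters only on $A$, the analytic continuation of $\eta$ along the leaves from $V$ meets no obstruction on $\mathbb P^2\setminus A$, so $\eta$ descends to a closed rational $1$-form $\omega$ on $\mathbb P^2$ with simple poles located exactly on $A$: $\omega=\sum_{i\in I}\mu_i\,\frac{dF_i}{F_i}$ with $F_i=0$ the reduced global equations of the $A_i$ and $\sum_i\mu_i\deg F_i=0$. Thus $\mathcal L$ is a Darboux foliation whose polar divisor is $A$. One then uses $\mathcal L_\infty=A$ once more — a logarithmic foliation on $\mathbb P^2$ whose leaves cluster \emph{only} on its polar divisor cannot have too many independent logarithmic factors — to reduce, after collecting the $F_i$ into products $G_1,G_2,G_3$ of equal degree, to a model with at most three effective logarithmic factors: $\omega=\sum_{\ell=1}^{3}\alpha_\ell\,\frac{dG_\ell}{G_\ell}$ with $\sum_\ell\alpha_\ell\deg G_\ell=0$. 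Then the rational self-map $F:=[G_1:G_2:G_3]$ of $\mathbb P^2$ carries $A=\{G_1G_2G_3=0\}$ onto the coordinate triangle and pulls back the linear logarithmic form $\alpha_1\frac{dx}{x}+\alpha_2\frac{dy}{y}+\alpha_3\frac{dz}{z}$, whose foliation $\mathcal L_0$ is a linear flow, onto $\omega$; hence $\mathcal L=F^*\mathcal L_0$.

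The step I expect to be the genuine obstacle is the passage from the semi-local logarithmic model near $A$ to the global one: showing that the monodromy assembled from the hyperbolic holonomy characters is actually abelian, that continuing $\eta$ along the leaves produces a single-valued rational $1$-form on all of $\mathbb P^2$ with no poles off $A$, and that the only possible local polar contributions — from transverse separatrices at the non-corner singularities on $\tilde A$ — disappear or are themselves components of $A$. This is exactly where the hypothesis $\mathcal L_\infty=A$ is used in an essential way, and where (ii) prevents the continuation from running into dicritical behavior. The remaining bookkeeping — reducing to three logarithmic factors and arranging the homogeneous components $G_\ell$ to have equal degree, so that $F$ is an honest self-map of $\mathbb P^2$ — is routine once the global closed logarithmic form is available.
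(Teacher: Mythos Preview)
The paper does not prove this theorem: it is stated with attribution to Camacho--Lins~Neto--Sad and immediately followed by the next result, with no argument given. So there is no ``paper's own proof'' to compare your proposal against; the survey simply quotes the result from \cite{CLS1992}.

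That said, your outline does follow the broad architecture of the original Camacho--Lins~Neto--Sad argument: resolve, use hyperbolic holonomy to linearize the singularities on the (blown-up) limit curve, build a closed logarithmic $1$-form in a tubular neighborhood, and then globalize. Two points in your sketch are underdeveloped relative to what the actual proof requires. First, the globalization: you correctly flag it as the crux, but ``analytic continuation of $\eta$ along the leaves meets no obstruction'' is not an argument---one must really use that leaves are closed in $\mathbb P^2\setminus A$ together with the hyperbolic transverse structure to produce a single-valued meromorphic form on all of $\mathbb P^2$, and to check that no new poles appear at singularities off $A$ or at the non-corner separatrices. Second, the final ``reduction to three logarithmic factors'' is not a matter of grouping the $F_i$ into three products of equal degree; that rewriting is generally impossible (the residues $\mu_i$ need not be compatible with any such grouping). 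The honest way to get the map $F$ to a \emph{linear} foliation on $\mathbb P^2$ is to use the residue relation $\sum_i\mu_i\deg F_i=0$ to build three homogeneous combinations of the same degree whose logarithmic differentials span the line $\mathbb C\omega$, which is a linear-algebra step on the residue data rather than a combinatorial regrouping of the $F_i$. With those two steps made precise, your strategy matches the original.
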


A consequence of the uniqueness of harmonic currents is the following.

\begin{theorem}
Let $\mathcal L \in \mathcal H(d)$, i.e. all singular points are hyperbolic and there is no algebraic leaf. Then for every leaf $L,$
$$
{\mbox{Supp}}(T) \subset L_\infty,
$$
\noindent where $T$ is the unique positive harmonic current of mass $1$ directed by $\mathcal L.$
\end{theorem}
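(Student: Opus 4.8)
The plan is to deduce everything from the uniqueness of $T$ together with a desintegration of the trace measure $\sigma:=T\wedge\omega$. First I record the standing facts for $\mathcal L\in\mathcal H(d)$. Since the singularities are hyperbolic and there is no algebraic leaf, Theorem 16 (Brunella) and the argument showing in $\mathbb P^2$ that a positive closed directed current must be carried by an algebraic leaf give that $\mathcal L$ admits no positive closed directed current. Hence Theorem 21 applies: the universal covering $\phi:\Delta\to L$ of any leaf $L$ satisfies $\int_\Delta(1-|\zeta|)|\phi'(\zeta)|^2\,d\lambda(\zeta)=\infty$, so by the ergodic statement (Proposition 6) the normalized currents $\tau_r$ converge to $T$. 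Because $\mathrm{Supp}(\tau_r)\subset\phi(\overline{\Delta_r})\subset L$ for every $r<1$ and $\overline L$ is closed, we get $\mathrm{Supp}(T)\subset\overline L$. Moreover the absence of algebraic leaves forces, via Theorem 6 (Ghys) and Proposition 3, the absence of a meromorphic first integral, so the decomposition of directed harmonic currents (Proposition 4) reduces to $T=T_0$: in every flow box $B$ disjoint from $E$ one has $T=\int_\Delta h_\alpha[V_\alpha]\,d\mu(\alpha)$ with the transverse measure $\mu$ diffuse.

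Second I would show $\sigma(L)=0$. In a flow box $B$ disjoint from $E$, $L\cap B$ is an open subset of $L$; since $L$ is Lindel\"of it has at most countably many connected components, each a plaque $V_\alpha$ with $V_\alpha\subset L$ (a plaque meeting $L$ lies in $L$), while every other plaque of $B$ is disjoint from $L$. Hence $\sigma(L\cap B)=\int_C h_\alpha\,\mathrm{Area}_\omega(V_\alpha)\,d\mu(\alpha)$ where $C=\{\alpha:V_\alpha\subset L\}$ is countable, and as $\mu$ is diffuse, $\mu(C)=0$, so $\sigma(L\cap B)=0$. Covering $L\subset X\setminus E$ by countably many such boxes gives $\sigma(L)=0$. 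On the other hand every $q\in\overline L\setminus L$ is a limit of a sequence in $L$ which eventually leaves each compact $K\subset L$ (otherwise $q$ would itself lie in a compact subset of $L$), hence $q\in\bigcap_n\overline{L\setminus K_n}=L_\infty$; thus $\overline L\setminus L\subset L_\infty$. Using $\sigma(\mathrm{Supp}(T))=1$ we conclude
\[
1=\sigma(\mathrm{Supp}(T))\le\sigma(\overline L)=\sigma(L)+\sigma(\overline L\setminus L)\le\sigma(L_\infty),
\]
so $\sigma(L_\infty)=1$.

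Third I would restrict $T$ to $L_\infty$. As $\Lambda^2(E)=0$, $T$ has no mass on the finite set $E$, so $\mathbf 1_{L_\infty}T=\mathbf 1_{L_\infty\setminus E}T$; and $L_\infty\setminus E$ is $\mathcal L$-invariant, so in each flow box away from $E$ the set $L_\infty$ is a union of plaques, i.e. $\mathbf 1_{L_\infty}T=\int_{C'}h_\alpha[V_\alpha]\,d\mu(\alpha)$ with $C'=\{\alpha:V_\alpha\subset L_\infty\}$ Borel. Thus $\mathbf 1_{L_\infty}T$ is a positive current directed by $\mathcal L$, and it is harmonic, since locally $i\partial\overline\partial(\mathbf 1_{L_\infty}T)=\int_{C'}i\partial\overline\partial(h_\alpha[V_\alpha])\,d\mu=0$ (the finitely many points of $E$ being removable for a harmonic current with no mass there). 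Its mass is $\sigma(L_\infty)=1$, so by the uniqueness theorem for foliations with only hyperbolic singularities and no algebraic leaf (Theorem 26) we get $\mathbf 1_{L_\infty}T=T$, whence $\mathrm{Supp}(T)\subset L_\infty$.

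The main obstacle is the bookkeeping in the second and third steps: one must check that the flow-box decomposition of a directed harmonic current, hence its restriction to an invariant closed set, is globally well defined and genuinely $i\partial\overline\partial$-closed (this rests on the structure theory behind Theorem 23 and Proposition 4), and that diffuseness of $\mu$ really does kill each individual leaf's contribution to $\sigma$. Granting the deep input — the uniqueness Theorem 26 — what remains is a careful but routine measure-theoretic argument.
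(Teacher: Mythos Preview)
Your argument is correct, but it is considerably more elaborate than the paper's. The paper's proof is one line: $L_\infty$ is closed, $\mathcal L$-invariant, and not contained in $E$ (since otherwise $L$ would be closed in $\mathbb P^2\setminus E$ and hence algebraic by Proposition~3); therefore $(L_\infty,\mathcal L_{|L_\infty},E\cap L_\infty)$ is itself a lamination with singularities, and the existence theorem (Theorem~23 or Corollary~3) produces a positive harmonic directed current $S$ of mass~$1$ supported on $L_\infty$; uniqueness (Theorem~26) then forces $S=T$, so $\mathrm{Supp}(T)\subset L_\infty$.

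You instead use uniqueness in the guise of the ergodic convergence $\tau_r\to T$ (Corollary~7) to get $\mathrm{Supp}(T)\subset\overline L$, and then invoke diffuseness of the transverse measure (Proposition~4) to kill $\sigma(L)$ and push the support into $\overline L\setminus L\subset L_\infty$. This trades the existence theorem for the ergodic theorem plus the structure result on diffuseness; both routes ultimately rest on Theorem~26. One remark: your third step is unnecessary. Once you have $\sigma(L_\infty)=1$ with $L_\infty$ closed, the open set $\mathbb P^2\setminus L_\infty$ has $\sigma$-measure zero, and since $\omega>0$ this already gives $T=0$ there, i.e.\ $\mathrm{Supp}(T)\subset L_\infty$. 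You do not need to verify that $\mathbf 1_{L_\infty}T$ is harmonic and re-invoke uniqueness.
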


Indeed $L_\infty$ is invariant, not finite, hence it supports a positive  harmonic non closed current which is necessarily equal to $T.$

\subsection{Harmonic Flow}

Let $(X,\mathcal L,E)$ be a lamination with singularities in a compact Hermitian manifold $M.$
We consider $\mathcal C_{\mathcal L}$, the convex  set of positive harmonic currents of mass $1$, directed by $\mathcal L.$ We show that $\mathcal C_{\mathcal L}$ is compact. Let $T_n\in \mathcal C_{\mathcal L}$.
Suppose $T_n \rightarrow T.$ We have to show that $T$ is directed. In a flow box $B$ we have $(T_n)_{|B}$
are in the closed convex hull of the directed harmonic currents $h_\alpha [V_\alpha] \delta_\alpha,
h_\alpha(0)=1,$ $\delta_\alpha$ the Dirac mass at $\alpha.$ So $T$ is directed in $B.$ and $\mathcal C_{\mathcal L}$ is compact. 
Each $T\in \mathcal C_{\mathcal L}$ is expressed in a flow box $B$ as
$$
T=\int h_\alpha [V_\alpha]d\mu(\alpha).
$$
We also have $\partial T=\tau\wedge T,$ with $\tau=\frac{\partial h_\alpha}{h_\alpha},$ here $\partial$ is the $\partial-$ operator along leaves. Observe that if we replace $h_\alpha$ by $c_\alpha h_\alpha$ and accordingly $\mu$ by $\frac{\mu(\alpha)}{c_\alpha}$, then $\tau$ is unchanged. By Harnack, $\tau$ is bounded in every flow box.

To simplify our description we assume that $T$ is extremal in $\mathcal C_{\mathcal L}$.

We define a metric $g_T$ along leaves: $g_T=\frac{i}{2} \tau \otimes \overline{\tau}.$ If we choose coordinates $(z,\alpha)$ in a flow box then
$$
g_T=\frac{i}{2} \left|\frac{\partial h_\alpha}{\partial z_\alpha}\right|^2 \frac{1}{h_\alpha^2}dz_\alpha \otimes d\overline{z}_\alpha.
$$
Observe that the metric is defined $T$ a.e. and that  it's a metric only out of the set $\mathcal C_T=\{(z,\alpha); \frac{\partial h_\alpha(z)}{\partial z_\alpha}=0\}.$

We can also introduce the positive measure

$$\mu_T:=i \tau \wedge \overline{\tau}\wedge T.
$$

The measure is locally finite out of $E.$ See Frankel \cite{Fr1995} for the non singular case.

\begin{proposition}
Let $T$ be a non closed extremal current in $\mathcal C_{\mathcal L}.$ The metric has constant negative curvature out of $\mathcal C_T$, which is of $T$ measure zero.
\end{proposition}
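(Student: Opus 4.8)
The plan is to prove the two assertions of the proposition separately: that $\mathcal{C}_T$ carries no mass of $T$, and that on $X\setminus\mathcal{C}_T$ the leafwise metric $g_T$ has constant negative curvature.

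First I would describe $\mathcal{C}_T$ plaque by plaque. In a flow box with coordinates $(z,\alpha)$ write $T=\int h_\alpha[V_\alpha]\,d\mu(\alpha)$ with $h_\alpha>0$ harmonic on the plaque $V_\alpha$. Since $h_\alpha$ is real and harmonic, $u_\alpha:=\partial h_\alpha/\partial z$ is \emph{holomorphic} on $V_\alpha$, so $\mathcal{C}_T\cap V_\alpha=\{u_\alpha=0\}$ is, for each fixed $\alpha$, either discrete in the plaque or the whole plaque, the latter exactly when $h_\alpha$ is constant. Let $B'$ be the set of plaques on which $h_\alpha$ is constant. It is measurable, since the densities depend measurably on $\alpha$ (Theorem 23), and it is holonomy-saturated: a harmonic function that is constant on an open subset of a connected plaque is constant throughout (its $\partial/\partial z$-derivative is holomorphic and vanishes on an open set), and the transition factors between overlapping flow boxes are locally constant and positive, so the property propagates along the leaf. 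On the plaques outside $B'$ the set $\mathcal{C}_T$ is discrete, hence $[V_\alpha]$ assigns it no mass; therefore $T$ gives $\mathcal{C}_T$ total mass equal to $\|T_{B'}\|$, where $T_{B'}$ denotes the restriction of the disintegration of $T$ to $B'$ (a well-defined current on $X\setminus E$ because $B'$ is saturated).

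Next I would show $T_{B'}=0$ by extremality. The current $T_{B'}$ is positive, directed, and harmonic, since $\mu$-a.e. slice $h_\alpha[V_\alpha]$ is leafwise $i\partial\overline\partial$-closed (Theorem 23). On the leaves making up $B'$ the density is constant, so $\tau=\partial h_\alpha/h_\alpha$ vanishes there; localizing the identity $\partial T=\tau\wedge T$ to this subfamily yields $\partial T_{B'}=0$, and since $T_{B'}$ is real, $dT_{B'}=0$: $T_{B'}$ is closed. If $T_{B'}\neq 0$, then either $\|T_{B'}\|=1$, in which case $T=T_{B'}$ is closed, or $0<\|T_{B'}\|<1$, in which case, after normalizing, $T$ is a nontrivial convex combination of the two elements $T_{B'}/\|T_{B'}\|$ and $(T-T_{B'})/\|T-T_{B'}\|$ of $\mathcal{C}_{\mathcal{L}}$, so extremality of $T$ forces $T$ to be a multiple of the closed current $T_{B'}$. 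Either way $T$ is closed, contradicting the hypothesis. Hence $T_{B'}=0$ and $T(\mathcal{C}_T)=0$.

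Finally, on $X\setminus\mathcal{C}_T$ the form $\tau$ is nowhere zero, so $g_T=\tfrac{i}{2}\,\tau\otimes\overline\tau$ is a genuine conformal metric, and I would compute its Gaussian curvature directly. Writing $g_T=e^{2\varphi}|dz|^2$ in a flow box gives $\varphi=\log|u_\alpha|-\log h_\alpha$. Here $\log|u_\alpha|$ is harmonic, since $u_\alpha$ is holomorphic and nonvanishing; and using $\partial u_\alpha/\partial\overline z=0$ together with $\partial h_\alpha/\partial\overline z=\overline{u_\alpha}$ one gets $\partial\overline\partial\log h_\alpha=-\bigl(|u_\alpha|^2/h_\alpha^2\bigr)\,dz\wedge d\overline z$, i.e. $\Delta\log h_\alpha=-4e^{2\varphi}$. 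Hence $\Delta\varphi=4e^{2\varphi}$ and
\[
K=-e^{-2\varphi}\Delta\varphi=-4,
\]
a negative constant. Equivalently, and more conceptually: on a simply connected plaque $h_\alpha=\operatorname{Re}g_\alpha$ for some holomorphic $g_\alpha$ mapping into the right half-plane, and then $g_T$ is the pullback by $g_\alpha$ of a fixed multiple of the Poincar\'e metric of the half-plane; since $g_\alpha'$ vanishes precisely on $\mathcal{C}_T$, off that set $g_T$ is locally isometric to a metric of constant negative curvature.

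The step I expect to be the main obstacle is the bookkeeping in the second and third paragraphs above: making precise that the constant-density plaques form a measurable, holonomy-saturated family whose associated restricted current $T_{B'}$ is well defined, directed, harmonic, and — through $\partial T=\tau\wedge T$ — closed, so that extremality can be invoked. Once $\mathcal{C}_T$ is discarded, the curvature identity is a routine conformal computation.
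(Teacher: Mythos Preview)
Your argument is correct and follows essentially the same route as the paper: use extremality to rule out a positive-$T$-measure set of leaves with constant $h_\alpha$ (your $B'$ is the paper's $\mathcal{N}_g$), then compute the curvature directly from $\partial\overline{\partial}\log h_\alpha=-\tau\wedge\overline\tau$. Your treatment is in fact more careful than the paper's in separating the isolated zeros of $\partial h_\alpha/\partial z$ from the constant-density leaves; the discrepancy between your $K=-4$ and the paper's $\kappa=-2$ is only a normalization convention (the paper carries a factor $i/2$ in the definition of $g_T$).
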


\begin{proof}
Consider $\mathcal N_g:=\{{\mbox{leaves on which}}\; g_T\; {\mbox{vanishes identically}}\}$.The set $\mathcal N_g$ is measurable and if it is of $T$ positive measure, the extremality of $T,$ would imply that it is of full measure in which  case $T$ is closed, a contradiction. So we can assume that the metric $g_T$ is non zero on a set of full $T$ measure.
Since $h_\alpha$ is harmonic along leaves, $\partial \overline{\partial} (\log h_\alpha)=-\frac{\partial h_\alpha}{h_\alpha}\wedge \frac{\overline{\partial} h_\alpha}{h_\alpha}=-\tau \wedge \overline{\tau}.$ 

The expression of the curvature of a conformal metric g is :

$$
     \kappa(g)=-\frac{1}{4} \frac{\Delta \log g}{g}=\frac{1}{2} \frac{\Delta \log h_\alpha}{\left|\frac{\partial h_\alpha}{\partial z_\alpha}\right|^2 \frac{1}{h_\alpha^2}}.
$$

So 

$$
\kappa(g_T)=  \frac{h_\alpha^2}{|h_{\alpha,z}|^2} \left( \frac{\partial}{\partial \overline{z}}
\left( \frac{h_{\alpha,z}}{h_\alpha}\right)\right).
$$

Since $h_\alpha$ is harmonic we get $\kappa(g_T)=-2.$

\end{proof}

We define the harmonic flow associated to $T.$ Consider $\chi_\alpha$, the gradient vector field associated to the ''function'' $h_\alpha.$ We compute the gradient with respect to the metric $g_T.$ So in a flow box with local coordinates $z_\alpha=x_\alpha+iy_\alpha$ we have
$\chi_\alpha=
c(h_{x_\alpha}, h_{y_\alpha})$ 
\noindent with $c$ chosen so that $g_T(\chi_\alpha,\chi_\alpha)=1.$  The vector field is independent of the choice of $h_\alpha$ so it is well defined $T$ a.e.
 The points in $\mathcal C_T$  are singular for the gradient of $h_\alpha.$ With the above normalization the parametrized integral curves for $
\chi_\alpha$ approach these points at infinite speed.

 The volume $\mu_T$ is finite when $E=\emptyset$ in any dimension and also in the following case.
 
 \begin{theorem}\cite{FS2006}  Let $\mathcal F \in \mathcal H(d),$ with $T$ the unique harmonic current associated to $\mathcal F.$ Then the measure $\mu_T$ is $\underline{\mbox{finite}}$ 
  \end{theorem}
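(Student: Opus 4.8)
The plan is to reduce the statement to a local estimate at each singular point and to recognise the mass of $\mu_T$ there as a Dirichlet energy of the leafwise densities. Away from $E=\mathrm{Sing}(\mathcal F)$ the measure $\mu_T$ is automatically locally finite: in any flow box disjoint from $E$ the $(1,0)$--form $\tau=\partial h_\alpha/h_\alpha$ is bounded by Harnack's inequality, so $\|\mu_T\|=\|i\tau\wedge\overline\tau\wedge T\|\le C\|\tau\|_\infty^{2}\,\|T\|$ there; since $E$ is a finite set of hyperbolic points, finitely many flow boxes of uniformly bounded geometry cover the compact set $\mathbb P^{2}\setminus\bigcup_i B(p_i,\delta)$, on which $\mu_T$ therefore has finite mass. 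Hence it suffices to bound $\|\mu_T\|$ on a small polydisc $\Delta_i^{2}$ around each $p_i$.

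Near a hyperbolic $p_i$ I would use Poincar\'e's linearisation theorem to bring $\mathcal F$ to the model $z\,dw-\lambda w\,dz=0$, $\lambda=a+ib$, $b>0$, and parametrise the local leaves $L_\alpha$ by the maps $\psi_\alpha\colon S_\lambda\to\Delta_i^{2}$ over the fixed sector $S_\lambda=\{v>0,\ bu+av>0\}$ used in the discussion of geometric intersection. Writing $H_\alpha=h_\alpha\circ\psi_\alpha$ for the associated positive harmonic function on $S_\lambda$, one has $\psi_\alpha^{*}\tau=\partial_\zeta(\log H_\alpha)\,d\zeta$, so pulling back the $2$--form $i\,\partial h_\alpha\wedge\overline\partial h_\alpha$ by $\psi_\alpha$ absorbs the Jacobian and yields $|\partial_\zeta H_\alpha|^{2}\,i\,d\zeta\wedge d\overline\zeta$. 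Since pushing a measure forward does not change its total mass,
\[ \|\mu_T\|(\Delta_i^{2})\;=\;2\int\!\!\int_{S_\lambda}\frac{|\partial_\zeta H_\alpha(\zeta)|^{2}}{H_\alpha(\zeta)}\,d\lambda(\zeta)\,d\mu(\alpha)\;=\;8\int\!\!\int_{S_\lambda}\bigl|\partial_\zeta\sqrt{H_\alpha}\bigr|^{2}\,d\lambda(\zeta)\,d\mu(\alpha), \]
so the problem becomes: show that the Dirichlet energy of $\sqrt{H_\alpha}$ on $S_\lambda$ is integrable against the transverse measure $\mu$.

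For this I would tile $S_\lambda$ by a bounded core together with two chains of unit--size boxes $(R_n)$ running out to the two ends of the sector, which are the uniformising pictures of the successive flow boxes crossed when one follows a loop of $L_\alpha$ around one of the two separatrices of $p_i$. On each box a Caccioppoli inequality for the positive harmonic function $H_\alpha$, combined with Harnack's inequality, gives $\int_{R_n}\frac{|\partial_\zeta H_\alpha|^{2}}{H_\alpha}\,d\lambda\le C\,H_\alpha(c_n)$ with $C$ independent of $n$ and $\alpha$ ($c_n$ the centre of $R_n$), and the core contributes a bounded amount by the same argument; so matters reduce to proving $\sum_n\int H_\alpha(c_n)\,d\mu(\alpha)<\infty$. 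Here one uses the structure of a hyperbolic singularity: following $L_\alpha$ around a separatrix it meets a fixed transverse flow box in infinitely many plaques $P_n$ whose $T$--masses are bounded above by $\|T\|$ of that box, so $\int\sum_n(\text{mass of }P_n)\,d\mu<\infty$; since these plaques have comparable leaf area, this pins the values of $H_\alpha$ along ``combs'' of slices running to each end of $S_\lambda$. Feeding this into the Poisson--integral representation of $H_\alpha$ on $S_\lambda$, with the precise decay estimates at the two ends established in the proof of uniqueness above and in \cite{FS2006}, one controls $H_\alpha(c_n)$ by geometrically decreasing multiples of the $T$--mass of the corresponding flow box and sums; summing over the finitely many $p_i$ then gives $\|\mu_T\|<\infty$.

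The main obstacle is precisely this last step: controlling the harmonic functions $H_\alpha$ — hence the Dirichlet energy of $\sqrt{H_\alpha}$ — at the two ends of the model sector, where $L_\alpha$ winds infinitely often into the separatrices and, in the ``corner'' of $S_\lambda$, into $p_i$ itself. This requires the holonomy analysis of a hyperbolic singularity (the finiteness of the total $T$--mass distributed over the infinite chain of disjoint flow boxes encircling $p_i$) together with the matching Poisson--integral estimates for $H_\alpha$; this is the genuinely technical part of \cite{FS2006}, the rest of the argument being soft.
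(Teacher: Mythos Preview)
The paper does not actually give a proof of this theorem; it cites \cite{FS2006} and only remarks that ``the proof uses heavily the contraction of holonomy and estimates of Poisson integrals,'' together with the local model near a hyperbolic singularity already set up in Section~5.5 (the sector $S_\lambda$, the functions $H_\alpha$, and the observation that finiteness of the total $T$--mass over the chain of flow boxes encircling a singularity feeds into a Poisson--integral representation of $H_\alpha$). Your proposal reconstructs exactly this strategy: reduction to a neighborhood of each $p_i$ via Harnack, Poincar\'e linearisation and the sector parametrisation, identification of $\|\mu_T\|$ with a leafwise Dirichlet integral of $\sqrt{H_\alpha}$, a Caccioppoli/Harnack tiling reduction to $\sum_n\int H_\alpha(c_n)\,d\mu(\alpha)$, and finally the holonomy--plus--Poisson estimates of \cite{FS2006} at the two ends of $S_\lambda$; you are also right that this last step is the genuinely technical one and is precisely what is deferred to \cite{FS2006}. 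So your outline is correct and coincides with the approach the paper sketches.
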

 There is indeed a more precise result.
 If $(\mathcal F_\lambda)$ is a holomorphic family in $\mathcal H(d),\lambda \in \Delta(0,r),$ then the mass of $\mu_{T_\lambda}$ is, locally in $\lambda,$ uniformly small in a neighborhood of the singularities. The  proof uses heavily the contraction of holonomy and estimates of Poisson integrals.

\noindent John Erik Forn\ae ss\\
Mathematics Department\\
The University of Michigan\\
East Hall, Ann Arbor, MI 48109\\
USA\\
fornaess@umich.edu\\

\noindent Nessim Sibony\\
CNRS UMR8628\\
Mathematics Department\\
Universit\'e Paris-Sud\\
Batiment 425\\
Orsay Cedex\\
France\\
nessim.sibony@math.u-psud.fr\\

\begin{thebibliography}{9999}

\bibitem{AI1988} Arnold, V., Il'yashenko, Yu.; {\em Ordinary differential equations,}
Encyclopedia of Math. Sciences, Springer Verlag, Vol 1 (1988), 1--148.

\bibitem{A1979} Atiyah, M. F.; {\em Geometry of Yang-Mills Fields,} Pisa 1979.
Scuola Normale Superiore Pisa, Pisa 1979.

\bibitem{BB} Baum, Paul F., Bott, Raoul.;  {\em On the zeros of meromorphic vector-fields,}  1970  Essays on Topology and Related Topics (MŽmoires dŽdiŽs ˆ Georges de Rham)  pp. 29--47 Springer, New York

\bibitem{BS2002} Berndtsson, B., Sibony, N.; {\em
The $\overline{\partial}$ equation on a positive current,} Invent.
math. 147 (2002), 371--428.

\bibitem{BLM1992} Bonatti, C., Langevin, R., Moussu, R.; {\em
Feuilletages de $\mathbb P^n:$ de l'holonomie hyperbolique pour les
minimaux exceptionnels,} I.H.E.S. Publ. Math. 75 (1992), 123--134.

\bibitem{BG-M2001} Bonatti, C., Gomez-Mont, X.; {\em Sur le comportement statistique
des feuilles de certains feuilletages holomorphes,}
Monographie Ens. Math. 38 (2001), 15--41.


\bibitem{B1979} Brjuno, A. D.; {\em A local method of non linear analysis for differential equations,} Moscow Nauka (1979).

\bibitem{Br2000} Brunella, M.; {\em Birational geometry of foliations,}
Notas de Curso, Impa (2000).

\bibitem{Br2005} Brunella, M.; {\em
Inexistence of invariant measures for generic rational differential
equations in the complex domain,} preprint 2005.

\bibitem{Br2006} Brunella, M.; {\em Mesures harmoniques conformes et feuilletages du plan projectif complexe,} preprint (2006).

\bibitem{CL-N1985} Camacho, C., Lins Neto, A.; {\em Geometric theory of foliations,}
Birkhauser (1985).

\bibitem{CLS1988} Camacho, C., Lins Neto, A., Sad, P.;
{\em Minimal sets of foliations on complex projective spaces,}
Publ. Math. IHES, 68 (1988), 187--203.

\bibitem{CLS1992} Camacho, C., Lins Neto, A., Sad, P.;
{\em Foliations with algebraic limit sets,} Ann. of Math. 136 (1992), 429--446.

\bibitem{CS1982} Camacho, C., Sad, P.; {\em Invariant varieties through singularities of holomorphic vector fields,} Ann. of Math. 115 (1982), 579--595.

\bibitem{C1993} Candel, A.; {\em Uniformization of surface laminations,} Ann. Scient. Ec. Norm. Sup. 26 (1993), 489--516.

\bibitem{C2003} Candel, A.; {\em The harmonic measures
of Lucy Garnett,} Advances in Math. 176 (2003), 187--247.


\bibitem{CG-M1995} Candel, A., Gomez-Mont, X.; {\em Uniformization of the leaves of a rational vector field,}
Ann. Inst. Fourier 45 (1995), 1123--1133.

\bibitem{CCG2000} Campillo, A., Carnicer, M. M., Garcia de la Fuente, J.; {\em Invariant curves by vector fields on algebraic varieties,}
J. London Math. Soc. 62 (2000), 56--70.

\bibitem{C1994} Carnicer, M. M.; {\em The Poincar\'e problem in the non dicritical case,} Ann. of Math. 140 (1994), 289--294.

\bibitem{CM1982} Cerveau, D., Mattei, J-F.; {\em Formes int\'egrables holomorphes singuli\`eres,} 
Asterisque 97 (1982) 


\bibitem{Ch1986}  Chaperon, M.;  {\em ${\mathcal C}^k-$ conjugacy
of holomorphic flows near a singularity,} Inst. Hautes Etudes Sci.,
Publ. Math. 64 (1986), 143--183.

\bibitem{CL1996} Collingwood, E.F., Lohwater, A.J.; {\em
The theory of cluster sets,} Cambridge University Press, 1966.

\bibitem{De 1996} Demailly, J. P.; {\em Introduction \`{a} la
th\'eorie de Hodge,} In Panoramas et Synth\`eses 3 (1996), 1--111.

\bibitem{De 1997}  Demailly, J. P.; {\em Complex analytic and algebraic
geometry,} http://www-fourier.ujf-grenoble.fr/~demailly/



\bibitem{DK2006} Deroin, B., Klepsyn, V.; {\em Random conformal dynamical systems,}
preprint (2006).

\bibitem{DF1978} Diederich, K., Forn\ae ss, J. E.; {\em Pseudoconvex domains with real-analytic
boundary,} Ann. Math. (2) 107 (1978), 371--384.

\bibitem{DS2006}  Dinh T-C., Sibony, N.; {\em Pull-back of currents by holomorphic maps,}
preprint 2006.


\bibitem{F1969} Federer, H.; {\em Geometric Measure Theory,} 
Springer Verlag (1969).

\bibitem{FWW2007a} Forn\ae ss, J. E., Wang Y., Wold, E. F.; {\em Laminated currents,} preprint 2007.

\bibitem{FWW2007b} Forn\ae ss, J. E., Wang Y., Wold, E. F.; {\em Laminated harmonic currents,} preprint 2007.

\bibitem{FS1995} Forn\ae ss, J.E., Sibony, N.; {\em Oka's
inequality for currents and applications,} Math. Ann. 301 (1995), 399--419.

\bibitem{FS2005}  Forn\ae ss, J. E., Sibony, N.; {\em Harmonic Currents
of Finite Energy and Laminations, }  GAFA 15 (2005), 962--1003.


\bibitem{FS2006} Forn\ae ss, J.E., Sibony, N.; {\em Unique ergodicity of harmonic currents on singular foliations of $\mathbb P^2,$}
preprint 2006.

\bibitem{Fr1995} Frankel, S.; {\em Harmonic analysis of surface group representation and Milnor type inequalities.} Prepublication de l'Ecole Polytechnique no 1125 (nov 1995)


\bibitem{G1983} Garnett, L.; {\em Foliations, the ergodic theorem
and brownian motion,}  J.  Funct. Analysis 51 (1983), 285--311.


\bibitem{Gh1983} Ghys, E.; {\em Gauss Bonnet theorem for $2-$ dimensional Foliations,}
J. of Functional Analysis 51 (1983), 285--311.

\bibitem{Gh1999} Ghys, E.; {\em Laminations par surfaces de Riemann,}
Dynamique et G\'eom\'etrie complexes (Lyon 1997),
Panoramas et synth\`eses 5 (1999), 49--95.

\bibitem{Gh2000} Ghys, E.; {\em A propos d'un th\'eor\`eme de J. P. Jouanolou concernant les feuilles ferm\'ees des feuilletages holomorphes,}
Rend. Circ. Mat. Palermo (2) 49 (2000), 175--180.

\bibitem{GML1992} Gomez-Mont, X., Luengo, I.; {\em Germs of holomorphic vector fields without separatrix,} Inv. Math. 109 (1992), 211--219.

\bibitem{H1974} Harvey, R.; {\em Removable singularities for positive currents.} Amer. J. Math. 96 (1974), 67--78.

\bibitem{H1976} Hille, E., {\em Ordinary differential equations in the complex domain,}
Wiley (1976).

\bibitem{HM1991} Hurder, S., Mitsumatsu, Y.; {\em The 
intersection product of
transverse invariant measures,} Indiana Math. J. (1991), 1169--1183.


\bibitem{Il1987} Il'yashenko, Yu.; {\em Global and local aspects in the theory
of complex differential equations,} Proc. Int. Cong. Math. Helsinki.
Acad. Scient. Fennica 2 (1978), 821--826.

\bibitem{J1978} Jouanolou, J. P.; {\em Hypersurfaces solutions d'une equation de Pfaff,} Math. Ann. 232 (1978), 239--245.

\bibitem{J1979} Jouanolou, J. P.; {\em Equations de Pfaff Algebriques,} LNM 708, Springer-Verlag (1979).

\bibitem{LN2002} Lins Neto, A.; {\em Some examples for the Poincar\'e and Painlev\'e problems,} Ann. Sc. Ec. Norm. Sup 35 (2002), 231--266.

\bibitem {LNP2006} Lins Neto, A., Pereira, J. V.; {\em The generic rank of the Baum-Bott map for foliations of the projective plane,} Compos.Math. 142 (2006), 1549-1587.


\bibitem{LNSo1996} Lins Neto, A., Soares, M.; {\em Algebraic
solutions of one dimensional foliations,} J. Diff. Geom. 43
(1996), 652--673.


\bibitem{LR2003} Loray, F., Rebelo, J.; {\em Minimal rigid foliations by
curves on $\mathbb P^n$,} J. Eur. Math. Soc. (JEMS) 5 (2003),
147--201.
\bibitem{M2006} Martinez,M.; {\em Measures on hyperbolic surface laminations ,} Ergod. Th. Dynam. Sys. 26
(2006) 847--867.

\bibitem{M1975} Mjuller, B.; {\em The density of the solutions of a certain equation in $\mathbb P^2,$}
(Russian) Math. Sb. (N.S.) 98(140) (1975), no. 3(11) 363--377, 495.

\bibitem{MP1974} Moussu, R., Pelletier, F.; {\em Sur le theor\`eme de Poincar\'e-Bendixon},
Ann. Inst. Fourier 24 (1974), 131--148.

\bibitem{N1999} Nemirovski, S.; {\em Stein domains with Levi flat boundaries on compact complex surfaces,} Math. Notes, 66  (1999), 522--525.

\bibitem{Ohsawa} Ohsawa, T.; {\em On the Levi flats in complex tori of dimension 2,} preprint.

\bibitem{P} Petrovski\u\i, I. G., Landis, E. M.;  {\em On the number of limit cycles of the equation $dy/dx=P(x,\,y)/Q(x,\,y)$, where $P$ and $Q$ are polynomials of the second degree.}, 1958  American Mathematical Society Translations, Ser. 2, Vol. 10  pp. 177--221 American Mathematical Society, Providence, R.I.


\bibitem{P1975} Plante, J.;  {\em Foliations with measure preserving holonomy,}
Ann. of Math. 102 (1975), 327--361.

\bibitem{R1974} Royden, H. L.; {\em The extension of regular holomorphic maps,} P.A.M.S. 43 (1974), 306--310.

\bibitem{S1968} Seidenberg, A.; {\em Reduction of singularities of  the differential equation $Ady=Bdx$,} Amer. J. Math. 90 (1968), 248--269.

\bibitem{S1974} Siu, Y. T.; {\em Analyticity of sets associated to Lelong numbers and extension of positive currents,} Invent. Math. 27 (1974), 53--156.

\bibitem{S1976} Sullivan, D.; {\em Cycles for the dynamical study of foliated manifolds and complex
manifolds,} Invent. Math. 36 (1976), 225--255. 

\bibitem{S1978} Suzuki, M.; {\em Sur les int\'egrales premi\`eres de certains feuilletages analytiques complexes,} (French)  Fonctions de plusieurs variables complexes, III (SŽm. Franois Norguet, 1975--1977),  pp. 53--79, 394, Lecture Notes in Math., 670, Springer, Berlin, 1978. 

\bibitem{V1987} Verjovski, A.; {\em A uniformization theorem for holomorphic foliations,} The
Lefschetz Centennial Conference, Contemp. Math. 58 (1987), 233--253.

\bibitem{W2002} Warwick, T.; {\em A rigorous ODE solver and Smale's 14th problem,} Foundations of Computational Mathematics 2 (2002), 53--117.



\bibitem{Z2001} Zakeri, S.; {\em Dynamics of singular holomorphic foliations
on the complex projective plane,} Contemporary Math. 269 (2001),
179--233.

\end{thebibliography}
\end{document}